\documentclass[12pt]{article}
\usepackage[utf8]{inputenc}
\date{\today}
\usepackage{amsthm}
\usepackage{tabularx}
\usepackage{ upgreek }
\usepackage[normalem]{ulem}
\usepackage{amsmath,amssymb} 
\usepackage{caption}   
\usepackage{xfrac}
\usepackage{caption}
\usepackage{graphicx} 
\usepackage{mathrsfs}
\usepackage{array}
\usepackage{booktabs}
\usepackage{enumitem}
\usepackage{xcolor}
\usepackage[normalem]{ulem}
\usepackage{mdframed} 
\usepackage{amsmath}

\usepackage{algorithm}
\usepackage{algcompatible}
\usepackage{tablefootnote}
\usepackage{algpseudocode}
\usepackage[utf8]{inputenc}
\usepackage[UKenglish]{babel}
\usepackage[nottoc,numbib]{tocbibind}
\usepackage{mathtools,amssymb,amsthm}
\usepackage{adjustbox,graphicx}
\usepackage{mathtools}
\usepackage{hyperref}
\usepackage{bbm}
\usepackage{lineno}
\usepackage{float}
\usepackage{tikz}
\usetikzlibrary{shapes.geometric, arrows}
\usepackage{csquotes}
\usepackage[sorting=none]{biblatex}
\AtBeginBibliography{\small}
\usepackage{amsthm}
\newtheorem{prop}{Proposition} 
\theoremstyle{definition}
\newtheorem{lem}[prop]{Lemma}

\newtheorem{thm}[prop]{Theorem}
\newtheorem{cor}[prop]{Corollary}
\newtheorem{defn}[prop]{Definition}
\newtheorem{rmk}[prop]{Remark}
\newtheorem{exam}[prop]{Example}

\usepackage[normalem]{ulem} 



\newcommand{\IE}{\mathbb{E}}

\newcommand{\indep}{\perp\!\!\!\perp}

\newmdenv[
  backgroundcolor=yellow!20,
  linecolor=yellow!50!black,
  linewidth=2pt,
  roundcorner=5pt
]{highlightblock}

\title{The Unseen Species Problem Revisited}
\date{}
\author{Edward Eriksson\thanks{Max Planck Institute for Mathematics in the Sciences, Leipzig. \texttt{Edward.Eriksson@mis.mpg.de}}}
\begin{document}
\maketitle

\begin{abstract}
Given $n$ i.i.d. samples from an unknown discrete distribution over an unknown set, the unseen species problem is to predict how many new outcomes would be observed in $m$ additional samples. For small $m$ we show that the Good--Toulmin estimator is the unique estimator which both respects the symmetries of the problem and has non-trivial rate. We resolve the open problem of constructing principled prediction intervals for it. For intermediate $m$ we propose a new estimator which has vastly improved worst case MSE guarantees compared to competing methods and good empirical performance. For large $m$ we follow previous authors in assuming a power law tail and show that a simple estimator achieves the same rate as, and better empirical performance than, a recent sophisticated method. Moreover, we give pre-asymptotic guarantees and asymptotically calibrated prediction intervals.
  
Many of our results extend to incidence data, without further independence assumptions, provided that the sets are of bounded size. Using Stein's method we obtain concentration inequalities for some natural functionals of sequences of i.i.d. discrete-set-valued random variables which are of independent interest.
\end{abstract}
\section{Introduction}

In the early 1940s the British naturalist Alexander Corbet spent two years trapping butterflies in British Malaya \cite{orlitsky_2016_optimal}. Upon his return to England he consulted with R.A. Fisher as to how many new species he may expect to discover on a return expedition \cite{orlitsky_2016_optimal}. Making such a prediction has become known as the unseen species problem. With access to a good estimate one could, for example, make a more informed decision about whether a return trip is worthwhile.

The return expedition need not be of the same length as the first expedition. In the limit of an infinitely long return expedition the unseen species problem becomes the classical support size estimation problem. At the other extreme, namely a return expedition consisting of capturing only a single specimen, it becomes the classical missing mass problem in which one tries to estimate how much probability mass is associated to as of yet unobserved outcomes. In this way the unseen species problem \enquote{interpolates} between these classical problems, each arising as an extremal version of the unseen species problem. As we feel the two are sometimes confused, we spend a few more sentences on distinguishing the unseen species problem from support size estimation. Support size estimation is asking \enquote{how much \emph{could} one discover?} and the unseen species problem is asking \enquote{how much \emph{will} we discover?}. For example, in the context of software code \cite{bhme_2018_stads} support size estimation is estimating the number of bugs that \emph{exist}, while the unseen species problem is the  question of determining how many bugs will be \emph{encountered} in some finite time interval. As another example, there were a large number of body fragments collected in the aftermath of the 9/11 attacks, so many in fact that as of the writing of \cite{adams_2022_victim} the identification effort was still ongoing. If one imagines that each fragment is sampled i.i.d. from some distribution over the victims, then the support size is the number of victims, which is known by other means \cite{adams_2022_victim}. The task carried out in that paper, namely to predict the number of victims who are represented in the finite number of recovered fragments, is the unseen species problem.

\subsection{The Generalized Unseen Species Problem} \label{subsec: problem statement}

Let $\mathcal{S}$ be the unknown, but presumed at most countable, set of species. Following \cite{efron_1976_estimating} we let $n,m$ be Poisson random variables with means $t,t'$, respectively\footnote{This so-called \enquote{Poissonization} is mostly done for technical reasons, but at least in the classical setting one may view it as fixing the duration of the trips ($t,t'$) rather than the number of specimens caught ($n,m$), which is quite natural. In any case one expects it not to make a significant difference since Poisson random variables are well-concentrated.}. In the classical unseen species problem one takes the observations $X_1,\ldots,X_n$ to be i.i.d. samples from an unknown $\mu \in \mathcal{P}(\mathcal{S})$, where $\mathcal{P}(\cdot)$ denotes the set of probability measures over a set, and attempts to predict how many new outcomes will be observed in the next $m$ samples. We consider the following generalization: Let the data consist of $X_1,\ldots,X_n$, i.i.d. samples from an unknown $\mu\in\mathcal{P}\left(\mathcal{T}(\mathcal{S})\right)$, where $\mathcal{T}(\mathcal{S})$ denotes the set of finite multisets over $\mathcal{S}$. For each species $s$, let $N_s$ be the number of times it was observed, counted with multiplicity. Let $\mathcal{S}_t := \{s \in \mathcal{S}: N_s > 0\}$ denote the set of species observed until time $t$, $S_t:=|\mathcal{S}_t|$ and define $T:=t+t'$. We then call the task of predicting
$$S_{t,T}:=\left|\mathcal{S}_T\setminus\mathcal{S}_t\right|=S_T-S_t,$$
the generalized unseen species problem.

The classical unseen species problem is recovered by insisting that $\mu$ is supported on singletons. All the results which we state for the generalized problem with sets of size at most one also hold for the classical problem, except for the value of the constant in Theorem \ref{thm: opt powerhouse}, but for convenience we do not state this each time. We believe our model better captures the dependence structure in the problem, which we illustrate with some examples.
\begin{exam}
    When collecting butterflies (or maybe even more so when observing fish as in \cite{chao_2017_seen}) one expects clustering of members of the same species and of species which enjoy the same micro-environments. It is therefore much more plausible to believe that every day a set of butterflies is caught and that independence arises between \emph{days} rather than \emph{specimens}.
\end{exam}
\begin{exam}

To sequence a genome from a single cell one prepares a \enquote{library} of DNA fragments which haphazardly cover different parts of the target genome. Then a machine reads those fragments. One would like to predict from a small number of fragments how much of the genome could be recovered by reading many fragments from that library as then one could compare protocols for preparing libraries without having to do (expensive) deep sequencing experiments \cite{daley_2014_modeling}. Rather than \emph{genomic locations} being independent, it is much more plausible that \emph{fragments} are independent. 
\end{exam}
\begin{exam}
    Suppose that an intelligence agency is spying on a terrorist organization by means of monitoring phone calls and is facing a decision: either arrest the known terrorists at risk of sending the rest into hiding, or wait to discover more terrorists. Clearly  being able to forecast the number of new terrorists that will be discovered by waiting is key to making an informed decision. The usual model would have one believe that who is on one end of a phone call is independent of who is at the other end. It is much more plausible to believe that \emph{calls} rather than \emph{callers} are independent. 
\end{exam}
\begin{exam} \label{exam: mtg}
    In many trading card games customers can buy small packages (\enquote{packs}) of randomized cards. The customer may wish to forecast the growth of their collection as they buy such packs, which is closely related to the famous coupon collector's problem. The number of cards in a pack is known and fixed but the exact contents are random and unknown. The cards are not independent as various constraints are imposed. For example, the presence of precisely one rare card, roughly equal presence of cards of various types etc. Hence it is much more plausible to believe that \emph{packs} rather than \emph{cards} are independent.
\end{exam}
That data is often structured in ways resembling the above examples is well-known to biologists who refer to it as incidence data \cite{chao_2014_rarefaction} and to statisticians who speak of feature allocation models \cite{ayed_2019_a} or edge exchangeable random graphs \cite{crane_2018_edge}. Edge exchangeable random graphs are a way to encode the data as a multi-hypergraph by setting the observed species as the vertices and the observed sets as the edges. For estimation problems resembling those discussed here the only analyses we are aware of assume that membership in each set is independent for each species \cite{chao_2014_rarefaction,ayed_2019_a}, essentially assuming away precisely the dependence we are trying to capture. We will often assume that the sets are of bounded size, but make no such within-set independence assumption. Owing to the de Finetti-type result of \cite{crane_2018_edge} one could assume that the data is an exchangeable sequence of sets rather than an independent one, with suitable modifications to our results. Concretely, the de Finetti-type result says, essentially, that weakening  independence corresponds to allowing $\mu$ itself to be random. However, many of our bounds are worst case bounds which automatically hold also for random $\mu$.

\subsubsection{Reduction to Multiplicity-Free Case}
When we observe multisets we may forget about the duplicates without changing when we discover what species, in particular, without changing $S_{t,T}$. Thus in the remainder it suffices to consider $\mu \in \mathcal{P}(2^\mathcal{S})$ (still supported on finite sets). Perhaps it appears that we have robbed ourselves of some information that our estimators\footnote{Following previous authors we use the term \enquote{estimator} even though \enquote{predictor} is perhaps more accurate.} could depend on, but note that starting with a $\mu \in \mathcal{P}(2^\mathcal{S})$ we can add multiplicities arbitrarily without changing when what is discovered so the duplicates cannot, in general, be informative.

\begin{rmk} As observed in \cite{chao_2017_seen}, while such a reduction is always possible it is sometimes essentially unavoidable. Therein a data set of tropical fish is considered. The data is collected by groups of divers submerging and then, upon their return to the surface, reporting what they observed. As noted in \cite{chao_2017_seen} there is a substantial risk that the same specimen was seen by multiple divers causing essentially \enquote{illusory} duplicate observations and that counting the number of fish in a school may in any case be hopeless.
\end{rmk}

\subsubsection{Bounded Arity}
One readily sees that the generalized unseen species problem is not feasible in full generality. 
\begin{exam} \label{cave exam}
    Suppose all animals live in a single cave such that either none or all of them are discovered. This corresponds to a $\mu$ which assigns positive probability only to a single large set. By making this set arbitrarily large and tuning the probability of discovery we can clearly make the prediction problem arbitrarily hopeless.
\end{exam}
Thus some extra assumption is needed. The following will be of central importance to us.
\begin{defn}
    We say that $\mu$ is of \textbf{$B$-bounded arity} and write $\mu \prec B$ if $\mu(A) = 0$ for all $A \in 2^\mathcal{S}$ such that $|A| >B$. We say that $\mu$ is of \textbf{bounded arity} if it is of $B$-bounded arity for some $B \in \mathbb{Z}_{\geq 1}$.
\end{defn}
In practice $B$ may or may not be known. For example, the genome fragments consist of $B=100$ base pairs. Phone calls are between $B=2$ people. The card game Magic: The Gathering (MTG) has packs of $B=15$ cards. On the other hand, when collecting butterflies $B$ is not known, insofar as it may be said to exist at all. When we wish to disallow assigning probability to the empty set we write $1 \prec \mu$.

\subsection{Previous Work and Our Contribution} 
Different methodologies are suitable for different ranges of $r := \frac{t'}{t}$. To reflect this we speak of the near, intermediate and distant future, corresponding roughly to $r \leq 1$, $1<r \lesssim \log(t)$ and $\log(t) \lesssim r \lesssim \exp(t^\frac{\alpha}{2})$ for a constant $\alpha \in (0,1)$, respectively. The rest of the subsection will be split by regime, detailing pre-existing work and our contribution. Before describing the literature and our contribution more accurately, we give a crude and necessarily oversimplified overview in Table~\ref{tab:results}. 

\begin{table}[H]
\centering
\begin{tabular}{c|>{\raggedright\arraybackslash}p{2cm}|>{\raggedright\arraybackslash}p{2.3cm}|>{\raggedright\arraybackslash}p{2cm}}
   & Near & Intermediate & Distant \\ \hline
  Classical & 
      EB $\star$
      
      RO $\checkmark^\prime$
      
      ED $\star$
  &  
  EB $\checkmark^\prime$
  
  RO $\checkmark^\prime$
  
  ED $\star$
  & 
  EB $\star$
  
  RO $\checkmark^{{(\prime)}}$
  
  ED $\star$
  \\ \hline
  Generalized &   EB $\star$
  
  RO $\star$
  
  ED $\times$ &   EB $\star$
  
  RO $\star$
  
  ED $\times$ &   EB $\star$
  
  RO $\star$
  
  ED $\times$ \\ \hline
\end{tabular}

\caption[Overview of relevant results]{%
  \footnotesize
  Overview of relevant results, categorized in three prediction ranges
  (near, intermediate, distant) and two set-ups (classical, generalized).

  EB: Explicit, non-asymptotic error bounds, RO: Rate and optimality,
  ED: Characterizing the limiting distribution of the error.
  $\checkmark$: Pre-existing result.
  $\checkmark^\prime$: We add to or improve on a pre-existing result.
  $\checkmark^{(\prime)}$: We improve on an existing result in a very weak sense. 
  $\star$: Our contribution.
  $\times$: Open problem.\protect\footnotemark
}
\label{tab:results}
\end{table}
\footnotetext{Even showing that $S_t$ is asymptotically Gaussian, never mind the error in
predicting it, is an open problem, even when the sets are of size two. See problem 6.9 of \cite{janson_2017_on} and progress in \cite{eriksson_2025_edge}.}

\subsubsection{Near Future}

For near future prediction the classical Good--Toulmin estimator \cite{good_1956_the} (GTE),

$$\hat{S}_{t,T}^{(\text{GT})}:= -\sum_{s \in \mathcal{S}}1_{N_s > 0}(-r)^{N_s},$$
has long been the dominant approach. We obtain explicit constants in the first part of Theorem 11 of \cite{polyanskiy_2019_dualizing}, thereby showing that the GTE is optimal to within a factor $1+r \leq 2$ from the worst case MSE achievable by any eligible estimator. Actually, we show something more delicate: that when moving to the generalized unseen species problem not only does the GTE not suffer a rate degradation in $t$, it even retains the expected rate in $\IE[S_t]$. We show that no estimator which respects some natural problem symmetries other than precisely the GTE achieves a rate which is better than that of the estimator which always predicts zero, henceforth referred to as the null estimator. For $\mu \prec 1$ we define a variance proxy and show that a central limit theorem holds with this variance proxy, settling the open problem \cite{contardi_2025_gaussian} of constructing principled prediction intervals. Practitioners \cite{adams_2022_victim} have previously estimated $\mu$ and then simulated the unseen species problem on $\hat{\mu}$ \cite{chao_2014_rarefaction}, which lacks theoretical guarantees. 

\subsubsection{Intermediate Future}
In the intermediate future the variance of the GTE is so large it is unusable and one must seek other estimators. The most well-known alternatives are the Smoothed Good--Toulmin estimators (SGTE) proposed by \cite{orlitsky_2016_optimal}, which encompass the Efron--Thisted estimator of \cite{efron_1976_estimating}. They are linear estimators (see Definition \ref{defn: linear}) which work by averaging over stochastic truncations of the GTE. Specifically, if $L$ is a non-negative-integer-valued random variable then
$$\hat{S}_{t,T}^{(\text{SGT})} := -\sum_{s \in \mathcal{S}}1_{N_s > 0}P(L \geq N_s)(-r)^{N_s},$$
is the SGTE associated to (the distribution of) $L$. Define the normalized risk,
\begin{align*}
    \mathcal{R}_B(\hat{S}_{t,T},S_{t,T}) := \sup_{\mu \prec B} \frac{\IE[|\hat{S}_{t,T}-S_{t,T}|^2]}{r^2t^2}.
\end{align*}

The best known choice of the distribution of $L$ is a binomial distribution with $\left\lfloor \frac{1}{2} \log_3 \frac{tr^2}{r-1} \right\rfloor$ attempts and success probability $\frac{2}{2+r}$ \cite{orlitsky_2016_optimal} and henceforth when discussing the SGTE, it is always with this particular choice. The SGTE achieves a rate, in $\mathcal{R}_1$, of $t^{-\log_3\left(1+\frac{2}{r}\right)}$ \cite{orlitsky_2016_optimal}. The same authors also showed that no estimator can achieve a rate better than $t^{-c/r}$ for some constant $c$, thereby characterizing the prediction horizon as $r(t)$ growing logarithmically in $t$ \cite{orlitsky_2016_optimal}. Polyanskiy and Wu then showed that for fixed $r$, the optimal rate is between $t^{-\frac{2}{r+1}}\log^{-4}(t)$\footnote{The pre-print currently says $\log^{-2}(t)t^{-\frac{2}{r+1}}$, but we believe this is a typo.} and $t^{-\frac{2}{r+1}}\log^4(t)$ \cite{polyanskiy_2019_dualizing} and in so doing provided an estimator with the latter rate, which we denote by $\hat{S}_{t,T}^{(\text{PW})}$.
The construction of $\hat{S}_{t,T}^{(\text{PW})}$ involves, in some sense, a filtering step on top of a candidate linear estimator which is defined as the solution to a certain linear programming problem. Their estimator is a valuable theoretical contribution but has some practical problems. It suffers from large pre-factors which render any pre-asymptotic guarantees useless. In particular, it suffers from  the $\log^4(t)$ factor and quadratic dependence on a \enquote{large enough} constant $C_0$. For their proof to go through verbatim one needs $C_0 > \frac{11}{\log(2)-\frac{1}{2}} \approx 57$, but this is easily optimized to $C_0 > \frac{1}{2\log(2)-1} \approx 2.59$, which is the value we use. It is non-linear, which we argue is undesirable in light of the symmetries of the problem, and in particular fails to be invariant under permutations of the (exchangeable) data. In addition to this being concerning on theoretical grounds, this means that it cannot be applied when only the final counts and not the entire history of the discovery process is available. This is the case, for example, for Corbet's data \cite{fisher_1943_the}.

 We propose a linear estimator of our own, defined as the minimizer of a certain convex functional which sandwiches the MSE. Our estimator has much better pre-asymptotic guarantees than even the SGTE, better asymptotic guarantees than $\hat{S}_{t,T}^{(\text{PW})}$ and has good empirical performance. We show our estimator to be optimal, pre-asymptotically, to within an explicit multiplicative constant, in the class of linear estimators. We argue that the class of linear estimators is very natural in light of the symmetries of the problem. Independently, we show a rate improvement of $\log^{6-\frac{1}{1+r}}(t)$ compared to \cite{polyanskiy_2019_dualizing}. The principal difference between our approach to constructing an estimator as compared to that of \cite{polyanskiy_2019_dualizing} is that we reduce the construction to an optimization problem which cannot be solved by linear programming. In that it obtains an estimator by minimizing a convex functional our approach resembles \cite{convex_stats}, but they work with \enquote{$\epsilon$-risk} guarantees rather than MSE ones, our problem is not of the type they prove guarantees for, and should one insist on using their approach anyway this would involve minimizing a different, although not unrelated, functional.  

We give a central limit theorem with a bias term and a variance proxy for linear estimators when $\mu \prec 1$. Since the estimators are in general biased this does not immediately allow the construction of principled prediction intervals. However, we find that for our estimator applied to natural distributions, good coverage can be achieved by simply dropping the bias term.

\subsubsection{Distant Future}
In light of Theorem 2 of \cite{orlitsky_2016_optimal} some extra assumption is needed to make predictions beyond logarithmically growing $r$. Following previous authors, we consider a regular variation (power law) assumption. The consequences of making such an assumption in the unseen species problem are briefly mentioned in \cite{benhamou_2017_concentration} and subsequently explored in detail in \cite{favaro_2023_nearoptimal}. From these works it is known that if one obtains an accurate estimator $\hat{\alpha} \in (0,1)$ for the regular variation index then \begin{align}\label{induced estimator} 
    \hat{S}_{t,T} := S_t\big((1+r)^{\hat{\alpha}}-1\big),
\end{align} is an accurate estimator in the unseen species problem. The prediction problem is thus reduced to a parameter estimation problem. The authors of \cite{favaro_2023_nearoptimal} gave an estimator for $\alpha$, motivated by certain maximum likelihood considerations. They showed that the resulting estimator for the unseen species problem achieves the minimax-rate up to polylog factors. With $\phi_1$ being the number of species seen precisely once, we suggest using $\hat{\alpha}:= \frac{\phi_1}{S_t}$ instead. The consideration of this ratio is certainly not new. In fact Corbet's writings already mention that $\frac{\phi_1}{S_t}$ appears to be constant in time \cite{corbet_1956_the}. The most detailed treatment of it which we are aware of is \cite{chebunin_2018_asymptotically}, which builds on a long line of research on the infinite urn scheme, see \cite{Gnedin_notes} for a (somewhat outdated) review. Even the use in the unseen species problem is not new as this was mentioned and shown to give rise to a multiplicatively consistent unseen species estimator in \cite{benhamou_2017_concentration}. However,
 the rate for the resulting unseen species estimator was only known for the more modern estimator of \cite{favaro_2023_nearoptimal}. Trying to work directly with the estimator of \cite{favaro_2023_nearoptimal} in the generalized problem would have been quite difficult since it depends on the data in a complicated way. Our insight is that one can apply the work of \cite{bartroff_2018_bounded} on Stein's method with bounded size biased couplings for configuration models to  get concentration inequalities for, among other quantities, $\phi_1$ and $S_t$ and from this give pre-asymptotic tail bounds on the probability of the prediction error exceeding a given amount. Our pre-asymptotic result implies a rate guarantee which is at least as good as that of \cite{favaro_2023_nearoptimal} but covers the generalized version of the problem, with $\mu$ of bounded arity.

Equivalently the concentration inequalities give concentration for the number of vertices in an edge exchangeable graph and for the number of vertices of a given degree. Such quantities were of interest in \cite{janson_2017_on}. We also expect them to be useful for studying other species sampling problems with incidence data. For example, $\phi_1$ plays a central role in the missing mass problem \cite{Good_1953}.

For $\mu \prec 1$ we give a CLT with a variance proxy, allowing the construction of principled prediction intervals. On the Bayesian side it is known that in the classical problem, under the regular variation assumption, the Pitman--Yor process serves as a reasonable prior and that Gaussian credible intervals can be constructed for this approach \cite{contardi_2025_gaussian}.  

\subsection{Structure} Section \ref{sec: symmetries} discusses some symmetries it would be desirable for the estimators to respect.  Then the next three sections, \ref{sec: near}, \ref{sec: inter}, \ref{sec: distant} deal with the near future, intermediate future and distant future, respectively. Finally we compare the estimators empirically in Section \ref{sec: comparison}.

Proofs are relegated to the appendix but selected proof sketches, most applying only to $\mu \prec 1$, are provided in the main text. 

\section{Linear Estimators and Problem Symmetries} \label{sec: symmetries}
\begin{defn} \label{defn: linear}
    We say that an estimator is \textbf{linear} if there exists a function $H: \mathbb{Z}_{\geq 0} \to \mathbb{R}$, in general depending on $r$ and $t$ such that,
    \begin{align*}
    \hat{S}_{t,T} = \hat{S}^{(H)}_{t,T} := \sum_{s \in \mathcal{S}}H(N_s).
    \end{align*}
\end{defn}
For a linear estimator to be eligible we need $H(0)=0,$ as we don't know how many unseen species there are and so we cannot have them contribute. For $i \in \mathbb{Z}_{>0}$, let $\phi_i := \sum_{s \in \mathcal{S}}1_{N_s = i}$ be the number of species observed exactly $i$ times and encode the sequence as a vector $\boldsymbol{\phi}:=(\phi_1,\phi_2,\ldots)$. Notice that $\sum_{s \in \mathcal{S}}H(N_s) = \sum_{i=1}^\infty H(i)\phi_i$, so the estimator is linear in the $\phi_i$'s, justifying the name. We will often think of $H$ as a sequence and consequently write $H_i$ in place of $H(i)$. 
\begin{defn} We say that an estimator has the \textbf{synchronous expeditions property} (SEP) if it is a function of $r,t$ and $\boldsymbol{\phi}$ and \begin{align*}
        \hat{S}_{t,T}\left(\boldsymbol{\phi}\right)+\hat{S}_{t,T}(\tilde{\boldsymbol{\phi}}) = \hat{S}_{t,T}(\boldsymbol{\phi}+\tilde{\boldsymbol{\phi}}),
    \end{align*}
for all $\boldsymbol{\phi},\tilde{\boldsymbol{\phi}}$.
\end{defn}
To motivate the SEP, imagine that two expeditions set out to two locations with disjoint sets of species. The expeditions spend an amount of time $t$ at their destinations and then, on their way back, each expedition makes a prediction as to how many new species they would discover on a return voyage. Imagine also that once they rendezvous they jointly make a prediction using the combined data as to how many new species will be discovered by the return trips. Since the sets of species were presumed disjoint, the number of such discoveries will be equal to the sum of the number of discovered species by each expedition and the SEP is essentially that the sum of the predictions should satisfy the same equality. Although it is not directly applicable, to see why it is natural to consider estimators that depend on the data only through $\boldsymbol{\phi}$, refer to Theorem 2 of \cite{hao2020}. In a mild abuse of notation, we write SEP for the class of estimators with the SEP.

\begin{defn}
    We say that an estimator is \textbf{representation invariant} (RI) if 
    it depends on the data only through $\boldsymbol{\phi}$ and 
    $$\hat{S}_{t,T}(B\boldsymbol{\phi}) = B\hat{S}_{t,T}(\boldsymbol{\phi}),$$
    for all $\boldsymbol{\phi}$ and $B \in \mathbb{Z}_{\geq 1}$.
\end{defn}
To motivate representation invariance, imagine that we are sampling matching pairs of shoes. We can either see this as the classical unseen species problem with an individual being a pair of shoes or as the generalized unseen species problem with shoes as individuals and all sets of size two (one left and one right shoe, which we consider distinct). Representation invariance is then an enforcement of consistency between these two views. We frequently use the following construction to move lower bounds from the classical to the generalized problem. Given a $\mu \prec 1$ on $\mathcal{S}$ one may associate a $\mu \prec B$ on $\mathcal{S} \times \{1,\ldots,B\}$ by $\mu(\{(s,1),(s,2),\ldots,(s,B)\}) := \mu(\{s\})$ for all $s \in \mathcal{S}$. We call this speciating $\mu$. Note that this scales $\boldsymbol{\phi}$ and $S_{t,T}$ by a factor $B$. Thus, if using a representation invariant estimator the MSE is scaled by $B^2$.

Linear estimators play very well with the symmetries of the problem.
\begin{prop}\label{prop:symmetries} An estimator has the SEP if and only if it is linear. Linear estimators are representation invariant.
\end{prop}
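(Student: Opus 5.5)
The plan is to prove the three implications directly from the definitions, treating an estimator as a function of $\boldsymbol{\phi}$ (for fixed $r,t$). The domain is the set of finitely supported sequences of non-negative integers, since $\phi_i$ counts observed species and only finitely many species are observed. A key standing point: $\hat S(\mathbf{0})$ is the prediction when nothing is observed, and SEP applied with $\boldsymbol{\phi}=\tilde{\boldsymbol{\phi}}=\mathbf{0}$ forces $\hat S(\mathbf{0})=0$.

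\emph{Linear $\Rightarrow$ SEP.} If $\hat S=\hat S^{(H)}$ with $H(0)=0$, then $\hat S(\boldsymbol{\phi})=\sum_{i\ge1}H_i\phi_i$. This depends on the data only through $\boldsymbol{\phi}$, and the sum is finite because $\boldsymbol{\phi}$ is finitely supported. It is additive in $\boldsymbol{\phi}$ termwise, so $\hat S(\boldsymbol{\phi})+\hat S(\tilde{\boldsymbol{\phi}})=\hat S(\boldsymbol{\phi}+\tilde{\boldsymbol{\phi}})$.

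\emph{SEP $\Rightarrow$ linear.} Let $e_i$ denote the $i$-th unit vector, corresponding to one species seen exactly $i$ times. Define $H_0:=0$ and $H_i:=\hat S(e_i)$ for $i\ge1$. Any admissible $\boldsymbol{\phi}$ is a finite sum $\sum_i \phi_i e_i$ of unit vectors, each repeated $\phi_i$ times. Inducting on $\sum_i\phi_i$ and applying SEP at each step gives $\hat S(\boldsymbol{\phi})=\sum_i\phi_i\hat S(e_i)=\sum_{s}H(N_s)$. The base case is $\hat S(\mathbf 0)=0$, which follows from the remark above.

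This step is the only point needing care. Additivity on the monoid $\mathbb{Z}_{\ge0}^{(\infty)}$ gives linearity over non-negative integer combinations, which is all we need, since $\boldsymbol{\phi}$ never has negative or non-integer entries. We also need every $e_i$ to lie in the domain, which holds because a single species can be observed exactly $i$ times with positive probability for suitable $\mu$.

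\emph{Linear $\Rightarrow$ RI.} For $B\in\mathbb{Z}_{\ge1}$ we have $\hat S^{(H)}(B\boldsymbol{\phi})=\sum_i H_iB\phi_i=B\hat S^{(H)}(\boldsymbol{\phi})$. Alternatively, this is immediate from SEP, since $B\boldsymbol{\phi}$ is a $B$-fold sum of copies of $\boldsymbol{\phi}$.
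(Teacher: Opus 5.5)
Your proposal is correct and follows the paper's proof essentially step for step. You define $H(i):=\hat{S}_{t,T}(e_i)$, decompose $\boldsymbol{\phi}=\sum_i\phi_i e_i$ and apply the SEP repeatedly, and treat the converse and representation invariance as immediate. Your explicit remarks that the SEP forces $\hat{S}_{t,T}(\mathbf{0})=0$ and that $\boldsymbol{\phi}$ is finitely supported make precise a step the paper's computation uses tacitly: its sum $\sum_i \hat{S}_{t,T}(\phi_i e_i)$ runs over infinitely many terms with $\phi_i=0$.
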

It is natural for estimators to not depend (explicitly) on $t$.
\begin{defn}
    We say that an estimator is \textbf{atemporal} if it depends only on the data and $r$.
\end{defn}
To motivate atemporality, pick some $c \in \mathbb{R}_{>1}$. Consider two worlds. One in which we are faced with the generalized unseen species problem with $r,t,\mu$ and an alternative world where we are faced with $r,ct,\mu'$ where for all $A \in 2^\mathcal{S}\backslash \{\emptyset\}$, $\mu'(A) = \frac{\mu(A)}{c}$ and $\mu'(\emptyset) = 1-\frac{1}{c}(1-\mu(\emptyset))$. Couple these problems in the natural way, obtaining a coupling such that at all times the numbers of observations of all species agree, as does $S_{t,T}$. Then using an estimator which is not atemporal would mean making different predictions in these worlds. We write AT for the class of atemporal estimators.

\section{Near Future}\label{sec: near}

\subsection{Optimality of the Good--Toulmin Estimator}
For $x,y \in \mathcal{S}$ let
\begin{align*}
    M_{x \cap y} &:= \sum_{A \in 2^\mathcal{S}: A \supset \{x,y\} }\mu(A)
     &M_{x \cup y}:=  \sum_{A \in 2^\mathcal{S}: A \cap \{x,y\} \neq \emptyset }\mu(A) \\
    M_{x} &:= \sum_{A \in 2^\mathcal{S}: x \in A}\mu(A),
     &M_{x \backslash y}:= \sum_{A \in 2^\mathcal{S}: x \in A,y \notin A} \mu(A).
\end{align*}
We adopt the convention that $\sum_{x,y}$ means the sum over ordered pairs of distinct elements of $\mathcal{S}$. 
Let 
\begin{align*}
    \delta(t) :=& \IE[S_{t,T}+\sum_{s \in \mathcal{S}}1_{N_s >0}r^{2N_s}] \\
\epsilon(t) :=& \sum_{x,y}e^{-(1+r)M_{x \cup y}t}(e^{r(r+1)M_{x \cap y}t}-1).
\end{align*}
We may then decompose the error in using the Good--Toulmin estimator in the generalized unseen species problem as one term which depends only on the marginals and one term which measures the dependence between the species as follows.

\begin{prop} \label{prop: error decomposition}
    \begin{align*}
        \IE[|S_{t,T}-\hat{S}_{t,T}^{(\text{GT})}|^2] = \delta(t)+\epsilon(t).
    \end{align*}
\end{prop}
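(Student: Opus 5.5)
We write the error as a sum of per-species terms, $S_{t,T}-\hat{S}_{t,T}^{(\text{GT})}=\sum_{s\in\mathcal{S}}Z_s$, and expand the square into diagonal terms $s=s'$ and off-diagonal terms over ordered pairs $x\neq y$. The per-species term is
\begin{align*}
Z_s := 1_{N_s=0}1_{N'_s>0} + 1_{N_s>0}(-r)^{N_s},
\end{align*}
where $N'_s$ is the count of $s$ in the second period $(t,T]$.

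The key structural fact is Poissonization. The number of sets $A$ observed in $[0,t]$ forms independent Poisson variables with means $\mu(A)t$, and similarly for $(t,T]$ with means $\mu(A)t'$, independently of the first period. Hence $N_x\sim\mathrm{Poi}(M_xt)$ and $N'_x\sim\mathrm{Poi}(M_xt')$, and these are independent. For a pair $(x,y)$ we get the decomposition $N_x=U_x+W$, $N_y=U_y+W$. Here $U_x\sim\mathrm{Poi}(M_{x\backslash y}t)$, $U_y\sim\mathrm{Poi}(M_{y\backslash x}t)$ and $W\sim\mathrm{Poi}(M_{x\cap y}t)$ are independent. The same decomposition holds for the second period with $t'$ in place of $t$.

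\textbf{Diagonal terms.} Since the indicators are disjoint,
\begin{align*}
Z_s^2 = 1_{N_s=0}1_{N'_s>0} + 1_{N_s>0}r^{2N_s}.
\end{align*}
Summing over $s$ and taking expectations gives exactly $\IE[S_{t,T}]+\IE[\sum_s 1_{N_s>0}r^{2N_s}]=\delta(t)$. No further computation is needed here.

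\textbf{Off-diagonal terms.} We rewrite $Z_s$ more conveniently. Note $1_{N_s>0}(-r)^{N_s}=(-r)^{N_s}-1_{N_s=0}$, so
\begin{align*}
Z_s = (-r)^{N_s}-1_{N_s=0}1_{N'_s=0},
\end{align*}
i.e. $Z_s=f_s-g_s$ with $f_s:=(-r)^{N_s}$ and $g_s:=1_{N_s+N'_s=0}$. Then $\IE[Z_xZ_y]$ splits into four terms, each a joint Poisson generating-function evaluation:
\begin{itemize}
\item $\IE[(-r)^{N_x+N_y}] = \exp\bigl(-(1+r)(M_{x\backslash y}+M_{y\backslash x})t + (r^2-1)M_{x\cap y}t\bigr)$, because $N_x+N_y=U_x+U_y+2W$;
\item $\IE[g_xg_y] = e^{-M_{x\cup y}(1+r)t}$;
\item the cross term $\IE[(-r)^{N_x}1_{N_y=0}1_{N'_y=0}]$, which forces $U_y=W=0$ and $N'_y=0$, leaving $\IE[(-r)^{U_x}]$.
\end{itemize}
Using $M_{x\cup y}=M_{x\backslash y}+M_{y\backslash x}+M_{x\cap y}$, the first two terms give
\begin{align*}
e^{-(1+r)M_{x\cup y}t}\bigl(e^{(1+r)M_{x\cap y}t+(r^2-1)M_{x\cap y}t}\bigr)=e^{-(1+r)M_{x\cup y}t}e^{r(r+1)M_{x\cap y}t}.
\end{align*}
For the cross term, $\IE[(-r)^{U_x}]=e^{-(1+r)M_{x\backslash y}t}$ and $P(U_y=W=N'_y=0)=e^{-M_{y\backslash x}t-M_{x\cap y}t-M_y t'}$. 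Since $M_y=M_{y\backslash x}+M_{x\cap y}$ and $t'=rt$, the product is $e^{-(1+r)M_{x\cup y}t}$. The symmetric cross term is identical.

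Altogether,
\begin{align*}
\IE[Z_xZ_y]=e^{-(1+r)M_{x\cup y}t}\bigl(e^{r(r+1)M_{x\cap y}t}-1+1-2\bigr)\cdot\text{(sign bookkeeping)},
\end{align*}
which must reduce to $e^{-(1+r)M_{x\cup y}t}(e^{r(r+1)M_{x\cap y}t}-1)$: the two cross terms enter with coefficient $-1$ each and $\IE[g_xg_y]$ with $+1$, giving net $-1$. Summing over ordered pairs yields $\epsilon(t)$.

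The main obstacle is justifying the interchange of the sum and expectation when $\mathcal{S}$ is countable, since individual terms like $(-r)^{N_s}$ do not vanish for unseen species. Using the form $Z_s=f_s-g_s$ termwise is delicate because $\sum_s f_s$ need not converge. I would therefore work with the original $Z_s$, which is nonzero only on finitely many species almost surely, and first restrict to finite $\mathcal{S}$. The general case then follows by monotone or dominated convergence, using that both $\delta$ and $\epsilon$ are sums of non-negative terms (note $\epsilon$'s summands are non-negative). The pairwise identity for $\IE[Z_xZ_y]$ itself is exact regardless, since it involves only two species.
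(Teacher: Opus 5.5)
Your proof is correct. Its skeleton matches the paper's: expand the square, read off $\delta(t)$ from the diagonal (the two pieces of $Z_s$ have disjoint supports), and evaluate $\IE[Z_xZ_y]$ using Poisson thinning and independence of the two periods. The difference is in how you organize the off-diagonal computation.

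The paper keeps $Z_s=1_{N_s=0}1_{N_s'>0}+1_{N_s>0}(-r)^{N_s}$. It therefore has to evaluate three types of products, each needing a conditioning or inclusion--exclusion step to strip the indicators $1_{N>0}$. It quotes the bivariate Poisson generating function from the literature, and then needs a multi-line cancellation to reach $L_{x,y}=-e^{-rM_{x\cup y}t}$.

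Your identity $Z_s=(-r)^{N_s}-1_{N_s=0}1_{N_s'=0}$ removes those indicators at the outset. Each of the four products then factorizes over independent thinned Poisson counts, so you derive the bivariate generating function rather than cite it. The three correction terms all collapse to $e^{-(1+r)M_{x\cup y}t}$, which makes the net coefficient $-1$ immediate. The paper's form preserves the interpretable split into discovery and Good--Toulmin contributions; yours is shorter and less error-prone.

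There are two things to tidy.

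First, the ``Altogether'' display is wrong as written. The expression $e^{r(r+1)M_{x\cap y}t}-1+1-2$ has constant part $-2$. It should read $e^{-(1+r)M_{x\cup y}t}\bigl(e^{r(r+1)M_{x\cap y}t}-1-1+1\bigr)$, which is exactly what your verbal bookkeeping says, and the ``(sign bookkeeping)'' factor should be dropped.

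Second, your concern about interchanging sum and expectation is legitimate and goes beyond the paper, which expands the square over countably many species without comment. However, your sketch is not yet an argument. The quantity $\IE[(\sum_{s\in F}Z_s)^2]$ is not monotone in the finite set $F$, so monotone growth of $\delta_F$ and $\epsilon_F$ alone does not transfer to the left-hand side. Two repairs are available:
\begin{itemize}
\item Under bounded arity (the setting where the proposition is used), dominated convergence works with $|\sum_{s\in F}Z_s|\le\sum_{s}|Z_s|\le Bm+Bn\max\{1,r\}^{n}$. Here $n\sim\mathrm{Po}(t)$ and $m\sim\mathrm{Po}(rt)$ count the observed sets.
\item Alternatively, since every $\IE[Z_xZ_y]\ge 0$, the partial sums are Cauchy in $L^2$ along any exhaustion whenever $\delta(t)+\epsilon(t)<\infty$. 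Fatou's lemma gives $\IE[|S_{t,T}-\hat{S}_{t,T}^{(\text{GT})}|^2]\le\delta(t)+\epsilon(t)$ unconditionally.
\end{itemize}
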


\begin{thm}\label{thm: GT analysis}If $\mu \prec B, r \leq 1$ then,
    \begin{align*}
        \IE[|S_{t,T}-\hat{S}_{t,T}^{(\text{GT})}|^2] &\leq (Br(r+1)-r)\IE[S_t]+\IE[S_{t,T}] \leq B^2r(r+1)t, \\
    \inf_{\hat{S}_{t,T}} \sup_{\mu \prec B}\IE[|S_{t,T}-\hat{S}_{t,T}|^2] &\geq B^2rt, \\
    \inf_{\hat{S}_{t,T} \in \text{SEP}}\sup_{\mu \prec B}\IE[|S_{t,T}-\hat{S}_{t,T}|^2] &\geq B^2\left(tr+\frac{r^2t^2}{t+1}\right), \\
    \sup_{\mu \prec B}\IE[|S_{t,T}-\hat{S}_{t,T}|^2] & \gtrsim B^2t^2 \text{ for any $\hat{S}_{t,T}$} \in \text{SEP} \cap \text{AT} \setminus \{\hat{S}_{t,T}^{(\text{GT})}\},
\end{align*}
where the implicit constant in the last result depends on the (fixed) value of $r$ and on $\hat{S}_{t,T}$ but not on $B$. 
\end{thm}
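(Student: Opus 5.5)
The plan is to prove the four displays in order, leaning on Proposition \ref{prop: error decomposition} for the upper bound and on explicit hard instances, together with speciation, for the three lower bounds.

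\textbf{Upper bound.} Under Poissonization, $N_s\sim\mathrm{Poisson}(M_st)$. Hence
$$\IE[1_{N_s>0}r^{2N_s}]=e^{-M_st}\bigl(e^{r^2M_st}-1\bigr).$$
For $r\le 1$ I will show this is at most $r^2(1-e^{-M_st})$. Writing $\lambda=M_st$, this is a one-variable inequality: both sides vanish at $0$ and a derivative comparison should settle it. It gives $\delta(t)\le \IE[S_{t,T}]+r^2\IE[S_t]$.

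For $\epsilon(t)$ I use $e^{x}-1\le xe^{x}$ together with $M_{x\cup y}\ge M_x$ and $M_{x\cap y}\le M_x$. The aim is to bound each summand by $r(r+1)M_{x\cap y}\,t\,e^{-M_xt}$, or something comparable. The step where I am least sure the constants close is controlling the exponent $-(1+r)(M_{x\cup y}-rM_{x\cap y})t$; I would first try $M_{x\cup y}\ge M_x+M_y-M_{x\cap y}$ with $M_y\ge M_{x\cap y}$.

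Next, bounded arity gives $\sum_{y\ne x}M_{x\cap y}\le (B-1)M_x$. Combined with $\lambda e^{-\lambda}\le 1-e^{-\lambda}$, this yields
$$\epsilon(t)\le (B-1)r(r+1)\IE[S_t].$$
Adding this to the bound on $\delta(t)$ gives the coefficient $r^2+(B-1)r(r+1)=Br(r+1)-r$, as claimed. The final inequality follows from $\IE[S_t]\le Bt$ and $\IE[S_{t,T}]\le Brt$.

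\textbf{Lower bound for arbitrary estimators.} Take a diffuse $\mu\prec1$, i.e. infinitely many species with vanishing masses, and speciate it into disjoint $B$-sets. In the limit, every future observation is a brand-new $B$-set, independent of the past. Then $S_{t,T}=B\cdot\mathrm{Poisson}(rt)$ is independent of the data, so any estimator has MSE at least $\mathrm{Var}(S_{t,T})=B^2rt$. A limiting argument with finitely many species of mass $\to0$ makes this rigorous.

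\textbf{SEP lower bound.} By Proposition \ref{prop:symmetries}, SEP estimators are linear, so the MSE splits into species-wise bias terms plus a variance term. I will use a speciated mixture: a diffuse part, which contributes $B^2rt$ as above, plus a speciated atom of mass $p$. On the atom the estimator outputs $BH(N)$ with $N\sim\mathrm{Poisson}(pt)$, while the target is $B\,1_{N=0}(1-e^{-rpt})$ up to fluctuation. Since $H$ cannot depend on $p$, I will lower bound $\sup_p$ by an average over a prior on $\lambda=pt$. An exponential-type prior should produce the $\frac{r^2t^2}{t+1}$ term through a Bayes least-squares computation with Poisson–Gamma conjugacy. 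Getting exactly this expression, i.e. choosing the right prior and masses, is the step I expect to be the main obstacle.

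\textbf{Atemporal linear estimators other than GT.} Here $H$ depends only on $r$. Zero bias on a species of intensity $\lambda$ requires
$$\sum_k H_k e^{-\lambda}\frac{\lambda^k}{k!}=e^{-\lambda}\bigl(1-e^{-r\lambda}\bigr)\quad\text{for all }\lambda,$$
and comparing power series forces $H_k=-(-r)^k$, i.e. $H$ is the GTE. So if $H\neq$ GT, some $\lambda_0$ has bias $b(\lambda_0)\neq0$. Now take $\lfloor t/\lambda_0\rfloor$ disjoint $B$-sets, each of mass $\lambda_0/t$. The per-species biases add coherently to a total bias of size $\asymp B(t/\lambda_0)|b(\lambda_0)|$. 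Therefore the MSE is $\gtrsim B^2t^2$, with a constant depending on $r$ and $H$ only.
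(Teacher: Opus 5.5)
Your unrestricted lower bound is sound. It uses a speciated diffuse $\mu$, so the MSE is at least $\IE[\mathbb{V}(S_{t,T}\mid\mathcal{F}_t)]\to B^2rt$. Your $\text{SEP}\cap\text{AT}$ argument is also sound: take a fixed $\lambda_0$ with nonzero per-species bias and replicate it $\lfloor t/\lambda_0\rfloor$ times as disjoint $B$-sets. Both are essentially the paper's. The bound $\delta(t)\le\IE[S_{t,T}]+r^2\IE[S_t]$ also holds, even pointwise, since $1_{N_s>0}r^{2N_s}\le r^2 1_{N_s>0}$.

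\textbf{The $\epsilon(t)$ bound fails as planned.} Your per-pair target $\epsilon_{x,y}\le r(r+1)M_{x\cap y}te^{-M_xt}$ is false. Suppose $x,y$ only ever occur together, e.g.\ $\mu$ is a single $B$-set of mass $m$, or any speciated $\mu$. With $\lambda=mt$ the summand is $e^{-(1-r^2)\lambda}-e^{-(1+r)\lambda}$. At $r=1$ this equals $1-e^{-2\lambda}\to1$, while $2\lambda e^{-\lambda}\to0$.

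Worse, the step $e^{x}-1\le xe^{x}$ already leaves $r(r+1)\lambda e^{-(1-r^2)\lambda}$. For this symmetric $\mu$ the target is exactly $r(r+1)(1-e^{-\lambda})$ per pair. The leftover bound equals $2\lambda$ at $r=1$, and exceeds the target for small $\lambda$ whenever $r>1/\sqrt2$. So no handling of the exponent can close the argument.

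The hypothesis $r\le1$ has to be used on a derivative instead. The paper builds $\mu$ set by set. Adding mass to $A$ can only increase $\epsilon_{x,y}$ when $\{x,y\}\subset A$, and then
$\partial_\lambda\epsilon_{x,y}=t(r+1)e^{-(r+1)M_{x\cup y}t}\bigl(1+(r-1)e^{r(r+1)M_{x\cap y}t}\bigr)\le r(r+1)te^{-M_xt}$.
The $|A|(|A|-1)$ ordered pairs are then charged to the $|A|$ elements of $A$, against the increment of $r(r+1)(B-1)\IE[S_t]$.

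Equivalently, differentiating in time gives $\tfrac{d}{d\tau}\epsilon_{x,y}(\tau)\le r(r+1)M_{x\cap y}e^{-M_x\tau}$. This integrates to the correct per-pair bound $\epsilon_{x,y}\le r(r+1)\tfrac{M_{x\cap y}}{M_x}(1-e^{-M_xt})$, after which your summation over $y$ goes through.

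\textbf{The SEP bound is misdirected.} You credit the diffuse part with only $B^2rt$, borrowing the unrestricted bound, and look for $\tfrac{r^2t^2}{t+1}$ in a single atom with a prior on its mass. One atom contributes $B^2\IE[(1_{N=0}1_{N'>0}-H(N))^2]=O(B^2)$ for bounded $H$. So for, say, GT-like $H$ with $H_1=r$, your decomposition certifies only $B^2(rt+O(1))$.

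The missing observation is that, for a linear estimator, the diffuse instance alone gives the whole bound. Take the uniform distribution over $1/p$ speciated species and let $p\to0$; then no species is seen twice. The estimator becomes $H_1\phi_1$ with $\phi_1=Bn$, $n\sim\mathrm{Po}(t)$. The target is $Bm$ with $m\sim\mathrm{Po}(rt)$ independent of $n$. Hence the MSE tends to $B^2\bigl[(r+H_1^2)t+(r-H_1)^2t^2\bigr]$. The variance term $H_1^2t$ and the squared bias $(r-H_1)^2t^2$ are exactly what your accounting drops. Minimizing this quadratic at $H_1=\tfrac{rt}{t+1}$ gives $B^2\bigl(rt+\tfrac{r^2t^2}{t+1}\bigr)$, with no prior or conjugacy computation needed.
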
 That is to say, for $r \leq 1$ the Good--Toulmin estimator is minimax optimal without class restriction to within a factor $1+r \leq 2$. Within the class of linear estimators it is optimal to within an additive offset of $B^2r^2\frac{t}{t+1}$ and within the class of estimators that enjoy the SEP and are atemporal, it is the unique estimator with rate better than that of the null estimator. For $B > 1$ we read the intermediate upper bound as saying that not only does the Good--Toulmin estimator not suffer a rate degradation in $t$ as we move to $B>1$, it doesn't even suffer a rate degradation in $\IE[S_t]$.
\begin{proof}[Proof sketch]
For the first lower bound we compare to using the conditional expectation as an estimator. For the second lower bound we compute how much error a linear estimator must incur on a distribution with many rare species. For the third lower bound, take some $\mu \prec 1$ and time $t$. Consider moving to $2t$ and building $\mu'$ by replacing each species by two new species, splitting the probability mass in half. The bias, which is a sum over functions depending on the product of $t$ and the observation probability, has each term duplicated under this transformation. Since this makes the bias grow linearly in $t$, the result follows. 
\end{proof}

\subsection{Uncertainty Quantification}

When $\mu \prec 1$, Theorem \ref{thm: CLT Intermediate} allows the construction of asymptotically calibrated prediction intervals. Let $\hat{V}_t := \sum_{s \in \mathcal{S}} 1_{N_{s}>0}r^{2N_s}+\hat{S}^{(\text{GT})}_{t,T}$. 
\begin{cor} \label{cor: GT Gaussianity} If $r \leq 1$, $\mu \prec 1$ and $\mathbb{V}[S_{t,T}-\hat{S}_{t,T}^{(\text{GT})}] \rightarrow \infty$ as $t\to\infty$ then  
     $$\frac{S_{t,T}-\hat{S}_{t,T}^{(\text{GT})}}{\sqrt{\hat{V}_t}} \overset{d}{\rightarrow} \mathcal{N}(0,1).$$
\end{cor}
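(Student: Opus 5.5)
Under Poissonization with $\mu \prec 1$, the counts $N_s \sim \mathrm{Poi}(t p_s)$ are independent over $s$. Moreover, the future counts $N'_s \sim \mathrm{Poi}(t' p_s)$ are independent of everything else. So
$$S_{t,T}-\hat S^{(\text{GT})}_{t,T}=\sum_{s}\Big(1_{N_s=0}1_{N'_s>0}+1_{N_s>0}(-r)^{N_s}\Big)=:\sum_s Y_s$$
is a sum of independent, uniformly bounded terms ($|Y_s|\le 1$ since $r\le 1$). The plan is to obtain the corollary from the general CLT, Theorem \ref{thm: CLT Intermediate}, applied to the linear estimator with $H_i=-(-r)^i$. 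If that theorem is stated with a bias term, note that the Good--Toulmin estimator is unbiased: $\IE[1_{N_s=0}1_{N'_s>0}]=e^{-tp_s}(1-e^{-t'p_s})$ and $\IE[(-r)^{N_s}1_{N_s>0}]=e^{-tp_s(1+r)}-e^{-tp_s}$ cancel. So the bias term vanishes. Independently of that theorem, the first step follows directly from Lindeberg--Feller: bounded summands with total variance $\to\infty$ give $(S_{t,T}-\hat S^{(\text{GT})}_{t,T})/\sqrt{\mathbb V}\to\mathcal N(0,1)$.

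The second step is to identify the variance. By Proposition \ref{prop: error decomposition} with $\mu\prec1$, we have $\epsilon(t)=0$ (no pair is ever jointly observed, so $M_{x\cap y}=0$). Unbiasedness then gives $\mathbb V=\delta(t)=\IE[\hat V_t]$, using $\IE[\hat S^{(\text{GT})}_{t,T}]=\IE[S_{t,T}]$. By Slutsky it then suffices to show $\hat V_t/\delta(t)\to1$ in probability.

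The third step, the ratio consistency, is the main obstacle, though a modest one. Write $\hat V_t-\delta(t)=\sum_s Z_s$ with $Z_s=1_{N_s>0}(r^{2N_s}-(-r)^{N_s})-\IE[\cdot]$, which are independent and satisfy $|Z_s|\le 2$. Then $\mathbb V[\hat V_t]\le \sum_s\IE[Z_s^2]\le 4\sum_s P(N_s>0)\cdot$const. I will bound this by $C\,\delta(t)$. Each summand of $\delta$ per species equals $\IE[1_{N_s=0}1_{N_s'>0}]+\IE[1_{N_s>0}r^{2N_s}]$, and one must show this dominates $\IE[Z_s^2]$ up to a constant. The obstacle is species with $N_s$ large, where $r^{2N_s}$ is tiny. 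However, $Z_s^2$ there is also controlled by $r^{2N_s}$ plus its mean, because $r^{2N_s}-(-r)^{N_s}$ is $O(r^{N_s})$. If $r<1$, the term $r^{N_s}$ is not dominated by $r^{2N_s}$ pointwise. I would instead compute exactly: $\IE[1_{N_s>0}r^{2N_s}]=e^{-tp_s(1-r^2)}-e^{-tp_s}$, while $\IE[Z_s^2]\lesssim e^{-tp_s(1-r^2)}-e^{-tp_s}+e^{-tp_s(1-r^2)}$-type terms, and compare these as functions of $x=tp_s$. For $x$ small, both sides are of order $x$. For $x$ large, both are of order $e^{-x(1-r^2)}$ up to constants, and the $S_{t,T}$ part of $\delta$ gives $e^{-x}(1-e^{-rx})$, which covers the intermediate range. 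This yields $\mathbb V[\hat V_t]\le C\delta(t)$. Chebyshev then gives $\hat V_t/\delta(t)-1=O_P(\delta(t)^{-1/2})\to0$, since $\delta(t)=\mathbb V\to\infty$, and Slutsky concludes the proof. If the case $r=1$ needs separate handling, the same comparison works more easily, since then $r^{2N_s}=1$.
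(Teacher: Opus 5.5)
Your proposal is correct and is essentially the paper's proof: the paper obtains the corollary by specializing Theorem~\ref{thm: CLT Intermediate}, i.e.\ Lindeberg--Feller for the bounded, independent species-wise errors, the identity $\mathbb{V}[S_{t,T}-\hat{S}^{(\text{GT})}_{t,T}]=\IE[\hat{V}_t]$ coming from $b_s\equiv 0$, and Chebyshev plus Slutsky for the variance proxy. Your step~3 is more roundabout than it needs to be, and bounding by $\sum_s P(N_s>0)=\IE[S_t]$ would be too weak, since $\IE[S_t]$ need not be $O(\delta(t))$ when $r<1$; the squaring removes your worry about $r^{N_s}$ versus $r^{2N_s}$, because $\bigl|r^{2N_s}-(-r)^{N_s}\bigr|\le 2r^{N_s}$ gives $\mathbb{V}[\hat{V}_t]\le 4\,\IE\bigl[\sum_{s}1_{N_s>0}r^{2N_s}\bigr]\le 4\delta(t)$ directly, which is exactly the paper's Lemma~\ref{lem: concentration of variance estimator} with $\|H\|_\infty\le 1$.
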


Formally, as the estimated variance could be zero (or negative) some convention is needed for the ratio in the theorem to make sense. Similar concerns reoccur elsewhere in the paper. When the choice of convention is immaterial we make no mention of it.

It would be very desirable to be able to estimate $\epsilon(t)$, as this would form the basis of uncertainty quantification also in the generalized unseen species problem. We are not able to offer a genuinely satisfactory estimator, but the next three propositions give some suggestions. Let $N_{x \cup y}$ be the number of observations that include $x$ or $y$, $N_{x \cap y}$ the number of observations that include both $x$ and $y$, $N_{x \backslash y}$ the number of times $x$ has been observed without $y$ and $N_{x \otimes y}:= N_{x \backslash y}+ N_{y \backslash x}$.
\begin{prop}\label{prop: epsilon-estimator} Let $\hat{\epsilon}(t) := \sum_{x,y}(r^{2N_{x \cap y}}-(-r)^{N_{x \cap y}})(-r)^{N_{x \otimes y}}$. Then,
    $$\epsilon(t)=\IE[\hat{\epsilon}(t)].$$
\end{prop}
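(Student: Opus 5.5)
The plan is a direct computation of $\IE[\hat{\epsilon}(t)]$ pair by pair, using Poissonization, followed by a justification for exchanging the expectation with the sum over pairs.

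\textbf{Independent Poisson counts.} Since $n\sim\mathrm{Poisson}(t)$ and the observations are i.i.d.\ from $\mu$, Poisson thinning gives that the counts $(K_A)_{A\in 2^{\mathcal S}}$ are independent, with $K_A\sim \mathrm{Poisson}(\mu(A)t)$. Here $K_A$ denotes the number of times the set $A$ is observed. Fix an ordered pair $x\neq y$. Then
$$N_{x\cap y}=\sum_{A\supset\{x,y\}}K_A,\qquad N_{x\backslash y}=\sum_{A\ni x,\,A\not\ni y}K_A,\qquad N_{y\backslash x}=\sum_{A\ni y,\,A\not\ni x}K_A .$$
These are sums over disjoint families of sets. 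Hence they are independent Poisson variables with means $M_{x\cap y}t$, $M_{x\backslash y}t$ and $M_{y\backslash x}t$. Consequently $N_{x\otimes y}$ is independent of $N_{x\cap y}$ and is Poisson with mean $(M_{x\backslash y}+M_{y\backslash x})t$. The key bookkeeping identity is
$$M_{x\cup y}=M_{x\cap y}+M_{x\backslash y}+M_{y\backslash x},$$
which holds because the three defining families partition the sets meeting $\{x,y\}$.

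\textbf{Per-pair computation.} Use the Poisson probability generating function $\IE[z^{N}]=e^{\lambda(z-1)}$ for $N\sim\mathrm{Poisson}(\lambda)$, together with independence. This gives
$$\IE\big[(r^{2N_{x\cap y}}-(-r)^{N_{x\cap y}})(-r)^{N_{x\otimes y}}\big]=\Big(e^{(r^2-1)M_{x\cap y}t}-e^{-(1+r)M_{x\cap y}t}\Big)e^{-(1+r)(M_{x\backslash y}+M_{y\backslash x})t}.$$
Factor out $e^{-(1+r)M_{x\cap y}t}$ and use the identity for $M_{x\cup y}$. The exponent in the first term becomes $(r^2-1+1+r)M_{x\cap y}t=r(r+1)M_{x\cap y}t$. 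The right-hand side is therefore exactly
$$e^{-(1+r)M_{x\cup y}t}\big(e^{r(r+1)M_{x\cap y}t}-1\big),$$
which is the summand of $\epsilon(t)$.

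\textbf{Interchanging sum and expectation.} This is the main technical point. A summand of $\hat\epsilon(t)$ vanishes whenever $N_{x\cap y}=0$, so almost surely only finitely many terms are nonzero. To apply Fubini I would bound each summand in absolute value by the nonnegative quantity
$$(r^{2N_{x\cap y}}+r^{N_{x\cap y}})\,r^{N_{x\otimes y}}\,1_{N_{x\cap y}>0}.$$
By Tonelli and the same generating-function computation, its expectation is
$$\Big(e^{(r^2-1)M_{x\cap y}t}+e^{(r-1)M_{x\cap y}t}-2e^{-M_{x\cap y}t}\Big)e^{(r-1)(M_{x\backslash y}+M_{y\backslash x})t}.$$
This is at most $C_r\,M_{x\cap y}t\,e^{c_rM_{x\cap y}t}$ for constants $C_r,c_r$ depending on $r$. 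For $r\le1$ the bound can be made simpler: the summand is at most $2\cdot 1_{N_{x\cap y}>0}$, whose expectation is at most $2M_{x\cap y}t$. Finally,
$$\sum_{x,y}M_{x\cap y}=\sum_A\mu(A)\,|A|(|A|-1),$$
which is at most $B^2$ under bounded arity, and finite whenever $\IE|A|^2<\infty$ in general. Under such a condition the interchange is justified, and summing the per-pair identity over all pairs gives $\IE[\hat\epsilon(t)]=\epsilon(t)$. Absent any integrability, I would state the identity in the sense that both sides are defined by the same absolutely convergent series.
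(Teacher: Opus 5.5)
Your proposal is correct and is essentially the paper's proof: thinning makes $N_{x \cap y}$, $N_{x \backslash y}$, $N_{y \backslash x}$ independent, the Poisson generating function gives each pair's expectation, and $M_{x \cup y}=M_{x \cap y}+M_{x \backslash y}+M_{y \backslash x}$ reduces it to the corresponding summand of $\epsilon(t)$. Your Fubini justification goes beyond the paper, which sums over pairs without comment. One correction there: for $r>1$ your bound omits the factor $e^{(r-1)(M_{x \backslash y}+M_{y \backslash x})t}$, which is at most $e^{(r-1)t}$ because $M_{x \backslash y}+M_{y \backslash x}\le M_{x \cup y}\le 1$, so your constant depends on $t$ as well as $r$, which is harmless for fixed $t$.
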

Unfortunately some numerics suggest that $\hat{\epsilon}(t)$ is not well-concentrated.

As expressed in the following proposition, if we assume a small multiplicative difference between the unions and the intersections we can take the number of pairs co-observed precisely once as a conservative estimator of $\epsilon(t)$.

\begin{prop}\label{Prop: epsilon-perfect-pairs}If $\left(1+\frac{r^2}{1+r}\right)M_{x \cap y} \leq  M_{x \cup y}$ for all distinct $x,y \in \mathcal{S}$ then $$\epsilon(t) \leq r(r+1)\IE\left[\sum_{x,y}1_{N_{x \cap y}=1}\right].$$
\end{prop}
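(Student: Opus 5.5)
The bound will be proved termwise: for each ordered pair $(x,y)$ of distinct species I compare the summand of $\epsilon(t)$ with $r(r+1)\,P(N_{x\cap y}=1)$, and then sum over pairs. This works because, by linearity of expectation, the right-hand side equals $r(r+1)\sum_{x,y}P(N_{x\cap y}=1)$.

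\emph{Step 1: the law of $N_{x\cap y}$.} Under Poissonization, the number of observed sets $A$ with $A\supset\{x,y\}$ is a Poisson thinning of $n\sim\mathrm{Poisson}(t)$. So $N_{x\cap y}\sim\mathrm{Poisson}(M_{x\cap y}t)$, and hence
$$P(N_{x\cap y}=1)=M_{x\cap y}t\,e^{-M_{x\cap y}t}.$$

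\emph{Step 2: bound a single summand.} Write $a:=M_{x\cap y}t$ and $u:=M_{x\cup y}t$. Using the elementary inequality $e^{z}-1\le z e^{z}$ for $z\ge 0$ with $z=r(r+1)a$, I get
$$e^{-(1+r)u}\bigl(e^{r(r+1)a}-1\bigr)\le r(r+1)\,a\,e^{r(r+1)a-(1+r)u}.$$

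\emph{Step 3: use the hypothesis on the exponent.} It remains to show that the exponent is at most $-a$, that is,
$$(1+r)u\ge (r^2+r+1)a, \qquad\text{equivalently}\qquad u\ge\Bigl(1+\tfrac{r^2}{1+r}\Bigr)a.$$
This is exactly the assumed inequality $\bigl(1+\frac{r^2}{1+r}\bigr)M_{x\cap y}\le M_{x\cup y}$, multiplied by $t$. Therefore each summand of $\epsilon(t)$ is at most $r(r+1)\,a e^{-a}=r(r+1)\,P(N_{x\cap y}=1)$.

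\emph{Step 4: sum.} Summing over ordered pairs gives $\epsilon(t)\le r(r+1)\,\IE\bigl[\sum_{x,y}1_{N_{x\cap y}=1}\bigr]$, as claimed.

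The only points needing care are that the summands are nonnegative and that exchanging the countable sum with the expectation is valid. Both are immediate, since all terms are nonnegative (Tonelli's theorem). No step is a serious obstacle: the crux is choosing the inequality $e^z-1\le ze^z$, which produces exactly the constant $1+\frac{r^2}{1+r}$ in the hypothesis.
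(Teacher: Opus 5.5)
Your proposal is correct and follows the paper's proof essentially line for line. The paper also bounds each summand using $e^{z}-1\le ze^{z}$, which it phrases as the mean value theorem. It then uses the hypothesis to bound the exponent $r(r+1)M_{x\cap y}t-(1+r)M_{x\cup y}t$ by $-M_{x\cap y}t$, identifies $r(r+1)M_{x\cap y}t\,e^{-M_{x\cap y}t}$ as $r(r+1)P(N_{x\cap y}=1)$, and sums over ordered pairs.
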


\begin{prop}\label{prop: epsilon-connectedness} Let $D_{x,y}$ be the event that $x,y$ are co-discovered, namely the event that the first set which contains either $x$ or $y$ contains both and let $r \leq 1$. Then,
    $$\epsilon(t) \leq r \IE\Big[\sum_{x,y}1_{D_{x,y}}\Big].$$
\end{prop}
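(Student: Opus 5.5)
The plan is to reduce the statement to a pairwise inequality and then to a one-variable calculus fact. Since both $\epsilon(t)$ and $\IE[\sum_{x,y}1_{D_{x,y}}]=\sum_{x,y}P(D_{x,y})$ are sums over ordered pairs of distinct species, it suffices to show, for each fixed pair $x\neq y$,
$$e^{-(1+r)M_{x\cup y}t}\big(e^{r(r+1)M_{x\cap y}t}-1\big)\leq r\,P(D_{x,y}).$$

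\textbf{Step 1: compute $P(D_{x,y})$.} Under Poissonization the observed sets form a marked Poisson process. By thinning, the sets meeting $\{x,y\}$ arrive at rate $M_{x\cup y}$ per unit of $t$. Each such arrival independently contains both $x$ and $y$ with probability $M_{x\cap y}/M_{x\cup y}$. So if $\tau$ is the sampling horizon over which co-discovery is judged,
$$P(D_{x,y})=\frac{M_{x\cap y}}{M_{x\cup y}}\big(1-e^{-M_{x\cup y}\tau}\big).$$
Write $a:=M_{x\cup y}t$ and $b:=M_{x\cap y}t$, so that $0\le b\le a$.

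\textbf{Step 2: reduce to $b=a$ by convexity.} The left-hand side $f(b)=e^{-(1+r)a}(e^{r(1+r)b}-1)$ is convex in $b$ and satisfies $f(0)=0$. Hence $f(b)\le \frac{b}{a}f(a)$ for $b\in[0,a]$. The right-hand side is exactly linear in $b/a$, so it is enough to prove the inequality at $b=a$.

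\textbf{Step 3: the one-variable inequality.} At $b=a$ the left-hand side is $e^{-(1-r^2)a}-e^{-(1+r)a}$. Substituting $v=e^{-(1+r)a}\in(0,1]$ turns it into $v^{1-r}-v$. If $\tau=T=(1+r)t$, the right-hand side becomes $r(1-v)$. So we need
$$h(v):=r(1-v)-v^{1-r}+v\ge 0 .$$
We have $h(1)=0$ and $h'(1)=-r-(1-r)+1=0$. Also $h''(v)=r(1-r)v^{-1-r}\ge 0$ for $r\le1$, so $h$ is convex. A convex function with a horizontal tangent at $v=1$ lies above that tangent, giving $h\ge0$ on $(0,1]$, which finishes the proof. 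The hypothesis $r\le1$ enters exactly here, through the sign of $h''$.

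\textbf{Main obstacle: the horizon $\tau$.} If $D_{x,y}$ is judged only up to time $t$, the same reduction gives the requirement $u^{1-r^2}-u^{1+r}\le r(1-u)$ with $u=e^{-a}$. This fails for $u$ near $1$, by a factor of about $1+r$ when $a$ is small. So the argument relies on reading co-discovery over the full horizon $T$, where the combined process has rate $(1+r)M_{x\cup y}t$; this matches the factor $(1+r)$ in the exponent of $\epsilon(t)$. I would first check this interpretation against the appendix's conventions. If the intended horizon really is $t$, the natural fallback is the weaker bound
$$\epsilon(t)\le r(1+r)\,\IE\Big[\sum_{x,y}1_{D_{x,y}}\Big].$$
This should follow from the same Steps 1 and 2 together with the elementary estimate $e^{-(1-r^2)a}-e^{-(1+r)a}\le r(1+r)(1-e^{-a})$.
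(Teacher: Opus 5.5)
Your proof is correct, and by a different route it establishes a slightly stronger inequality than the paper's.

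The paper reads $D_{x,y}$ with no time horizon: the first set \emph{ever} meeting $\{x,y\}$. The exponential race then gives exactly $P(D_{x,y})=M_{x\cap y}/M_{x\cup y}$. The paper drops all time dependence by maximizing $s\mapsto e^{-(1+r)M_{x\cup y}s}\bigl(e^{r(r+1)M_{x\cap y}s}-1\bigr)$ over $s>0$. That maximum is at most $\frac{r(r+1)M_{x\cap y}}{(1+r)M_{x\cup y}}=r\frac{M_{x\cap y}}{M_{x\cup y}}$, where $r\le 1$ ensures the decay rate is at least the growth rate.

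Your Step 1 probability is increasing in $\tau$ and bounded by $M_{x\cap y}/M_{x\cup y}$. So your horizon-$T$ bound implies the paper's statement, and your ``main obstacle'' goes away. The paper's remark that $\sum_{x,y}1_{D_{x,y}}$ is not measurable with respect to the data fits the infinite-horizon reading. You are right that with horizon $t$ the claim fails by roughly a factor $1+r$ when $M_{x\cup y}t$ is small, but the fallback bound is not needed.

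The two routes trade off as follows. The paper's argument is a two-line calculus fact plus the exponential race, but its right-hand side stays constant as $t\to 0$. Yours keeps the factor $1-e^{-(1+r)M_{x\cup y}t}$, so your bound vanishes as $t\to0$, as $\epsilon(t)$ does. It also counts only co-discoveries within the two expeditions, which is closer to the observable proxy the paper suggests afterwards. The cost is the convexity reduction in $M_{x\cap y}$ and the one-variable inequality, which is where you use $r\le 1$.
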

Note that $\sum_{x,y}1_{D_{x,y}}$ is not measurable with respect to the sigma algebra generated by the data, but that since likely most co-discoveries occur fairly early the number of co-discoveries up to time $t$ can serve as a reasonable approximation. Proposition \ref{prop: epsilon-connectedness} justifies the following heuristic: \emph{If the number of co-discovered pairs is small compared to the number of discovered species, then the dependence is negligible.}

\section{Intermediate Future} \label{sec: inter}

\subsection{Constructing Linear Estimators}

Following previous authors we aim to construct (linear) estimators suited to $r>1$. Naturally these will not be atemporal so as to avoid the rate rigidity result of Theorem \ref{thm: GT analysis}. The restriction to linear estimators is principally motivated by Proposition \ref{prop:symmetries} but, as we will see, the class is large enough to contain performant estimators. 

Our linear estimator will be defined as the minimizer of a certain functional which we now introduce. Let $g_H(x):=\sum_{i=1}^{\infty}\frac{H_i}{i!}x^i$ denote the exponential generating function of $H$ and let $H^2$ denote the element-wise square of $H$. 
\begin{align*}
    Y_b &:= \left(\max_{p \in [0,1]} \frac{e^{-pt}}{p}|1-e^{-rpt}-g_{H}(pt)|\right)^2, \\
    Y_v &:= \max_{q \in [0,1]} \frac{e^{-qt}}{q}(1-e^{-rqt}+g_{H^2}(qt)), \\
    G(H) &:=Y_b+Y_v,
\end{align*}
where both terms are extended continuously at zero. $G(H)$ characterizes the worst case MSE in using a linear estimator. 

\begin{thm}\label{thm: opt powerhouse} For $\hat{S}_{t,T}^{(H)}$ a linear estimator,
\begin{align*}
    \frac{B^2}{6}G(H) &\leq \sup_{\mu \prec B} \IE[|S_{t,T}-\hat{S}^{(H)}_{t,T}|^2] \leq B^2G(H). \\
    \frac{B^2}{34}G(H) &\leq \sup_{1 \prec\mu \prec B} \IE[|S_{t,T}-\hat{S}^{(H)}_{t,T}|^2] \leq B^2G(H).
\end{align*}
\end{thm}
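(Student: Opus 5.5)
The plan is to Poissonize at the level of sets. Write $n_A$ for the number of times the set $A$ is observed by time $t$ and $n'_A$ for the number in the additional period. Then all these counts are independent Poisson variables, with means $\mu(A)t$ and $\mu(A)t'$ respectively. Let $p_s := M_s$, so that $N_s \sim \mathrm{Poi}(p_s t)$. Write the error as $S_{t,T}-\hat S^{(H)}_{t,T} = \sum_s f_s$ with $f_s := 1_{N_s=0}1_{N'_s>0} - H(N_s)$. The key structural fact is that $\mu \prec B$ gives $\sum_s p_s = \sum_A \mu(A)|A| \le B$. I will use the decomposition $\mathrm{MSE} = (\IE\sum_s f_s)^2 + \mathbb{V}[\sum_s f_s]$ and bound each piece.

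\emph{Bias.} A direct computation gives
\[
\IE f_s = e^{-p_s t}\bigl(1-e^{-rp_s t} - g_H(p_s t)\bigr) = p_s \cdot \frac{e^{-p_s t}}{p_s}\bigl(1-e^{-rp_st}-g_H(p_st)\bigr).
\]
Hence $|\IE \sum_s f_s| \le \sum_s p_s \sqrt{Y_b} \le B\sqrt{Y_b}$, so the squared bias is at most $B^2 Y_b$.

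\emph{Variance.} This is where the dependence between species enters. Note first that $f_s^2 = 1_{N_s=0}1_{N'_s>0} + H(N_s)^2$, since $H(0)=0$ kills the cross term. This gives
\[
\IE f_s^2 = e^{-p_st}\bigl(1-e^{-rp_st}+g_{H^2}(p_st)\bigr) \le p_s Y_v.
\]
In the independent case ($\mu\prec 1$) we would conclude immediately that the variance is at most $\sum_s \IE f_s^2 \le Y_v$.

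For general $\mu\prec B$ I would try to prove $\mathbb{V}[\sum_s f_s] \le B\sum_s \IE f_s^2$. My first attempt is to bound the covariances. Let $m_{xy}:=\mu(\{A: x,y\in A\})$ and $K_{xy} := \{A: \{x,y\}\subseteq A\}$. Then $f_x$ and $f_y$ are conditionally independent given the counts in $K_{xy}$, and only the event that these counts vanish decouples them. I would then use Cauchy--Schwarz to get $|\mathrm{Cov}(f_x,f_y)| \lesssim$ a quantity supported on pairs with $m_{xy}>0$. These should be summed using $\sum_{y\ne x} m_{xy} \le (B-1)p_x$.

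If this does not give the clean constant, the fallback is the Poisson Poincaré inequality over the independent counts $(n_A,n'_A)$. Adding one copy of $A$ changes at most $|A|\le B$ of the $f_s$. Writing $\Delta_s$ for the resulting change in $f_s$, we have $(\sum_{s\in A}\Delta_s)^2\le B\sum_{s\in A}\Delta_s^2$, and $\sum_A \mu(A)\sum_{s\in A}=\sum_s p_s$. The identity $\lambda\IE h(N+1) = \IE[N h(N)]$ for $N\sim\mathrm{Poi}(\lambda)$ then converts the resulting terms back into exponential generating functions. This route risks losing a constant factor, so I expect the variance step to be the main obstacle. Matching the sharp constant $B^2$ in the upper bound will require the covariance route, or a careful grouping of terms in the Poincaré route.

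\emph{Lower bounds.} I would work with $\mu\prec1$ and then speciate, which multiplies the MSE by $B^2$ as noted in Section~\ref{sec: symmetries}. It suffices to beat $\tfrac{1}{6}\max$-type combinations of $Y_b$ and $Y_v$, since $\max(Y_b,Y_v)\ge G/2$. For the bias, take $p^*$ attaining the maximum in $Y_b$, place $k=\lfloor 1/p^*\rfloor$ species of mass $p^*$, and put the remaining mass on $\emptyset$. Then the squared bias is $k^2 (p^*)^2 Y_b \ge Y_b/4$ when $p^*\le 1/2$; the case $p^*>1/2$ is handled with a single species. For the variance, take $q^*$ attaining the maximum in $Y_v$ with $k=\lfloor 1/q^*\rfloor$ species. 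Independence gives $\mathbb{V} = \sum_s(\IE f_s^2 - (\IE f_s)^2)$. If the $(\IE f_s)^2$ terms are comparable to $\IE f_s^2$, then the bias is already large, so a case split between the two configurations yields the constant $1/6$.

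For $1\prec\mu$ the empty set is not allowed, so the leftover mass must go onto one heavy species. I would choose its mass to be at least $1/2$, so that its contribution is exponentially small in $t$. The remaining lighter species are then split more crudely, which costs further constant factors and should account for the worse constant $1/34$.
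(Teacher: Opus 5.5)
Your bias bound, the identity $f_s^2=1_{N_s=0}1_{N_s'>0}+H(N_s)^2$, and the $\mu\prec 1$ lower-bound constructions match the paper. Those constructions are: uniform on $\lfloor 1/p^*\rfloor$ or $\lfloor 1/q^*\rfloor$ species, leftover mass on $\emptyset$, then speciation. However, two steps do not go through as proposed.

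\emph{Covariances for $B>1$.} This is the core of the upper bound, and you leave it open.
\begin{itemize}
\item A Cauchy--Schwarz bound that is merely ``supported on pairs with $m_{xy}>0$'' cannot be summed, because a species can co-occur with infinitely many partners. You need a bound carrying a factor proportional to $m_{xy}$.
\item The paper proves $|\mathrm{Cov}(f_x,f_y)|\le \frac{M_{x\cap y}}{\sqrt{M_xM_y}}\sqrt{\mathbb{V}[f_x]\mathbb{V}[f_y]}\le M_{x\cap y}Y_v$. It uses that the maximal correlation of the bivariate Poisson pair $(N_x,N_y)$ equals its ordinary correlation $M_{x\cap y}/\sqrt{M_xM_y}$. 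This is tensorized over the independent past and future via the Cs\'aki--Fischer identity. Then $\sum_{x\neq y}M_{x\cap y}\le B(B-1)$ and $\sum_s\mathbb{V}[f_s]\le BY_v$ give exactly $B^2Y_v$.
\item Your conditional-independence idea can be completed the same way. Let $\mathcal{G}$ be generated by the shared counts; then $\mathrm{Cov}(f_x,f_y)=\mathrm{Cov}(\IE[f_x|\mathcal{G}],\IE[f_y|\mathcal{G}])$. The missing lemma is the contraction $\mathbb{V}[\IE[f_x|\mathcal{G}]]\le \frac{m_{xy}}{p_x}\mathbb{V}[f_x]$ for Poisson thinning, provable e.g.\ via Charlier polynomials.
\item Your Poincar\'e fallback does not merely lose a constant. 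It produces terms $p_st\,\IE[(H(N_s+1)-H(N_s))^2]$, i.e.\ quantities of the form $xg'_{H^2}(x)$ rather than $g_{H^2}(x)$, and $Y_v$ does not control these. For $H=h1_{\{k_0\}}$ the loss is of order $k_0$.
\end{itemize}

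\emph{The case $1\prec\mu$.} Your claim that a leftover species of mass $q\ge\frac12$ contributes something exponentially small in $t$ is false for general $H$.
\begin{itemize}
\item Its bias $e^{-qt}(1-e^{-rqt}-g_H(qt))$ is only bounded by $q\sqrt{Y_b}$, since $H$ may be huge at indices near $qt$.
\item With the opposite sign it can cancel the bias you built from the $p^*$-species. It also enters the MSE through cross terms $2b(q')\sum_s b_s$ of order $-Y_b$.
\item Controlling exactly this cancellation is what Lemma~\ref{lem: lower bias build} does. It looks for $p'\le p^*$ that retains half the maximal cost-effectiveness and has no root of $1-e^{-rpt}-g_H(pt)$ on $(0,p')$; then leftover mass on a smaller species cannot flip the sign. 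Otherwise it fills with zero-bias species at the smallest root $p_0$, plus one species of mass below $p_0$ costing at most half. This yields $Y_b/16$.
\item In Lemma~\ref{lem: lower variance build}, the cross terms are bounded below by minimizing a quadratic in $b(q^*)$, using $|b(q')|\le q'\sqrt{Y_b}$. This gives $\frac23Y_v-\frac23Y_b$ (or $\frac12Y_v-\frac12Y_b$). The negative $Y_b$ term is then absorbed by a convex combination with the bias bound, giving $1/34$.
\end{itemize}

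A smaller point on the $\mu\prec 1$ constant. Using $\max(Y_b,Y_v)\ge G/2$ with the bounds $\frac14Y_b$ and $\frac12Y_v$ only gives $1/8$; the $1/6$ comes from weighting them $\frac23$ and $\frac13$. Also, no case split is needed in the $\mu\prec1$ variance configuration. With $k=\lfloor 1/q^*\rfloor$, the cross terms $k(k-1)b(q^*)^2$ are non-negative, so the MSE is already at least $\sum_s\IE f_s^2\ge \frac12 Y_v$.
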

\begin{proof}[Proof sketch]
\renewcommand{\qedsymbol}{}%
A species with probability mass $p$ contributes $e^{-pt}(1-e^{-rpt}-g_H(pt))$ to the bias so that $\frac{e^{-pt}|1-e^{-rpt}-g_H(pt)|}{p}$ can be viewed as the cost-effectiveness of creating species with probability $p$ when trying to maximize the absolute bias. Spending a total unit mass, the maximal cost-effectiveness bounds the maximal achievable bias. The second term is analogous, but comes from the variance. If the maximizers $p^*$ and $q^*$ happened to be reciprocals of integers then one could realize the bias bound by taking a uniform distribution on $\frac{1}{p^*}$ symbols, or the variance bound by taking a uniform distribution on $\frac{1}{q^*}$ symbols. Combining these would give a lower bound of $\frac{1}{2}G(H)$. As $p^*$ and $q^*$ need not be reciprocals of integers technical complications arise.
\end{proof}

Critically, $G(H)$ is convex and even strictly convex in the following sense.

\begin{prop}\label{prop: convexity} Let $\theta \in (0,1)$. For any sequences $H^{(1)},H^{(2)}$,
    \begin{align} \label{eq: convexity of GH}
        G(\theta H^{(1)}+(1-\theta)H^{(2)}) \leq \theta G(H^{(1)})+(1-\theta)G(H^{(2)}).
    \end{align}
Moreover, if $H^{(1)} \neq H^{(2)}$ and $\theta$ are such that $q^*$, an argmax in $Y_v$, associated to $\theta H^{(1)}+(1-\theta)H^{(2)}$ is non-zero and $G(\theta H^{(1)}+(1-\theta)H^{(2)}) < \infty$, then strict inequality holds in (\ref{eq: convexity of GH}).
\end{prop}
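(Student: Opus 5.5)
The plan is to prove the convexity of $Y_b$ and of $Y_v$ separately, pointwise in $p$ and $q$, and then pass to the suprema. Write $H_\theta := \theta H^{(1)}+(1-\theta)H^{(2)}$. For fixed $p$, the generating function $g_H(pt)$ is linear in $H$. Hence
\[
f_p(H):=\frac{e^{-pt}}{p}\left|1-e^{-rpt}-g_H(pt)\right|
\]
is the absolute value of an affine function of $H$, and so is convex in $H$. A pointwise supremum of convex functions is convex, so $\sqrt{Y_b}=\sup_p f_p(H)$ is convex and nonnegative. Squaring a nonnegative convex function keeps it convex, because $x\mapsto x^2$ is convex and nondecreasing on $[0,\infty)$. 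So $Y_b$ is convex.

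For $Y_v$, note that $g_{H^2}(qt)=\sum_i \frac{H_i^2}{i!}(qt)^i$ is a nonnegative combination of the convex functions $H_i^2$. Therefore $\frac{e^{-qt}}{q}\bigl(1-e^{-rqt}+g_{H^2}(qt)\bigr)$ is convex in $H$ for each $q$, and taking the supremum over $q$ gives convexity of $Y_v$. Adding the two bounds yields (\ref{eq: convexity of GH}). The value $+\infty$ is allowed throughout, and the continuous extension at $p=0$ or $q=0$ is a limit of convex functions, hence still convex.

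For strictness, let $q^*\neq 0$ be an argmax in $Y_v$ for $H_\theta$, and assume $G(H_\theta)<\infty$. Then
\[
Y_v(H_\theta)=\frac{e^{-q^*t}}{q^*}\bigl(1-e^{-rq^*t}+g_{H_\theta^2}(q^*t)\bigr).
\]
Because $H^{(1)}\neq H^{(2)}$, some index $i$ has $H^{(1)}_i\neq H^{(2)}_i$. Strict convexity of the square then gives
\[
(H_\theta)_i^2<\theta (H^{(1)}_i)^2+(1-\theta)(H^{(2)}_i)^2 ,
\]
with weak inequality at every other index. The weight $(q^*t)^i/i!$ is strictly positive since $q^*>0$, so
\[
g_{H_\theta^2}(q^*t)<\theta g_{H^{(1)2}}(q^*t)+(1-\theta)g_{H^{(2)2}}(q^*t).
\]
Evaluating at $q^*$ then gives
\[
Y_v(H_\theta)<\theta\,\frac{e^{-q^*t}}{q^*}\bigl(\cdots\bigr)_{H^{(1)}}+(1-\theta)\,\frac{e^{-q^*t}}{q^*}\bigl(\cdots\bigr)_{H^{(2)}}\le \theta Y_v(H^{(1)})+(1-\theta)Y_v(H^{(2)}).
\]
Combining this with the weak inequality for $Y_b$ gives strict inequality for $G$.

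The main obstacle is handling infinities so that the strict inequality is meaningful. If the right-hand side of (\ref{eq: convexity of GH}) is $+\infty$, the strict inequality is immediate from $G(H_\theta)<\infty$. Otherwise both $g_{H^{(j)2}}(q^*t)$ are finite. Their finite convex combination then dominates $g_{H_\theta^2}(q^*t)$ term by term, with a strict gap at index $i$. That gap survives in the series because the remaining terms are all weakly ordered and the series converge absolutely, their terms being nonnegative. A second point to check is that the argmax in $Y_v$ is attained, so that $q^*$ makes sense. The hypothesis of the proposition supplies such a $q^*$, so nothing beyond what is stated is needed.
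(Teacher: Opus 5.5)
Your proof is correct and follows essentially the same route as the paper: convexity in $H$ of each expression inside the suprema, then passing to the suprema. The strict part also matches: you use strict convexity of $x\mapsto x^2$ at an index where $H^{(1)}$ and $H^{(2)}$ differ, evaluated at the nonzero argmax $q^*$.

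The only variation is inessential. The paper moves the square inside the $Y_b$ supremum, while you square the nonnegative convex supremum. Your handling of the strict step and of infinite values is more explicit than the paper's one-line appeal to ``sums of convex and strictly convex functions.''
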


Convexity suggests feasibility of seeking a minimizer of $G(H)$ and the following result establishes that there is something to converge to.

\begin{thm} \label{thm: existence and uniqueness}
The functional $G(H)$ admits a minimizer. That is, for all $r,t>0$ there exists an $H^*$ such that $G(H^*) \leq G(H)$ for all other sequences $H$. Moreover, for all $r > 2$ there exists some $t_0(r)$ such that for all $t > t_0$ the minimizer is unique.
\end{thm}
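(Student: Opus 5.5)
Existence follows from the direct method of the calculus of variations on the space of real sequences with the product (pointwise) topology. Uniqueness is then derived from the strict convexity in Proposition~\ref{prop: convexity}, once we know the variance maximizer $q^*$ of any minimizer is non-zero.

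\emph{Existence.} First, $G$ is finite somewhere: the null sequence $H=0$ gives
\[
Y_b=\Big(\max_p \tfrac{e^{-pt}}{p}(1-e^{-rpt})\Big)^2 \le (rt)^2, \qquad Y_v \le rt,
\]
so $m:=\inf G<\infty$. Take a minimizing sequence $H^{(k)}$ with $G(H^{(k)})\le m+1$. For each fixed $q\in(0,1]$ we have
\[
\frac{e^{-qt}}{q}\,g_{(H^{(k)})^2}(qt)\le m+1 ,
\]
so every term $\frac{(H^{(k)}_i)^2}{i!}(qt)^i$ is bounded. Taking $q=1$ gives $|H^{(k)}_i|\le C_i$ uniformly in $k$. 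A diagonal argument then yields a subsequence converging coordinatewise to some $H^*$.

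Lower semicontinuity is the main point. For each fixed $p\in(0,1]$, the maps $H\mapsto g_H(pt)$ and $H\mapsto g_{H^2}(pt)$ are coordinatewise limits under the uniform bounds. For $g_{H^2}$, each term is non-negative, so Fatou gives $g_{(H^*)^2}(pt)\le\liminf_k g_{(H^{(k)})^2}(pt)$. For $g_H$, domination is needed. Here I would use the bound at a larger argument: we only control $q\le 1$, so I instead bound the tails via Cauchy--Schwarz,
\[
\sum_{i>I}\frac{|H_i|}{i!}(pt)^i\le \Big(\sum_{i>I}\frac{H_i^2}{i!}(pt)^i\Big)^{1/2}\Big(\sum_{i>I}\frac{(pt)^i}{i!}\Big)^{1/2}.
\]
The first factor is uniformly bounded, and the second tends to $0$ as $I\to\infty$. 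Hence $g_{H^{(k)}}(pt)\to g_{H^*}(pt)$ pointwise in $p$. Each of $Y_b$ and $Y_v$ is a supremum over $p$ of functions that are lower semicontinuous in $H$, so it is itself lower semicontinuous. At $p=0$ the continuous extension only involves $H_1$, so it is handled by the same sup-over-$p>0$ description. Therefore $G(H^*)\le\liminf G(H^{(k)})=m$.

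\emph{Uniqueness.} Suppose $H^{(1)}\neq H^{(2)}$ are both minimizers. Convexity makes $\bar H=\tfrac12(H^{(1)}+H^{(2)})$ a minimizer as well, with $G(\bar H)<\infty$. If the $Y_v$-argmax $q^*$ for $\bar H$ can be taken non-zero, Proposition~\ref{prop: convexity} gives $G(\bar H)<m$, a contradiction. So it suffices to show that for $r>2$ and $t$ large, no near-minimizer has its $Y_v$-maximum only at $q=0$.

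At $q=0$ the extended value is
\[
\lim_{q\to0}\frac{1-e^{-rqt}+g_{H^2}(qt)}{q}=t(r+H_1^2)\ge rt .
\]
So if the max is attained only at $0$, then $G(\bar H)\ge rt$. Comparing with an explicit good estimator shows this cannot happen for large $t$. Concretely, take the SGTE; via Theorem~\ref{thm: opt powerhouse} and its known risk $r^2t^2\,t^{-\log_3(1+2/r)}$, it has $G=o(t)$ exactly when $\log_3(1+2/r)>1$, i.e.\ $r<1$. This does not give $r>2$ directly, so some care is needed.

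The natural replacement is $H=0$ together with a sharper comparison. For the null sequence, $Y_b=\big(\max_p e^{-pt}(1-e^{-rpt})/p\big)^2$, which is of order $t^2$, so that comparison fails too. Instead I would compare $G(\bar H)\ge rt$ against a candidate with the $q=0$ value below $rt$ being beaten by interior $q$. More precisely, I would show that for any $H$ the interior maximum of
\[
\frac{e^{-qt}}{q}\big(1-e^{-rqt}+g_{H^2}(qt)\big)
\]
exceeds $t(r+H_1^2)$ unless $H$ is tiny. In that case the bias term $Y_b\gtrsim t^2 e^{-c}$ is forced to be large, contradicting minimality against some $H$ with $G=O(t^{2-\eta})$. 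I expect the condition $r>2$ to enter through this balancing between the near-zero expansion $\frac{1-e^{-rx}+g_{H^2}(x)}{x}e^{-x}$ and the bias constraint.

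This final step, ruling out $q^*=0$ for all minimizers, is the main obstacle. I would first attempt it via a Taylor expansion near $q=0$: if the derivative of the variance profile at $0^+$ is non-negative, the max is not uniquely at $0$. This reduces to the inequality $H_2^2\ge 2H_1^2+r(2-r)/\dots$ type constraints, which the bias requirement $g_H(x)\approx 1-e^{-rx}$ (i.e.\ $H_1\approx r$, $H_2\approx -r^2$) should enforce precisely when $r>2$.
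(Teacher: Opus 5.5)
\textbf{Existence is fine; the uniqueness step for $r>2$ is not proved, and the two routes you sketch for it do not work.}

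Your existence argument is sound and is a more elementary variant of the paper's. The paper works in the weighted Hilbert space $\mathcal{H}_t$, gets coercivity from $G(H)\ge e^{-t}\sum_i H_i^2t^i/i!$ (your $q=1$ bound), and uses convexity to upgrade lower semicontinuity to weak lower semicontinuity. You instead extract a coordinatewise convergent subsequence and prove sequential lower semicontinuity directly, using Fatou for $g_{H^2}$ and Cauchy--Schwarz tail domination for $g_H$; this does not need convexity at all. Your reduction of uniqueness to ruling out a $Y_v$-argmax at $q^*=0$ for a minimizer is also the paper's reduction. The gap is that this last step is never proved.

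\emph{First route: comparison at scale $rt$.} The bound ``$G(\bar H)\ge rt$ if the maximum is at $q=0$'' carries no information. The $q=0$ value $t(r+H_1^2)$ always lies inside the supremum defining $Y_v$, so $G(H)\ge rt$ for \emph{every} $H$. No competitor can beat it, and your computation suggesting the SGTE has $G=o(t)$ cannot be right. The contradiction has to come at scale $t^2$ through $Y_b$: a minimizer satisfies $G(H^*)\le G(H^{(\mathrm{SGT})})=o(t^2)$, by the lower bound of Theorem~\ref{thm: opt powerhouse} together with Theorem 1 of \cite{orlitsky_2016_optimal}.

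\emph{Second route: Taylor expansion at $q=0$.} This does find the right threshold: with $H_1=r$ and $H_2=-r^2$, the condition $H_2^2>r^2+2r+2H_1^2$ reads $(r-2)(r+1)^2>0$. But the claim that the bias constraint forces $H_2\approx -r^2$ is unjustified. Uniform smallness of $\frac{e^{-x}}{x}|1-e^{-rx}-g_H(x)|$ on $[0,t]$ does not pin down Taylor coefficients of $g_H$ beyond $H_1$. For example, adding $\varepsilon^2(1-\cos(x/\varepsilon))$ to $g_H$ shifts $H_2$ by $1$ but changes the normalized bias by at most $\varepsilon$.

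\emph{The missing idea.} Use the hypothesis $q^*=0$ itself to control all higher coefficients at once. It says $\frac{e^{-qt}}{q}(1-e^{-rqt}+g_{H^2}(qt))\le t(r+H_1^2)$ for all $q$. Equivalently, $g_{H^2}(x)\le xe^x(r+H_1^2)-1+e^{-rx}$ on $[0,t]$, a bound independent of $t$. Cauchy--Schwarz applied to $\sum_{i\ge2}H_ix^i/i!$ then gives $g_H(x)\ge H_1x-\sqrt{xe^x(r+H_1^2)-1+e^{-rx}-H_1^2x}\,\sqrt{e^x-x-1}$.

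Since $Y_b\ge t^2(r-H_1)^2$, you may assume $H_1\to r$. For $H_1=r$, this lower bound minus $1-e^{-rx}$ expands as $\frac{x^2}{2}\bigl(r^2-\sqrt{r(3r+2)}\bigr)+O(x^3)$. That is strictly positive at some small fixed $x^*$ exactly when $r>2$, and by continuity it stays positive once $H_1$ is close to $r$. Evaluating the bias at $p=x^*/t$ then gives $Y_b\gtrsim t^2$, contradicting $G(H^*)=o(t^2)$. To obtain $t_0(r)$, run this argument by contradiction along a hypothetical sequence $t_n\to\infty$ of minimizers with $q^*=0$.
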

\begin{proof}[Proof sketch]
The proof of existence is by the direct method in the calculus of variations. For uniqueness a supplementary argument is needed to account for the failures of strict convexity. This amounts to showing that if  $q^*=0$ then the $H$-values are so small that a large bias is unavoidable.
\end{proof}

In light of Theorem \ref{thm: existence and uniqueness} we may define an estimator $\hat{S}_{t,T}^{(H^*)}$, where $H^*$ is a minimizer of $G(H)$. None of our analysis requires uniqueness, although we expect it to hold more generally than established by Theorem \ref{thm: existence and uniqueness}.

\subsection{Minimax Optimality}
Pre-asymptotic optimality, up to multiplicative constants, in the class of linear estimators, is immediate from Theorem \ref{thm: opt powerhouse}.
\begin{cor}\label{cor: optimality of H^*} For any linear estimator $\hat{S}_{t,T}^{(H)}$,
\begin{align*}
\sup_{\mu \prec B}\IE[|S_{t,T}-\hat{S}_{t,T}^{(H^*)}|^2] &\leq 6\sup_{\mu \prec B}\IE[|S_{t,T}-\hat{S}_{t,T}^{(H)}|^2].
\\
    \sup_{1 \prec \mu \prec B}\IE[|S_{t,T}-\hat{S}_{t,T}^{(H^*)}|^2] &\leq 34\sup_{1 \prec \mu \prec B}\IE[|S_{t,T}-\hat{S}_{t,T}^{(H)}|^2].
\end{align*}
\end{cor}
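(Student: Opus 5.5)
The corollary follows by chaining the two sides of Theorem \ref{thm: opt powerhouse} with the minimizing property of $H^*$ from Theorem \ref{thm: existence and uniqueness}. Fix an arbitrary linear estimator $\hat{S}_{t,T}^{(H)}$. If $G(H)=\infty$, the lower bound in Theorem \ref{thm: opt powerhouse} shows that the right-hand side of each claimed inequality is infinite, so there is nothing to prove. Otherwise we argue as follows.

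For the first inequality, apply the upper bound of Theorem \ref{thm: opt powerhouse} to $H^*$:
\begin{align*}
\sup_{\mu \prec B}\IE[|S_{t,T}-\hat{S}_{t,T}^{(H^*)}|^2] \leq B^2 G(H^*).
\end{align*}
Since $H^*$ minimizes $G$, we have $G(H^*) \leq G(H)$. The lower bound of Theorem \ref{thm: opt powerhouse} applied to $H$ gives
\begin{align*}
B^2 G(H) \leq 6 \sup_{\mu \prec B}\IE[|S_{t,T}-\hat{S}_{t,T}^{(H)}|^2].
\end{align*}
Chaining these three facts yields the claim.

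The second inequality follows in exactly the same way, using the second line of Theorem \ref{thm: opt powerhouse}, which holds over the restricted class $1 \prec \mu \prec B$ with constant $34$. The upper bound $B^2G(H^*)$ is valid over this smaller class, since restricting the supremum only decreases it.

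There is no genuine obstacle here. The only points needing care are that $H^*$ exists, which is Theorem \ref{thm: existence and uniqueness}, and that the minimizer is taken over all sequences $H$, so the comparison with an arbitrary linear $H$ is legitimate. One should also note that $H^*$ may be taken with $H^*_0=0$, so that the resulting estimator is eligible. This holds because $G$ depends only on $H_i$ for $i \ge 1$.
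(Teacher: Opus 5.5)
Your proposal is correct and is the same argument the paper intends: it calls the corollary immediate from Theorem~\ref{thm: opt powerhouse}, namely the upper bound $B^2G(H^*)$ for $H^*$, then minimality $G(H^*)\leq G(H)$, then the lower bound with constant $6$ (respectively $34$) for $H$. Your side remarks about $G(H)=\infty$ and about taking $H^*_0=0$ are harmless extra care.
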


The rate cannot be read from Corollary \ref{cor: optimality of H^*}.
Essentially by averaging $\hat{S}_{t,T}^{(\text{PW})}$ over permutations of the data one can use the results of \cite{polyanskiy_2019_dualizing} to obtain a linear estimator with rate $B^2t^{-\frac{2}{1+r}}\log^4(t)$, for fixed $r$ (see Proposition \ref{prop: distilling} in the appendix) thus obtaining the same guarantee for $\hat{S}_{t,T}^{(H^*)}$ without much work. However, we supply an independent sharper rate guarantee, proven with substantial aid of Anthropic's Claude Fable 5. 
\begin{thm}\label{thm: main rate} With $r(t) > 1$ such that for all $t$ sufficiently large,
\begin{align*}
    \frac{r-1}{r+1} \log(t) \geq 4\log\log(t),
\end{align*} we have
\begin{align*}
    \mathcal{R}_B(\hat{S}_{t,T}^{(H^*)},S_{t,T}) \lesssim B^2\min\left\{1,\frac{(1+r)^2}{r\log(t)}\right\}^2K^{\frac{2}{1+r}}t^{-\frac{2}{1+r}},
\end{align*}
where $K:=(r+1)(r+3)\sqrt{\log(t)}\min\left\{\frac{1}{r-1},\log(t)\right\}$ and the implicit constant is independent of $r$.
\end{thm}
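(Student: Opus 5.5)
By Theorem \ref{thm: opt powerhouse}, $\sup_{\mu\prec B}\IE[|S_{t,T}-\hat S^{(H^*)}_{t,T}|^2]\le B^2G(H^*)\le B^2G(H)$ for every sequence $H$. The plan is therefore to exhibit one explicit $H$ and bound $Y_b(H)$ and $Y_v(H)$ by $r^2t^2$ times the claimed rate. Substituting $x=pt$, we have
\[
Y_b=t^2\Big(\sup_{x\in[0,t]}\tfrac{e^{-x}}{x}\big|1-e^{-rx}-g_H(x)\big|\Big)^2,\qquad Y_v=t\sup_{x\in[0,t]}\tfrac{e^{-x}}{x}\big(1-e^{-rx}+g_{H^2}(x)\big).
\]
We thus need a function $g_H$ that approximates $f(x)=1-e^{-rx}$ uniformly on $[0,t]$ after weighting by $e^{-x}/x$, while keeping the coefficients $H_i$ small enough that $e^{-x}g_{H^2}(x)/x\lesssim r^2t\cdot(\text{rate})$. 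Note $1-e^{-rx}=-\sum_{i\ge1}(-r)^ix^i/i!$, which is the Good--Toulmin choice $H_i=-(-r)^i$. Its coefficients explode, so the plan is to truncate and damp it, in the spirit of the SGTE but with a tuned truncation.

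Concretely, take $H_i=-(-r)^i w_i$, with damping weights $w_i\in[0,1]$ equal to $1$ for $i\le L_0$ and decaying to $0$ by some $L$. The bias is then $\sum_i(1-w_i)(-r)^ix^i/i!$. I would write $w_i=P(\xi\ge i)$ and get a closed form as a contour integral, or alternatively choose $g_H$ through a polynomial approximation. A cleaner route, closer to Polyanskiy--Wu, is as follows. Set $g_H(x)=e^{x}P(x)$ with $P$ a polynomial approximant of $e^{-x}(1-e^{-rx})/x\cdot x$, in the Chebyshev basis on $[0,t]$. Then $H_i=i!\,[x^i](e^xP(x))=\sum_k\binom{i}{k}k!\,c_k$, so the bias weight becomes $|e^{-x}(1-e^{-rx})-P(x)|/x$. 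In either form the bias is a polynomial approximation error for $e^{-x}-e^{-(1+r)x}$ on $[0,t]$, and its size is governed by the degree $L$.

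The degree $L$ then has to balance bias against variance. Approximating $e^{-(1+r)x}$ on $[0,t]$ in the $1/x$-weighted sup norm, with degree $L\approx c\sqrt t\,\cdot$, gives error $\exp(-\Theta(L^2/t))$ (Chebyshev/Bernstein-ellipse estimates). The variance is controlled by $e^{-x}g_{H^2}(x)/x$ through the growth of $H_i$. For $H_i$ I expect a bound of the form $\max_i|H_i|\lesssim r^{L}$ times a Chebyshev coefficient blowup factor of $(1+2/r)^{-\cdot}$. The optimal exponent $2/(1+r)$ should come out by picking $L$ so that $t^{-1}$ times the variance matches $\text{bias}^2$, i.e.\ $r^{2L}/t\approx t^{-2/(1+r)}$ up to the $K$ factor. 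The hypothesis $\frac{r-1}{r+1}\log t\ge4\log\log t$ guarantees this $L$ lies in the admissible range, so that the polylogarithmic factors $K^{2/(1+r)}$ dominate. The extra factor $\min\{1,(1+r)^2/(r\log t)\}^2$ would come from a sharper estimate of the bias for large $r$, using $|1-e^{-rx}-g_H|\le\ldots$ near $x\approx\log t/(1+r)$.

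The main obstacle is the explicit, $r$-uniform bound on the coefficients $H_i$ of the approximant together with the weighted sup of $e^{-x}g_{H^2}(x)/x$. Getting the sharp polylog $K=(r+1)(r+3)\sqrt{\log t}\min\{1/(r-1),\log t\}$, rather than Polyanskiy--Wu's $\log^4 t$, requires tracking constants carefully. For this I would first try a damped Good--Toulmin with weights given by a smooth cutoff $w_i=\Phi((L-i)/\sigma)$. I would bound the bias by saddle-point analysis of $\sum_{i>L}r^ix^i/i!\,e^{-x}$, which peaks at $x\approx L/r$, giving $\sqrt{\log t}$-type factors. Factors like $1/(r-1)$ would then arise from geometric sums $\sum_k r^{-k}$-type tails.
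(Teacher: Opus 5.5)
\textbf{There is a genuine gap.} Your opening reduction is sound, and it is also where the paper starts: $G(H^*)\le G(H)$ for every $H$, and Theorem~\ref{thm: opt powerhouse} bounds the MSE by $B^2G(H)$, so it suffices to control $\inf_H G(H)$. After that, however, the proposal contains no argument. Every quantitative step is deferred (``I expect'', ``should come out'', ``would come from''), and the steps that are concrete do not work.

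\emph{The bias bound fails.} The bias of a damped Good--Toulmin sequence is $e^{-x}\sum_i(1-w_i)(-rx)^i/i!$. For $r>1$ it can only be small through cancellation among the alternating terms. Your saddle-point bound on $e^{-x}\sum_{i>L}(rx)^i/i!$ discards the signs. That quantity also does not peak near $x\approx L/r$: it equals $e^{(r-1)x}P(\mathrm{Po}(rx)>L)$, so it grows like $e^{(r-1)x}$ once $rx\gg L$ and is exponentially large on $[0,t]$.

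\emph{The balancing step says nothing about the bias.} Choosing $L$ so that $r^{2L}/t\approx t^{-2/(1+r)}$ only calibrates the variance. It gives no information about the bias at that $L$, which is the entire content of the theorem.

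\emph{Tuning a cutoff is not known to reach the exponent.} Damped GT with a tuned cutoff is exactly the SGTE family. Its best known member (binomial smoothing) gives only $t^{-\log_3(1+2/r)}$. Writing $u=2/(1+r)$, convexity of $u\mapsto 2\,\mathrm{artanh}(u/2)$ shows $\log_3(1+2/r)<2/(1+r)$ strictly for every $r>1$. Nothing in your sketch explains why a smooth cutoff $\Phi((L-i)/\sigma)$ would do better.

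\emph{The Chebyshev variant has no coefficient control.} You give no bound on $H_i=\sum_k c_k\,k!\binom{i}{k}$. This is not $P(i)$: the monomial coefficients of a Chebyshev approximant on $[0,t]$ are huge and alternate in sign, and $Y_v$ needs control of $H_i$ for all $i$ up to order $t$.

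\emph{What is missing is the mechanism that produces the exponent $1/(1+r)$.} The paper never constructs $H$. The steps are as follows.
\begin{enumerate}
\item Using $\sup_x p_k(x)\le(k+1)^{-1/2}$, it bounds $Y_v\le rt+tN(H)^2$ with $N(H)^2=\sum_kH_k^2k^{-3/2}$. Hence $G(H)\le rt+t^2(B(H)+\epsilon N(H))^2$ with $\epsilon=t^{-1/2}$.
\item A minimax theorem turns $\inf_H(B+\epsilon N)$ into $\sup_\nu\int_0^r\mathcal{T}[\nu](-s)\,ds$. The supremum is over signed measures on $[0,t]$ with $\|\nu\|\le1$ and $\sum_kk^{-1/2}(\int p_{k-1}d\nu)^2\le\epsilon^2$.
\item The transform $f=\mathcal{T}[\nu]$ satisfies $|f|\le1$ on $\{\text{Re}(z)\le1\}$ and $|f(z)|\le\epsilon(1-|z|^2)^{-1}$ on the unit disc.
\item A two-constants argument on $\{\text{Re}(z)<1\}\setminus\overline{D(0,\delta)}$ gives $|f(-s)|\lesssim(K\epsilon^2)^{1/(1+s)}$. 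The key input is that the harmonic measure of the inner circle seen from $-s$ is at least $2\delta/(1+s)$.
\item Integrating over $s\in[0,r]$ yields $t^{-1/(1+r)}$.
\end{enumerate}
The stated polylog factors come from specific choices. The choice $\delta=1-2/\log t$, Harnack's inequality and the bound $\sup_k\delta^{2k}(k+1)^{1/2}\lesssim\sqrt{\log t}$ produce $K$. Lemma~\ref{lem: integration with reciprocal power} produces $\min\{r,(1+r)^2/\log t\}$.

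A primal route would have to exhibit a near-extremal $H$ for this dual extremal problem. As written, your proposal supplies neither that $H$ nor any estimate that yields $t^{-2/(1+r)}$, let alone the factor $\min\{1,(1+r)^2/(r\log t)\}^2K^{2/(1+r)}$.
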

\begin{proof}[Proof sketch]
\renewcommand{\qedsymbol}{}%
By a minimax theorem we can study the dual problem, which involves certain signed measures on the positive half-line. The dual objective can be expressed in terms of a Laplace-type transform of these measures. The constraints on the signed measures allow the control of the transforms, which are analytic functions, by a two-constants type argument.
\end{proof}

We specialize Theorem \ref{thm: main rate} in two ways. Once far from the prediction horizon and once near it.

\begin{cor}
    For fixed $r > 1$,
    \begin{align*}
        \mathcal{R}_B(\hat{S}_{t,T}^{(H^*)},S_{t,T}) \lesssim B^2\log^{-2+\frac{1}{1+r}}(t)t^{-\frac{2}{1+r}},
    \end{align*}where the implicit constant depends on $r$.
\end{cor}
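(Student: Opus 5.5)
This corollary follows by specializing Theorem \ref{thm: main rate} to constant $r>1$ and simplifying; no new estimates are needed. First I check the hypothesis. For fixed $r>1$, $\frac{r-1}{r+1}$ is a positive constant, and $\log(t)/\log\log(t)\to\infty$. Hence $\frac{r-1}{r+1}\log(t)\geq 4\log\log(t)$ holds for all sufficiently large $t$, and Theorem \ref{thm: main rate} applies. For the finitely many smaller $t$ (or any bounded range of $t$), the left-hand side stays bounded. This is because $\mathcal{R}_B$ is at most a constant times $B^2$: by Corollary \ref{cor: optimality of H^*}, it is dominated by the risk of the null estimator, which is at most $\IE[S_{t,T}^2]/(r^2t^2)\lesssim B^2$. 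The right-hand side is bounded below on such a range, so the claim holds there after enlarging the implicit constant.

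Next I simplify the three factors for large $t$. Since $\frac{(1+r)^2}{r\log(t)}\to 0$, eventually
\[
\min\Bigl\{1,\tfrac{(1+r)^2}{r\log(t)}\Bigr\}^2=\tfrac{(1+r)^4}{r^2}\log^{-2}(t)\lesssim_r \log^{-2}(t).
\]
Likewise, eventually $\min\{\frac{1}{r-1},\log(t)\}=\frac{1}{r-1}$, so $K=\frac{(r+1)(r+3)}{r-1}\sqrt{\log(t)}$. Therefore
\[
K^{\frac{2}{1+r}}=C_r^{\frac{2}{1+r}}\log^{\frac{1}{1+r}}(t),
\]
where $C_r=\frac{(r+1)(r+3)}{r-1}$ depends only on $r$. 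Multiplying the factors gives
\[
\mathcal{R}_B\lesssim_r B^2\log^{-2+\frac{1}{1+r}}(t)\,t^{-\frac{2}{1+r}},
\]
with all $r$-dependent factors absorbed into the implicit constant. This is exactly the claim.

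There is no real obstacle here. The only point needing care is that the implicit constant may now depend on $r$: it absorbs $(1+r)^4/r^2$, $C_r^{2/(1+r)}$, and the threshold $t_0(r)$ beyond which both the hypothesis holds and the minima are attained by the stated branches.
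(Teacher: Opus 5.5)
Your proposal is correct and follows the paper's route: the corollary is simply Theorem \ref{thm: main rate} specialized to fixed $r>1$, where for large $t$ the minima select $\frac{(1+r)^2}{r\log(t)}$ and $\frac{1}{r-1}$, so $K^{\frac{2}{1+r}}\asymp_r \log^{\frac{1}{1+r}}(t)$, and multiplying by $\log^{-2}(t)$ gives the stated rate. Your extra treatment of small $t$ is harmless but unnecessary, since the statement is asymptotic in $t$; note also that $t$ is real, so ``finitely many'' should read ``a bounded range with $t>1$''.
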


This saves a factor of $\log^{6-\frac{1}{1+r}}(t)$ compared to Theorem 11 of \cite{polyanskiy_2019_dualizing}.

\begin{cor}
    Let $r(t) = c\log(t)$, then $$\mathcal{R}_B(\hat{S}_{t,T}^{(H^*)},S_{t,T}) \lesssim B^2\min\left\{1,c^2\right\} e^{-2/c}.$$ 
\end{cor}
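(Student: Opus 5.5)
The plan is to substitute $r(t)=c\log(t)$ into Theorem \ref{thm: main rate} and simplify each factor. First we check the hypothesis. For large $t$ we have $\frac{r-1}{r+1}\to 1$, so $\frac{r-1}{r+1}\log(t)\geq \frac12\log(t)\geq 4\log\log(t)$ eventually. The theorem therefore applies, with an implicit constant independent of $r$, and in particular independent of $c$.

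Next we simplify the prefactor. Here $\frac{(1+r)^2}{r\log(t)} = \frac{(1+c\log t)^2}{c\log^2 t}\to c$ as $t\to\infty$, so for large $t$ it is at most $2c$, say. Hence $\min\{1,\frac{(1+r)^2}{r\log t}\}^2\lesssim\min\{1,c\}^2=\min\{1,c^2\}$, with absolute constants.

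The main step is to control $K^{\frac{2}{1+r}}t^{-\frac{2}{1+r}}$. For $t^{-\frac{2}{1+r}}$ we write $t^{-\frac{2}{1+r}}=\exp\!\left(-\frac{2\log t}{1+c\log t}\right)$. Since $\frac{2\log t}{1+c\log t}=\frac{2}{c}-\frac{2}{c(1+c\log t)}$, this equals $e^{-2/c}\exp\!\left(\frac{2}{c(1+c\log t)}\right)$, and the correction factor tends to $1$ for fixed $c$. For $K$, note that $\min\{\frac{1}{r-1},\log t\}\le \frac{1}{r-1}\lesssim \frac{1}{c\log t}$, so
\[
K\lesssim (c\log t)^2\sqrt{\log t}\,\frac{1}{c\log t} = c\log^{3/2}(t)
\]
for large $t$. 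Then
\[
K^{\frac{2}{1+r}}=\exp\!\left(\frac{2\log K}{1+c\log t}\right)\le\exp\!\left(\frac{2(\log c+\tfrac32\log\log t+O(1))}{c\log t}\right)\to 1,
\]
since $\log\log t/\log t\to 0$. So $K^{\frac{2}{1+r}}$ is bounded by an absolute constant for $t$ large; its exponent tends to $0$, so any bound above $1$, such as $2$, holds eventually.

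Multiplying the three estimates gives $\mathcal{R}_B\lesssim B^2\min\{1,c^2\}e^{-2/c}$. The only delicate point is the usual one with asymptotic $\lesssim$ statements. The thresholds in $t$ beyond which the correction factors are at most $2$ depend on $c$, but the resulting constant does not. One can also check that for $c$ very small or very large the correction factors $\exp(\frac{2}{c(1+c\log t)})$ and $K^{2/(1+r)}$ still tend to $1$, once $t$ is large relative to $c$. So the statement holds for each fixed $c$ with an absolute implicit constant, valid for $t\ge t_0(c)$.
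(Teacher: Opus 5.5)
Your proposal is correct and follows the paper's intended route: the corollary is Theorem~\ref{thm: main rate} specialised to $r=c\log(t)$. The hypothesis holds eventually, the prefactor tends to $\min\{1,c\}^2$, $t^{-\frac{2}{1+r}}=e^{-2/c}(1+o(1))$, and $K^{\frac{2}{1+r}}\to 1$ because $K\lesssim c\log^{3/2}(t)$. Your remark that the implicit constant is absolute while the threshold in $t$ depends on $c$ is the right reading of the statement.
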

For small $c$ this is a meaningful improvement compared to the SGTE which in this regime obtains $\mathcal{R}_B(\hat{S}_{t,T}^{(\text{SGT})},S_{t,T}) \lesssim  B^2e^{-\frac{2}{c\log(3)}}$.

In the generalized unseen species problem there is in principle much more an estimator could make use of than $\boldsymbol{\phi}$, for example, the number of pairs observed a given number of times, so that one may hope for estimators which have a sub-quadratic dependence on $B$. However, without much difficulty the lower bound of Theorem 11 of \cite{polyanskiy_2019_dualizing} can be generalized to the $B>1$ setting, ruling out this possibility.

\begin{prop}\label{prop: rate lower} In the generalized unseen species problem, for fixed $r > 1$,
    \begin{align*}
        \inf_{\hat{S}_{t,T}}\mathcal{R}_B(\hat{S}_{t,T},S_{t,T}) \gtrsim B^2 t^{-\frac{2}{1+r}} \log^{-4}(t),
    \end{align*}
    the implicit constant depending on $r$.
\end{prop}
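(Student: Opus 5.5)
The plan is to reduce to the classical case via speciation and then invoke the lower bound of Theorem 11 of \cite{polyanskiy_2019_dualizing}, after checking that extra information available in the generalized problem cannot help on speciated instances.

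The first step is to fix a hard family of classical instances. Theorem 11 of \cite{polyanskiy_2019_dualizing} supplies, for fixed $r>1$, a class of distributions (or priors over distributions) $\mu \prec 1$ on $\mathcal{S}$ such that every estimator $\hat S$ based on the classical data satisfies
\[
\sup_{\mu}\IE[|S_{t,T}-\hat S|^2] \gtrsim r^2t^2\, t^{-\frac{2}{1+r}}\log^{-4}(t).
\]
It is safest to phrase this as a Bayes lower bound under a prior $\pi$ on $\mu \prec 1$, which is how such bounds are typically proven (two fuzzy hypotheses or moment matching). If the published statement is only a minimax statement, I would either extract the prior from their proof or use the general fact that minimax risk equals the supremum of Bayes risks, since the loss is convex and the action space is the real line. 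I expect this to be the main obstacle. Beyond citing Theorem 11 precisely, I need to ensure the lower bound holds for \emph{all} estimators measurable with respect to the classical data: the full sample sequence, not merely $\boldsymbol{\phi}$. Under Poissonization this is the case.

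The second step is to speciate. Each $\mu \prec 1$ is mapped to $\mu^{(B)}\prec B$ on $\mathcal{S}\times\{1,\dots,B\}$, assigning mass $\mu(\{s\})$ to $\{(s,1),\dots,(s,B)\}$. Under the natural coupling:
\begin{itemize}
\item the generalized data $X^{(B)}_1,\dots,X^{(B)}_n$ is a deterministic, invertible function of the classical data $X_1,\dots,X_n$, since each observed set determines $s$ and vice versa;
\item $S^{(B)}_{t,T}=B\,S_{t,T}$ holds exactly.
\end{itemize}

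The third step is the reduction itself. Let $\tilde S$ be any estimator for the generalized problem, meaning any measurable function of the observed sets, so it may use pair counts and the like. Define a classical estimator by $\hat S(X_{1:n}) := \tilde S(X^{(B)}_{1:n})/B$. Then
\[
\IE|\tilde S - S^{(B)}_{t,T}|^2 = B^2\,\IE|\hat S - S_{t,T}|^2.
\]
Taking the supremum over $\mu$ in the hard class, or integrating against $\pi$, gives
\[
\sup_{\mu\prec B}\IE|\tilde S-S_{t,T}|^2 \ge B^2 \sup_{\mu\prec1}\IE|\hat S-S_{t,T}|^2 \gtrsim B^2 r^2t^2\,t^{-\frac{2}{1+r}}\log^{-4}(t).
\]
Dividing by $r^2t^2$ and taking the infimum over $\tilde S$ yields the claim, with the constant depending on $r$ only through the Polyanskiy--Wu constant.

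The key observation is that on speciated instances the extra structure (co-occurrence counts, etc.) is a function of the classical data. Hence no generalized estimator can beat the classical minimax risk, scaled by $B^2$.
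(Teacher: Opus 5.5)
Your proposal is correct and matches the paper's proof essentially step for step. The paper also speciates via $s\mapsto\{(s,1),\ldots,(s,B)\}$, turns any generalized estimator into a classical one by applying it to the speciated data and dividing by $B$, and then invokes the lower bound of Theorem 11 of \cite{polyanskiy_2019_dualizing}. The detour through Bayes risks that you flag as the main obstacle is unnecessary, since this per-estimator reduction already transfers the minimax bound directly.
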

\subsection{Finite Sample Guarantees}

A solver (which uses the cutting-plane method) to compute an approximation of $H^*$, as well as the code to generate all figures in the paper is available on our GitHub\footnote{github.com/EdwardEriksson/Unseen-Species. Anthropic's LLMs were used extensively in producing the code.}. 

Once our solver finds an $H$ we evaluate $G(H)$ to get a bound on the MSE. In particular, one need not believe that our optimization procedure found the optimal $H$ to trust the bound on the MSE. It suffices that we can evaluate $G(H)$ accurately for the found $H$. We show the worst case guarantees we can give by this method against those for the SGTE and those for $\hat{S}_{t,T}^{(\text{PW})}$ in Figure \ref{fig:UsVSOrlitskyRMSE}. 
\begin{figure}
    \centering
    \includegraphics[width=1.0\linewidth]{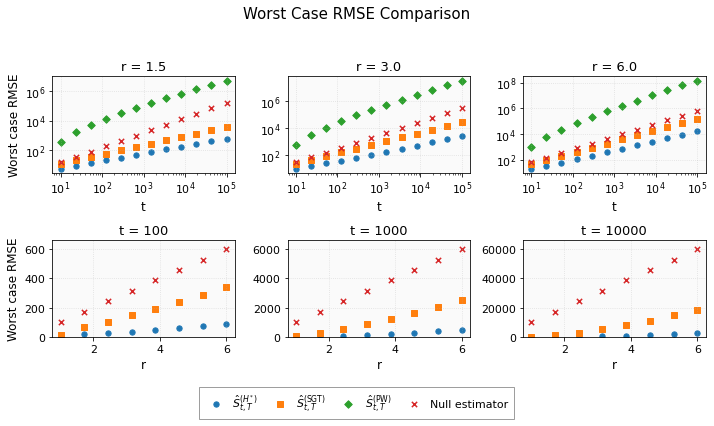}
    \caption{Comparison of the worst case RMSE guarantee of the SGTE, $\hat{S}_{t,T}^{(\text{PW})}$ and $\hat{S}_{t,T}^{(H^*)}$. Note that the top plots use log-log scales and that the guarantees for $\hat{S}_{t,T}^{(\text{PW})}$ are too poor to be seen in the bottom row of plots.}
    \label{fig:UsVSOrlitskyRMSE}
\end{figure}

It is clear that for $r,t$ as large as we consider the worst case guarantees for the estimators of \cite{polyanskiy_2019_dualizing} and \cite{orlitsky_2016_optimal} are much worse than ours. One may then ask whether this reflects a genuine difference in the worst case performance of the estimators or is an artifact of the proof techniques used to get the bounds. It is also interesting to ask on what distributions the estimators perform well and poorly.

To partially answer such questions, and in light of the proof of Theorem \ref{thm: opt powerhouse} which suggests that studying performance on uniform distributions is enough to characterize the performance of a linear estimator, we experimentally test the performance of the estimators on near-uniform distributions. Namely, with $p \in (0,1)$ we consider the uniform distribution on $\left\lfloor \frac{1}{p} \right\rfloor$ symbols with an extra species to absorb the leftover probability mass. We query values of $p$ adaptively and use a Gaussian process prior. The results are displayed in Figure \ref{fig: performance uniforms}.

\begin{figure}
    \centering
    \includegraphics[width=0.85\linewidth]{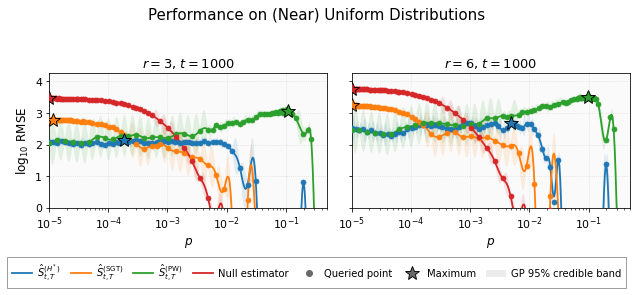}
    \caption{Performance of estimators on distributions which are essentially the uniform distribution on $\frac{1}{p}$ symbols.}
    \label{fig: performance uniforms}
\end{figure}
Clearly, $\hat{S}_{t,T}^{(H^*)}$ is dominant in worst case error,
although for distributions with very little to discover the SGTE outperforms it, and in the absence of common species $\hat{S}_{t,T}^{(\text{PW})}$ is quite competitive with it.

We note that Theorem \ref{thm: opt powerhouse} can easily be adapted to the assumption of lower bounded probabilities which is a common assumption in, for example, support size estimation \cite{wu_2019_chebyshev}. Specifically, if one restricts the supremum to be over $\mu$ with no species less probable than $p_0$ for some $p_0>0$, then the upper bound in Theorem \ref{thm: opt powerhouse} holds with the suprema restricted to $[p_0,1]$. Thus one can construct, using essentially the same methodology, estimators which are better suited to the regime of few rare species.

\subsection{Uncertainty Quantification}

For a linear estimator, for $\mu \prec 1$, we suggest estimating the variance by
$$\hat{V}_t := \hat{S}_{t,T}^{(H)}+\sum_{s \in \mathcal{S}}H(N_s)^2.$$ 
Let $N_s'$ denote the number of observations of species $s$ on the second expedition and $b_s := \IE[1_{N_s =0}1_{N_s'>0}-H(N_s)]$  the bias associated with species $s$.

\begin{thm} \label{thm: CLT Intermediate}
Let $\hat{S}_{t,T}^{(H)}$ be a linear estimator, not necessarily atemporal, and let $r(t)$ possibly vary in time. Suppose that $\IE[\hat{S}^{(H)}_{t,T}] \geq 0$, $\mu \prec 1$ and that

\begin{align*}
    \frac{|\sum_{s \in \mathcal{S}}{b_s}|+\sum_{s \in \mathcal{S}}b_s^2+||H||_\infty^2+1}{\mathbb{V}[S_{t,T}-\hat{S}_{t,T}]} \rightarrow 0.
\end{align*}
Then,
    \begin{align*}
        \frac{S_{t,T}-\hat{S}_{t,T}^{(H)}-\IE[S_{t,T}-\hat{S}_{t,T}^{(H)}]}{\sqrt{\hat{V}_t}} \overset{d}{\rightarrow} \mathcal{N}(0,1).
    \end{align*}
\end{thm}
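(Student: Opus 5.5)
Under $\mu \prec 1$ with Poissonization, the counts $(N_s,N_s')_{s\in\mathcal{S}}$ are independent across species. For each $s$ we have $N_s\sim\mathrm{Poi}(p_s t)$ and $N_s'\sim\mathrm{Poi}(p_s t')$, with $N_s$ and $N_s'$ independent. Hence
$$S_{t,T}-\hat{S}^{(H)}_{t,T}=\sum_s Z_s,\qquad Z_s:=1_{N_s=0}1_{N_s'>0}-H(N_s),$$
is a sum of independent summands. Each summand is bounded, $|Z_s|\le 1+\|H\|_\infty$. The plan is to (i) prove a CLT for the sum normalized by its true standard deviation $\sigma_t:=\sqrt{\mathbb{V}[S_{t,T}-\hat{S}^{(H)}_{t,T}]}$, and (ii) show $\hat{V}_t/\sigma_t^2\to 1$ in probability. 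Slutsky's theorem then gives the stated conclusion, with whatever convention is adopted for $\hat V_t\le 0$ being immaterial since that event has vanishing probability.

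For (i) I will use the Lindeberg condition. Let $\bar Z_s:=Z_s-\IE Z_s$, so $|\bar Z_s|\le 2(1+\|H\|_\infty)$. The hypothesis gives $(\|H\|_\infty^2+1)/\sigma_t^2\to0$, so $\max_s|\bar Z_s|=o(\sigma_t)$ uniformly. The Lindeberg indicator sets are therefore eventually empty, and the CLT $\sum_s\bar Z_s/\sigma_t\overset{d}{\to}\mathcal N(0,1)$ follows. If the sum over species is infinite, I will truncate to finitely many species and take a limit; absolute summability holds because $\sum_s\mathbb{V}[Z_s]<\infty$ whenever $\sigma_t<\infty$.

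For (ii) I first compute $\IE\hat V_t$. Since $N_s=0$ forces $H(N_s)=0$, the cross term vanishes: $Z_s^2=1_{N_s=0}1_{N_s'>0}+H(N_s)^2$. Hence
$$\IE[Z_s^2]=\IE[1_{N_s=0}1_{N_s'>0}]+\IE[H(N_s)^2]=b_s+\IE[H(N_s)]+\IE[H(N_s)^2].$$
Summing over $s$,
$$\sigma_t^2=\sum_s\bigl(\IE Z_s^2-b_s^2\bigr)=\IE\hat V_t+\sum_s b_s-\sum_s b_s^2 .$$
The hypothesis then yields $\IE\hat V_t/\sigma_t^2\to 1$. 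For concentration, note that $\hat V_t=\sum_s\bigl(H(N_s)+H(N_s)^2\bigr)$ is again a sum of independent terms. Its variance is bounded by
$$\sum_s\IE\bigl[(H+H^2)(N_s)^2\bigr]\le(1+\|H\|_\infty)^2\sum_s\IE\bigl[|H(N_s)|+H(N_s)^2\bigr].$$

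The main obstacle is controlling $\sum_s\IE|H(N_s)|$, since only $\IE\hat S^{(H)}_{t,T}\ge0$ is assumed and $H$ may take both signs. I will try the bound $|H|\le\frac12(1+H^2)$ restricted to $N_s>0$, which gives
$$\sum_s\IE|H(N_s)|\le\tfrac12\IE[S_t]+\tfrac12\sum_s\IE[H(N_s)^2].$$
The first piece, $\IE S_t$, still needs to be controlled by $\sigma_t^2$ plus negligible terms. Failing that, I will use $\sum_s\IE[H(N_s)^2]\le\sum_s\IE Z_s^2$, together with the bound $\sum_s\IE[H(N_s)]\le\IE\hat V_t$ (valid because the middle term of $\hat V_t$ is nonnegative in expectation by assumption). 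In either case the goal is $\mathbb V[\hat V_t]\lesssim(1+\|H\|_\infty^2)\,\sigma_t^2=o(\sigma_t^4)$. Chebyshev's inequality then gives $\hat V_t/\sigma_t^2\to1$ in probability, completing (ii).
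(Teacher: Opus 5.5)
Your overall route is the paper's: Lindeberg with eventually empty indicator sets, the identity $\sigma_t^2=\IE\hat V_t+\sum_s b_s-\sum_s b_s^2$, then Chebyshev and Slutsky. The concentration step for $\hat V_t$, however, has a genuine gap, and it is self-inflicted. Your bound $\IE[(H+H^2)(N_s)^2]\le(1+\|H\|_\infty)^2\IE[|H(N_s)|+H(N_s)^2]$ discards a factor $|H|$ exactly where $H$ is small. The resulting quantity $\sum_s\IE|H(N_s)|$ is not controlled by the hypotheses.

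Your first fix needs $(1+\|H\|_\infty)^2\IE S_t=o(\sigma_t^4)$, and this fails already for the Good--Toulmin estimator with fixed $r<1$. A species with $M_st\to\infty$ adds about $1$ to $\IE S_t$ but only $O(e^{-(1-r^2)M_st})$ to $\sigma_t^2$. So $t^{0.9}$ species of mass $t^{-0.95}$, alongside rare species producing $\sigma_t^2\asymp t^{0.2}$, give $\IE S_t\gg\sigma_t^4$.

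Your second fix controls only the signed sum $\sum_s\IE H(N_s)=\IE\hat S^{(H)}_{t,T}$, which says nothing about $\sum_s\IE|H(N_s)|$. Take $H=\eta_t$ and $H=-\eta_t$ on two count ranges, each occupied by about $n/2$ common species, plus rare species giving $\sigma_t^2\asymp t^{0.2}$. The common species' biases cancel in $\sum_s b_s$. With $n=t^{0.9}$ and $\eta_t=t^{-0.4}$ one has $\sum_s b_s^2\approx n\eta_t^2=o(\sigma_t^2)$, while $\sum_s\IE|H(N_s)|\approx n\eta_t\gg\sigma_t^4$. So no estimate routed through $\sum_s\IE|H(N_s)|$ can close the argument.

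The missing observation is the pointwise identity $(H+H^2)^2=H^2(1+H)^2\le(1+\|H\|_\infty)^2H^2$. It gives $\mathbb{V}[\hat V_t]\le(1+\|H\|_\infty)^2\sum_s\IE[H(N_s)^2]$, with no $|H|$ term at all. The paper then uses $\IE\hat S^{(H)}_{t,T}\ge0$ to get $\sum_s\IE[H(N_s)^2]\le\IE\hat V_t$; this, not the signed inequality you state, is where the sign hypothesis enters. Alternatively, your own remark $\IE[H(N_s)^2]\le\IE[Z_s^2]$ gives $\sum_s\IE[H(N_s)^2]\le\sigma_t^2+\sum_s b_s^2=O(\sigma_t^2)$ even without that hypothesis. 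Either way, since $(1+\|H\|_\infty)^2\le2(1+\|H\|_\infty^2)=o(\sigma_t^2)$, one gets $\mathbb{V}[\hat V_t]=o(\sigma_t^4)$. Chebyshev and Slutsky then finish as you planned.
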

\begin{proof}[Proof sketch]
\renewcommand{\qedsymbol}{}%
Because of linearity of the estimator the error is a sum over contributions from the different species, so that we may use the Feller--Lindeberg CLT for triangular arrays to get a CLT with the true variance. Then we ensure that the estimator is multiplicatively consistent for the variance and apply Slutsky's Theorem. 
\end{proof}
\begin{rmk}
Theorem \ref{thm: CLT Intermediate} implies Corollary  \ref{cor: GT Gaussianity} by noticing that the GTE has $b_s \equiv 0$ and that for $r\leq 1$, $||H||_\infty \leq 1$.
\end{rmk}
Since Theorem \ref{thm: CLT Intermediate} includes a bias (and has in practice unverifiable, although reasonable, assumptions) it cannot directly be used to construct principled prediction intervals. However, we find empirically that on natural distributions the variance is the dominant contributor to the error of $\hat{S}_{t,T}^{(H^*)}$, especially when $r$ is not very large, so that dropping the bias produces good prediction intervals and that the same is true of the linearized version of $\hat{S}_{t,T}^{(\text{PW})}$. See Table \ref{tab:coverage}. We emphasize that this is for $\mu \prec 1$. Satisfactory uncertainty quantification in the generalized unseen species problem, when the dependence between species is too large to be assumed away, is still open.

\begin{table}[ht]
\centering
\scriptsize
\setlength{\tabcolsep}{3pt}
\resizebox{\textwidth}{!}{%
\begin{tabular}{lcccccc}
\toprule
 & \multicolumn{3}{c}{$t=1000$, $r=2$} & \multicolumn{3}{c}{$t=1000$, $r=5$} \\
\cmidrule(lr){2-4}\cmidrule(lr){5-7}
Distribution & $H^*$ & SGT & PW & $H^*$ & SGT & PW \\
\midrule
Zipf$(1)$, $M=10^4$ & $95.5 \pm 0.3$ & $76.7 \pm 0.6$ & $95.6 \pm 0.3$ & $94.9 \pm 0.3$ & $0.7 \pm 0.1$ & $96.8 \pm 0.2$ \\
Zipf$(1.25)$, $M=10^4$ & $95.0 \pm 0.3$ & $85.9 \pm 0.5$ & $96.7 \pm 0.3$ & $95.7 \pm 0.3$ & $14.9 \pm 0.5$ & $96.0 \pm 0.3$ \\
Zipf$(1.5)$, $M=10^4$ & $94.5 \pm 0.3$ & $91.4 \pm 0.4$ & $96.5 \pm 0.3$ & $94.8 \pm 0.3$ & $56.0 \pm 0.7$ & $96.5 \pm 0.3$ \\
Zipf$(2)$, $M=10^4$ & $95.1 \pm 0.3$ & $93.4 \pm 0.4$ & $97.2 \pm 0.2$ & $95.2 \pm 0.3$ & $89.7 \pm 0.4$ & $97.0 \pm 0.2$ \\
Zipf$(1.5)$, $M=10^5$ & $94.5 \pm 0.3$ & $90.7 \pm 0.4$ & $96.8 \pm 0.3$ & $95.1 \pm 0.3$ & $49.1 \pm 0.7$ & $96.6 \pm 0.3$ \\
Hamlet (i.i.d.) & $94.7 \pm 0.3$ & $86.4 \pm 0.5$ & $93.4 \pm 0.4$ & $95.5 \pm 0.3$ & $14.7 \pm 0.5$ & $94.7 \pm 0.3$ \\
Surnames (2010 Census) & $95.3 \pm 0.3$ & $48.1 \pm 0.7$ & $83.2 \pm 0.5$ & $94.9 \pm 0.3$ & $0.0 \pm 0.0$ & $95.0 \pm 0.3$ \\
Uniform, $M=5000$ & $94.6 \pm 0.3$ & $78.6 \pm 0.6$ & $75.9 \pm 0.6$ & $78.3 \pm 0.6$ & $9.5 \pm 0.4$ & $67.2 \pm 0.7$ \\
\bottomrule
\end{tabular}
}
\caption{Empirical coverage (\%), $\pm$ SEM, of nominal (ignoring bias) 95\% prediction intervals. }
\label{tab:coverage}
\end{table}

\section{Distant Future} \label{sec: distant}

\subsection{Concentration Inequalities for Set Functionals}
\label{subsec: Distant Future, Generalized Setting}
The key technical tool of the present section is the following concentration inequality. Let $S_t^{(i)} := \sum_{j=i}^\infty \phi_j$ be the number of species seen at least $i$ times. 
\begin{lem}\label{lem: new concentration} Suppose $\mu \prec B$. Then, for $z \geq 0$,
\begin{align*}
    P\left(S_t^{(i)}-\IE\big[S_t^{(i)}\big] \leq -z\right) &\leq \exp\left(-\frac{z^2}{2((B-1)i+1)\IE\big[S_t^{(i)}\big]}\right), \\
    P\left(S_t^{(i)}-\IE\big[S_t^{(i)}\big] \geq z\right) &\leq \exp\left(-\frac{z^2}{2((B-1)i+1)\big(\IE\big[S_t^{(i)}\big]+\frac{z}{3}\big)}\right).
\end{align*}
\end{lem}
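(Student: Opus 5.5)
The plan is to use Stein's method with bounded size-biased couplings, in the form of \cite{bartroff_2018_bounded} (going back to Ghosh--Goldstein). Suppose $Y\geq 0$ with $\IE[Y]=\mu_Y$, and there is a coupling of $Y$ with its size-biased version $Y^s$ satisfying $Y^s \leq Y + c$ almost surely. Then the lower tail obeys $P(Y-\mu_Y\leq -z)\leq \exp(-z^2/(2c\mu_Y))$. If moreover $Y^s\geq Y$, the upper tail obeys the Bernstein-type bound $P(Y-\mu_Y\geq z)\leq \exp(-z^2/(2c(\mu_Y+z/3)))$. These are exactly the two displayed inequalities with $c=(B-1)i+1$. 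So the whole task reduces to building, for $Y=S_t^{(i)}=\sum_{s}1_{N_s\geq i}$, a monotone size-biased coupling with increment at most $(B-1)i+1$.

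The construction follows the standard recipe for sums of indicators. Pick a species $J$ independently with $P(J=s)=P(N_s\geq i)/\IE[S_t^{(i)}]$. Then replace the data by a sample from the law of the data conditioned on $\{N_J\geq i\}$, built from the original data. By Poissonization, the observations form a Poisson process on $2^{\mathcal{S}}$ with intensity $t\mu$. The sets containing $J$ form a Poisson process with total rate $tM_J$, and they are independent of the sets not containing $J$. So $N_J\sim\text{Poi}(tM_J)$, and we may condition on $N_J\geq i$ by keeping the sets not containing $J$ fixed. We also keep the existing sets containing $J$, and if $N_J<i$ we add $i-N_J$ further independent sets drawn from $\mu(\cdot\mid J\in A)$. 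Concretely, let $N'$ be a $\text{Poi}(tM_J)$ variable conditioned on being $\geq i$, coupled monotonically with $N_J$ so that $N'\geq N_J$ and $N'-N_J\leq i$ whenever $N_J<i$; if $N_J\geq i$ nothing changes. One has to check that this gives the correct conditional law. It does: given the count, the marks of the sets containing $J$ are i.i.d. from $\mu(\cdot\mid J\ni A)$, so adding new i.i.d. marks gives the right law.

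For the bound, the coupling only adds sets, so every $N_s$ weakly increases and $Y^s\geq Y$. Each added set contains at most $B$ species, one of which is $J$. Thus at most $i$ added sets raise the counts of at most $i(B-1)$ species other than $J$, each of which can newly cross threshold $i$. Together with $J$ itself this gives $Y^s-Y\leq (B-1)i+1$.

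The main obstacle is the monotone coupling of $N_J$ with its conditioned version with increment at most $i$. I would take $N'=\max(N_J, i)$ when $N_J<i$... but that is not the correct conditional law. Instead, I would use the memorylessness-type fact that $\text{Poi}$ conditioned on $\geq i$ can be realized as $\max(N_J,\tilde N)$ where $\tilde N$ is an independent copy conditioned on $\geq i$? That overshoots by more than $i$. The honest fix is to avoid conditioning on a count and work in the time picture. Let the observation times of sets containing $J$ be the points of a rate-$tM_J$ process on $[0,1]$. Conditioning on at least $i$ points is then achieved by keeping the existing points and, when there are $k<i$ of them, adding $i-k$ points whose joint law is chosen by a maximal coupling. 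This works because the conditional law of $N$ given $N\geq i$ stochastically dominates the law of $N$. The remaining requirement is a coupling with $N'\leq \max(N,i)$-type control, i.e. $N'=N$ on $\{N\geq i\}$ and $N'\leq$ something. Since on $\{N<i\}$ the conditional law of $N'$ can have unbounded support, the increment in the \emph{count} is unbounded. So I would instead aim to bound the increment in $Y$ differently: only species among the $\leq B-1$ partners in each added set matter, and a species $s\neq J$ crosses the threshold only if it gains at least one new observation. The real difficulty is therefore bounding how many partners newly reach $i$. I expect to need to refine the argument, by counting the $\leq i$ added sets needed until $N_J$ first reaches $i$ and arguing via a tail decomposition, and I am not fully sure of the details.
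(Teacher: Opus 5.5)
Your overall plan matches the paper's. You reduce to a bounded monotone size-biased coupling, pick $J$ with probability proportional to $P(N_J\ge i)$, keep the sets not containing $J$, and add i.i.d.\ sets drawn from $\mu(\cdot\mid J\in A)$. The paper does the same thing by verifying the hypotheses of Theorem 2.1 of \cite{bartroff_2018_bounded}: the law of $E_t$ given $N_x=k$ equals that of $E^{(k)}_{x,t}$, and passing from $k$ to $k+1$ raises at most $B-1$ other counts.

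\textbf{The gap.} You never produce a coupling of $N_J$ with $N'\sim\mathcal{L}(N_J\mid N_J\ge i)$ such that $N_J\le N'\le N_J+i$. Without that, nothing bounds the number of added sets, and so nothing bounds $Y^s-Y$. You end by asserting the count increment is unbounded. Your first version (\enquote{$N'=N_J$ on $\{N_J\ge i\}$, increment at most $i$ on $\{N_J<i\}$}) cannot exist. If $N'=N_J$ whenever $N_J\ge i$, the remaining target mass, $P(N_J=k)\,P(N_J<i)/P(N_J\ge i)$ for every $k\ge i$, must all be supplied from $\{N_J<i\}$. So conditionally on $N_J<i$, $N'$ is again distributed as $\mathcal{L}(N_J\mid N_J\ge i)$, which has unbounded support. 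The \enquote{time picture} and the \enquote{maximal coupling} do not change this.

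\textbf{The missing idea.} Allow $N'$ to differ from $N_J$ even when $N_J\ge i$, and use log-concavity of the Poisson law. This is why the paper notes that $N_x$ is Poisson, hence log-concave, before invoking \cite{bartroff_2018_bounded}; that theorem performs this coupling internally.
\begin{itemize}
\item A log-concave pmf has a log-concave tail, so the discrete hazard $h(k)=P(N=k)/P(N\ge k)$ is nondecreasing.
\item Hence
\begin{align*}
P(N\ge i+k)=P(N\ge i)\prod_{j=i}^{i+k-1}(1-h(j))\le P(N\ge i)\prod_{j=0}^{k-1}(1-h(j))=P(N\ge i)\,P(N\ge k).
\end{align*}
\item This gives $\mathcal{L}(N)\preceq\mathcal{L}(N\mid N\ge i)\preceq\mathcal{L}(N+i)$ in the usual stochastic order.
\item Use the quantile coupling: one uniform $U$ drives all three generalized inverses, with $U$ generated from $N_J$ by external randomization. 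This yields $N_J\le N'\le N_J+i$ simultaneously.
\end{itemize}

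With this in hand, the rest of your argument goes through.
\begin{itemize}
\item The sets not containing $J$ are independent of $(N_J,U)$, and the marks of the $J$-sets are i.i.d.\ given their count. So adding $N'-N_J\le i$ fresh i.i.d.\ sets from $\mu(\cdot\mid J\in A)$ produces exactly the law of the data conditioned on $N_J\ge i$.
\item Every count weakly increases, so $Y^s\ge Y$.
\item At most $(B-1)i$ species other than $J$, plus $J$ itself, can newly reach $i$. This gives $0\le Y^s-Y\le (B-1)i+1$, and hence both tail bounds.
\end{itemize}
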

\begin{proof}[Proof sketch]
\renewcommand{\qedsymbol}{}%
An application of the work of \cite{bartroff_2018_bounded} on concentration via size biased couplings for configuration models. A bounded size biased coupling can be constructed since $\mu$ is of bounded arity, which implies concentration. This is essentially  the hypergraph version of the graph example there.
\end{proof}
\begin{rmk} In \cite{benhamou_2017_concentration} concentration inequalities are proven for the classical urn scheme. Two incomparable results are obtained, one by a negative association argument and one by the entropy method. They note that using Stein's method one obtains the same result as using negative association and indeed in the classical setting this is what Lemma \ref{lem: new concentration} amounts to.
\end{rmk}
Note that although we will use it under a power law approximation, Lemma \ref{lem: new concentration} itself does not rely on this. We generalize the power law hypothesis in the natural way, namely by setting $\nu(x) := \sum_{s \in \mathcal{S}}1_{M_s > x}$ and requiring that $\nu(x) \approx cx^{-\alpha}$ for some $c > 0, \alpha \in (0,1)$. We then obtain the following bound on the probability of making a large error. Let $\mathcal{L}[f]$ denote the Laplace transform of a function $f$.
\begin{thm} \label{thm: far future main}
    Let $\hat{S}_{t,T}:=S_t\big((1+r)^{\phi_1/S_t}-1\big)$, $p \in (0,1)$ and $q:= 1-p$. Suppose that $\mu$ is such that for some constants $c>0$ and $\alpha \in (0,1)$ we have $|\nu(x)-cx^{-\alpha}| \leq f(x)$ for all $x \in \mathbb{R}_{>0}$, where $f \geq 0$ is such that $\mathcal{L}[f](t) < \infty$ for all $t > 0$, and that $\mu \prec B$. Let
\begin{align*}
    d(z) &:= \frac{pz c\Gamma(1-\alpha)t^\alpha(1+r)^{-\alpha}}{(1+2\log(1+r))c\Gamma(1-\alpha)t^\alpha+pz(2-\alpha)\log(1+r)(1+r)^{-\alpha}},
\end{align*}
and suppose that 
\begin{align}\label{eq: three hypothesis estimates}
    T\mathcal{L}[f](T) \leq qz, \qquad t\mathcal{L}[f](t) \leq d(z), \qquad -t^2 \mathcal{L}[f]'(t) \leq d(z). 
\end{align}
Then,
\begin{align*}
    P(|\hat{S}_{t,T}-S_{t,T}|>z) &\leq \exp \left(-\frac{(qz-T\mathcal{L}[f](T))^2}{2B\IE[S_T]}\right)\\
    &+ \exp\left(-\frac{(qz-T\mathcal{L}[f](T))^2}{2B\IE[S_T]+\frac{2}{3}B(qz-T\mathcal{L}[f](T))}\right)\\
    &+\exp\left(-\frac{(d(z)-t\mathcal{L}[f](t))^2}{2B\IE[S_t]}\right) \\
    &+\exp\left(-\frac{(d(z)-t\mathcal{L}[f](t))^2}{2B\IE[S_t]+\frac{2}{3}B(d(z)-t\mathcal{L}[f](t))}\right) \\
    &+\exp\left(-\frac{(d(z)+t^2\mathcal{L}[f]'(t))^2}{(4B-2)\IE[S_t^{(2)}]}\right) \\
    &+\exp\left(-\frac{(d(z)+t^2\mathcal{L}[f]'(t))^2}{(4B-2)\IE[S_t^{(2)}]+\frac{4B-2}{3}(d(z)+t^2\mathcal{L}[f]'(t))}\right).
\end{align*}
\end{thm}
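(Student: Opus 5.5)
The six exponential terms suggest three applications of Lemma~\ref{lem: new concentration}, each used in both tails. The three statistics are $S_T$ (with $i=1$, constant $2B$), $S_t$ (with $i=1$), and $S_t^{(2)}$ (with $i=2$, where $2((B-1)2+1)=4B-2$). The plan is to split the error budget as $z=qz+pz$. We compare $S_T$ and $S_t(1+r)^{\phi_1/S_t}$ to the same deterministic power-law proxy $\Lambda_T:=c\Gamma(1-\alpha)T^\alpha=c\Gamma(1-\alpha)t^\alpha(1+r)^\alpha$. Writing
$$S_{t,T}-\hat S_{t,T}=\big(S_T-\Lambda_T\big)-\big(S_t(1+r)^{\phi_1/S_t}-\Lambda_T\big),$$
the event $\{|\hat S_{t,T}-S_{t,T}|>z\}$ is contained in $\{|S_T-\Lambda_T|>qz\}\cup\{|S_t(1+r)^{\phi_1/S_t}-\Lambda_T|>pz\}$.

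\emph{Step 1 (means via Laplace transforms).} Under Poissonization, $\IE[S_t]=\sum_s(1-e^{-M_st})=t\mathcal{L}[\nu](t)$, by writing $1-e^{-M_st}=\int_0^{M_s}te^{-xt}\,dx$ and summing over $s$. Since $t\mathcal{L}[cx^{-\alpha}](t)=c\Gamma(1-\alpha)t^\alpha$, the hypothesis $|\nu-cx^{-\alpha}|\le f$ gives $|\IE[S_t]-c\Gamma(1-\alpha)t^\alpha|\le t\mathcal{L}[f](t)$, and likewise $|\IE[S_T]-\Lambda_T|\le T\mathcal{L}[f](T)$. Similarly, $\IE[\phi_1]=\sum_s M_st e^{-M_st}=t\frac{d}{dt}\IE[S_t]$, so
$$\IE[S_t^{(2)}]=\IE[S_t]-\IE[\phi_1]=-t^2\mathcal{L}[\nu]'(t).$$
Its power-law value is $(1-\alpha)c\Gamma(1-\alpha)t^\alpha$, with error at most $t^2\int xe^{-xt}f(x)\,dx=-t^2\mathcal{L}[f]'(t)$. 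Consequently, $\phi_1$ has proxy $\alpha c\Gamma(1-\alpha)t^\alpha$, so $\phi_1/S_t\approx\alpha$.

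\emph{Step 2 (first two terms).} By Step 1, $|S_T-\Lambda_T|>qz$ forces $|S_T-\IE S_T|>qz-T\mathcal{L}[f](T)\ge 0$, where nonnegativity is the first hypothesis in (\ref{eq: three hypothesis estimates}). Lemma~\ref{lem: new concentration} with $i=1$ and arity $B$ then yields the first two exponentials, with denominators $2B\IE[S_T]$ and $2B(\IE[S_T]+\frac{\cdot}{3})$.

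\emph{Step 3 (deterministic perturbation, the main obstacle).} Let $a:=S_t$ and $b:=S_t^{(2)}$, with proxies $a_0=c\Gamma(1-\alpha)t^\alpha$ and $b_0=(1-\alpha)a_0$. Set $F(a,b):=a(1+r)^{(a-b)/a}$, so that $F(a_0,b_0)=\Lambda_T$ exactly. I would prove that $|a-a_0|\le d(z)$ and $|b-b_0|\le d(z)$ together imply $|F(a,b)-\Lambda_T|\le pz$. This is a mean-value estimate using
$$\partial_aF=(1+r)^{1-b/a}\Big(1+\tfrac{b}{a}\log(1+r)\Big),\qquad \partial_bF=-(1+r)^{1-b/a}\log(1+r).$$
The delicate points are twofold. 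First, $a$ must stay bounded away from $0$, which requires $d(z)\lesssim a_0$. Second, $b/a$ must stay in a controlled window so that $(1+r)^{1-b/a}$ remains comparable to $(1+r)^{\alpha}$. The exact form of $d(z)$, with $pz(2-\alpha)\log(1+r)(1+r)^{-\alpha}$ in the denominator, should be what falls out of solving the resulting inequality $(1+2\log(1+r))(1+r)^{\alpha}\,\delta+(\ldots)\delta^2/a_0\le pz$ for $\delta$. First I would try linearizing with an explicit second-order remainder and then inverting the resulting fractional-linear inequality.

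\emph{Step 4 (remaining four terms).} By Step 1, $|a-a_0|>d(z)$ forces $|S_t-\IE S_t|>d(z)-t\mathcal{L}[f](t)$, and $|b-b_0|>d(z)$ forces $|S_t^{(2)}-\IE S_t^{(2)}|>d(z)+t^2\mathcal{L}[f]'(t)$. Both thresholds are nonnegative by the remaining hypotheses in (\ref{eq: three hypothesis estimates}). Applying Lemma~\ref{lem: new concentration} with $i=1$ and with $i=2$, in both tails, gives the remaining four exponentials. A union bound over all six events completes the proof.
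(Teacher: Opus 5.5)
Your architecture is exactly the paper's. Your $\Lambda_T$ is the paper's $\hat S_T^*=\IE[S_T^*]$, the three events and the Laplace-transform comparisons coincide, and the six exponentials come from Lemma~\ref{lem: new concentration} with $i=1,1,2$ just as you say. The gap is Step 3. The specific $d(z)$ is part of the statement, so showing that $|S_t-a_0|<d(z)$ and $|S_t^{(2)}-b_0|<d(z)$ force $|S_t(1+r)^{\phi_1/S_t}-\Lambda_T|<pz$ is the one non-routine step. You only sketch a plan for it, and the plan points the wrong way. Write $L:=\log(1+r)$, $d_0:=\frac{pz(1+r)^{-\alpha}}{1+2L}$ and $\lambda:=\frac{(2-\alpha)Ld_0}{a_0}$. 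Then the stated threshold is $d(z)=\frac{d_0}{1+\lambda}$, which is not the root of a quadratic. A second-order expansion also brings in a factor $1/a$: with $w:=b/a$, the Hessian quadratic form of $F$ is $\frac{(1+r)^{1-w}L^2}{a}(w\,\Delta a-\Delta b)^2$. You would therefore need $d(z)<a_0$. This fails when $(2-\alpha)L<1$ and $z$ is large, because $d(z)\to a_0/((2-\alpha)L)$ as $z\to\infty$.

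The missing idea is to stay first order and keep the exponential factor exact. On the segment from $(a_0,b_0)$ to $(S_t,S_t^{(2)})$ you have $0\le b\le a$, because $S_t^{(2)}\le S_t$ and $b_0\le a_0$. Hence $|\partial_aF|+|\partial_bF|\le(1+r)^{1-b/a}(1+2L)$, with no factor $1/a$. On the box, $b/a\ge\frac{b_0-d}{a_0+d}$, and $1-\frac{b_0-d}{a_0+d}=\alpha+\frac{(2-\alpha)d}{a_0+d}$. Therefore
\[
|F(S_t,S_t^{(2)})-\Lambda_T|\le(1+2L)(1+r)^{\alpha}\,d\,\exp\Big(\frac{(2-\alpha)L\,d}{a_0+d}\Big),
\]
and this is $<pz$ if and only if $\frac{(2-\alpha)Ld}{a_0+d}<\log\frac{d_0}{d}$. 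For $d=\frac{d_0}{1+\lambda}$ the left side equals $\frac{\lambda a_0}{a_0(1+\lambda)+d_0}$, which is $<\frac{\lambda}{1+\lambda}\le\log(1+\lambda)$, and $\log(1+\lambda)$ is the right side. This is precisely the paper's Lemma~\ref{lem: trans eq}. With it in place, your Steps 2 and 4 complete the proof as you describe.
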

\begin{proof}[Proof sketch]
\renewcommand{\qedsymbol}{}%
$S_{t,T}-\hat{S}_{t,T} = S_T-S_t(1+r)^{\phi_1/S_t}$. Heuristically, Proposition 17 of \cite{Gnedin_notes} suggests that if (i) the power law approximation was perfect (which it cannot be) and (ii) $S_t,S_T$ and $\phi_1$ were perfectly concentrated (which they are not), then the prediction error would be zero. It is thus natural to control the error by studying (a) how much deviation from a perfect power law can impact $\IE[S_t],\IE[S_T]$ and $\IE[\phi_1]$, (b) controlling how far $S_t,S_T$ and $\phi_1$ can be from their means and (c) controlling sensitivity to deviation in these quantities. 

Part (a) is done through some elementary Laplace transform  analysis, very much in the spirit of \cite{Gnedin_notes} but using a pre-asymptotic hypothesis to get pre-asymptotic bounds. Part (b) amounts to noticing that $S_t = S_t^{(1)}$, $\phi_1 = S_t^{(1)}-S_t^{(2)}$ and appealing to Lemma \ref{lem: new concentration} and (c) uses an analytical lemma due to Anthropic's Claude Opus 4.7.
\end{proof}

Note that using $\hat{\alpha} = \frac{\phi_1}{S_t}$ gives rise to a representation invariant estimator, while the estimator of \cite{favaro_2023_nearoptimal} is not. 

Define the loss $\ell_{\alpha}(S_{t,T},\hat{S}_{t,T}) := \frac{|S_{t,T}-\hat{S}_{t,T}|^2}{(rt)^{2\alpha}}.$

\begin{cor}\label{cor: far future rate} If $\mu \prec B$, $\log(r) = o(t^{\alpha/2})$, $r(t) \geq r_0 > 1$ for all $t$ for some fixed $r_0$, $|\nu(x)-cx^{-\alpha}| \leq Kx^{-\frac{\alpha}{2}}$ for all $x \in (0,1)$ and $\hat{S}_{t,T} = S_t((1+r)^{\phi_1/S_t}-1)$ then for every $\epsilon > 0$ there exists a constant $C(\epsilon,\alpha,c,K,r_0)$ such that
\begin{align*}
    \limsup_{t \rightarrow \infty}P\left(\ell_{\alpha}(S_{t,T},\hat{S}_{t,T}) > \frac{CB\log(r)^2}{t^\alpha}\right) < \epsilon.
\end{align*}
\end{cor}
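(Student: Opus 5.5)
The plan is to derive the corollary from Theorem \ref{thm: far future main} with $z := \sqrt{CB}\,\log(r)\,(rt)^{\alpha} t^{-\alpha/2}$, so that $\ell_\alpha > CB\log(r)^2/t^\alpha$ is exactly the event $|\hat S_{t,T}-S_{t,T}|>z$. We take $p=q=\tfrac12$ and $f(x) := Kx^{-\alpha/2}1_{x<1} + g(x)1_{x\geq 1}$. On $[1,\infty)$ we have $\nu(x)=0$ because $M_s\le 1$, so $g(x)=cx^{-\alpha}$ works there. The Laplace transform of $f$ is finite for all $t>0$. The remaining work is to check the three hypotheses (\ref{eq: three hypothesis estimates}) for large $t$, and to show that each of the six exponentials tends to zero as $t\to\infty$ once $C$ is large, or at least becomes smaller than $\epsilon/6$.

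\textbf{Step 1: size of the error terms.} Write $\mathcal{L}[f](s)=K\int_0^1 x^{-\alpha/2}e^{-sx}dx+c\int_1^\infty x^{-\alpha}e^{-sx}dx$. The first integral is at most $K\Gamma(1-\alpha/2)s^{\alpha/2-1}$. The second is exponentially small, of order $e^{-s}/s$. Hence $s\mathcal{L}[f](s)\lesssim K s^{\alpha/2}$, and likewise $-s^2\mathcal{L}[f]'(s)=K\int_0^1 s^2x^{1-\alpha/2}e^{-sx}dx+\ldots\lesssim K s^{\alpha/2}$. So $T\mathcal{L}[f](T)\lesssim K(rt)^{\alpha/2}$, while $t\mathcal{L}[f](t)$ and $-t^2\mathcal{L}[f]'(t)$ are both $\lesssim Kt^{\alpha/2}$.

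\textbf{Step 2: size of $d(z)$ and of the means.} The power-law hypothesis gives, by the Laplace computation behind the theorem, $\IE[S_t]\asymp c\Gamma(1-\alpha)t^\alpha$, $\IE[S_T]\asymp c\Gamma(1-\alpha)(rt)^\alpha$ and $\IE[S^{(2)}_t]\lesssim t^\alpha$. The corrections are $O(Kt^{\alpha/2})$, which is of lower order. For $d(z)$, the second summand of the denominator is $pz(2-\alpha)\log(1+r)(1+r)^{-\alpha}\asymp\sqrt{CB}\log(r)^2t^{\alpha/2}$. Since $\log r=o(t^{\alpha/2})$, this is $o(t^\alpha\log r)$, so the first summand $(1+2\log(1+r))c\Gamma(1-\alpha)t^\alpha$ dominates. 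Therefore
\[
d(z)\asymp \frac{z(1+r)^{-\alpha}}{\log(1+r)}\asymp \sqrt{CB}\,t^{\alpha/2},
\]
with constants depending on $r_0$ and $\alpha$, using $r\ge r_0>1$ so that $(r/(1+r))^\alpha$ and $\log r/\log(1+r)$ are bounded below.

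\textbf{Step 3: plug in.} Compare $qz\asymp\sqrt{CB}\log(r)r^\alpha t^{\alpha/2}$ with $T\mathcal{L}[f](T)\lesssim K r^{\alpha/2}t^{\alpha/2}$. The ratio is at least $\sqrt{C}\log(r_0)r_0^{\alpha/2}/K$, so once $C$ is large the error term is at most half of $qz$. The same holds for $d(z)$ against $Kt^{\alpha/2}$. This verifies (\ref{eq: three hypothesis estimates}). Each numerator is then at least a quarter of the square of the main term. The first two exponents are $\gtrsim CB\log(r)^2r^{2\alpha}t^\alpha/(B(rt)^\alpha+B\sqrt{CB}\log(r)r^\alpha t^{\alpha/2})$. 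Using $\log r=o(t^{\alpha/2})$, the first part of this denominator dominates and the exponent is $\gtrsim C\log(r_0)^2$. The remaining four exponents are $\gtrsim CBt^\alpha/(Bt^\alpha+B\sqrt{CB}t^{\alpha/2})\gtrsim C$. Choosing $C$ large in terms of $\epsilon,\alpha,c,K,r_0$ makes the limsup of the sum below $\epsilon$.

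\textbf{Main obstacle.} The delicate point is the uniformity in $r$, which may grow with $t$. The $\log(1+r)$ factors in $d(z)$ must cancel exactly against the $\log r$ in $z$. This cancellation is what forces the $\log(r)^2$ in the rate, and it is also where the condition $\log r=o(t^{\alpha/2})$ is needed. A second point to check is that the factor $B$ appears only once in the denominators and is not multiplied further by the $\sqrt{B}$ inside $z$. If it were, that term would only be controlled using $B\ge1$, and $C$ might then need to depend on $B$. I will verify this by keeping $\sqrt{CB}\,t^{\alpha/2}\ll t^\alpha$ for large $t$, for fixed $B$.
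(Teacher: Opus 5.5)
Your proposal is correct and follows essentially the same route as the paper: $p=\tfrac12$, the same choice $z=\sqrt{CB}\log(r)r^\alpha t^{\alpha/2}$, the estimate $d(z)\asymp\sqrt{CB}\,t^{\alpha/2}$ obtained from $\log r=o(t^{\alpha/2})$ for fixed $C,B$, and the $O(t^{\alpha/2})$ and $O(T^{\alpha/2})$ bounds on the Laplace error terms, which are then plugged into the six exponentials. The only difference is cosmetic: your envelope $f$ uses $\nu\equiv 0$ on $[1,\infty)$, whereas the paper uses the global envelope $\max\{K,c\}x^{-\alpha/2}$, and both give the same bounds.
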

Note that one should not expect the dependence to be on $B^2$ as it was in the previous sections. This is because speciating a $\mu$ with $|\nu(x)-cx^{-\alpha}| \leq Kx^{-\alpha/2}$ does not yield a $\mu$ satisfying the same estimate. Instead one obtains a $\mu$ with an associated $\nu$ satisfying $|\nu(x)-Bcx^{-\alpha}| \leq BKx^{-\alpha/2}$. Nonetheless it seems likely that speciation could give a generalization of Theorem 6 of \cite{favaro_2023_nearoptimal} with the aim of pinning down the optimal dependence on $B$. We have not pursued this. Instead, specializing Theorem \ref{thm: far future main} to the problem class of \cite{favaro_2023_nearoptimal}, we obtain the same near-optimal rate in $r,t$ as they do.

If the power law approximation is very good the $B=1$ case of Corollary \ref{cor: far future rate} improves the prediction horizon from $\log(r)=O(t^{\alpha/2}/\sqrt{\log(t)})$ to $\log(r(t)) = o(t^{\alpha/2})$ as compared to Theorem 3 of \cite{favaro_2023_nearoptimal}. However, as the authors explained in personal communication, they chose their class for reasons related to the lower bound construction. On our problem class they can strengthen their results to match ours.

For us the following is really a lemma used in the proof of Theorem \ref{thm: CLT distant}, but as it may be of interest beyond the unseen species problem we state it as a theorem.

\begin{thm}\label{thm: alpha-rate} Let $\hat{\alpha} = \frac{\phi_1}{S_t}$. Fix $\epsilon > 0$. If $\mu \prec B$ and there exist constants $c,\alpha,K$ such that $|\nu(x)-cx^{-\alpha}| \leq Kx^{-\frac{\alpha}{2}}$ for all $x \in (0,1)$ then there exists a constant $C(\epsilon,c,\alpha,K,B)$ so that
\begin{align*}
    \limsup_{t \rightarrow \infty}P\left(|\alpha-\hat{\alpha}| > \frac{C}{t^{\frac{\alpha}{2}}}\right) < \epsilon.
\end{align*}
\end{thm}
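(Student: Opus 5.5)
We will prove the theorem in three steps: compute the means of $S_t$ and $\phi_1$ under the power law, show that the random quantities concentrate around these means, and then propagate the errors through the ratio $\hat{\alpha}=\phi_1/S_t$.

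\textbf{Step 1: the means.} In the Poissonized model the observations of each species $s$ form a Poisson process of rate $M_s$. Hence
\begin{align*}
\IE[S_t]=\sum_s(1-e^{-M_st}), \qquad \IE[\phi_1]=\sum_s M_st\,e^{-M_st}.
\end{align*}
We write both as Stieltjes integrals against $\nu$ and integrate by parts:
\begin{align*}
\IE[S_t]=t\int_0^\infty e^{-xt}\nu(x)\,dx, \qquad \IE[S_t]-\IE[\phi_1]=\IE[S_t^{(2)}]=t^2\int_0^\infty xe^{-xt}\nu(x)\,dx.
\end{align*}
The first is $t\mathcal{L}[\nu](t)$ and the second is $-t^2\mathcal{L}[\nu]'(t)$, as in the sketch of Theorem \ref{thm: far future main}.

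Substituting $\nu(x)=cx^{-\alpha}+(\nu(x)-cx^{-\alpha})$, the main term gives
\begin{align*}
\IE[S_t]\approx c\Gamma(1-\alpha)t^\alpha, \qquad \IE[S_t^{(2)}]\approx c\Gamma(2-\alpha)t^\alpha=(1-\alpha)\,c\Gamma(1-\alpha)t^\alpha.
\end{align*}
Therefore $\IE[\phi_1]\approx\alpha\, c\Gamma(1-\alpha)t^\alpha$.

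For the error we split the range. On $x\in(0,1)$ the hypothesis $|\nu(x)-cx^{-\alpha}|\le Kx^{-\alpha/2}$ contributes $O(Kt^{\alpha/2})$ to each quantity. On $x\ge1$ we have $\nu(x)\le B$, because $\sum_s M_s\le B$ when $\mu\prec B$, so at most $B$ species can have $M_s>1$. The term $cx^{-\alpha}$ is bounded there too, so this range contributes only $O(t e^{-t})$ after the exponential weighting. The conclusion is
\begin{align*}
\IE[S_t]=c\Gamma(1-\alpha)t^\alpha+O(t^{\alpha/2}), \qquad \IE[\phi_1]=\alpha c\Gamma(1-\alpha)t^\alpha+O(t^{\alpha/2}),
\end{align*}
with constants depending on $c,\alpha,K,B$.

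\textbf{Step 2: concentration.} We apply Lemma \ref{lem: new concentration} with $i=1$ and $i=2$, using $\phi_1=S_t^{(1)}-S_t^{(2)}$. Since $\IE[S_t^{(i)}]\asymp t^\alpha$, taking $z=M\sqrt{t^\alpha}$ makes both tails at most $2e^{-M^2/(C(B))}$ for large $t$; the $z/3$ term is negligible because $z=o(t^\alpha)$. Choosing $M$ large relative to $\epsilon$ gives, with probability at least $1-\epsilon$,
\begin{align*}
|S_t-\IE[S_t]|\le Mt^{\alpha/2}, \qquad |\phi_1-\IE[\phi_1]|\le 2Mt^{\alpha/2}.
\end{align*}

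\textbf{Step 3: the ratio.} On this event, write $A=c\Gamma(1-\alpha)t^\alpha$, $S_t=A+\Delta_1$ and $\phi_1=\alpha A+\Delta_2$, with $|\Delta_1|,|\Delta_2|=O(t^{\alpha/2})$. Then
\begin{align*}
\hat{\alpha}-\alpha=\frac{\Delta_2-\alpha\Delta_1}{A+\Delta_1}.
\end{align*}
For large $t$ the denominator is at least $A/2$, so $|\hat\alpha-\alpha|\le C t^{\alpha/2}/t^\alpha=Ct^{-\alpha/2}$. Taking the $\limsup$ gives the claim.

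The main obstacle is Step 1, specifically bookkeeping the deviation of $\nu$ from the power law. The hypothesis only constrains $x\in(0,1)$, so the bound $\nu\le B$ is needed on $[1,\infty)$. The Laplace-transform integrals of $x^{-\alpha/2}$ must also be checked to converge and to scale as $t^{\alpha/2}$, which they do since $\alpha/2<1$. Steps 2 and 3 are routine once the means are pinned down at this precision.
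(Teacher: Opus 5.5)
Your proposal is correct and is essentially the paper's proof. Both arguments bound $\IE[S_t]$ and $\IE[S_t^{(2)}]$ by Laplace-transform estimates to within $O(t^{\alpha/2})$ of their power-law values, apply Lemma \ref{lem: new concentration} to $S_t$ and to $\phi_1=S_t-S_t^{(2)}$ at deviation scale $t^{\alpha/2}$, and then bound $\phi_1/S_t-\alpha$ deterministically on the resulting good event; the paper phrases that last step as a four-way split of the probability. The only cosmetic difference is on $x\geq 1$: the paper uses the global envelope $f(x)=\max\{K,c\}x^{-\alpha/2}$ in Corollary \ref{cor: Laplace neo} (in fact $\nu\equiv 0$ there since $M_s\leq 1$), whereas you split the integral at $x=1$ and use $\nu\leq B$.
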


\subsection{Uncertainty Quantification}

The following central limit theorem for the prediction error was proven with substantial aid of Anthropic's Claude Opus 4.8. It allows the construction of asymptotically calibrated prediction intervals when $B=1$.

\begin{thm}\label{thm: CLT distant} Let $\hat{S}_{t,T} := S_t((1+r)^{\phi_1/S_t}-1)$ and suppose that $\mu \prec 1$ is such that $\nu(x)=cx^{-\alpha}+o(x^{-\alpha/2})$ as $x \downarrow 0$ for some $c > 0, \alpha \in (0,1)$, $\frac{\log(r(t))}{t^{\alpha/2}} \rightarrow 0$ and $\liminf_{t \rightarrow \infty}r(t) >0$. Then,
\begin{align*}
    \frac{S_{t,T}-\hat{S}_{t,T}}{\sqrt{V_t}} \overset{d}{\rightarrow} \mathcal{N}(0,1),
\end{align*} where $V_t$ may be replaced by $\hat{V}_t$ and these quantities are defined by
\begin{align*}
    \rho :=& r+1, \qquad \beta := \log(\rho), \qquad \hat{\alpha} := \phi_1/S_t \\
    \Xi(r,\alpha) :=& \alpha\beta^2-\beta^2\alpha(1-\alpha)2^{\alpha-2}+(1-\alpha\beta)^2(2^\alpha-1)+\rho^{-\alpha}(2^\alpha-1) \\
    +&\beta(1-\alpha\beta)\alpha 2^\alpha -2\beta \rho^{-\alpha}\alpha (1+\rho)^{\alpha-1}-2(1-\alpha\beta)\rho^{-\alpha}((\rho+1)^\alpha-\rho^\alpha) \\
    V_t :=& \rho^{2\alpha}c\Gamma(1-\alpha)\Xi(r,\alpha)t^\alpha, \qquad
    \hat{V}_t := \rho^{2\hat{\alpha}}  \Xi(r,\hat{\alpha}) S_t
\end{align*}
\end{thm}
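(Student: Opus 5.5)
We would prove Theorem~\ref{thm: CLT distant} by a delta-method linearization. The linear term becomes a sum of independent per-species contributions, so it is handled by the Lindeberg--Feller CLT, as in the proof of Theorem~\ref{thm: CLT Intermediate}. Throughout we use Poissonization. Since $\mu \prec 1$, the counts $N_s \sim \mathrm{Poi}(p_s t)$ and the second-expedition counts $N_s' \sim \mathrm{Poi}(p_s t')$ are all mutually independent.

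\textbf{Step 1 (linearization).} Write $\rho=r+1$, $\beta=\log\rho$, $a := \IE[\phi_1]/\IE[S_t]$, $\hat\alpha=\phi_1/S_t$ and $x := \beta(\hat\alpha - a)$. Then
$$S_{t,T}-\hat S_{t,T} = S_T - S_t\rho^{a}e^{x}.$$
Expanding $e^x = 1+x+R$ with $|R|\lesssim x^2$ for $|x|\le 1$, and writing $\phi_1/S_t - a$ to first order in the centred quantities $\bar S_t := S_t-\IE S_t$ and $\bar\phi_1 := \phi_1-\IE\phi_1$, we get
$$S_{t,T}-\hat S_{t,T} = \Big(\IE S_T - \rho^{a}\IE S_t\Big) + \Big(\bar S_T - \rho^{a}\big[(1-a\beta)\bar S_t + \beta\bar\phi_1\big]\Big) + \mathrm{Rem}_t .$$

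\textbf{Step 2 (deterministic terms).} Elementary Laplace-transform estimates, in the spirit of the proof of Theorem~\ref{thm: far future main} but using $\nu(x)=cx^{-\alpha}+o(x^{-\alpha/2})$, give
$$\IE S_t = c\Gamma(1-\alpha)t^\alpha + o(t^{\alpha/2}),\qquad \IE\phi_1=\alpha c\Gamma(1-\alpha)t^\alpha+o(t^{\alpha/2}),$$
and likewise for $\IE S_T$ with $T$ in place of $t$. Hence $a=\alpha+o(t^{-\alpha/2})$, and the centring term is $o(\rho^{\alpha}(1+\beta)t^{\alpha/2})$.

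\textbf{Step 3 (remainder).} Theorem~\ref{thm: alpha-rate} and Lemma~\ref{lem: new concentration} give $\hat\alpha - a = O_p(t^{-\alpha/2})$. Since $\beta = o(t^{\alpha/2})$ by hypothesis, $x = o_p(1)$, and then
$$\mathrm{Rem}_t = O_p\big(\rho^\alpha t^\alpha x^2\big) = O_p(\rho^\alpha\beta^2).$$
The cross terms coming from linearizing the ratio $\phi_1/S_t$ are of the same order.

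\textbf{Step 4 (size of $V_t$).} We need the lower bound $\Xi(r,\alpha)\gtrsim (1+\beta)^2$ uniformly in $r$ bounded away from $0$, which is where $\liminf_t r(t) > 0$ is used. Granting this, $\sqrt{V_t}\asymp \rho^\alpha(1+\beta)t^{\alpha/2}$. Every error term from Steps 2 and 3 is then $o_p(\sqrt{V_t})$, precisely because $\beta/t^{\alpha/2}\to 0$.

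\textbf{Step 5 (CLT for the linear part).} The linear part is
$$\sum_s\Big(\xi_s-\IE\xi_s\Big),\qquad \xi_s := 1_{N_s+N_s'>0} - \rho^{a}\big[(1-a\beta)1_{N_s>0}+\beta 1_{N_s=1}\big].$$
The summands are independent and bounded by $O(\rho^\alpha(1+\beta))$, which is $o(\sqrt{V_t})$. So the Lindeberg condition holds trivially once the variance is shown to be asymptotic to $V_t$.

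\textbf{Step 6 (variance identification).} This is where we expect the main computational work. We must compute $\sum_s \mathbb V[\xi_s]$ and show it equals $V_t(1+o(1))$. Each term involves $P(N_s>0)=1-e^{-p_st}$, $P(N_s=1)=p_s te^{-p_st}$, $P(N_s+N_s'>0)=1-e^{-\rho p_s t}$ and their products. Summing over $s$ reduces to integrals of the form $\int (\cdots)\,d\nu$ evaluated against $cx^{-\alpha}$, for example
$$\int (1-e^{-2x})\,\alpha x^{-\alpha-1}dx,\qquad \int x^2e^{-2x}\,\alpha x^{-\alpha-1}dx,\qquad \int e^{-x}(1-e^{-\rho x})\,\alpha x^{-\alpha-1}dx.$$
These produce the terms $2^\alpha-1$, $\alpha(1-\alpha)2^{\alpha-2}$, $(1+\rho)^{\alpha-1}$ and $(\rho+1)^\alpha-\rho^\alpha$ appearing in $\Xi$, each multiplied by $c\Gamma(1-\alpha)t^\alpha$. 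The error terms are controlled by the $o(x^{-\alpha/2})$ hypothesis. Matching all seven terms of $\Xi(r,\alpha)$ and proving the uniform lower bound on $\Xi$ used in Step 4 is the step we expect to be the obstacle. Our first attempt would be to write $\Xi$ as a variance of a Poisson-process functional, which is manifestly nonnegative, and then isolate its leading $\beta^2$ coefficient, namely $\alpha-\alpha(1-\alpha)2^{\alpha-2}+\alpha^2(2^\alpha-1)-\alpha^2 2^\alpha$ plus contributions from the $\rho^{-\alpha}$ terms, and show it is bounded below.

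\textbf{Step 7 (plug-in).} Finally, we replace $V_t$ by $\hat V_t$. First, $S_t/(c\Gamma(1-\alpha)t^\alpha)\to 1$ in probability by Lemma~\ref{lem: new concentration} and Step 2. Second, $\rho^{2\hat\alpha}/\rho^{2\alpha}=e^{2\beta(\hat\alpha-\alpha)}\to1$ in probability by Theorem~\ref{thm: alpha-rate} and $\beta=o(t^{\alpha/2})$. Third, $\Xi(r,\hat\alpha)/\Xi(r,\alpha)\to1$: after dividing by $(1+\beta)^2$, $\Xi$ is Lipschitz in $\alpha$ uniformly in $r$, once the powers $\rho^{-\alpha}$ are handled as in the second point. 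Slutsky's theorem then completes the proof.
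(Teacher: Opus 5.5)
Your argument is essentially the paper's. The paper uses the exact identity $\hat S_{t,T}-S_{t,T}=\rho^\alpha U+\rho^\alpha S_t\,h(\beta(\hat\alpha-\alpha))$, with $U=\beta\phi_1+(1-\alpha\beta)S_t-\rho^{-\alpha}S_T$ and $h(z)=e^z-1-z$. It then applies Lindeberg to $U$, computes $\mathbb{V}[U]$ via Laplace-transform asymptotics, proves a lower bound on $\Xi$, and finishes with Slutsky using a Lipschitz bound on $\alpha\mapsto\Xi(r,\alpha)$.

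You center at $a=\IE[\phi_1]/\IE[S_t]$ instead of $\alpha$. This moves the bias into the deterministic term $\IE S_T-\rho^a\IE S_t$ rather than into $\IE[U]$, and costs one easy extra comparison of $a$ with $\alpha$ in the variance. Your Step 1 is in fact exact rather than first order. Since $\IE\phi_1=a\IE S_t$, you have $S_t\beta(\hat\alpha-a)=\beta(\bar\phi_1-a\bar S_t)$ identically, so $\mathrm{Rem}_t=-\rho^aS_t\,h(x)$ and there are no ratio cross terms to control.

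The one genuine hole is the step you flag yourself: the uniform lower bound $\Xi(r,\alpha)\gtrsim(1+\beta)^2$. Your plan does not deliver it as stated.
\begin{itemize}
\item The $\beta^2$ coefficient is exactly $\alpha(1-\alpha)(1-2^{\alpha-2})>0$; the $\rho^{-\alpha}$ terms enter only at orders $\beta$ and $1$, with bounded coefficients. Its positivity controls large $\beta$ only.
\item Since $r$ is merely bounded below, $\beta$ may stay near $\log(1+r_0)$. There the lower-order terms matter, and nonnegativity of a variance gives no positive lower bound.
\end{itemize}

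The paper closes this by collecting terms as
\[ \Xi=\alpha(1-\alpha)(1-2^{\alpha-2})\beta^2+2\alpha\Big(\big(\tfrac{\rho}{1+\rho}\big)^{1-\alpha}-2^{\alpha-1}\Big)\beta+(2^\alpha-1)(1+\rho^{-\alpha})-2\big(\rho^{-\alpha}(1+\rho)^\alpha-1\big) \]
and checking that both lower-order coefficients are nonnegative for $\rho\ge1$.
\begin{itemize}
\item The linear coefficient vanishes at $\rho=1$ and increases in $\rho$.
\item The constant term, multiplied by $\rho^\alpha$, equals $[g(2\rho)-g(1+\rho)]+[g(\rho)-g(1)]-[g(1+\rho)-g(2)]$ with $g(x)=x^\alpha$. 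This is nonnegative by monotonicity and concavity of $g$.
\end{itemize}
Hence $\Xi\ge\alpha(1-\alpha)(1-2^{\alpha-2})\beta^2$, which is $\gtrsim(1+\beta)^2$ once $\beta\ge\log(1+r_0)$.

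Your variance-representation route can also be completed, but it needs strict positivity, not just nonnegativity. The species-wise variable equals $0$ on $\{N=N'=0\}$ and $-\rho^{-\alpha}$ on $\{N=0,N'>0\}$. So it is nondegenerate at every intensity, and $\Xi>0$ for every $\beta$. Continuity in $\beta$ then gives a positive minimum on any compact $\beta$-range. Combined with the $\beta^2$ asymptotics, this yields the uniform bound you need in Steps 4, 5 and 7.
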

\begin{proof}[Proof sketch]
\renewcommand{\qedsymbol}{}%
It transpires that the prediction error can be written as a sum over species-wise contributions plus a correction term. The sum is shown asymptotically normal by the Feller--Lindeberg CLT for triangular arrays much like in Theorem \ref{thm: CLT Intermediate} and then the correction term is shown to be vanishingly small. Slutsky's theorem is used to move to an estimated variance.
\end{proof}

\section{Experiments} \label{sec: comparison}

\subsection{Task}
For $r > 1$ we compare the performance of a variety of estimators. Our prescribed task is modeled after the use case in \cite{adams_2022_victim}. Following \cite{efron_1976_estimating},  \cite{wu_2019_chebyshev} and \cite{orlitsky_2016_optimal} we consider a person reading Hamlet. They know the total number of words in the play and aim to predict the total number of distinct words from reading an initial portion of the piece. Setting $\hat{S}_T := S_t + \hat{S}_{t,T}$ we see that this is an instantiation of the unseen species problem. In order to make the plots as comparable as possible we study absolute relative error, namely $\frac{|\hat{S}_T-S_T|}{S_T}$. When it makes sense to do so we give both the performance when the data is in the true temporal order and an average over $100$ shuffles to reduce noise. The shuffling is justified insofar as we believe exchangeability holds. We prefer this to sampling i.i.d. from the empirical distribution as that underestimates the number of rare species and artificially introduces exchangeability.

\subsection{Data}

In addition to Hamlet we consider data sets of butterfly collecting \cite{yu_2025_spatial}, messages on a social media network \cite{panzarasa_2009_patterns} (sub-sampled), genome coverage (SRA code SRX151616, restricted to the first $10^5$ genome locations, sub-sampled), and simulated packs of MTG cards \cite{joshbirnholz_2025_github}. The messages and genome fragments are sub-sampled to put us into a regime where there is still a reasonable number of species left to discover. We think this is the interesting regime and hence warranted, but note that Figure \ref{fig: performance uniforms} suggests that the SGTE is better at detecting when there is essentially nothing to discover and so this choice is disfavorable for the SGTE. For all but the Hamlet data we use the generalized unseen species model. The data is described in more detail in Appendix \ref{app: data}.

\subsection{Benchmarks}
We compare $\hat{S}_{t,T}^{(H^*)}$ and our suggested distant future estimator (\emph{Ratio-$\alpha$}) to several competitors. A naive benchmark is obtained from the null estimator (\emph{Trivial}). We include the binomial smoothing SGTE with the suggested parameters, the power law estimator of \cite{favaro_2023_nearoptimal} (\emph{MLE-$\alpha$}), and $\hat{S}_{t,T}^{(\text{PW})}$. We also compare against the method of \cite{daley_2014_modeling} (\emph{Padé-GT$[2,3]$}) developed for applications in genome coverage and the support size estimator of \cite{wu_2019_chebyshev} (\emph{Chebyshev}), treating the estimated support size as an estimate for $S_T$. We do this to investigate the difference between the unseen species problem and support size estimation. In their notation we set $k$ to be the corpus length and $c_0=0.45,c_1 = 0.5$ as their experimentally chosen values.
\begin{figure}[ht]
    \centering
    \includegraphics[width=1\linewidth]{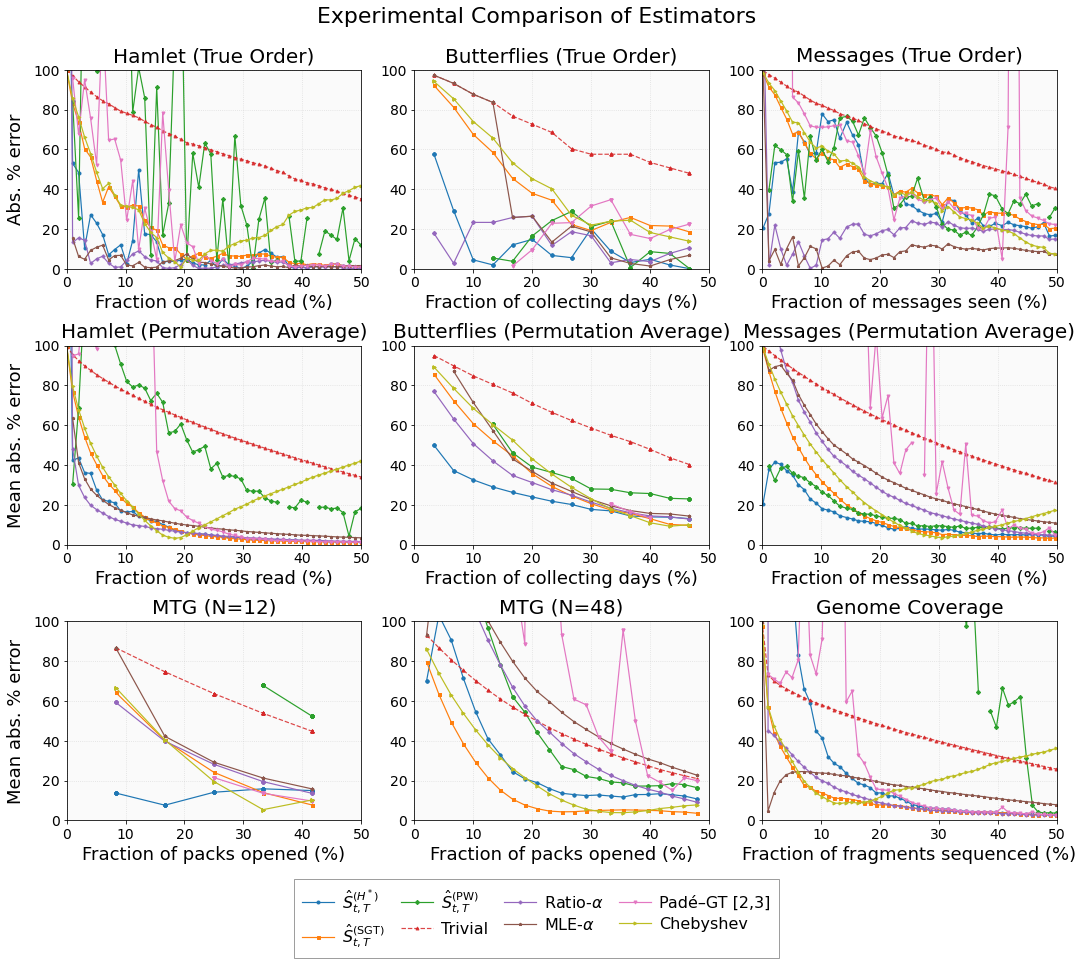}
    \caption{Empirical Comparison of Performance of Estimators}
    \label{fig:Key}
\end{figure}

\subsection{Results}

See the result in Figure \ref{fig:Key}. We see that for the messages data the plots for the true order and the permutation average look totally different. This strongly suggests that exchangeability was severely violated and that the model is therefore not a good fit. One sees that good performance on the permutation average does not translate into good predictions under the temporal ordering. This serves as an important reminder that exchangeability is a non-trivial modeling assumption and that applications may require specialized models. One way to test the suitability of the model might be to artificially use an initial portion of the data as the first expedition and the second part of the data as the return expedition.

We note that as expected the Chebyshev-polynomial-based estimator of \cite{wu_2019_chebyshev} performs poorly for small $r$ as for such $r$ the unseen species problem is very different from support size estimation.

We find that our suggested distant future estimator fairly consistently outperforms that of \cite{favaro_2023_nearoptimal} and that $\hat{S}_{t,T}^{(H^*)}$ performs well but is not universally superior. One expects, from Zipf's law \cite{zipf_1949_human}, that the Hamlet data is prototypical power law data and therefore the poor performance of $\hat{S}_{t,T}^{(\text{PW})}$ there is particularly concerning. There are some missing data points for $\hat{S}_{t,T}^{(\text{PW})}$ because our linear programming solver was sometimes unable to find a solution to the LP. Neither the linear nor the power law estimators are consistently better than the others across data sets. Also, despite theoretically having drastically different prediction horizons it does not appear that the power law estimators are dominant for large $r$. No doubt the extent to which the power law approximation is accurate is an important factor in prediction performance of those estimators. See appendix F of \cite{favaro_2023_nearoptimal} for a diagnostic tool to assess such fit. Raw data for Figure \ref{fig:Key} can be found in Appendix \ref{app: data tables}. 

\section*{Acknowledgments}
We thank Alexander Fuchs-Kreiss, who carefully read previous drafts and provided elaborations on a number of technical details. We also thank Torben Swoboda, who thought of Example \ref{exam: mtg}.

\printbibliography[heading=bibintoc]

\appendix
\section{Proofs}
\subsection{General Facts}
We collect some facts which are not associated with any particular regime but see frequent use in the subsequent proofs.
\begin{itemize}
    \item $\mathcal{F}_t \indep \mathcal{F}_{t,T}$, where $\mathcal{F}_{t,T}$ is the sigma algebra generated by the second expedition.
    \item By the thinning property of Poisson processes, $N_{x \backslash y}, N_{x \cap y}$ and $N_{y \backslash x}$ are independent.
    \item From the definitions it is immediate that $M_{x \cap y} + M_{x \backslash y} = M_{x}$ and $M_{x \backslash y} + M_{x \cap y} + M_{y \backslash x} = M_{x \cup y}.$
\end{itemize}

\begin{lem} \label{lem: linear bounds} If $\mu \prec B$, then
    \begin{align*}
        \IE[S_t] \leq Bt, \qquad
        \IE[S_{t,T}] \leq Brt.
    \end{align*}
\end{lem}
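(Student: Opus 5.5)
We bound the expected counts by the expected number of observed sets times their maximal size, using Poissonization. Under the Poissonized model the number of observations $n$ in the first expedition is Poisson with mean $t$, and each observation $X_j$ is a set drawn from $\mu$. Since $\mu \prec B$, almost surely $|X_j| \leq B$ for every $j$. Every species in $\mathcal{S}_t$ lies in at least one of $X_1,\ldots,X_n$, so
\begin{align*}
S_t = \Big|\bigcup_{j=1}^n X_j\Big| \leq \sum_{j=1}^n |X_j| \leq Bn.
\end{align*}
Taking expectations gives $\IE[S_t] \leq B\,\IE[n] = Bt$. The same bound can be checked species-wise: $\IE[S_t] = \sum_s (1-e^{-M_s t}) \leq t\sum_s M_s$, and $\sum_s M_s = \sum_A |A|\mu(A) \leq B$.

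For the second inequality we use that every newly discovered species must appear in some set from the second expedition. Writing $X_1',\ldots,X_m'$ for the second-expedition observations, with $m \sim \mathrm{Poisson}(t')$ independent of the first expedition, we have $\mathcal{S}_T \setminus \mathcal{S}_t \subseteq \bigcup_{j=1}^m X_j'$. Hence $S_{t,T} \leq Bm$, and taking expectations gives $\IE[S_{t,T}] \leq Bt' = Brt$.

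There is no real obstacle. The only point to state clearly is that the set-union bound $|\bigcup_j X_j| \leq \sum_j |X_j|$ holds pathwise. Equivalently, in the species-wise form, one applies $1-e^{-x}\leq x$ and interchanges the sum over species with the sum over sets (Tonelli), which is justified because all terms are nonnegative.
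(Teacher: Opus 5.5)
Your proposal is correct and takes essentially the paper's approach. You bound $S_t \le Bn$ and $S_{t,T} \le Bm$ pathwise, then take expectations of the Poisson counts; the species-wise check via $\sum_s M_s \le B$ is a valid but unnecessary addition.
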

\begin{proof}[Proof of Lemma \ref{lem: linear bounds}]
  As before, let $n \sim \text{Po}(t)$ be the number of sets observed by time $t$ and let $m \sim \text{Po}(rt)$ be the number of sets observed in the second time period. 
  \begin{align*}
      \IE[S_t] &\leq \IE[Bn] = Bt, \\
      \IE[S_{t,T}] &\leq \IE[Bm] = Brt.
  \end{align*}
  \end{proof}
To realize these bounds, take $K$ disjoint sets of $B$ elements each. Let $\mu$ be the uniform distribution on these $K$ sets and then let $K \rightarrow \infty$.

\subsection{Symmetries}
\begin{proof}[Proof of Proposition~\ref{prop:symmetries}] We show that the SEP implies linearity. The other implications are trivial. For $i \in \mathbb{Z}_{>0}$, let $e_i \in \mathbb{Z}_{\geq 0}^\infty$ be the sequence with $i$th coordinate $1$ and all other coordinates zero. It holds that
    \begin{align*}
        \hat{S}_{t,T}(r,t,\boldsymbol{\phi}) &= \hat{S}_{t,T}(r,t,\sum_{i=1}^\infty \phi_i e_i), \\
        &= \sum_{i=1}^\infty \hat{S}_{t,T}(r,t,\phi_ie_i), \\
        &= \sum_{i=1}^\infty\sum_{j=1}^{\phi_i} \hat{S}_{t,T}(r,t,e_i), \\
        &= \sum_{i=1}^\infty \hat{S}_{t,T}(r,t,e_i) \phi_i.
    \end{align*}
Now defining $H(i) := \hat{S}_{t,T}(r,t,e_i)$ shows the estimator to be linear.
\end{proof}
\subsection{Near Future}

\begin{proof}[Proof of Proposition \ref{prop: error decomposition}]
We expand
\begin{align*}
    \IE[|S_{t,T}-\hat{S}_{t,T}|^2] &=  \IE[|\sum_{s \in \mathcal{S}}1_{N_s=0}1_{N_s'>0}+1_{N_s>0}(-r)^{N_s}|^2], \\
    &= \IE[\sum_{s \in \mathcal{S}}1_{N_s=0}1_{N_s'>0}+1_{N_s > 0}r^{2N_s}] \\
    &+ \sum_{x,y} \IE[(1_{\substack{N_x=0\\ N_x'>0}}+1_{N_{x}>0}(-r)^{N_x})(1_{\substack{N_y=0\\ N_y'>0}}+1_{N_y > 0}(-r)^{N_y})].
\end{align*}
The first line above is just $\delta(t)$. It remains to prove that the second line equals $\epsilon(t)$. Expanding the brackets we see that we have three types of terms to deal with. We deal with each in turn.
By independence of past and future,
\begin{align*}
    \IE[1_{N_x=0}1_{N_x'>0}1_{N_y=0}1_{N_y'>0}] &= \IE[1_{N_x=0}1_{N_y=0}]\IE[1_{N_x'>0}1_{N_y'>0}]
\end{align*}
It is immediate that $\IE[1_{N_x=0}1_{N_y=0}] = e^{-M_{x \cup y}t}$. Moreover,
\begin{align*}
    \IE[1_{N_x'>0}1_{N_y'>0}] &= \IE[1_{N_x'>0}1_{N_y'>0}|N_{x \cap y}'=0]P(N_{x \cap y}'=0) \\
    &+\IE[1_{N_x'>0}1_{N_y'>0}|N_{x \cap y}'>0]P(N_{x \cap y}'>0), \\
    &= (1-e^{-rM_{x \backslash y}t})(1-e^{-rM_{y \backslash x}t})e^{-rM_{x \cap y}t}+(1-e^{-rM_{x \cap y}t}), \\
    &= 1-e^{-rM_xt}-e^{-rM_yt}+e^{-rM_{x \cup y}t}.
\end{align*}
Therefore,
\begin{align} \label{eq: type 1}
    \IE[1_{N_x=0}1_{N_x'>0}1_{N_y=0}1_{N_y'>0}] = e^{-M_{x \cup y}t}(1-e^{-rM_xt}-e^{-rM_yt}+e^{-rM_{x \cup y}t}).
\end{align}
Next we compute one of the cross terms. The other is the same up to swapping the roles of $x$ and $y$.
If $L$ is a Poisson random variable with mean $\lambda$ then $\IE[1_{L>0}(-r)^L] = \IE[(-r)^L]-P(L = 0) = e^{-\lambda(1+r)}-e^{-\lambda} = e^{-\lambda}(e^{-r\lambda}-1)$. From this one sees that
\begin{align}
    \IE[1_{N_x=0}1_{N_x'>0}1_{N_y>0}(-r)^{N_y}] &= P(N_x=0)\IE[1_{N_x'>0}1_{N_y>0}(-r)^{N_y}|N_x=0]\nonumber, \\
    &= P(N_x=0)P(N_x'>0)\IE[1_{N_y>0}(-r)^{N_y}|N_x=0]\nonumber, \\    
    &=(1-e^{-rM_xt})e^{-M_xt}\IE[1_{N_{y \backslash x} > 0}(-r)^{N_{y \backslash x}}] \nonumber, \\
    &= (1-e^{-rM_xt})e^{-M_xt}e^{-M_{y \backslash x}t}(e^{-rM_{y \backslash x}t}-1) \nonumber, \\
    &= -e^{-M_{x \cup y}t}(1-e^{-rM_xt})(1-e^{-rM_{y \backslash x}t}). \label{eq: type 2}
\end{align}
We turn our attention to the third type of term.
\begin{align*}
    \IE[1_{N_x >0}1_{N_y > 0}(-r)^{N_x}(-r)^{N_y}] &= \IE[(-r)^{N_x}(-r)^{N_y}]-P(N_x=0)\IE[(-r)^{N_{y \backslash x}}] \\
    &-P(N_y=0)\IE[(-r)^{N_{x \backslash y}}]+P(N_x=0,N_y=0), \\
    &= \IE[(-r)^{N_x}(-r)^{N_y}]-e^{-M_xt}e^{-(1+r)M_{y \backslash x}t} \\
    &- e^{-M_yt}e^{-(1+r)M_{x \backslash y}t}+e^{-M_{x \cup y}t}.
\end{align*}
Next notice that $\IE[(-r)^{N_x}(-r)^{N_y}]$ is exactly the probability generating function of a suitable bivariate Poisson distribution. From the first equation of \cite{loukas_1986_the}, we have
\begin{align*}
    \IE[(-r)^{N_x}(-r)^{N_y}] = \exp(-(1+r)M_{x \cup y}t+r(r+1)M_{x \cap y}t).
\end{align*}
Combining these terms and tidying up slightly we have

\begin{align} \label{eq: type 3}
    \IE[1_{N_x >0}1_{N_y > 0}(-r)^{N_x}(-r)^{N_y}] &= e^{-(1+r)M_{x \cup y}t+r(r+1)M_{x \cap y}t} \\
    &+e^{-M_{x \cup y}t}(1-e^{-rM_{y \backslash x}t}-e^{-rM_{x \backslash y}t}). \notag
\end{align}

Combining (\ref{eq: type 1}), (\ref{eq: type 2}),(\ref{eq: type 3}) gives  that each term is
\begin{align*}
    e^{-(1+r)M_{x \cup y}t+r(r+1)M_{x \cap y}t}+e^{-M_{x \cup y}t}L_{x,y},
\end{align*}
where
\begin{align*}
    L_{x,y} :&=1-e^{-rM_{x \backslash y}t}-e^{-rM_{y \backslash x}t}  \\
    &-1+e^{-rM_xt}+e^{-rM_{y \backslash x}t} - e^{-rM_{x \cup y}t} \\
    &-1+e^{-rM_yt}+e^{-rM_{x \backslash y}t} - e^{-rM_{x \cup y}t} \\
    &+1-e^{-rM_xt}-e^{-rM_yt}+e^{-rM_{x \cup y}t},\\
    &= -e^{-rM_{x \cup y}t}.
\end{align*}
Thus the decomposition holds with $$\epsilon(t) = \sum_{x,y}e^{-(1+r)M_{x \cup y}t}(e^{r(r+1)M_{x \cap y}t}-1),$$
as claimed.
\end{proof}

\begin{lem}\label{lem: epsilon bound} If $\mu \prec B$ and $r \leq 1$ then
\begin{align*}
    \epsilon(t) \leq r(r+1)(B-1)\IE[S_t].
\end{align*}
\end{lem}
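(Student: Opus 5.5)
The plan is to bound $\epsilon(t)$ term by term, reducing each summand to a multiple of $\frac{M_{x\cap y}}{M_x}(1-e^{-M_xt})$, and then to sum over $y$ using bounded arity. Fix an ordered pair $(x,y)$ of distinct species and write $a := M_{x\cap y}t$ and $u := M_x t$. By the general facts listed at the start of the appendix, $a \leq u \leq M_{x\cup y}t$. Since the exponential prefactor is decreasing in $M_{x\cup y}$, the summand satisfies
\begin{align*}
e^{-(1+r)M_{x\cup y}t}\big(e^{r(r+1)M_{x\cap y}t}-1\big) \leq e^{-(1+r)u}\big(e^{r(r+1)a}-1\big).
\end{align*}
(If $M_x=0$ then $M_{x\cap y}=0$ and the term vanishes, so assume $u>0$.)

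Next I use convexity in $a$. The function $g(a) := e^{r(r+1)a}-1$ is convex with $g(0)=0$, so $g(a) \leq \frac{a}{u}g(u)$ for $0 \leq a \leq u$. It then suffices to prove the one-variable inequality
\begin{align*}
h(u) := e^{-(1-r^2)u}-e^{-(1+r)u} \leq r(r+1)\big(1-e^{-u}\big) \qquad \text{for all } u \geq 0, \; r \leq 1.
\end{align*}
This is the only genuinely analytic step, and I expect it to be the main obstacle, since the two sides agree to first order at $u=0$ (both have slope $r(r+1)$). So I compare derivatives rather than values. Both sides vanish at $u=0$, so it is enough that $h'(u) \leq r(r+1)e^{-u}$. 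After dividing by $1+r$, this becomes
\begin{align*}
e^{-(1+r)u} \leq (1-r)e^{-(1-r^2)u}+re^{-u}.
\end{align*}
This holds because $r\in[0,1]$ makes the right-hand side a convex combination of $e^{-(1-r^2)u}$ and $e^{-u}$, both of which are at least $e^{-u} \geq e^{-(1+r)u}$.

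Combining the steps gives
\begin{align*}
\epsilon(t) \leq r(r+1)\sum_{x}\frac{1-e^{-M_xt}}{M_x}\sum_{y\neq x}M_{x\cap y}.
\end{align*}
Finally I use bounded arity. We have $\sum_{y\neq x}M_{x\cap y}=\sum_{A\ni x}\mu(A)(|A|-1) \leq (B-1)M_x$, because every set containing $x$ has at most $B-1$ other elements. This gives $\epsilon(t) \leq r(r+1)(B-1)\sum_x(1-e^{-M_xt})$. The last sum equals $\IE[S_t]$, since under Poissonization $P(N_x>0)=1-e^{-M_xt}$.
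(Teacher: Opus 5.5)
Your proof is correct, and it takes a genuinely different route from the paper.

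The paper argues dynamically. It enumerates the countably many sets of size at most $B$ and builds $\mu$ by adding mass $\lambda$ to one set $A_{n+1}$ at a time. It then shows that $\partial_\lambda \epsilon$ never exceeds $\partial_\lambda\, r(r+1)(B-1)\IE[S_t]$. This needs several ingredients:
\begin{enumerate}
\item the observation that pairs meeting the new set in exactly one element can only decrease;
\item the computation $\partial_\lambda \epsilon_{x,y} = t(r+1)e^{-(r+1)M_{x\cup y}t}\bigl(1+(r-1)e^{r(r+1)M_{x\cap y}t}\bigr) \leq r(r+1)t\,e^{-(r+1)M_{x\cup y}t}$ for pairs inside the new set, which is where $r\leq 1$ enters;
\item a matching of the $|A_{n+1}|(|A_{n+1}|-1)$ ordered pairs in the new set to its elements;
\item a monotone convergence and Fatou argument to pass from the finitely supported $\mu_n$ to $\mu$.
\end{enumerate}

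You instead prove a static per-pair bound,
\[
e^{-(1+r)M_{x\cup y}t}\bigl(e^{r(r+1)M_{x\cap y}t}-1\bigr) \leq r(r+1)\frac{M_{x\cap y}}{M_x}\bigl(1-e^{-M_xt}\bigr).
\]
You get it from monotonicity in $M_{x\cup y}$, convexity in $M_{x\cap y}$, and a one-variable derivative comparison in which $r\leq 1$ plays exactly the role it plays in the paper. Bounded arity then enters only through $\sum_{y\neq x}M_{x\cap y}\leq (B-1)M_x$.

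What your route buys:
\begin{itemize}
\item It avoids the enumeration, the matching and the limiting argument entirely; nonnegativity and Tonelli suffice.
\item It gives a slightly sharper intermediate estimate, in which $B-1$ is replaced by $\frac{1}{M_x}\sum_{A\ni x}\mu(A)(|A|-1)$, the $\mu$-weighted average of $|A|-1$ over sets containing $x$.
\end{itemize}

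What the paper's route buys is generality: it compares growth rates of two functionals of $\mu$ without needing a closed-form pointwise inequality. For this particular statement, though, yours is the cleaner proof.
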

\begin{proof}
Since there are only countably many subsets of $\mathcal{S}$ with at most $B$ elements we may enumerate them in some fixed but arbitrary way. Let $\mu_n$ agree with $\mu$ on the first $n$ sets but assign zero intensity to all subsequent sets (since we are dealing with the Poissonized version of the problem $\mu_n$ not being normalized is not a problem). We aim to establish the sought inequality for all the measures $\mu_n$ and then use a limit argument to obtain it for $\mu$.

Notice that the inequality trivially holds for $\mu_0 \equiv 0$. It remains to show that the inequality never ceases to hold. Let $\lambda$ be the amount of mass we add at the $(n+1)$th step and $A_{n+1}$ the set we add it to. We will show that the RHS is increasing faster in $\lambda$ than the LHS so that when we add the mass (producing $\mu_{n+1}$) the inequality remains true. The $\epsilon(t)$ associated to $\mu_{n+1}$ may be written as,
$$\sum_{x,y}\epsilon_{x,y}^{(n+1)}(t) := \sum_{x,y}e^{-(r+1)(M_{x \cup y}^{(n)}+1_{\{x,y\} \cap A_{n+1} \neq \emptyset}\lambda)t}(e^{r(r+1)(M_{x \cap y}^{(n)}+1_{\{x,y\} \subset A_{n+1}}\lambda)t}-1),$$
where $M_{x \cap y}^{(n)} := \sum_{k \leq n: A_k \supset \{x,y\}} \mu(A_k)$ is the $M_{x \cap y}$ associated to $\mu_{n}$ and we define ${M}_{x \cup y}^{(n)}$ analogously. This notation separates that which depends on $\lambda$ from that which does not. We decompose as,
\begin{align*}
    \partial_{\lambda}\epsilon^{(n+1)}(t) &= \partial_{\lambda} \sum_{x,y}\epsilon_{x,y}^{(n+1)}(t), \\
    &= \partial_{\lambda}\sum_{x,y: |\{x,y\} \cap A_{n+1}| =0}\epsilon_{x,y}^{(n+1)}(t)+\partial_{\lambda}\sum_{x,y: |\{x,y\} \cap A_{n+1}| =1}\epsilon_{x,y}^{(n+1)}(t) \\
    &+\partial_{\lambda}\sum_{x,y: |\{x,y\} \cap A_{n+1}| =2}\epsilon_{x,y}^{(n+1)}(t).
\end{align*}
Increasing $\lambda$ does not change the first term at all since for such $x,y$ neither $M_{x \cap y}$ nor $M_{x \cup y}$ are changed. For the second collection of terms, only $M_{x \cup y}$ is changed, and it is increasing which can only decrease $\epsilon_{x,y}(t)$. Thus we may take only the third term (which has finitely many summands) as an upper bound on the derivative. A short computation shows that for such terms,
\begin{align*}
    \partial_\lambda \epsilon_{x,y}(t)&= t(r+1)e^{-(r+1)M_{x \cup y}^{(n+1)}t}(1+(r-1)e^{r(r+1)M_{x \cap y}^{(n+1)}t}), \\
    &\leq t(r+1)e^{-(r+1)M_{x \cup y}^{(n+1)}t}r,
\end{align*}
Note that $\lambda$ does implicitly appear in the above equations but that we have absorbed it into the $M$'s. Let $\IE_n[S_t] := \sum_{s \in \mathcal{S}}\left(1-e^{-M^{(n)}_st}\right)$, where $M^{(n)}_s := \sum_{k \leq n: s \in A_k}\mu(A_k)$. Then,
\begin{align}
    \partial_{\lambda}r(r+1)(B-1)\IE_{n+1}[S_t] &=  r(r+1)(B-1)\partial_\lambda \sum_{s \in A_{n+1}}\left(1-e^{-M_s^{(n)}t}e^{-\lambda t}\right)  \nonumber, \\
    &=r(r+1)(B-1)\sum_{s \in A_{n+1}}t e^{-M_s^{(n)}t}e^{-\lambda t} \nonumber \\
    &= r(r+1)(B-1)\sum_{s \in A_{n+1}}t e^{-M_s^{(n+1)}t}\label{eps RHS2}.
\end{align}
There are exactly $|A_{n+1}|(|A_{n+1}|-1)$ pairs contained in $A_{n+1}$ and exactly $|A_{n+1}|$ terms in (\ref{eps RHS2}). Use up the $B-1$ pre-factor in (\ref{eps RHS2}) to duplicate each term in the sum $B-1$ times. After doing so we end up with $|A_{n+1}|(B-1) \geq |A_{n+1}|(|A_{n+1}|-1)$ terms. Pick a pair of distinct elements $x,y \in A_{n+1}$. There are two ordered pairs consisting of these elements. Map one ordered pair to $x$ and the other to $y$. This assigns to each element of $A_{n+1}$ at most $B-1$ ordered pairs in such a way that each element of $A_{n+1}$ is assigned only pairs which contain it. Observing that $M_{x \cup y} \geq M_x$ for any $y$ the result now follows for all finite $\mu_n$ from a term-wise comparison. To finish, note that $\IE_n[S_t]$ is increasing in the masses and so $\lim_{n \rightarrow \infty}\IE_{n}[S_t] = \IE[S_t]$ by the monotone convergence theorem. Now, by continuity $\epsilon^{(n)}_{x,y}(t) \rightarrow \epsilon_{x,y}(t)$ and so we have, by Fatou's lemma,
\begin{align*}
    \epsilon(t) &= \sum_{x,y} \epsilon_{x,y}(t), \\
    &=  \sum_{x,y} \liminf_{n \rightarrow \infty}\epsilon_{x,y}^{(n)}(t), \\
    &\leq \liminf_{n \rightarrow \infty} \epsilon^{(n)}(t), \\
    &\leq r(r+1)(B-1)\liminf_{n \rightarrow \infty} \IE_{n}[S_t], \\
    &= r(r+1)(B-1)\IE[S_t].
\end{align*}
\end{proof}

\begin{proof}[Proof of Theorem \ref{thm: GT analysis}]

The upper bounds follow from combining Proposition \ref{prop: error decomposition}, Lemma \ref{lem: epsilon bound} and Lemma \ref{lem: linear bounds}. 

Next we show the lower bound without symmetry constraints on the estimator. We consider first the case $B=1$.
Recall that the conditional expectation is the best $\mathcal{F}_t$-measurable prediction of a random variable in mean-squared error, that is, no $\mathcal{F}_t$-measurable estimator can have a better mean squared error than that of $\IE[S_{t,T}|\mathcal{F}_t]$. 
\begin{align*}
    \IE[S_{t,T}| \mathcal{F}_t] &= \sum_{s \in \mathcal{S}}\IE[1_{N_s=0}1_{N_s'>0}|\mathcal{F}_t], \\
    &= \sum_{s \in \mathcal{S}}1_{N_s = 0}(1-e^{-rM_st}).
\end{align*}
Next we compute the mean squared error in using the conditional expectation as an estimator for $S_{t,T}$ for $\mu \prec 1$. By the tower law of conditional expectation, $\IE[\IE[S_{t,T}|\mathcal{F}_t]] = \IE[S_{t,T}]$ so that the estimator is unbiased. Notice also that the unbiasedness also holds species-wise and recall the independence of the future and the past.
\begin{align*}
    \IE[(S_{t,T}-\IE[S_{t,T}|\mathcal{F}_t])^2] &= \mathbb{V}\left[\sum_{s \in \mathcal{S}}1_{N_s =0}(1_{N_s' > 0}-1+e^{-rM_st})\right], \\
    &=\sum_{s \in \mathcal{S}}\mathbb{V}[1_{N_s=0}(1_{N_s' > 0}-1+e^{-rM_st})], \\
    &= \sum_{s \in \mathcal{S}}\IE[1_{N_s=0}]\IE[(1_{N_s'>0}-1+e^{-rM_st})^2], \\
    &= \sum_{s \in \mathcal{S}}\IE[1_{N_s=0}]\mathbb{V}[1_{N_s'>0}], \\
    &= \sum_{s \in \mathcal{S}}e^{-M_st}(1-e^{-rM_st})e^{-rM_st}.
\end{align*}
Again as in the comment following Lemma \ref{lem: linear bounds} we may make the right hand side as close as we wish to $rt$ by taking a distribution on many rare species. For $B > 1$, use speciation.  

We next show the lower bound under the SEP assumption for $\mu \prec 1$. By Proposition \ref{prop:symmetries} the estimator is linear with some choice of $H$. Computing the MSE for a linear estimator gives
        \begin{align*}
        \IE[|S_{t,T}-\hat{S}^{(H)}_{t,T}|^2] &= \mathbb{V}[S_{t,T}-\hat{S}^{(H)}_{t,T}]+\IE[S_{t,T}-\hat{S}_{t,T}^{(H)}]^2 \\
        &= \sum_{s \in \mathcal{S}}\IE[1_{N_s=0}1_{N_s'>0}+H(N_s)^2]\\&-\sum_{s \in \mathcal{S}} \IE[1_{N_s=0}1_{N_s' >0}-H(N_s)]^2\\
        &+\left(\sum_{s \in \mathcal{S}}\IE[1_{N_s=0}1_{N_s' >0}-H(N_s)]\right)^2.
        \end{align*}

Let $p$ be the reciprocal of an integer and consider the uniform distribution on $\frac{1}{p}$ species. We intend on taking $p \rightarrow 0$ through reciprocals of integers. Studying the representation of the MSE above one sees that in the second term one has $\frac{1}{p}$ terms, which, owing to the square, each depend on $p$ only from second order, meaning that we can make the second term arbitrarily small. In taking $p \rightarrow 0$ we can make the event that there are no duplicate samples up to time $T$ arbitrarily likely. Bringing the sums inside the expectations we see that this implies the lower bound
    \begin{align*}
        \sup_\mu \IE[|S_{t,T}-\hat{S}^{(H)}_{t,T}|^2] &\geq rt+tH_1^2+(rt-tH_1)^2 =(r-H_1)^2t^2+(r+H_1^2)t, \\
        &= (t^2+t)H_1^2-2rt^2H_1+r^2t^2+rt.
    \end{align*}
    This is a quadratic in $H_1$ with positive leading coefficient so its minimum is achieved at $H_1 = -\frac{-2rt^2}{2(t^2+t)} = \frac{rt^2}{t^2+t}$. Evaluating the quadratic at this value and tidying up the resulting expression completes the proof. For $B >1$ use speciation.

Next we show the lower bound under the assumption that the estimator both have the SEP and be atemporal. Let $g_H(x)$ be the exponential generating function of the sequence $H$. Then
\begin{align*}
    \IE[S_{t,T}-\hat{S}_{t,T}^{(H)}] = \sum_{s \in \mathcal{S}}e^{-M_st}(1-e^{-rM_st}-g_H(M_st)).
\end{align*} 

Take a uniform distribution on $K$ species and send $K \rightarrow \infty$ as before. We see that this gives $\sup_{\mu}|\IE[S_{t,T}-\hat{S}^{(H)}_{t,T}]| \geq t|r-H_1|$. Thus if $H_1 \neq r$ the result is immediate and we may assume $H_1 = r$. Now pick some $x$ such that $g_H(x) \neq 1-e^{-rx}$. If this is not possible then by the identity theorem for analytic functions and the fact that convergent power series are absolutely convergent the estimator is the GTE or has infinite MSE.

At time $t$ take $\left\lfloor \frac{t}{x} \right\rfloor$ species with probability $\frac{x}{t}$ then assign the remaining mass by, for some $p > 0$, creating more species in such a way that no species is more probable than $p$. As we take $p$ to zero, we can make the impact on the bias of the leftover mass as small as we like since $H_1 = r$. Thus, \begin{align*}
\sup_{\mu}|\IE[S_{t,T}-\hat{S}_{t,T}^{(H)}]| &\geq \left\lfloor \frac{t}{x}\right\rfloor e^{-x} |g_H\left(t\frac{x}{t}\right)-1+e^{-r\frac{x}{t}t}| \\ &\geq \left(\frac{t}{x}-1\right)e^{-x}|g_H(x)-1+e^{-rx}|,\end{align*}
which grows linearly in $t$. This implies the result since the MSE is lower bounded by the squared absolute bias. For $B>1$, use speciation.
\end{proof}

\begin{proof}[Proof of Proposition \ref{prop: epsilon-connectedness}]
    Let $f(t) := e^{-bt}(e^{at}-1)$ for positive reals with $a < b$. Differentiating and evaluating $f$ at the maximizer one gets that the maximum is
    \begin{align*}
        \left(\frac{b-a}{b}\right)^\frac{b}{a}\frac{a}{b-a} = \left(\frac{b-a}{b}\right)^{\frac{b}{a}-1}\frac{a}{b} \leq \frac{a}{b}.
    \end{align*}
    It is a standard property of independent exponential random variables (the waiting times of a Poisson process) that the probability that the minimum is realized by one is the ratio of its intensity to the sum of the intensities. Thus $P(D_{x,y}) = \frac{M_{x \cap y}}{M_{x \cup y}}$. Combining these facts now completes the proof.
\end{proof}
\begin{proof}[Proof of Proposition \ref{prop: epsilon-estimator}]
    For each $x,y$ we have
    \begin{align*}&\IE\left[(r^{2N_{x \cap y}}-(-r)^{N_{x \cap y}})(-r)^{N_{x \backslash y}+N_{y \backslash x}}\right]\\
    &= \IE[r^{2N_{x \cap y}}-(-r)^{N_{x \cap y}}]\IE[(-r)^{N_{x \backslash y}+N_{y \backslash x}}], \\
    &= (e^{(r^2-1)M_{x \cap y}t}-e^{-(r+1)M_{x \cap y}t})e^{-(r+1)(M_{x \backslash y}+M_{y \backslash x})t}, \\
    &= e^{-(r+1)M_{x \cup y}t}(e^{r(1+r)M_{x \cap y}t}-1).
    \end{align*}
where we used independence of $N_{x \cap y}, N_{x \backslash y}, N_{y \backslash x}$ and the fact that a Poisson random variable with intensity $\lambda$ has probability generating function $e^{\lambda(z-1)}$. Summing over $x,y$ gives the result.
\end{proof}
\begin{proof}[Proof of Proposition \ref{Prop: epsilon-perfect-pairs}]
For each $x,y$
    \begin{align*}
        e^{-(1+r)M_{x \cup y} t}(e^{r(r+1)M_{x \cap y}t}-1) &\leq e^{-(1+r)M_{x \cup y} t}r(r+1)M_{x \cap y}te^{r(r+1)M_{x \cap y}t}, \\
        &\leq r(r+1)M_{x \cap y}te^{-M_{x \cap y}t} \\
        &=r(r+1)P(N_{x \cap y}=1),
    \end{align*} where we applied the mean value theorem in the first line, the assumption in the second, and recalling the probability that a Poisson random variable takes the value one in the third. Summing over all pairs $x,y$ now completes the proof.
\end{proof}

\subsection{Intermediate Future}\label{sec: Intermediate Future}

For the lower bound in Theorem \ref{thm: opt powerhouse} we will need the following two lemmas.

\begin{lem} \label{lem: lower bias build}
    \begin{align*}
    &\sup_{1 \prec \mu \prec 1}\IE[S_{t,T}-\hat{S}_{t,T}]^2 \geq \frac{1}{16}Y_b.\\
    &\sup_{\mu \prec 1}\IE[S_{t,T}-\hat{S}_{t,T}]^2 \geq \frac{1}{4}Y_b.
\end{align*}
\end{lem}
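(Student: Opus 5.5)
Write $F(p) := e^{-pt}(1-e^{-rpt}-g_H(pt))$ for the bias contributed by a single species of (Poissonized) intensity $p$. This is the computation in the proof of Theorem \ref{thm: GT analysis}. For $\mu \prec 1$ with masses $\{p_s\}$ we then have $\IE[S_{t,T}-\hat{S}_{t,T}] = \sum_s F(p_s)$. We have $\sqrt{Y_b} = \sup_{p\in(0,1]} |F(p)|/p$, extended continuously at $0$. Fix $p^*$ attaining (or nearly attaining) this supremum, and without loss of generality assume $F(p^*)>0$ (else flip signs). The plan is to build distributions whose bias is at least a constant fraction of $|F(p^*)|/p^*$, using copies of species of mass $p^*$ plus harmless filler.

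\emph{Case $\mu \prec 1$ (empty set allowed).} Here total mass may be less than one, since leftover mass can be placed on $\emptyset$. Take $k=\lfloor 1/p^*\rfloor \ge 1$ species of mass $p^*$ and put the rest on $\emptyset$. The bias is $kF(p^*) \ge \frac{1}{2p^*}F(p^*)$, because $\lfloor x\rfloor \ge x/2$ for $x\ge1$. Squaring gives $\frac14 Y_b$. If $p^*=0$ (the boundary case), the continuous extension means $|F(p)|/p \to \sqrt{Y_b}$ as $p\downarrow 0$. We then apply the same construction with small $p$, where $\lfloor 1/p\rfloor p\to 1$, and take a supremum.

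\emph{Case $1 \prec \mu \prec 1$ (mass must sum to one).} The leftover mass $m = 1-kp^* \in [0,p^*)$ must now go on genuine species, and their bias could cancel ours. I would split the remainder over $N$ species of mass $m/N$ and let $N\to\infty$. Their total bias is $N F(m/N) \to m F'(0)$, where $F'(0)=t(r-H_1)$, so the limit equals $m\cdot\lim_{p\to0}F(p)/p$. This is bounded in absolute value by $m\sqrt{Y_b} \le p^*\sqrt{Y_b} = |F(p^*)|$.

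So the bias is at least $(k-1)F(p^*)$, which suffices when $k\ge2$: then $k-1 \ge k/3 \ge \frac{1}{4p^*}$ up to floor effects. When $k=1$, i.e.\ $p^*>1/2$, this gives nothing, and I would switch to a second candidate. One option is a single species of mass $1$, with bias $F(1)$. Another is to take the whole mass split into infinitely many tiny species, with bias $\lim_{p\to 0}F(p)/p$. Alternatively, use the competing choice $\lfloor 1/p^*\rfloor$ versus $\lceil 1/p^*\rceil$ species with rescaled masses. The $1/16$ constant suggests the intended argument uses two alternative distributions, one of which must have bias at least $\frac14\sqrt{Y_b}$ by a triangle-inequality dichotomy. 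Concretely: take distribution A with $k$ species of mass $p^*$ plus dust, and distribution B consisting of dust only. Their biases differ by exactly $kF(p^*)$ minus the dust correction $kp^*\cdot \lim F(p)/p$. Hence one of them has absolute bias at least half of $k|F(p^*)|$ minus that correction.

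\emph{Main obstacle.} I expect the main difficulty to be controlling the filler correction so that the dichotomy yields a clean constant. The approach is to compare A with B: the difference $k(F(p^*) - p^*L)$, where $L=\lim_{p\to0}F(p)/p$, need not be large if $F(p^*)/p^*\approx L$. But in that case B alone already has bias $|L| \approx \sqrt{Y_b}$. Balancing the two cases is what should give $\frac14\sqrt{Y_b}$, and squaring then gives $1/16$.
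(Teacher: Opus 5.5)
Your $\mu\prec1$ argument is the same as the paper's. For $1\prec\mu\prec1$ your route is genuinely different, and once the last step is written out it is correct.

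The paper never uses vanishing filler. It asks whether some $p'\le p^*$ has at least half the maximal ratio and no root of $1-e^{-rpt}-g_H(pt)$ in $(0,p')$. If such a $p'$ exists, it takes $\lfloor 1/p'\rfloor$ species of mass $p'$ plus arbitrary filler; the filler cannot change sign, so it only adds to the bias. If not, it takes $\lfloor 1/p^*\rfloor$ species of mass $p^*$, adds zero-bias filler species at the smallest root $p_0$, and ends with one leftover species of mass $p_L<p_0$, whose ratio is then forced below half the maximum.

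You instead fill with dust. Its total bias tends to $mL$ with $L=t(r-H_1)$, and $|L|\le\sqrt{Y_b}$ because $p=0$ is included in the maximum defining $Y_b$. This bypasses all root structure, including the paper's aside about roots accumulating at $0$. The price is working with limits of distributions, which the paper already does when $p^*=0$.

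The ``balancing'' you leave open takes one line, and it makes your separate $k\ge2$ case unnecessary. Write A and B for the limiting biases and normalize $F(p^*)=p^*\sqrt{Y_b}>0$; note that $kp^*>\frac{k}{k+1}\ge\frac12$.
\begin{itemize}
\item If $L\ge0$, A alone has bias at least $kp^*\sqrt{Y_b}>\frac12\sqrt{Y_b}$.
\item If $L<0$, the triangle inequality gives $\max(|A|,|B|)\ge\frac12|A-B|=\frac12 kp^*(\sqrt{Y_b}+|L|)>\frac14\sqrt{Y_b}$.
\end{itemize}
Squaring gives exactly the $\frac1{16}$ of the lemma. For $p^*=0$, B alone already gives bias $\sqrt{Y_b}$.

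Splitting instead on whether $|L|\ge\frac13\sqrt{Y_b}$ gives the better constant $\frac19$. In the remaining case A has bias at least $kp^*\sqrt{Y_b}-(1-kp^*)\frac13\sqrt{Y_b}=\frac13(4kp^*-1)\sqrt{Y_b}>\frac13\sqrt{Y_b}$.

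Finally, your chain $k-1\ge k/3\ge\frac{1}{4p^*}$ fails for $k=2$ and $p^*$ slightly above $\frac13$. The estimate you actually need is $(k-1)p^*>\frac{k-1}{k+1}\ge\frac13$.
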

\begin{proof}
We will lower bound the supremum by finding a $\mu$ with large squared bias. Because the supremum over $p$ is over a compact set, the supremum is attained at some $p^*$ by continuity. If $p^*=0$ then taking a uniform distribution on $\frac{1}{p}$ symbols and then sending $p$ to zero through reciprocals of integers gives the result. We may therefore focus on $p^* > 0$. 

For the $\mu \prec 1$ result, create $\lfloor \frac{1}{p^*}\rfloor$ species with probability $p^*$, with total probability mass $m_b := \lfloor \frac{1}{p^*}\rfloor p^*$. Now, for the $\mu \prec 1$ result assign the leftover probability mass $1-m_b$ to the empty set. This probability mass does not affect the bias, in particular there is no cancellation due to it. Therefore, in this case,

\begin{align*}
    \sup_\mu |\IE[S_{t,T}-\hat{S}^{(H)}_{t,T}]| &\geq \left \lfloor \frac{1}{p^*} \right\rfloor e^{-p^*t}|1-e^{-rp^*t}-g_H(p^*t)|, \\
    & \geq p^*\left\lfloor \frac{1}{p^*} \right\rfloor \frac{e^{-p^*t}}{p^*}|1-e^{-rp^*t}-g_H(p^*t)|, \\
    & \geq \frac{1}{2}\frac{e^{-p^*t}}{p^*}|1-e^{-rp^*t}-g_H(p^*t)|.
\end{align*}
and the result follows from squaring.

Turning now to the $1 \prec \mu \prec 1$ result, suppose that there exists a $p' \leq p^*$ such that the following hold.
\begin{itemize}
    \item $
    \frac{e^{-p't}}{p'}|1-e^{-rp't}-g_H(p't)| \geq \frac{1}{2}\frac{e^{-p^*t}}{p^*}|1-e^{-rp^*t}-g_H(p^*t)|$.
    \item $1-e^{-rpt}-g_H(pt)$ has no root in $(0,p')$.
\end{itemize}

 Then take $\lfloor \frac{1}{p'} \rfloor$ species each with probability $p'$, allocating a total mass of $p'\lfloor \frac{1}{p'} \rfloor \geq \frac{1}{2}$ this way. For the remaining mass, allocate it however. Because there is no root there is no sign change so it will only contribute to the bias. Thus we have that in this case, the constructed $\mu$ witnesses that

\begin{align*}
    \sup_\mu |\IE[S_{t,T}-\hat{S}^{(H)}_{t,T}]| &\geq p'\left \lfloor \frac{1}{p'} \right\rfloor\frac{e^{-p't}}{p'}|1-e^{-rp't}-g_H(p't)|, \\
    & \geq p'\left\lfloor \frac{1}{p'} \right\rfloor \frac{1}{2}\frac{e^{-p^*t}}{p^*}|1-e^{-rp^*t}-g_H(p^*t)|, \\
    & \geq \frac{1}{4}\frac{e^{-p^*t}}{p^*}|1-e^{-rp^*t}-g_H(p^*t)|.
\end{align*}
Now for this case the lemma follows by squaring and we may thus suppose that no such $p'$ exists. In this case, begin constructing a $\mu$ by creating $\left\lfloor \frac{1}{p^*}\right\rfloor$ species with probability $p^*$, assigning a total mass of $p^*\left\lfloor \frac{1}{p^*}\right\rfloor \geq \frac{1}{2}$ this way. If $1-e^{-rpt}-g_H(pt)$ has no root in $(0,p^*)$, then $p^*$ would have the properties of $p'$, which was presumed not to exist. Thus $1-e^{-rpt}-g_H(pt)$ has a root in $(0,p^*)$. Let $p_0$ be the smallest root (if there are roots arbitrarily close to but not equal to zero an analyticity argument gives the result trivially). Repeatedly create species with probability $p_0$. This has no impact on the bias and ensures that the leftover mass $p_L$ is now smaller than $p_0$. Now create a single species with probability $p_L$. By the assumed non-existence of $p'$ we have that for all $p \in (0,p_0)$,
\begin{align*}
    \frac{e^{-pt}}{p}|1-e^{-rpt}-g_H(pt)| \leq \frac{1}{2}\frac{e^{-p^*t}}{p^*}|1-e^{-rp^*t}-g_H(p^*t)|,
\end{align*}
in particular, this holds for $p_L$. Thus this $\mu$ witnesses that if $p'$ does not exist,
\begin{align*}
\sup_\mu |\IE[S_{t,T}-\hat{S}^{(H)}_{t,T}]| &\geq p^*\left\lfloor \frac{1}{p^*}\right\rfloor  \frac{e^{-p^*t}}{p^*}|1-e^{-rp^*t}-g_H(p^*t)|\\
&-p_L\frac{e^{-p_Lt}}{p_L}|1-e^{-rp_Lt}-g_H(p_Lt)|, \\
&\geq p^*\left\lfloor \frac{1}{p^*}\right\rfloor \frac{e^{-p^*t}}{p^*}|1-e^{-rp^*t}-g_H(p^*t)|\\
&-p^*\left\lfloor \frac{1}{p^*}\right\rfloor\frac{1}{2}\frac{e^{-p^*t}}{p^*}|1-e^{-rp^*t}-g_H(p^*t)|, \\
&=\frac{1}{2}p^*\left\lfloor \frac{1}{p^*}\right\rfloor \frac{e^{-p^*t}}{p^*}|1-e^{-rp^*t}-g_H(p^*t)|, \\
&\geq \frac{1}{4}\frac{e^{-p^*t}}{p^*}|1-e^{-rp^*t}-g_H(p^*t)|.
\end{align*}
Squaring we see that the lemma holds also in this case and the proof is complete.
\end{proof}

Let $q^*$ be the point where the supremum in $Y_v$ is attained, which exists by continuity and compactness and let $m_v = q^* \lfloor \frac{1}{q^*}\rfloor$.
\begin{lem} \label{lem: lower variance build}
If $q^* \leq \frac{1}{2}$, then
\begin{align*}
    \sup_{1 \prec \mu \prec 1}\IE[|S_{t,T}-\hat{S}_{t,T}|^2]  \geq \frac{2}{3}Y_v-\frac{2}{3}Y_b.
\end{align*}
Further, if $q^* > \frac{1}{2}$ then
\begin{align*}
    \sup_{1 \prec \mu \prec 1}\IE[|S_{t,T}-\hat{S}_{t,T}|^2] &\geq  \frac{1}{2}Y_v-\frac{1}{2}Y_b.
\end{align*}
In either case,
\begin{align*}
    \sup_{\mu \prec 1}\IE[|S_{t,T}-\hat{S}_{t,T}|^2] \geq \frac{1}{2}Y_v.
\end{align*}
\end{lem}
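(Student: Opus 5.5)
The approach is to write the MSE of a linear estimator species by species, as in the proof of Theorem~\ref{thm: GT analysis}, and then evaluate it on near-uniform distributions built from $q^*$. For $\mu\prec 1$ and a species $s$ of mass $M_s=q$, set $b_s=\IE[1_{N_s=0}1_{N_s'>0}-H(N_s)]$. Let $m_s:=\IE[(1_{N_s=0}1_{N_s'>0}-H(N_s))^2]$. Since $H(0)=0$, the cross term vanishes, and
\begin{align*}
m_s = e^{-qt}\big(1-e^{-rqt}+g_{H^2}(qt)\big).
\end{align*}
So $m_s\le qY_v$, with equality when $q=q^*$. Likewise $|b_s|=e^{-qt}|1-e^{-rqt}-g_H(qt)|\le q\sqrt{Y_b}$. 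Independence across species under $\mu\prec 1$ gives
\begin{align*}
\IE[|S_{t,T}-\hat S^{(H)}_{t,T}|^2]=\sum_s m_s-\sum_s b_s^2+\Big(\sum_s b_s\Big)^2 .
\end{align*}
If $q^*=0$, I would take the limit of uniform distributions on $1/p$ symbols as $p\to 0$, as in Lemma~\ref{lem: lower bias build}. So assume $q^*>0$ and let $k:=\lfloor 1/q^*\rfloor$.

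\emph{Third claim ($\mu\prec 1$).} Take $k$ species of mass $q^*$ and put the leftover mass on the empty set. Then all $b_s$ are equal to some $b$, and the MSE is $km_{s}-kb^2+k^2b^2\ge k m_s=m_vY_v$, because $k^2\ge k$. Since $m_v=q^*\lfloor 1/q^*\rfloor\ge \tfrac12$, this gives $\tfrac12 Y_v$. No bias subtraction is needed here.

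\emph{Case $q^*\le \tfrac12$ ($1\prec\mu\prec 1$).} Now the leftover mass $\ell<q^*$ must sit on a genuine species, so I add one species of mass $\ell$. The square of the total bias is non-negative, so I drop it, and I also drop the leftover species' variance $m_\ell-b_\ell^2\ge 0$. The MSE is then at least
\begin{align*}
k(m_s-b_s^2)\ge kq^*Y_v-k(q^*)^2Y_b=m_v(Y_v-q^*Y_b).
\end{align*}
Here $k\ge 2$, and $\lfloor x\rfloor/x\ge 2/3$ for $x\ge 2$, so $m_v\ge\tfrac23$. Also $m_vq^*\le 1$. These give $\tfrac23Y_v-\tfrac23Y_b$.

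\emph{Case $q^*>\tfrac12$.} Here $k=1$: one species $s$ of mass $q^*$ and one of mass $\ell=1-q^*$. The formula gives
\begin{align*}
m_s+m_\ell+2b_sb_\ell\ge m_s+2b_sb_\ell ,
\end{align*}
using $m_\ell\ge 0$ (or $m_\ell\ge b_\ell^2$, which is stronger). Then $2|b_sb_\ell|\le 2q^*(1-q^*)Y_b\le\tfrac12Y_b$ and $m_s=q^*Y_v\ge\tfrac12 Y_v$, which yields $\tfrac12Y_v-\tfrac12Y_b$.

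The main obstacle is this last case: subtracting $b_s^2$ crudely only gives $\tfrac12Y_v-Y_b$. To get the factor $\tfrac12$ one must keep the cross term $2b_sb_\ell$ instead, and use $q^*(1-q^*)\le\tfrac14$.
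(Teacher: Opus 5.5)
Your proof is correct. For the $\mu\prec 1$ claim and for the case $q^*>\frac12$ you follow the paper: a uniform block with the leftover mass on $\emptyset$ in the first, and in the second you keep the single cross term $2b_sb_\ell$ and use $q^*(1-q^*)\le\frac14$.

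For $q^*\le\frac12$ your route is different.
\begin{itemize}
\item The paper keeps the off-diagonal sum $2k\,b(q')b(q^*)+k(k-1)b(q^*)^2$ and minimizes this quadratic over $b(q^*)$. This gives $-\frac{k}{k-1}b(q')^2\ge-\frac{k}{k-1}(1-m_v)Y_b$, so the bias loss is charged to the leftover species. It then finishes with a floor-function interval analysis.
\item You instead use that the MSE dominates the sum of the per-species variances, discard everything except the $k$ uniform species, and charge the loss to their own biases via $b_s^2\le (q^*)^2Y_b$.
\end{itemize}
Your version is shorter and avoids both the quadratic minimization and the interval bookkeeping. It also gives the slightly stronger bound $m_vY_v-m_vq^*Y_b\ge\frac23Y_v-\frac12Y_b$.

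One small correction in that step: ``$m_vq^*\le 1$'' as written only yields $-Y_b$. What you need, and what is true since $m_v\le 1$, is $m_vq^*\le q^*\le\frac12\le\frac23$.
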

\begin{proof} Our aim is to construct a distribution with a substantial mean square error. If $q^*=0$ then one sees that we may take a uniform distribution over $\frac{1}{q}$ symbols and send $q$ to zero through reciprocals of integers. Thus we may assume $q^*>0$. By independence,
    \begin{align} \label{eq: decomp stars}
        \sup_{\mu}\IE[|S_{t,T}-\hat{S}_{t,T}^{(H)}|^2] &= \sup_\mu (\IE[\sum_{s \in \mathcal{S}}1_{N_s=0}1_{N_s'>0}+H(N_s)^2]  +\sum_{x,y}b_xb_y).
    \end{align}
Create $\lfloor \frac{1}{q^*}\rfloor$ species with probability $q^*$. For $1 \prec \mu \prec 1$ use the remaining probability mass to create a single species with probability $q' := 1-\lfloor \frac{1}{q^*}\rfloor q^* = 1-m_v$. Let $b(p) := e^{-pt}(1-e^{-rpt}-g_H(pt))$. We begin by treating the off-diagonal term. For $\mu$ of the type discussed, one sees that \begin{align} \label{eq: quad spot}
\sum_{x,y}&\IE[1_{N_x=0}1_{N_x'>0}-H(N_x)]\IE[1_{N_y=0}1_{N_y'>0}-H(N_y)] = 2b(q')b(q^*)\left\lfloor \frac{1}{q^*}\right\rfloor\nonumber\\
&+\left\lfloor \frac{1}{q^*}\right\rfloor\left(\left\lfloor \frac{1}{q^*}\right\rfloor-1\right)b(q^*)^2.
\end{align}
In the case $\mu \prec 1$ do not create the extra species, instead putting the remaining probability mass on the empty set. The cross terms sum is then a sum of squares and so trivially non-negative.
Setting aside for the moment the case $q^* > \frac{1}{2}$ we see that this is a quadratic in $b(q^*)$ with positive leading coefficient. Thus its minimum is achieved at

\begin{align*}
    b(q^*) = \frac{-2b(q')\left\lfloor \frac{1}{q^*}\right\rfloor}{2\left\lfloor \frac{1}{q^*}\right\rfloor\left(\left\lfloor \frac{1}{q^*}\right\rfloor-1\right)} = -\frac{b(q')}{\left\lfloor \frac{1}{q^*}\right\rfloor-1}.
\end{align*}
 Substituting this in place of $b(q^*)$ in (\ref{eq: quad spot}) we obtain \begin{align*}
    \sum_{x,y}&\IE[1_{N_x=0}1_{N_x'>0}-H(N_x)]\IE[1_{N_y=0}1_{N_y'>0}-H(N_y)] \geq -b(q')^2 \frac{\left\lfloor \frac{1}{q^*}\right\rfloor}{\left\lfloor \frac{1}{q^*}\right\rfloor-1} \\
    &\geq -Y_b (1-m_v)\frac{\left\lfloor \frac{1}{q^*}\right\rfloor}{\left\lfloor \frac{1}{q^*}\right\rfloor-1}.\\
\end{align*}
where we used that for any $p$, $(\frac{b(p)}{p})^2 \leq Y_b$ so that $|b(p)| \leq p \sqrt{Y_b}$. Now the result follows by reasoning about which intervals the floor functions are constant on.

For $q^* > \frac{1}{2}$ one sees that $q' = 1-q^*$ and now using the same bound on $|b(p)|$ gives a lower bound of $-2q^*(1-q^*)Y_b$ on the RHS of (\ref{eq: quad spot}). Again one finishes by some floor-function analysis.

Turning our attention to the diagonal term in (\ref{eq: decomp stars}) we get \begin{align*}
    \IE[\sum_{s \in \mathcal{S}}1_{N_s=0}1_{N_s'>0}+H(N_s)^2] &\geq \left\lfloor \frac{1}{q^*}\right\rfloor e^{-q^*t}(1-e^{-rq^*t}+g_{H^2}(q^*t)), \\
    &= m_v Y_v.
\end{align*} from the uniform part of the constructed measure and that the lone species with probability $q'$ can only contribute positively.
\end{proof}

\begin{proof}[Proof of Theorem \ref{thm: opt powerhouse}] 
We first prove the upper bound. By direct calculation, with $\mathbb{C}$ here denoting covariance,

\begin{align*}
    \IE[|S_{t,T}-\hat{S}_{t,T}^{(H)}|^2] &= \IE[\sum_{s \in \mathcal{S}}1_{N_s=0}1_{N_s'>0}-H(N_s)]^2 \\
    &+\sum_{s }\mathbb{V}[1_{N_s=0}1_{N_s'>0}-H(N_s)] \\
    &+ \sum_{x,y}\mathbb{C}[1_{N_x=0}1_{N_x'>0}-H(N_x),1_{N_y=0}1_{N_y'>0}-H(N_y)]
\end{align*}
We bound each of these terms in order.
\begin{align*}
    \sum_{s \in \mathcal{S}}b_s &= \sum_{s \in \mathcal{S}}e^{-M_st}(1-e^{-rM_st}-g_H(M_st)),\\ &= \frac{\sum_{s \in \mathcal{S}}M_s}{\sum_{s \in \mathcal{S}}M_s}\sum_{s \in \mathcal{S}}e^{-M_st}(1-e^{-rM_st}-g_H(M_st)), \\
    &\leq \sum_{s \in \mathcal{S}}M_s \times\sup_{s \in \mathcal{S}}\frac{e^{-M_st}(1-e^{-rM_st}-g_H(M_st))}{M_s},\\
    &\leq B \sup_{p \in [0,1]} \frac{e^{-pt}(1-e^{-rpt}-g_H(pt))}{p}.
\end{align*}
The negative bias is treated similarly. Next we focus on the variance. \begin{align*}
\sum_{s \in \mathcal{S}}\mathbb{V}[1_{N_s=0}1_{N_s'>0}-H(N_s)] &\leq \sum_{s \in \mathcal{S}}\IE[1_{N_s=0}1_{N_s'>0}+H(N_s)^2], \\
&= \sum_{s \in \mathcal{S}}e^{-M_st}(1-e^{-rM_st}+g_{H^2}(M_st)).
\end{align*}
Extending by $\sum_{s \in \mathcal{S}}M_s$ as before one sees that the variance is bounded by $B\sup_{q \in [0,1]}\frac{e^{-qt}}{q}(1-e^{-rqt}+g_{H^2}(qt))$.

We move on to bounding the sum over correlations. To make the connection to the theory of maximal correlation clear, let $$h((N_x,N_x')):=1_{N_x=0}1_{N_x'>0}-H(N_x)$$ and let $R(X,Y)$ denote the maximal correlation between a function of $X$ and a function of $Y$ as in \cite{Maximal_Correlation}.
\begin{align*}
    &\mathbb{C}[1_{N_x=0}1_{N_x'>0}-H(N_x),1_{N_y=0}1_{N_y'>0}-H(N_y)] \\
    &= \mathbb{C}[h(N_x,N_x'),h(N_y,N_y')] \\
    &\leq R((N_x,N_x'),(N_y,N_y')) \sqrt{\mathbb{V}[h(N_x,N_x')]\mathbb{V}[h(N_y,N_y')]} \\
    &= \max\{R(N_x,N_y),R(N_x',N_y')\} \sqrt{\mathbb{V}[h(N_x,N_x')]\mathbb{V}[h(N_y,N_y')]} \\
    &= \max\{\rho(N_x,N_y),\rho(N_x',N_y')\}\sqrt{\mathbb{V}[h(N_x,N_x')]\mathbb{V}[h(N_y,N_y')]} \\
    &= \max\{\frac{M_{x \cap y}}{\sqrt{M_x}\sqrt{M_y}},\frac{M_{x \cap y}}{\sqrt{M_x}\sqrt{M_y}}\}\sqrt{\mathbb{V}[h(N_x,N_x')]\mathbb{V}[h(N_y,N_y')]}, \\
    &\leq \frac{M_{x \cap y}}{\sqrt{M_x}\sqrt{M_y}}\sqrt{M_x Y_v M_yY_v}, \\
    &= M_{x \cap y}Y_v.
\end{align*}
where the max arises from the application of the Csáki--Fischer identity, Theorem 6.2 of \cite{Correlation_Identity}. That for bivariate-Poisson the maximal correlation is the correlation of the underlying random variables is the last sentence of section 1.1 of \cite{Maximal_Correlation}. 
Evaluating the correlation is a short (omitted) computation. The last estimate merely employs the fact that $\mathbb{V}[1_{N_x=0}1_{N_x'>0}-H(N_x)] \leq e^{-M_xt}(1-e^{-rM_xt}+g_{H^2}(M_xt)) \leq M_x Y_v$.

Because a set of size $|A|$ can contribute to at most $2 \binom{|A|}{2} = |A|(|A|-1)$ pairs we have $\sum_{x,y}M_{x \cap y} \leq B(B-1) \sum_{A \in 2^\mathcal{S}}\mu(A) = B(B-1)$ and consequently

\begin{align*}
\sum_{x,y}\mathbb{C}[1_{N_x=0}1_{N_x'>0}-H(N_x),1_{N_y=0}1_{N_y'>0}-H(N_y)] \leq B(B-1)Y_v
\end{align*}

Now combining the estimates on the bias, sum of variances and sum of covariances gives the upper bound in the theorem.

We move on to the lower bound. Since the squared bias lower bounds the MSE, Lemmas \ref{lem: lower bias build} and \ref{lem: lower variance build} imply, respectively that
\begin{align*}
    \frac{2}{3}\sup_{\mu \prec 1}\IE[|S_{t,T}-\hat{S}_{t,T}|^2] &\geq \frac{2}{3}\frac{1}{4} Y_b, \\
    \frac{1}{3}\sup_{\mu \prec 1}\IE[|S_{t,T}-\hat{S}_{t,T}|^2] &\geq \frac{1}{3}\frac{1}{2}Y_v.
\end{align*}
Adding these inequalities now gives the $\mu \prec 1$ result.

Next we treat $1\prec \mu \prec 1$. Similarly to the above, if $q^* > \frac{1}{2}$,

\begin{align*}
    \frac{16}{17}\sup_{1 \prec \mu \prec 1}\IE[|S_{t,T}-\hat{S}_{t,T}|^2] &\geq \frac{16}{17}\frac{1}{16} Y_b, \\
    \frac{1}{17}\sup_{1 \prec \mu \prec 1}\IE[|S_{t,T}-\hat{S}_{t,T}|^2] &\geq \frac{1}{17}\frac{1}{2}Y_v-\frac{1}{17}\frac{1}{2}Y_b.
\end{align*}
and the result follows by adding the inequalities. If $q^* \leq \frac{1}{2}$,

\begin{align*}
    \frac{65}{68}\sup_{1 \prec \mu \prec 1}\IE[|S_{t,T}-\hat{S}_{t,T}|^2] &\geq \frac{65}{68}\frac{1}{16} Y_b \geq \frac{1}{17}Y_b, \\
    \frac{3}{68}\sup_{1 \prec \mu \prec 1}\IE[|S_{t,T}-\hat{S}_{t,T}|^2] &\geq \frac{3}{68}\frac{2}{3}Y_v-\frac{3}{68}\frac{2}{3}Y_b.
\end{align*}
and addition yields the result. For $B > 1$, speciate.
\end{proof}

\begin{proof}[Proof of Proposition \ref{prop: convexity}]
By monotonicity of $x \mapsto x^2$ we may take the square inside the first supremum. Because sums of convex functions are convex and suprema preserve convexity in order to get convexity of $G(H)$ it suffices to establish convexity for the expressions inside the suprema for given $p,q$. We begin with the first term. Because $g_H(x)$ is linear in $H$ we see that for $p \neq 0$ the first part is the square of an affine function of $H$, which is convex. For $p=0$ the expression simplifies to $t^2(r-H_1)^2$ which is again the square of an affine function of $H$. We turn to the second term. Note that $g_{H^2}(x)$ is increasing in $H$ in the sense that if $H'$ is a sequence such that $H_i^2 \leq {H_i'}^2$ for all $i$ then for any non-zero $x$, $g_{H^2}(x) \leq g_{{H'}^2}(x)$. From this and the  convexity of $x \mapsto x^2$ we get that for $q \neq 0$
\begin{align*}
    g_{(\theta H^{(1)}+(1-\theta)H^{(2)})^2}(qt) \leq g_{\theta {H^{(1)}}^2+(1-\theta){H^{(2)}}^2}(qt) = \theta g_{{H^{(1)}}^2}(qt)+(1-\theta)g_{{H^{(2)}}^2}(qt).
\end{align*}
Adding and multiplying by a constant preserves this convexity, so we have convexity of the second term for $q \neq 0$. For $q=0$ the expression simplifies to $t(r+H_1^2)$ which is convex in $H$.

For strictness, let $H^{(1)},H^{(2)}$ be as hypothesized. Then, for some $q^* \neq 0$
\begin{align*}
    &\sup_{q \in [0,1]}\frac{e^{-qt}}{q}(1-e^{-rqt}+g_{(\theta H^{(1)}+(1-\theta)H^{(2)})^2}(qt)),\\
    &=\frac{e^{-q^*t}}{q^*}(1-e^{-rq^*t}+g_{(\theta H^{(1)}+(1-\theta)H^{(2)})^2}(q^*t)).
\end{align*}
Applying strict convexity at this $q^* \neq 0$ now propagates to give the result as sums of convex and strictly convex functions are strictly convex.
\end{proof}
\begin{proof}[Proof of Theorem \ref{thm: existence and uniqueness}]
Define the inner product $\langle a,b \rangle_t := \sum_{i=1}^\infty \frac{a_ib_it^i}{i!}$ and the Hilbert space $\mathcal{H}_t$ to be the space of sequences which have $H_0=0$ and $\langle H,H \rangle_t < \infty $. Let $\boldsymbol{p}=\{p^i\}_{i=1}^\infty$. Then $||\boldsymbol{p}||^2 = \sum_{i=1}^\infty p^{2i} \frac{t^i}{i!} = e^{p^2t}-1 < \infty$ so $\boldsymbol{p} \in \mathcal{H}_t$.
\begin{align*}
    g_H(pt) = \sum_{i=1}^\infty H_i\frac{(pt)^i}{i!} = \langle H,\boldsymbol{p} \rangle_t. 
\end{align*}
By the Cauchy--Schwarz inequality we have that \begin{align}\label{eq: cont of g_H}|g_H(pt)| \leq \sqrt{e^{p^2t}-1}||H||,
\end{align}and so $g_H(pt)$ is a bounded linear functional and hence continuous. Let $P_p: \mathcal{H}_t \to \mathcal{H}_t$ be defined by $(P_pH)_i= p^{i/2}H_i$. Then,
\begin{align*}
||P_pH||^2 = \sum_{i=1}^\infty p^iH_i^2\frac{t^i}{i!} \leq p||H||^2.
\end{align*}
Thus $P_p$ is a bounded linear operator and hence continuous. Next notice that
\begin{align*} 
    g_{H^2}(pt)=\sum_{i=1}^\infty H_i^2 \frac{(pt)^i}{i!} = ||P_pH||^2.
\end{align*}
Since the norm is always continuous this means we have realized $g_{H^2}(pt)$ as a composition of continuous functions and it is therefore continuous.

Using the second term in the definition of $G(H)$ and evaluating the supremum at $q=1$ one gets,
\begin{align}\label{eq: G_H lower}
    G(H) &\geq \sup_{q \in [0,1]}\frac{e^{-qt}}{q} g_{H^2}(qt) \geq e^{-t}\sum_{i=1}^\infty H_i^2\frac{t^i} {i!} = e^{-t}||H||^2 \geq 0.
\end{align}
Thus for all sequences not in the Hilbert space, $G(H) = \infty$\footnote{With just a little more work in fact the above estimates actually show this to be an if and only if condition.} so to claim optimality among all sequences it suffices to consider those in $\mathcal{H}_t$.

We are now ready to show existence of a minimizer. We proceed by the direct method in the calculus of variations. Hilbert spaces are reflexive. We minimize over the full space, which is trivially weakly closed. Equation (\ref{eq: G_H lower}) shows $G(H)$ to be coercive. Suprema of continuous functionals are lower semi-continuous so the functional is a sum of convex lower semi-continuous functionals and hence itself convex and lower semi-continuous. By Corollary 3.9 of \cite{haimbrezis_2011_functional} $G(H)$ is consequently weakly lower semi-continuous. Thus $G(H)$ has a minimizer by Theorem 1.2 of \cite{struwe_2008_variational}.

We move on to uniqueness. Fix $r > 2$. For the sake of contradiction, suppose that there exists a sequence $\{t_n\}_{n=1}^\infty$ such that $t_n \rightarrow \infty$ and at least one minimizer of $G(H)$ has at least one $q^* = 0$ for each $t_n$. We keep using $t$ for time, but our intention is to consider what happens along $\{t_n\}_{n=1}^\infty$. We will show that along the sequence $G(H)$ and hence the error grows quadratically in time. This creates a contradiction between Corollary \ref{cor: optimality of H^*} and Theorem 1 of \cite{orlitsky_2016_optimal}. Note that this is not circular as Corollary \ref{cor: optimality of H^*} does not require uniqueness.

Evaluating at $q=0$ reveals the second term to be $t(r+H_1^2)$. Thus for all $H$ with $q^*=0$ we have that for all $q\in [0,1]$, \begin{align*}
\frac{e^{-qt}}{q}(1-e^{-rqt}+g_{H^2}(qt)) &\leq t(r+H_1^2), \\
 g_{H^2}(qt)&\leq qe^{qt}t(r+H_1^2)-1+e^{-rqt}.
\end{align*}
Observe that by the Cauchy--Schwarz inequality we therefore have that for all such $H$ and $x \leq t$,
\begin{align*}
    \sum_{i=2}^\infty H_i \frac{x^i}{i!} &\leq  \sqrt{\sum_{i=2}^\infty H_i^2\frac{x^i}{i!}} \sqrt{\sum_{i=2}^\infty \frac{x^i}{i!}}, \\
    &= \sqrt{g_{H^2}(x)-H_1^2x}\sqrt{e^x-x-1}, \\
    &\leq \sqrt{xe^x(r+H_1^2)-1+e^{-rx}-H_1^2x}\sqrt{e^x-x-1}.
\end{align*}
Using this gives,
\begin{align*}
    g_H(x) &= H_1x+\sum_{i=2}^\infty H_i \frac{x^i}{i!} ,\\
    &\geq H_1x-\sqrt{xe^x(r+H_1^2)-1+e^{-rx}-H_1^2x}\sqrt{e^x-x-1}. \\
\end{align*}
We aim to show the existence of an $x^*$ so that \begin{align*}
rx^*-\sqrt{x^*e^{x^*}(r+r^2)-1+e^{-rx^*}-r^2x^*}\sqrt{e^{x^*}-x^*-1} > 1-e^{-rx^*}.
\end{align*} We will show that choosing $x^*$ small enough will work. Taylor expanding gives \begin{align*}
    F(r,x) :&=rx-\sqrt{xe^{x}(r+r^2)-1+e^{-rx}-r^2x}\sqrt{e^{x}-x-1}-1+e^{-rx} \\
    &= \frac{x^2}{2}\left(r^2 - \sqrt{r(3r+2)}\right) + O(x^3).
\end{align*}
To determine the sign of the leading coefficient we factorize $r^4 -r(3r+2) = r(r-2)(r+1)^2$ from which it is clear that the leading coefficient is positive for $r > 2$ and so for such $r$ a small enough $x$ will do. Possibly by making $t_0$ larger we may assume that $t > x^*$. Next let $\tilde{p} = \frac{x^*}{t}$.
\begin{align*}
    &\frac{1}{t}\sup_{p \in [0,1]}\frac{e^{-pt}}{p}|1-e^{-rpt}-g_H(pt)| \geq \frac{e^{-\tilde{p}t}}{t\tilde{p}}|1-e^{-r\tilde{p}t}-g_H(\tilde{p}t)|,\\
    & = \frac{e^{-x^*}}{x^*}|1-e^{-rx^*}-g_H(x^*)|, \\
    & \geq \frac{e^{-x^*}}{x^*}(g_H(x^*)-1+e^{-rx^*}), \\
    & \geq\frac{e^{-x^*}}{x^*} \times\\
    & (H_1x^*-\sqrt{x^*e^{x^*}(r+H_1^2)-1+e^{-rx^*}-H_1^2x^*}\sqrt{e^{x^*}-x^*-1}-(1-e^{-rx^*})), \\
    & \rightarrow \\
    &\frac{e^{-x^*}}{x^*}(rx^*-\sqrt{x^*e^{x^*}(r+r^2)-1+e^{-rx^*}-r^2x^*}\sqrt{e^{x^*}-x^*-1}-(1-e^{-rx^*})),\\
    &> 0,
\end{align*}
where the convergence may be assumed by Lemma \ref{lem: convergence of H_1}. This means that $\sup_{p \in [0,1]}\frac{e^{-pt}}{p}|1-e^{-rpt}-g_H(pt)|$ grows linearly in $t$ and hence $G(H)$ grows (at least) quadratically in $t$ (along $t_n$) for the sought contradiction. Thus no such sequence can exist and there exists some $t_0(r)$ after which there cannot be optimizers with $q^* = 0$. Next suppose that for some $t > t_0$ there are two distinct minimizers (each with $q^* \neq 0$). By convexity all convex combinations of the two are also optimizers. However, all such minimizers either (i) are better than the original supposed minimizers by strict convexity or (ii) have $q^*=0$. Either of these is a contradiction and so there can only be one minimizer. 
\end{proof}

\begin{lem} \label{lem: convergence of H_1}
    Fix $r > 0$. Let $\hat{S}_{t,T}^{(H)}$ be a linear estimator with $H_1 \nrightarrow r$ as $t \rightarrow \infty$. Then $G(H) \gtrsim t^2$ along a sequence of times $t_n \rightarrow \infty$. That is, $\limsup_{t \rightarrow \infty} G(H)/t^2 > 0$.
\end{lem}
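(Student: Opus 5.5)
The plan is to lower bound $G(H)$ by the bias term $Y_b$ evaluated as $p \to 0$. Since the expression inside $Y_b$ is extended continuously at zero, and since $e^{-pt}(1-e^{-rpt}-g_H(pt))/p \to t(r-H_1)$ as $p \downarrow 0$, we get
\begin{align*}
    G(H) \geq Y_b \geq t^2(r-H_1)^2.
\end{align*}
This holds whenever $G(H)<\infty$. If $G(H)=\infty$ the bound is trivial, and by \eqref{eq: G_H lower} we may assume $H \in \mathcal{H}_t$.

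First I justify the limit. For $H \in \mathcal{H}_t$, write $g_H(pt) = H_1 pt + \sum_{i\ge2}H_i (pt)^i/i!$. Using the Cauchy--Schwarz bound in the style of \eqref{eq: cont of g_H}, applied to the tail $i\geq 2$, the tail is bounded by $\|H\|\sqrt{e^{p^2t}-1-p^2t} = O(p^2)$. Hence $g_H(pt)/p \to H_1 t$. Together with $(1-e^{-rpt})/p \to rt$, this gives the value at $p=0$. In fact this is exactly the $p=0$ evaluation $t^2(r-H_1)^2$ already used in the proof of Proposition~\ref{prop: convexity}, so it can simply be cited.

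Next comes the conclusion. Since $H_1 = H_1(t)$ does not converge to $r$, there exist $\eta>0$ and a sequence $t_n \to \infty$ with $|H_1(t_n) - r| \geq \eta$. Along this sequence $G(H)/t_n^2 \geq \eta^2$, so $\limsup_{t\to\infty} G(H)/t^2 \geq \eta^2 > 0$.

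The only mild obstacle is bookkeeping. $H$ depends on $t$, and the sense of "$H_1 \nrightarrow r$" must be read as failure of convergence of the real sequence $H_1(t)$. That includes the case where $H_1(t)$ is unbounded along a subsequence, which only strengthens the bound. No distributional construction is needed, because $Y_b$ at $p=0$ is realised by the limiting uniform distributions already discussed in Lemma~\ref{lem: lower bias build}. If one prefers the statement in terms of MSE, Theorem~\ref{thm: opt powerhouse} transfers it with the constant $1/6$.
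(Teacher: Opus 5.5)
Your proposal is correct and follows essentially the same route as the paper: you evaluate the bias term of $G(H)$ at $p=0$ to get $G(H)\geq t^2(r-H_1)^2$, then extract $\eta>0$ and $t_n\to\infty$ with $|r-H_1(t_n)|\geq\eta$. Your Cauchy--Schwarz justification that the tail $\sum_{i\geq 2}H_i(pt)^i/i!$ is $O(p^2)$, which makes the continuous extension at $p=0$ legitimate, is a small detail that the paper leaves implicit.
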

\begin{proof}
    At $p=0$ we have $\frac{e^{-pt}}{p}|1-e^{-rpt}-g_H(pt)| = |rt-H_1t|=t|r-H_1|$. Since this term appears with a square in $G(H)$, we have $G(H) \geq t^2(r-H_1)^2$; if $|r-H_1| \nrightarrow 0$ there are $\eta > 0$ and $t_n \rightarrow \infty$ with $|r-H_1(t_n)| \geq \eta$, so $G(H) \geq \eta^2 t_n^2$ along this sequence.
\end{proof}

We will prove Theorem \ref{thm: main rate} by combining, in a straightforward way, a number of (non-trivial) results. We give the synthesis first and only then prove the necessary lemmas to make the structure of the argument clear. First we introduce all notation needed to state the key estimates. Define the Poisson probability $p_k(x):= e^{-x}\frac{x^k}{k!}$. We will use the fact that $kp_k(x)=xp_{k-1}(x)$. Define

\begin{align*}
h(x)    &:= \frac{e^{-x}(1-e^{-rx})}{x} = \int_0^r e^{-(1+s)x}ds \leq r, \\
f_H(x)  &:= h-\sum_{k=1}^\infty \frac{H_k}{k}p_{k-1}(x) = \frac{e^{-x}}{x}(1-e^{-rx}-g_H(x)),\\
B(H) &:= \max_{0 \leq x \leq t}|f_H(x)|,\\
 N(H) &:= \left(\sum_{k \geq 1}H_k^2k^{-3/2}\right)^{1/2}.
\end{align*}
Let $\mathcal{H} = \{H: \sum_{k=1}^\infty k^{-3/2}H_k^2 < \infty\}$ be the Hilbert space of sequences on which $N(H)$ is finite. Let $\epsilon = t^{-1/2}$. Let $Q$ be the Cartesian product of the set of signed measures $\nu$ on $[0,t]$ with $||\nu|| \leq 1$ and the set of sequences with $\sum_{k=1}^\infty k^{3/2}a_k^2 \leq 1$. Let $\mathcal{M}_\epsilon[0,t]$ denote the set of signed measures $\nu$ on $[0,t]$ satisfying $||\nu|| \leq 1$ and $\sum_{k=1}^\infty k^{-1/2} (\int_0^t  p_{k-1}(x)d \nu)^2 \leq \epsilon^2$. Define $\mathcal{M}_\epsilon[0,\infty)$ analogously. 
 Notice that, extending by zero, we have a natural inclusion of $\mathcal{M}_\epsilon[0,t]$ in $\mathcal{M}_\epsilon[0,\infty)$. 
Define  \begin{align*}
    \mathcal{T}: \mathcal{M}_\epsilon[0,\infty)&\to (\mathbb{C} \to \mathbb{C}) \\
    \nu &\mapsto  \int_0^\infty e^{-x(1-z)}d\nu(x),\\
    \end{align*}
where we are free to let $\mathcal{T}$ act on elements of $\mathcal{M}_\epsilon[0,t]$ by the aforementioned inclusion. Define the function class $\mathcal{F} = \mathcal{T}(\mathcal{M}_\epsilon[0,\infty))$ to be the image of $\mathcal{T}$.

\begin{proof}[Proof of Theorem \ref{thm: main rate}] We combine the following estimates in the natural way. Namely, we recall that $G(H)$ bounds the MSE, take the infimum over $\mathcal{H} \subset \mathcal{H}_t$ in the first two lines and then use the next to upper bound the previous. 
\begin{align*}
    G(H) &\leq rt+t^2(B(H)+\epsilon N(H))^2. && \text{(Lemma \ref{lem: GH representation})}\\
    B(H)+\epsilon N(H) &= \sup_{(\nu,a) \in Q} \underbrace{\int_0^t f_H d\nu +\epsilon \sum_{k \geq 1} H_ka_k}_{=: L(H,(\nu,a))}. && \text{(Lemma \ref{lem: BNL})}\\
    \inf_{H \in \mathcal{H}} \sup_{(\nu,a) \in Q} L(H,(\nu,a)) &= \sup_{(\nu,a) \in Q} \inf_{H \in \mathcal{H}} L(H,(\nu,a)). && \text{(Lemma \ref{lem: duality})}\\
    \sup_{(\nu,a) \in Q} \inf_{H \in \mathcal{H}} L(H,(\nu,a)) &= \sup_{\nu \in \mathcal{M}_{\epsilon}[0,t]}\int_0^t h d\nu. && \text{(Lemma \ref{lem: inf L to sup nu})}\\
    \sup_{\nu \in \mathcal{M}_{\epsilon}[0,t]}\int_0^t h d\nu &\leq \sup_{f \in \mathcal{F}} \int_{0}^r |f(-s)| ds. && \text{(Lemma \ref{lem: nu to F})}\\
    \sup_{f \in \mathcal{F}} \int_{0}^r |f(-s)| ds &\lesssim \min\left\{r,\frac{(1+r)^2}{\log(t)}\right\}K^{\frac{1}{1+r}}t^{-\frac{1}{1+r}}. && \text{(Lemma \ref{lem: assembly})}
\end{align*}

Combining the estimates in this way yields
\begin{align*}
    \frac{G(H^*)}{(rt)^2}  \lesssim \frac{1}{rt}+\min\left\{1,\frac{(1+r)^2}{r\log(t)}\right\}^2K^{\frac{2}{1+r}}t^{-\frac{2}{1+r}},
\end{align*}
where we note that (\ref{eq: r hypothesis}) was used to invoke Lemma \ref{lem: assembly}. We now let the second term absorb the first.

\begin{align*}
    \frac{1}{rt} = \frac{1}{r}t^{-\frac{r-1}{1+r}}t^{-\frac{2}{1+r}} \leq \log(t)^{-4}t^{-\frac{2}{1+r}} \leq (\min\{1,\frac{(r+1)^2}{r\log(t)}\})^{2}K^{\frac{2}{1+r}}t^{-\frac{2}{1+r}},
\end{align*}
where we used that

\begin{align*}
K &= \sqrt{\log(t)}(r+1)(r+3)\min\left\{\frac{1}{r-1}, \log(t)\right\} \\
&\geq \sqrt{\log(t)}\min\left\{\frac{(r+1)(r+3)}{r-1}, 8\log(t)\right\} \geq 1
\end{align*}
and that
\begin{align*}
\min\left\{1, \frac{(1+r)^2}{r\log(t)}\right\} &\geq \min\left\{1, \frac{4}{\log(t)}\right\} \geq \frac{1}{\log(t)}.
\end{align*}

Now recalling that $G(H)$ bounds the MSE by Theorem \ref{thm: opt powerhouse} completes the proof.

\end{proof}

\begin{lem} \label{lem: poisson prob estimates}
    \begin{align*}
        \sup_{x \geq 0} p_k(x) \leq (k+1)^{-1/2} 
    \end{align*}
\end{lem}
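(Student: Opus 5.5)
The plan is to locate the maximizer of $p_k$ explicitly and then bound the maximal value with a Stirling-type estimate. For $k=0$ we have $p_0(x)=e^{-x}\leq 1=(0+1)^{-1/2}$ on $x\geq 0$, so the claim holds trivially; assume from now on that $k\geq 1$.

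For $k\geq 1$, differentiate $\log p_k(x) = -x + k\log x - \log k!$ on $(0,\infty)$ to get $-1+k/x$. This is positive for $x<k$ and negative for $x>k$. Since $p_k(0)=0$, the supremum over $x\geq 0$ is attained at $x=k$, giving
\begin{align*}
    \sup_{x\geq 0} p_k(x) = p_k(k) = \frac{e^{-k}k^k}{k!}.
\end{align*}

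Next we invoke the standard non-asymptotic lower form of Stirling's formula, $k!\geq \sqrt{2\pi k}\,(k/e)^k$, valid for all $k\geq 1$. It follows, for example, from Robbins' bound $k! = \sqrt{2\pi k}(k/e)^k e^{\theta_k}$ with $\theta_k\in\left(\frac{1}{12k+1},\frac{1}{12k}\right)$. This yields
\begin{align*}
    p_k(k)\leq \frac{1}{\sqrt{2\pi k}}.
\end{align*}
It then remains to check that $\frac{1}{\sqrt{2\pi k}}\leq \frac{1}{\sqrt{k+1}}$. This is equivalent to $2\pi k\geq k+1$, i.e.\ $(2\pi-1)k\geq 1$, which holds for every $k\geq 1$.

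There is no real obstacle here. The only point requiring care is to cite a Stirling bound that is valid for all $k\geq 1$, not merely asymptotically. If one prefers to avoid citing it, there is a self-contained alternative. The elementary bound $k!\geq e\,(k/e)^k\sqrt{k}$ can be obtained by comparing $\log k! = \sum_{j\le k}\log j$ with $\int_1^k \log x\,dx$ plus the trapezoidal correction $\tfrac12\log k$, using concavity of $\log$. This gives $p_k(k)\leq e^{-1}k^{-1/2}$, and the conclusion follows since $e^{-2}(k+1)\leq k$ for $k\geq 1$.
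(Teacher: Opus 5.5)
Your main argument is correct and is essentially the paper's proof: $p_k$ is maximized at $x=k$, the quantified Stirling bound $k!\geq\sqrt{2\pi k}\,(k/e)^k$ gives $p_k(k)\leq(2\pi k)^{-1/2}$, and $2\pi k\geq k+1$ finishes.

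Your optional ``self-contained alternative'', however, is wrong. The inequality $k!\geq e\sqrt{k}\,(k/e)^k$ is false: at $k=2$ the right side is $4\sqrt{2}/e\approx 2.08>2$. It is in fact the standard \emph{upper} bound on $k!$. The trapezoidal comparison you describe goes the wrong way. Because $\log$ is concave, the trapezoid rule underestimates $\int_1^k\log x\,dx$, so it yields $\log k!\leq k\log k-k+1+\tfrac12\log k$, which is useless for bounding $p_k(k)$ from above.

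If you want an elementary lower bound, use the midpoint comparison instead. Concavity gives $\log j\geq\int_{j-1/2}^{j+1/2}\log x\,dx$ for each $j$. Summing over $j$ gives $k!\geq\sqrt{2}\,(k+\tfrac12)^{k+1/2}e^{-k}$. Since $(1+\tfrac{1}{2k})^{k+1/2}\geq e^{1/2}$, this is at least $\sqrt{2ek}\,(k/e)^k$. Hence $p_k(k)\leq(2ek)^{-1/2}\leq(k+1)^{-1/2}$.
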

\begin{proof}
    For $k=0$ the result is trivial. For $k\geq 1$ calculus gives that the maximum of $p_k(x)$, achieved at $x=k$, equals $\frac{k^ke^{-k}}{k!}$. By Stirling's approximation, suitably quantified, (1) of \cite{StrilingFormula}, this is bounded by $(2\pi k)^{-1/2} \leq (k+1)^{-1/2}$.
\end{proof}

\begin{lem} \label{lem: GH representation}

\begin{align*}
    G(H) \leq rt+t^2(B(H)+\epsilon N(H))^2
\end{align*}
\end{lem}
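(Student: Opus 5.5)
The plan is to bound the two terms of $G(H)=Y_b+Y_v$ separately after the change of variables $x=pt$ (respectively $x=qt$), which maps $p\in[0,1]$ onto $x\in[0,t]$. Each term then becomes $t$ times an expression in the functions $h$ and $f_H$ defined just above.

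\emph{Bias term.} For $p\in(0,1]$ put $x=pt$, so that $1/p=t/x$. Then
\begin{align*}
\frac{e^{-pt}}{p}\left|1-e^{-rpt}-g_H(pt)\right| = t\,\frac{e^{-x}}{x}\left|1-e^{-rx}-g_H(x)\right| = t\,|f_H(x)|.
\end{align*}
At $p=0$ both sides are understood as continuous extensions, and these agree. Taking the maximum over $x\in[0,t]$ gives $Y_b=t^2B(H)^2$ exactly.

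\emph{Variance term.} The same substitution gives
\begin{align*}
\frac{e^{-qt}}{q}\left(1-e^{-rqt}+g_{H^2}(qt)\right) = t\left(h(x)+\frac{e^{-x}}{x}g_{H^2}(x)\right).
\end{align*}
For the first summand I use $h\le r$, which is already recorded in the definition of $h$. For the second, expand $g_{H^2}$ and apply $x^k/k! = x\,x^{k-1}/(k-1)!\cdot\frac1k$:
\begin{align*}
\frac{e^{-x}}{x}g_{H^2}(x)=\sum_{k\ge1}\frac{H_k^2}{k}\,p_{k-1}(x).
\end{align*}
By Lemma~\ref{lem: poisson prob estimates}, $p_{k-1}(x)\le k^{-1/2}$ uniformly in $x\ge 0$. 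Hence the sum is at most $\sum_k H_k^2k^{-3/2}=N(H)^2$, uniformly in $x$. Bounding the maximum of the sum by the sum of the maxima gives
\begin{align*}
Y_v\le rt+tN(H)^2=rt+t^2\epsilon^2N(H)^2,
\end{align*}
using $\epsilon=t^{-1/2}$.

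\emph{Conclusion.} Adding the two bounds,
\begin{align*}
G(H)\le rt+t^2\left(B(H)^2+\epsilon^2N(H)^2\right)\le rt+t^2\left(B(H)+\epsilon N(H)\right)^2,
\end{align*}
since $B(H)$ and $N(H)$ are nonnegative. If $N(H)=\infty$ the inequality is trivial, so no convergence issue arises.

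I do not expect a real obstacle. The only points needing care are the endpoint conventions: the continuous extension at $p=0$ must match $f_H(0)$, and $h(0)=r$ is consistent with the bound $h\le r$. The uniform Poisson estimate is what allows the sum $\sum_k H_k^2 k^{-1}p_{k-1}(x)$ to be controlled independently of $x$.
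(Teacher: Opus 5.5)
Your proposal is correct and follows essentially the same route as the paper. You substitute $x=pt$ (resp.\ $x=qt$), which gives $Y_b=t^2B(H)^2$ exactly. You rewrite $\frac{e^{-x}}{x}g_{H^2}(x)=\sum_k \frac{H_k^2}{k}p_{k-1}(x)$ and bound it uniformly by $N(H)^2$ via Lemma~\ref{lem: poisson prob estimates}, together with $h\le r$. Finally you combine using $\epsilon=t^{-1/2}$ and $B(H)^2+\epsilon^2N(H)^2\le (B(H)+\epsilon N(H))^2$, a last step the paper leaves implicit.
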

\begin{proof}
Making the substitution $y=pt$, note that
    \begin{align*}
        Y_b(H) &= t^2B(H)^2\\
        Y_v(H) &= t \sup_{0 \leq y \leq t}[h(y)+\sum_{k=1}^\infty \frac{p_k(y)}{y}H_k^2] \\
        &= t \sup_{0 \leq y \leq t}[h(y)+\sum_{k=1}^\infty \frac{p_{k-1}(y)}{k}H_k^2]\\
        &\leq t(r+\sum_{k \geq 1}H_k^2 k^{-3/2}) \\
        &= tr+tN(H)^2
    \end{align*}
where we used Lemma \ref{lem: poisson prob estimates} for the inequality. Now combining these gives the result.
\end{proof}

\begin{lem} \label{lem: BNL}
\begin{align*}
B(H)+\epsilon N(H)  =
        \sup_{(\nu,a) \in Q} L(H,(\nu,a))
\end{align*}
\end{lem}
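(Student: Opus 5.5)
The identity splits into two independent suprema, one over $\nu$ and one over $a$. Since $Q$ is a Cartesian product and $L(H,(\nu,a))$ is a sum of a term depending only on $\nu$ and a term depending only on $a$, we have
\begin{align*}
\sup_{(\nu,a)\in Q} L(H,(\nu,a)) = \sup_{\|\nu\|\le 1}\int_0^t f_H\,d\nu + \epsilon \sup_{\sum_k k^{3/2}a_k^2\le 1}\sum_{k\ge 1}H_k a_k .
\end{align*}
Because $\epsilon>0$ is a constant, it factors out of the second supremum. It therefore suffices to show that the first supremum equals $B(H)$ and the second equals $N(H)$.

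\textbf{The measure part.} First I will check that $f_H$ is continuous on $[0,t]$, so that $B(H)=\max_{0\le x\le t}|f_H(x)|$ is attained. At $x=0$ we use the continuous extension already adopted for $Y_b$, namely $f_H(0)=r-H_1$. For $H$ with $N(H)<\infty$, the series $\sum_k \frac{H_k}{k}p_{k-1}(x)$ converges uniformly on $[0,t]$. Indeed, by Cauchy--Schwarz it is bounded by $N(H)\bigl(\sum_k k^{-1/2}p_{k-1}(x)^2\bigr)^{1/2}$, and Lemma \ref{lem: poisson prob estimates} together with the superexponential decay of $p_{k-1}$ on compact sets controls the tail. (If $N(H)=\infty$, both sides of the identity are $+\infty$ via the $a$-part, so nothing needs to be checked.)

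With continuity in hand, $|\int f_H\,d\nu|\le \|\nu\|\sup|f_H|\le B(H)$. For the reverse inequality, the measure $\nu=\operatorname{sign}(f_H(x^*))\delta_{x^*}$, with $x^*$ a maximizer of $|f_H|$, has total variation $1$ and attains $B(H)$.

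\textbf{The sequence part.} This is weighted Cauchy--Schwarz. We have
\begin{align*}
\sum_k H_k a_k = \sum_k \bigl(H_k k^{-3/4}\bigr)\bigl(a_k k^{3/4}\bigr)\le N(H)\Bigl(\sum_k k^{3/2}a_k^2\Bigr)^{1/2}\le N(H).
\end{align*}
Equality is attained by $a_k = H_k k^{-3/2}/N(H)$ when $0<N(H)<\infty$, and the case $N(H)=0$ is trivial. When $N(H)=\infty$, the truncations $a^{(n)}_k = H_k k^{-3/2}\mathbf{1}_{k\le n}/N_n(H)$, with $N_n(H)$ the truncated norm, are feasible and give values $N_n(H)\to\infty$, so both sides are infinite.

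Adding the two parts gives the lemma. The only mildly delicate step is justifying continuity of $f_H$ and the attainment of the maximum at the endpoint $x=0$. Even that can be sidestepped: without continuity, Dirac masses at near-maximizers still give the supremum equal to $\sup|f_H|$, which is exactly how $B(H)$ is meant.
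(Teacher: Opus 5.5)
Your proof is correct and is essentially the paper's argument. You split the supremum over the product set $Q$, realize $B(H)$ with a signed Dirac mass $\pm\delta_{x^*}$ at a maximizer of $|f_H|$ on $[0,t]$, and realize $N(H)$ by weighted Cauchy--Schwarz with the extremal choice $a_k = H_k k^{-3/2}/N(H)$. Your extra arguments for the continuity of $f_H$ and for the case $N(H)=0$ are sound; the paper only addresses continuity later, in the proof of Lemma~\ref{lem: duality}, via $\mathcal{H}\subset\mathcal{H}_t$. The $N(H)=\infty$ case is unnecessary, since the lemma is only applied for $H\in\mathcal{H}$.
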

\begin{proof} $Q$ is a Cartesian product, the first term depends only on $\nu$ and the second only on $a$ so the supremum splits. We will show that the two resulting terms match those on the left.

\begin{align*}
    &\sup_{\nu: ||\nu|| \leq 1}\int_0^t f_H d\nu  \leq ||f_H||_\infty ||\nu|| \leq B(H), \\
    &\sup_{\nu: ||\nu|| \leq 1}\int_0^t f_H d\nu \geq \mathrm{sgn}(f_H(x^*))\int_{0}^t f_H d\delta_{x^*} = B(H),
\end{align*}
where $x^*$ is a maximizer of $|f_H|$ on $[0,t]$, so that $\sup_{\nu: ||\nu|| \leq 1}\int_0^t f_H d\nu = B(H)$. Further, for $a$ such that $\sum_{k=1}^\infty k^{3/2}a_k^2 \leq 1$, applying the Cauchy--Schwarz inequality yields,

\begin{align*}
\sum_{k=1}^\infty H_k a_k &= \sum_{k=1}^\infty (H_kk^{-3/4})(a_kk^{3/4}) \leq N(H)\times 1
\end{align*}
The choice $a_k = \frac{H_kk^{-3/2}}{N(H)}$, which satisfies $\sum_{k=1}^\infty k^{3/2}a_k^2 \leq 1$, makes $\sum_{k=1}^\infty H_k a_k = N(H)$ so indeed

\begin{align*}
    N(H) = \sup \{\sum_{k=1}^\infty H_ka_k: \sum_{k=1}^\infty k^{3/2}a_k^2 \leq 1\}
\end{align*} and the claim follows.
\end{proof}

\begin{lem} \label{lem: duality}
    \begin{align*}
       \inf_{H \in \mathcal{H}} \sup_{(\nu,a) \in Q} L(H,(\nu, a)) &= \sup_{(\nu,a) \in Q} \inf_{H \in \mathcal{H}} L(H,(\nu,a)) 
    \end{align*}
\end{lem}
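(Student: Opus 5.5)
The plan is to apply Sion's minimax theorem. Take the compact side to be $Q$ and the non-compact side to be $\mathcal{H}$. $L(H,(\nu,a))$ is affine in $H$ and linear in $(\nu,a)$, so quasi-convexity in $H$ and quasi-concavity in $(\nu,a)$ are automatic. The work is to choose topologies making $Q$ compact and $L(H,\cdot)$ upper semicontinuous for each fixed $H$. Sion's theorem asks for no topology or compactness on $\mathcal{H}$ beyond convexity; one may still give $\mathcal{H}$ its norm topology, in which $L(\cdot,(\nu,a))$ is continuous.

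\textbf{Topology on $Q$.} On the measure factor I will use the weak-$*$ topology $\sigma(\mathcal{M}[0,t],C[0,t])$, viewing signed measures on $[0,t]$ as the dual of $C[0,t]$ via Riesz. By Banach--Alaoglu the closed unit ball $\{\|\nu\|\le 1\}$ is weak-$*$ compact, and it is convex. On the sequence factor, consider the weighted space $\ell^2_w$ with norm $\left(\sum_k k^{3/2}a_k^2\right)^{1/2}$. Its unit ball is weakly compact, since Hilbert spaces are reflexive, and convex. By Tychonoff, $Q$ is compact and convex in the product topology.

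\textbf{Continuity of $L(H,\cdot)$ for fixed $H\in\mathcal{H}$.} For the $\nu$-term I need $f_H\in C[0,t]$. The function $h$ is continuous on $[0,t]$, extended at $0$ by $h(0)=r$. For the series $\sum_k \frac{H_k}{k}p_{k-1}(x)$, Lemma \ref{lem: poisson prob estimates} gives $|p_{k-1}(x)|\le k^{-1/2}$, so its terms are bounded by $|H_k|k^{-3/2}$. By Cauchy--Schwarz,
\begin{align*}
\sum_k |H_k|k^{-3/2}=\sum_k (|H_k|k^{-3/4})(k^{-3/4}) \le N(H)\Big(\sum_k k^{-3/2}\Big)^{1/2}<\infty.
\end{align*}
The Weierstrass M-test then gives uniform convergence of continuous functions, so $f_H$ is continuous. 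Hence $\nu\mapsto\int f_H\,d\nu$ is weak-$*$ continuous.

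For the $a$-term, the functional $a\mapsto\sum_k H_ka_k$ is bounded on $\ell^2_w$: it is represented by the element with coordinates $H_kk^{-3/2}$, whose $\ell^2_w$-norm is exactly $N(H)<\infty$. It is therefore weakly continuous. So $L(H,\cdot)$ is continuous, in particular upper semicontinuous, on $Q$.

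\textbf{Continuity in $H$.} The second bound above also shows $\sup_x\big|\sum_k\frac{H_k}{k}p_{k-1}(x)\big|\lesssim N(H)$, so $H\mapsto f_H$ is Lipschitz from $\mathcal{H}$ into $C[0,t]$. Consequently $L(\cdot,(\nu,a))$ is continuous in norm, hence lower semicontinuous.

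\textbf{Conclusion and main obstacle.} With these ingredients Sion's theorem gives the claimed equality of $\inf\sup$ and $\sup\inf$. The step I expect to need the most care is the continuity of $f_H$ on $[0,t]$, including at $x=0$. This is what lets the measure factor sit in the dual of $C[0,t]$, where Banach--Alaoglu applies. The decay bound from Lemma \ref{lem: poisson prob estimates} is exactly the right tool for it.
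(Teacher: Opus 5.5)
Your proposal is correct and follows essentially the same route as the paper. Both arguments get compactness of $Q$ from Banach--Alaoglu in the product of the weak-$*$ and weak topologies, and continuity of $L(H,\cdot)$ from $f_H\in C[0,t]$ (you via an M-test, the paper via the embedding $\mathcal{H}\subset\mathcal{H}_t$) together with the representer $(H_kk^{-3/2})_k$. The one real difference is the minimax theorem: the paper uses Kneser's theorem, which needs affinity in both variables and upper semicontinuity only on the compact side. Sion's theorem, contrary to your remark, does also require lower semicontinuity in $H$, but your norm-Lipschitz bound in $N(H)$ supplies exactly that, so your argument is complete.
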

\begin{proof}
The conclusion will follow by applying Theorem 5 of \cite{minimax_theorems} (therein attributed to \cite{Kneser1952}), after verifying the hypothesis of the theorem. First we consider $Q$. Let $M[0,t]$ be the space of finite signed Borel measures on $[0,t]$ with the total variation norm and let $C[0,t]$ denote the space of continuous functions $[0,t] \to \mathbb{R}$ with the sup norm. Then $M[0,t] = C[0,t]^*$ so that $\{\nu \in M[0,t]:||\nu|| \leq 1\}$ is precisely the unit ball in a dual space. Equipping the space with the weak-$*$ topology, this is a non-empty convex subset of a topological vector space, and it is compact by the Banach–Alaoglu theorem. Recognize $\{a: \sum_{k=1}^\infty k^{3/2} a_k^2 \leq 1\}$ as a unit ball in a weighted $\ell^2$ space. We equip it with the weak topology. Because Hilbert spaces are their own dual we may again apply the Banach–Alaoglu theorem to conclude that it is a non-empty compact and convex subset of a topological vector space. Thus $Q$, under the product topology, is a non-empty compact and convex subset of a topological vector space. Meanwhile, $\mathcal{H}$ is non-empty and a vector space and therefore convex.

Next we study $L$. Expanding out its definition one sees it to be affine in both variables. It can be seen to be finite by repeated application of the Cauchy--Schwarz inequality as well as Lemma \ref{lem: poisson prob estimates}.

\begin{align}\label{eq: finiteness of L}
    |\sum_{k=1}^\infty H_ka_k| &\leq N(H)(\sum_{k=1}^\infty k^{3/2}a_k^2)^{1/2} \leq N(H) < \infty. \nonumber \\
    |\int_0^t f_H d\nu| &\leq r+\sup_{x \in [0,t]}\left|\sum_{k=1}^\infty \frac{H_k}{k}p_{k-1}(x)\right| \leq r+ \sum_{k=1}^\infty |H_k|k^{-3/2} \nonumber\\&\leq r+N(H)(\sum_{k=1}^\infty k^{-3/2})^{1/2} < \infty.
\end{align}

Now $L(H,\cdot)$ is the sum of a function of $\nu$ and a function of $a$, so it suffices to establish continuity of each term as a single-variable function. Since $\sum_{k=1}^\infty H_k^2\frac{t^k}{k!} \leq \sup_k( \frac{t^kk^{3/2}}{k!})\sum_{k=1}^\infty H_k^2k^{-3/2}$ we have $\mathcal{H} \subset \mathcal{H}_t$. Then continuity of $f_H$ follows from (\ref{eq: cont of g_H}) since $f_H(x) = \frac{e^{-x}}{x}(1-e^{-rx}-g_H(x))$ and the singularity at $x=0$ is removable. This in turn implies weak-$*$ continuity of the first term. For the second term, view it as taking the inner product with $k^{-3/2}H_k$, which is an element of the space precisely because $N(H) < \infty$, and then scaling. The second term is therefore continuous by the definition of a weak topology. Thus $L(H,\cdot)$ is continuous for each $H \in \mathcal{H}$. Invoking Theorem 5 of \cite{minimax_theorems} now gives the result.
\end{proof}

\begin{lem} \label{lem: inf L to sup nu}
    \begin{align*}
        \sup_{(\nu,a) \in Q} \inf_{H \in \mathcal{H}} L(H,(\nu,a)) &= \sup_{\nu \in \mathcal{M}_{\epsilon}[0,t]}\int_0^t h d\nu.
    \end{align*}
\end{lem}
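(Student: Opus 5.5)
The plan is to fix $(\nu,a) \in Q$ and compute $\inf_{H \in \mathcal{H}} L(H,(\nu,a))$ explicitly. Expanding $f_H = h - \sum_k \frac{H_k}{k}p_{k-1}$ gives
\begin{align*}
L(H,(\nu,a)) = \int_0^t h\, d\nu + \sum_{k \geq 1} H_k\Big(\epsilon a_k - \frac{1}{k}\int_0^t p_{k-1}\, d\nu\Big).
\end{align*}
The interchange of sum and integral is justified by the absolute convergence estimate already used in (\ref{eq: finiteness of L}): $\sum_k |H_k| k^{-3/2} < \infty$ and $0 \leq p_{k-1} \leq 1$. Writing $c_k(\nu) := \int_0^t p_{k-1}\,d\nu$, the map $H \mapsto L$ is affine on the vector space $\mathcal{H}$. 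Its infimum is therefore $\int_0^t h\,d\nu$ if the linear part vanishes identically on $\mathcal{H}$, and $-\infty$ otherwise. Since $\mathcal{H}$ contains every finitely supported sequence, the linear part vanishes if and only if $\epsilon a_k = c_k(\nu)/k$ for all $k$.

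So the left-hand side equals the supremum of $\int_0^t h\,d\nu$ over those $\nu$ with $\|\nu\| \leq 1$ for which some $a$ with $\sum_k k^{3/2}a_k^2 \leq 1$ satisfies $a_k = c_k(\nu)/(\epsilon k)$. That $a$ is forced, so the condition reads
\begin{align*}
\sum_k k^{3/2}\frac{c_k(\nu)^2}{\epsilon^2 k^2} = \epsilon^{-2}\sum_k k^{-1/2}c_k(\nu)^2 \leq 1,
\end{align*}
which is exactly the defining constraint of $\mathcal{M}_\epsilon[0,t]$. The set of admissible $\nu$ is nonempty, since $\nu=0$ qualifies, so the supremum is never over an empty set with value $-\infty$. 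Both inequalities then follow at once.
\begin{itemize}
\item For every $(\nu,a)$ either the inner infimum is $-\infty$ or $\nu \in \mathcal{M}_\epsilon[0,t]$ and the value is $\int_0^t h\,d\nu$. This gives $\leq$.
\item Conversely, each $\nu \in \mathcal{M}_\epsilon[0,t]$ yields the admissible pair $(\nu,a)$ with $a_k = c_k(\nu)/(\epsilon k)$, attaining $\int_0^t h\,d\nu$. This gives $\geq$.
\end{itemize}

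The only step I expect to need care is the claim that the infimum is $-\infty$ as soon as some coefficient is nonzero. Suppose $\epsilon a_{k_0} - c_{k_0}(\nu)/k_0 \neq 0$ for some $k_0$. Take $H = \lambda e_{k_0}$, which lies in $\mathcal{H}$, and send $\lambda \to \pm\infty$ with the appropriate sign; $L$ is then unbounded below. The remaining points are routine bookkeeping: the convention $\sup\emptyset$ does not arise, and the measures are taken on $[0,t]$ as in the definition of $Q$.
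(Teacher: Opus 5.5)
Your proposal is correct and matches the paper's proof step for step: expand $L$ so it is affine in $H$, use $H=\lambda e_{k}$ to force $\epsilon a_k = \frac{1}{k}\int_0^t p_{k-1}\,d\nu$ for every $k$, and observe that admissibility of this forced $a$ is exactly the defining constraint of $\mathcal{M}_\epsilon[0,t]$. One small fix is needed in the term-by-term integration: the dominating bound must use $p_{k-1}\le k^{-1/2}$ (Lemma \ref{lem: poisson prob estimates}), not merely $0\le p_{k-1}\le 1$, because $\sum_k |H_k|/k$ can diverge for $H\in\mathcal{H}$ (for instance $H_k = k^{1/4}/\log(k+1)$).
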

\begin{proof}
Expanding the definition of $f_H$ and integrating term by term, which is justified by (\ref{eq: finiteness of L}) we have

\begin{align*}
    L= \int_0^t hd\nu+\sum_{k=1}^\infty (\epsilon a_k-\frac{1}{k}\int p_{k-1} d\nu )H_k
\end{align*}
    For a given $(\nu,a)$ we have $\inf_{H \in \mathcal{H}} L(H,(\nu,a)) = -\infty$ unless $a_k = \frac{1}{k\epsilon}\int p_{k-1} d\nu$ for all $k$. To see this, suppose that some $a_k$ is not its prescribed value and pick an $H$ with large $H_k$, sign chosen appropriately and zeros elsewhere. Assuming for a moment that the resulting $a$ is admissible, we may therefore restrict the supremum in $\sup_{(\nu,a) \in Q} \inf_{H \in \mathcal{H}} L(H,(\nu,a))$ to be only a supremum over $\nu$, with $a$ determined from $\nu$ via $a_k = \frac{1}{k \epsilon}\int p_{k-1} d\nu$. However, we must then make sure that $a_k$ is admissible, that is, we need to enforce $\sum_{k=1}^\infty k^{3/2}a_k^2 = \sum_{k=1}^\infty k^{-1/2}\frac{1}{\epsilon^2}(\int_0^t p_{k-1}d\nu)^2 \leq 1$. Now for such $\nu$ and associated $a$, $L(H,(\nu,a)) = \int_0^t h d\nu$, giving the result.
\end{proof}

\begin{lem} \label{lem: nu to F}
    \begin{align*}
        \sup_{\nu \in \mathcal{M}_{\epsilon}[0,t]}\int_0^t h d\nu &\leq  \sup_{f \in \mathcal{F}} \int_{0}^r |f(-s)| ds
    \end{align*}
\end{lem}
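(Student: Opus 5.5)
The plan is to use the integral representation of $h$ recorded in its definition,
$$h(x) = \int_0^r e^{-(1+s)x}\,ds .$$
Take any $\nu \in \mathcal{M}_\epsilon[0,t]$ and extend it by zero to $[0,\infty)$ via the natural inclusion noted above. The integrand $e^{-(1+s)x}$ is bounded by $1$ on $[0,t]\times[0,r]$ and $\nu$ has finite total variation. So Fubini's theorem, applied to the product of $|\nu|$ with Lebesgue measure on $[0,r]$, lets us swap the integrals:
$$\int_0^t h\,d\nu = \int_0^r \int_0^t e^{-(1+s)x}\,d\nu(x)\,ds .$$

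Next we recognize the inner integral as the transform $\mathcal{T}\nu$. By definition $\mathcal{T}\nu(z) = \int_0^\infty e^{-x(1-z)}\,d\nu(x)$. Putting $z=-s$ gives $e^{-x(1+s)}$, so the inner integral equals $(\mathcal{T}\nu)(-s)$. This integral converges absolutely for every real $s \geq 0$, indeed for every $z$ with $\mathrm{Re}(z)\le 1$ and in fact for all $z$, since $\nu$ lives on the compact set $[0,t]$. Therefore
$$\int_0^t h\,d\nu = \int_0^r f(-s)\,ds, \qquad f:=\mathcal{T}\nu \in \mathcal{F}.$$
Bounding the integrand by its absolute value gives $\int_0^t h\,d\nu \le \int_0^r |f(-s)|\,ds \le \sup_{f\in\mathcal{F}}\int_0^r|f(-s)|\,ds$. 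Taking the supremum over $\nu \in \mathcal{M}_\epsilon[0,t]$ finishes the argument.

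No step here is a serious obstacle. The only points that need care are checking that the inclusion $\mathcal{M}_\epsilon[0,t]\subset\mathcal{M}_\epsilon[0,\infty)$ preserves both the total-variation bound and the constraint $\sum_k k^{-1/2}(\int p_{k-1}\,d\nu)^2\le\epsilon^2$, which is immediate since $\nu$ puts no mass outside $[0,t]$, and justifying Fubini, which boundedness of the integrand and finiteness of $\|\nu\|$ handle. The substantive analytic work is deferred to Lemma \ref{lem: assembly}.
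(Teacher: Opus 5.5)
Your proposal is correct and follows the paper's proof essentially verbatim: write $h$ as $\int_0^r e^{-(1+s)x}\,ds$, swap the integrals, recognize the inner integral as $\mathcal{T}[\nu](-s)$, and bound by the absolute value and then the supremum over $\mathcal{F}$. Your explicit justification of Fubini (bounded integrand, finite $\|\nu\|$) and of the inclusion $\mathcal{M}_\epsilon[0,t]\subset\mathcal{M}_\epsilon[0,\infty)$ fills in details the paper leaves implicit.
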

\begin{proof}
\begin{align*}
        \int_0^t h(x)d\nu(x) &= \int_0^t \int_0^r e^{-(1+s)x}ds\, d\nu(x) \\
        &= \int_0^r \int_0^t e^{-(1+s)x}d\nu(x)\,ds \\
        &= \int_0^r \mathcal{T}[\nu](-s)\,ds \\
        &\leq \sup_{f \in \mathcal{F}}\int_0^r |f(-s)|\, ds
    \end{align*}
\end{proof}
The bounds on $f \in \mathcal{F}$ will be obtained via the theory of harmonic functions in the complex plane. For subsets of the complex plane we take closures in the Riemann sphere. Fix $\delta \in (0,1)$ and let $ D_\delta := \{z \in \mathbb{C}: \text{Re}(z) < 1\}\setminus \overline {D(0,\delta)}$ where $D(0,\delta)$ denotes the disc centered at the origin with radius $\delta$. The boundary of $D_\delta$ has two components, $C := \overline {D(0,\delta)}\setminus D(0,\delta)$ and $W := \{z \in \mathbb{C}: \text{Re}(z) = 1\} \cup \{\infty \}$. We will need one quantitative bound on the associated harmonic measure, which we now state.

\begin{lem} \label{lem: harmonic measure}
    There exists a continuous function $\omega: \overline{D_\delta} \to [0,1]$ such that
\begin{itemize}
    \item $\omega$ is harmonic on $D_\delta$.
    \item $\omega$ is $1$ on $C$ and $0$ on $W$.
    \item $\omega(-s) \geq \frac{2\delta}{1+s}$ for $s > \delta$.
\end{itemize}
\end{lem}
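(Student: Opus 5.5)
We would construct $\omega$ explicitly by mapping $D_\delta$ conformally onto a round annulus, where harmonic measure is a multiple of $\log|\cdot|$, and then check the inequality at $z=-s$ by hand.

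\emph{Step 1: the Möbius map.} We look for real points $p,q$ that are symmetric with respect to both boundary components. Symmetry with respect to the line $\mathrm{Re}(z)=1$ forces $p+q=2$, and symmetry with respect to the circle $|z|=\delta$ forces $pq=\delta^2$. Hence
\begin{align*}
p=1-\sqrt{1-\delta^2}\in(0,\delta), \qquad q=1+\sqrt{1-\delta^2}>1 .
\end{align*}
Set $\varphi(z):=\frac{z-p}{z-q}$. By the symmetry, $|\varphi|$ is constant on each boundary component. On $W$ we have $|\varphi|\equiv 1$, including at $\infty$. On $C$ we have $|\varphi|\equiv\rho:=\frac{\delta-p}{q-\delta}<1$, obtained by evaluating at $z=\delta$. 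Since $\varphi$ is a Möbius map with pole $q\notin\overline{D_\delta}$, it sends $D_\delta$ biholomorphically onto the annulus $\{\rho<|w|<1\}$.

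\emph{Step 2: definition and the first two properties.} Define
\begin{align*}
\omega(z):=\frac{\log|\varphi(z)|}{\log\rho}.
\end{align*}
This is the real part of a locally holomorphic logarithm, so it is harmonic on $D_\delta$. It equals $1$ on $C$ and $0$ on $W$. It extends continuously to $\overline{D_\delta}$ in the Riemann sphere, with value $0$ at $\infty$, and takes values in $[0,1]$ because $\rho\le|\varphi|\le 1$ on $\overline{D_\delta}$.

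\emph{Step 3: the lower bound, which is the main obstacle.} For real $z=-s$ with $s>\delta$ we have
\begin{align*}
\omega(-s)=\frac{\log\frac{q+s}{p+s}}{\log\frac{q-\delta}{\delta-p}},
\end{align*}
and we must show this is at least $\frac{2\delta}{1+s}$. The numerator is $\log(1+\frac{q-p}{p+s})$ with $q-p=2\sqrt{1-\delta^2}$.

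First we would try a comparison argument. Since $\mathrm{Re}\frac{1}{1-z}$ vanishes on $W$, the maximum principle gives $\omega(z)\ge(1-\delta)\,\mathrm{Re}\frac{1}{1-z}$. This yields $\omega(-s)\ge\frac{1-\delta}{1+s}$, which settles the case $\delta\le\frac13$.

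For $\delta>\frac13$ the comparison is too weak, so we work with the explicit formula. We would bound the numerator from below by $\log(1+x)\ge\frac{x}{1+x}$, which gives $\frac{q-p}{q+s}$. Using $q\le 2$, we expect $q+s\lesssim 1+s$ up to the needed constant. We would bound the denominator from above using $\delta-p\ge\delta^2/2$, which follows from $p=\frac{\delta^2}{1+\sqrt{1-\delta^2}}\le\delta^2$ when $\delta\le\frac12$, and needs a separate check otherwise. The remaining task is then a one-variable inequality in $\delta$, uniform in $s>\delta$, which we would verify on $(\frac13,1)$.

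If this explicit verification becomes messy, we would instead use monotonicity of harmonic measure in the domain. Since $\omega$ increases as $\delta$ increases, we would compare against a suitable intermediate $\delta'$, splitting into the ranges $s\le1$ and $s>1$ to handle the two factors separately.
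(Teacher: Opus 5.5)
\paragraph{Where the proposal stands.} Your Steps 1--2 are correct and coincide with the paper's construction. Your $p,q,\varphi$ are its $a,b,\Phi$, and the normalisation is $\log(1/\rho)=\text{artanh}(x)$ with $x:=\sqrt{1-\delta^2}$. The problems are all in Step 3.

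\paragraph{The comparison for $\delta\le\frac13$ is invalid.} The function $\text{Re}\frac{1}{1-z}$ does not vanish on all of $W$. It is the half-plane Poisson kernel with a pole at $z=1\in W$: at $z=1-\epsilon\in D_\delta$ it equals $1/\epsilon$. So it is unbounded on $D_\delta$, and the maximum principle gives nothing. The conclusion $\omega(-s)\ge\frac{1-\delta}{1+s}$ is in fact false. For fixed $s$, $\omega(-s)\to 0$ as $\delta\to 0$, because the denominator $\text{artanh}(x)\to\infty$, whereas $\frac{1-\delta}{1+s}\to\frac{1}{1+s}$. Concretely, $\delta=0.1$, $s=10$ gives $\omega(-10)\approx 0.061<0.9/11$. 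So the case $\delta\le\frac13$ is not settled.

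\paragraph{The plan for $\delta>\frac13$ fails near $\delta=1$.} This is exactly the regime in which the lemma is later used, with $\delta=1-2/\log t$. Since $q+s=1+s+x$, $p+s=1+s-x$ and the denominator is $\log(1/\rho)$, your formula is exactly
\[
\omega(-s)=\frac{2\,\text{artanh}(y)}{\text{artanh}(x)},\qquad y:=\frac{x}{1+s}.
\]
The claim is therefore equivalent to
\[
\frac{\text{artanh}(y)}{y}\ge\frac{\delta\,\text{artanh}(x)}{x}=1-\frac{x^2}{6}+O(x^4),
\]
so the slack is only of order $x^2\asymp 1-\delta$.
\begin{itemize}
\item The estimate $\log(1+u)\ge\frac{u}{1+u}$ replaces $2\,\text{artanh}(y)$ by $\frac{2y}{1+y}$, a first-order loss $1-y+O(y^2)$. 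Even with the exact denominator, the resulting inequality fails whenever $x$ is small and $1+s$ is below roughly $6/x$. For $\delta=0.99$ and $s\downarrow\delta$ you get only about $0.94\cdot\frac{2\delta}{1+s}$.
\item There is no room for a constant-factor loss such as ``$q+s\lesssim 1+s$''.
\item The auxiliary bound $\delta-p\ge\delta^2/2$ is false near $\delta=1$, since $\delta-p=x-(1-\delta)\to 0$.
\item The monotonicity fallback only transfers bounds from some $\delta'<\delta$. Those carry the weaker constant $2\delta'$, and they presuppose the bound at $\delta'$.
\end{itemize}

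\paragraph{The missing idea.} Keep both logarithms as inverse hyperbolic tangents and use the two-sided bound
\[
u\le\text{artanh}(u)\le\frac{u}{\sqrt{1-u^2}}\qquad (0<u<1),
\]
which follows by comparing derivatives, $1\le(1-u^2)^{-1}\le(1-u^2)^{-3/2}$. Apply the lower bound to the numerator and the upper bound to the denominator; since $\sqrt{1-x^2}=\delta$, this gives
\[
\omega(-s)\ge\frac{2y}{x/\delta}=\frac{2\delta}{1+s}
\]
for all $\delta\in(0,1)$ and $s>\delta$ at once, with no case split. This is precisely the paper's argument.
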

\begin{proof}
    We construct $\omega$ explicitly. Set $a=1-\sqrt{1-\delta^2}$ and $b= 1+\sqrt{1-\delta^2}$. \begin{align*}
        \Phi(z) &:= \frac{z-a}{z-b} \\
        \omega(z) &:= \frac{\log(1/|\Phi(z)|)}{\text{artanh}(\sqrt{1-\delta^2})}
    \end{align*}
$\Phi$ has a pole at $z = b$ and a root at $z = a$. Clearly $\text{Re}(1+\sqrt{1-\delta^2}) = 1+\sqrt{1-\delta^2} > 1$, so the pole is outside of $\overline{D_\delta}$. Similarly, $|1-\sqrt{1-\delta^2}| < \delta$ so the root is inside $D(0,\delta)$. Thus $\Phi$ is holomorphic and zero-free on a neighborhood of $\overline{D_\delta}$. Since $\omega$ is the scaled logarithm of $|\Phi(z)|$ it is thus continuous on $\overline{D_\delta}$ and harmonic on $D_\delta$. On $W\setminus\{\infty\}$,

\begin{align*}
|z-a|^2 = 1-\delta^2+\text{Im}(z)^2= |z-b|^2.
\end{align*}
Thus $|\Phi(z)| = 1$ there and $\omega$ vanishes on $W\setminus\{\infty\}$. Also, $\Phi(\infty)=1$ so $\omega(\infty) = 0$.  Using $ab = \delta^2$ and then $a+b=2$ we have that for $|z|=\delta$,
\begin{align*}
    |z-a|^2 &= |z|^2-2a\text{Re}(z)+a^2 \\
    &= \delta^2-2a\text{Re}(z)+a^2\\
    &= a(b+a-2\text{Re}(z)) \\
    &= 2a(1-\text{Re}(z))
\end{align*}
and similarly $|z-b|^2=2b(1-\text{Re}(z))$. 
\begin{align*}
    |\Phi(z)|^2 = \frac{|z-a|^2}{|z-b|^2} = \frac{a}{b}= \frac{1-\sqrt{1-\delta^2}}{1+\sqrt{1-\delta^2}},
\end{align*} independently of $z$. Then $\log(1/|\Phi(z)|) = \frac{1}{2}\log(\frac{1+\sqrt{1-\delta^2}}{1-\sqrt{1-\delta^2}})$. By the identity, $\text{artanh}{(x)}=\frac{1}{2}\log(\frac{1+x}{1-x})$, it follows that $\omega=1$ on $C$. Note that the maximum principle now implies that the co-domain $[0,1]$ is valid. For real $s > \delta$,

\begin{align*}
    |\Phi(-s)| &=  \frac{1+s-\sqrt{1-\delta^2}}{1+s+\sqrt{1-\delta^2}}, \\
    &= \frac{1-\frac{\sqrt{1-\delta^2}}{1+s}}{1+\frac{\sqrt{1-\delta^2}}{1+s}}. \\
\end{align*}
Again applying the identity $\text{artanh}{(x)}=\frac{1}{2}\log(\frac{1+x}{1-x})$ gives
\begin{align*}\omega(-s) = \frac{2\text{artanh}(\frac{\sqrt{1-\delta^2}}{1+s})}{\text{artanh}(\sqrt{1-\delta^2})}
\end{align*}
We note that for $x \in (0,1)$,
\begin{align*}
    1 = \frac{d}{dx}x\leq (1-x^2)^{-1} = \text{artanh}'(x)\leq (1-x^2)^{-3/2} = \frac{d}{dx} \frac{x}{\sqrt{1-x^2}}.
\end{align*}
Along with the non-differentiated functions all vanishing at zero this implies,
\begin{align*}
    x \leq \text{artanh}(x) \leq \frac{x}{\sqrt{1-x^2}}.
\end{align*}
Applying the lower bound to the numerator and the upper bound to the denominator and noticing that $\sqrt{1-(\sqrt{1-\delta^2})^2}=\delta$ gives, for $s > \delta$
\begin{align*}
    \omega(-s) \geq 2\frac{\frac{\sqrt{1-\delta^2}}{1+s}}{\frac{\sqrt{1-\delta^2}}{\delta}} = \frac{2\delta}{1+s}.
\end{align*}
\end{proof}

\begin{lem} \label{lem: function class properties}
For any $f \in \mathcal{F}$, $\eta \in (0,\frac{1}{2})$, $\delta := 1-\eta$, $s > \delta$,
\begin{enumerate}[label=(\roman*)]
    \item $|f(z)| \leq 1 \text{ on }\{z \in \mathbb{C}: \text{Re}(z) \leq 1\}.$
    \item $|f(z)| \leq \epsilon (1-|z|^2)^{-1} \text{ on } D(0,1)$,
    \item $\log|f(-s)| \leq \dfrac{\delta}{1+s}\log\left(\dfrac{1+s}{2\delta}\left(\dfrac{\epsilon}{\eta}\right)^{2}\right)$.
    \item $\log|f(-s)| \leq \dfrac{\delta}{1+s}\log\left(\dfrac{(s+\delta)(1+s)e^{4\eta-1/2}}{2\delta(s-\delta)2\sqrt{\eta}}\epsilon^{2}\right)$.
\end{enumerate}
\end{lem}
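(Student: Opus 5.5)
The plan is to write $f=\mathcal{T}[\nu]$ for some $\nu\in\mathcal{M}_\epsilon[0,\infty)$ and prove the four items in order. Items (i) and (ii) are direct estimates on the transform, and items (iii) and (iv) come from a two-constants argument on $D_\delta$ using the harmonic measure $\omega$ of Lemma~\ref{lem: harmonic measure}.

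For (i), if $\mathrm{Re}(z)\le 1$ then $|e^{-x(1-z)}|=e^{-x(1-\mathrm{Re} z)}\le 1$ for all $x\ge 0$. Together with $\|\nu\|\le 1$ this gives $|f(z)|\le 1$.

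For (ii), expand $e^{-x(1-z)}=e^{-x}\sum_{j\ge0}\frac{(xz)^j}{j!}=\sum_{k\ge1}p_{k-1}(x)z^{k-1}$. For $|z|<1$ the series is dominated by $e^{-x(1-|z|)}\le 1$, so Fubini lets me exchange sum and integral. This gives $f(z)=\sum_{k\ge1}c_k z^{k-1}$ with $c_k:=\int p_{k-1}\,d\nu$, and the constraint defining $\mathcal{M}_\epsilon$ is exactly $\sum_k k^{-1/2}c_k^2\le\epsilon^2$. Cauchy--Schwarz with weights $k^{\mp1/2}$ gives
\begin{align*}
|f(z)|\le \epsilon\Big(\sum_{k\ge1}k^{1/2}|z|^{2(k-1)}\Big)^{1/2}\le \epsilon\Big(\sum_{k\ge1}k|z|^{2(k-1)}\Big)^{1/2}=\frac{\epsilon}{1-|z|^2}.
\end{align*}

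For (iii), $u:=\log|f|$ is subharmonic on $D_\delta$ and bounded above there by (i). It is also upper semicontinuous up to the boundary in the Riemann sphere: $f$ is continuous on $\{\mathrm{Re}\, z\le1\}$ by dominated convergence, and at $\infty$ we may take $\limsup u\le 0$.

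On $W$ we have $u\le 0$ by (i). On $C$, where $|z|=\delta$, (ii) gives $u\le \log\frac{\epsilon}{1-\delta^2}$. Since $1-\delta^2=\eta(2-\eta)\ge\eta$, this gives $u\le\log\frac{\epsilon}{\eta}$ on $C$.

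I then split into two cases.
\begin{itemize}
\item If $\epsilon\le\eta$, the function $u-\omega\log(\epsilon/\eta)$ is subharmonic, bounded above, and $\le 0$ on $\partial D_\delta$. The maximum principle on the (sphere-)domain then gives $u\le\omega\log(\epsilon/\eta)$. Since $\log(\epsilon/\eta)\le 0$, $-s\in D_\delta$ for $s>\delta$, and $\omega(-s)\ge\frac{2\delta}{1+s}$, we get $\log|f(-s)|\le\frac{\delta}{1+s}\log\big((\epsilon/\eta)^2\big)$. This is stronger than (iii), because $\frac{1+s}{2\delta}>1$ when $s>\delta$.
\item If $\epsilon>\eta$, the right-hand side of (iii) is positive while $\log|f(-s)|\le0$ by (i), so (iii) is trivial.
\end{itemize}

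For (iv), I would rerun the same comparison without throwing away information in the two lossy steps.
\begin{itemize}
\item First, instead of the lower bound $\omega(-s)\ge\frac{2\delta}{1+s}$, use the exact value $\omega(-s)=2\,\mathrm{artanh}\big(\tfrac{\sqrt{1-\delta^2}}{1+s}\big)/\mathrm{artanh}(\sqrt{1-\delta^2})$.
\item Second, replace the crude bound $\log\frac{\epsilon}{1-\delta^2}\le\log\frac{\epsilon}{\eta}$ by an exact expansion: write $\mathrm{artanh}(\sqrt{1-\delta^2})=\tfrac12\log\frac{1+\sqrt{1-\delta^2}}{1-\sqrt{1-\delta^2}}$ and use $1-\delta^2=\eta(2-\eta)$. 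My expectation is that the $\sqrt\eta$ and $e^{4\eta-1/2}$ factors come from this step.
\item Third, keep the exact numerator factor $\frac{1+s+\sqrt{1-\delta^2}}{1+s-\sqrt{1-\delta^2}}$, bounded in terms of $\frac{s+\delta}{s-\delta}$. I expect this to produce the $(s+\delta)/(s-\delta)$ term.
\end{itemize}
Then, as in (iii), split according to the sign of the resulting logarithm, and use (i) when it is nonnegative.

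The main obstacle is (iv): matching the exact constants $e^{4\eta-1/2}$ and $2\sqrt\eta$ requires careful elementary inequalities for $\mathrm{artanh}$ and $\log$ near $\delta\to1$. I would attack it by expanding $\omega(-s)\log\frac{\epsilon}{1-\delta^2}$ exactly and bounding each factor separately, and only at the end fold the result into the form $\frac{\delta}{1+s}\log(\cdot)$. The potential-theoretic step itself, the maximum principle on an unbounded domain, is routine here because $f$ is bounded by (i).
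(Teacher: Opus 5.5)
Your arguments for (i), (ii) and (iii) are sound. For (iii), the direct two-constants comparison $\log|f|\le\omega\log(\epsilon/\eta)$ is even a little cleaner than the paper's route. The gap is (iv). There you still give the maximum principle only the sup bound $|f|\le\epsilon/(1-\delta^2)$ on $C$ and $|f|\le 1$ on $W$. From that information, $\log|f(-s)|\le\omega(-s)\log\frac{\epsilon}{1-\delta^2}$ is the best one can extract, however carefully the elementary inequalities are done. With $x=\sqrt{1-\delta^2}=\sqrt{\eta(2-\eta)}$ one has $\omega(-s)=\frac{2}{1+s}(1+O(\eta))$: the $\mathrm{artanh}(x)\approx x$ in the denominator cancels against the numerator, so no $\log\sqrt{\eta}$ appears. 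Your route therefore yields essentially $|f(-s)|\lesssim(\epsilon^2\eta^{-2})^{1/(1+s)}$, whereas (iv) asserts roughly $\bigl(\epsilon^2\eta^{-1/2}\frac{s+\delta}{s-\delta}\bigr)^{\delta/(1+s)}$. In the regime where the lemma is used ($\eta=2/\log t$, $\epsilon^2=t^{-1}$, so $\eta\log(1/\epsilon)=1$), the $O(\eta)$ corrections to the exponent are worth only $O(1)$ in the logarithm. The discrepancy, however, grows like $\frac{3}{2(1+s)}\log(1/\eta)$. For instance, with $\eta=10^{-3}$, $s=3$ (so $t=e^{2000}$), your bound on $\log|f(-s)|$ is about $-496.6$, while (iv) is about $-498.6$. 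Likewise, the factor $\frac{s+\delta}{s-\delta}$ blows up as $s\downarrow\delta$, so it cannot come from $\frac{1+s+x}{1+s-x}$, which stays close to $1$.

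The missing idea is to control $f$ on $C$ in mean square rather than in sup norm.

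Let $V$ be the harmonic extension of $|f|^2|_C$ into $D(0,\delta)$, and let $U(z):=V(\delta^2/\bar z)$. This is a nonnegative harmonic function on $D_\delta$ equal to $|f|^2$ on $C$. Consider
$\log|f|^2-\bigl[\log\frac{U(-s)}{\omega(-s)}-1\bigr]\omega-\frac{\omega(-s)}{U(-s)}U$.
It is subharmonic, bounded above, and nonpositive on $C$ (by $\log y\le y-1$) and on $W$. Evaluating at $-s$ gives $\log|f(-s)|^2\le\omega(-s)\log\frac{U(-s)}{\omega(-s)}$, which is Jensen's inequality against harmonic measure in disguise.

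Because $w\mapsto w\log(U/w)$ is decreasing for $w>U$, you may replace $\omega(-s)$ by its lower bound $\frac{2\delta}{1+s}$ whenever the result is negative; otherwise (iv) is trivial by (i). This is where the factor $\frac{1+s}{2\delta}$ comes from.

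Harnack's inequality in $D(0,\delta)$ at the point $-\delta^2/s$ then gives $U(-s)\le\frac{s+\delta}{s-\delta}V(0)$. With $m_k=\int p_k\,d\nu$, the mean value property and Parseval give $V(0)=\sum_k m_k^2\delta^{2k}\le\epsilon^2\sup_k(k+1)^{1/2}(1-\eta)^{2k}\le\epsilon^2\frac{e^{2\eta-1/2}}{2\sqrt{\eta}}$, which is within the stated constant. This beats the bound $\epsilon^2(1-\delta^2)^{-2}\approx\epsilon^2/(4\eta^2)$ on $\sup_C|f|^2$ from (ii) by a factor of order $\eta^{-3/2}$. That gain is precisely what distinguishes (iv) from (iii).
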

\begin{proof}
For $(i)$, notice that the integrand in the definition of $\mathcal{T}$ satisfies $|e^{-x(1-z)}|= e^{-x(1-\text{Re}(z))} \leq 1$ for $x \geq 0$ and $\text{Re}(z) \leq 1$. Since $||\nu|| \leq 1$, the integral is bounded by the maximal value of the integrand and the result follows.

By expanding $e^{-x(1-z)}$ and then integrating term-wise we obtain

\begin{align*}
    f(z) &= \int_0^\infty e^{-x(1-z)} d\nu(x) \\
    &= \int_0^\infty e^{-x}\sum_{k=0}^\infty \frac{(xz)^k}{k!} d\nu(x), \\
    &= \sum_{k=0}^\infty z^k \int_0^\infty p_k(x) d\nu(x), \\
    &= \sum_{k=0}^\infty m_kz^k.
\end{align*}
The interchange of summation and integration is justified when $|z| \leq 1$ because $
\int_0^\infty e^{-x(1-|z|)}d|\nu| \leq ||\nu|| \leq 1$. Therefore, for $|z| < 1$,

\begin{align*}
   | f(z)| &\leq \sum_{k=0}^\infty ((k+1)^{-1/4}|m_k|)((k+1)^{1/4}|z|^k) \\
    &\leq \epsilon ({\sum_{k=0}^\infty (k+1)^{1/2} |z|^{2k}})^{1/2} \\
    &\leq  \epsilon ({\sum_{k=0}^\infty (k+1) |z|^{2k}})^{1/2} \\
    &=  \epsilon (1-|z|^2)^{-1}
\end{align*}
where in the last equality we recognized the sum of an arithmetico-geometric sequence, convergent for $|z| < 1$.

Next let $V(z)$ solve the Dirichlet problem on $D(0,\delta)$ with data $|f|^2$. It is non-negative since the boundary data is non-negative. Define $U(z) := V(\frac{\delta^2}{\overline{z}})$. It is harmonic because (i) $z \mapsto \frac{\delta^2}{z}$ is holomorphic and hence preserves harmonicity and (ii) $z \mapsto \overline{z}$ preserves harmonicity by direct computation. By the comparison principle $U$ is non-negative. We may assume that $f$ is not identically zero on $C$ as if it were, it would be zero everywhere and there would be nothing to prove. We must then have $V>0$ on $D(0,\delta)$ since it vanishing at an interior point would, by the strong maximum principle, imply it to be identically zero on $D(0,\delta)$ and hence on the boundary. Thus we have strict positivity of $U$ on $D_\delta$. For each $s > \delta$, consider 

\begin{align*}
    \log|f(z)|^2-\left[\log\left(\frac{U(-s)}{\omega(-s)}\right)-1\right]\omega(z)-\left[\frac{\omega(-s)}{U(-s)}\right]U(z).
\end{align*}

This function is sub-harmonic as the first term is sub-harmonic and the others harmonic. On $\zeta \in C$,

\begin{align*}
    &\log|f(\zeta)|^2-\left[\log\left(\frac{U(-s)}{\omega(-s)}\right)-1\right]-\left[\frac{\omega(-s)}{U(-s)}\right]|f(\zeta)|^2 \\
    &= \log\left(\frac{\omega(-s)|f(\zeta)|^2}{U(-s)}\right)+1-\frac{\omega(-s)|f(\zeta)|^2}{U(-s)} \\
    &\leq 0,
\end{align*} where in the last line we used that $\log(x) \leq x-1$ for positive $x$ and the $x=0$ case is trivial. On $W$, non-positivity holds term-wise. Since $\log|f| \leq 0, U(z) \geq 0$ and $\omega(z) \in [0,1]$, it is bounded by $|\log \frac{U(-s)}{\omega(-s)}-1|$. By the extended maximum principle, it is therefore non-positive on $D_\delta$. Evaluating at $z=-s$ gives that for $s > \delta$ 

\begin{align*}
    \log|f(-s)|^2 &\leq \left[\log\left(\frac{U(-s)}{\omega(-s)}\right)-1\right]\omega(-s)+\left[\frac{\omega(-s)}{U(-s)}\right]U(-s) \\
    &= \omega(-s)\log\left(\frac{U(-s)}{\omega(-s)}\right)-\omega(-s)+\omega(-s) \\
    &= \omega(-s)\log\left(\frac{U(-s)}{\omega(-s)}\right).
\end{align*}

Then applying the lower bound in Lemma \ref{lem: harmonic measure} gives

\begin{align} \label{eq: central harmonic bound}
    \log|f(-s)|^2 \leq \frac{2\delta}{1+s} \log\left(\frac{(1+s)U(-s)}{2\delta}\right).
\end{align}

where we were free to assume that the log is negative because $|f| \leq 1$ implies that the bound holds trivially otherwise.

The rest of the proof will bound the right hand side of (\ref{eq: central harmonic bound}) in two different ways. By the maximum principle, (ii) and the observation $1-\delta^2 = (1-\delta)(1+\delta) = \eta(2-\eta) \geq \eta$,

\begin{align*}
    U(-s) \leq \sup_C |f|^2 \leq \epsilon^2(1-\delta^2)^{-2} \leq \left(\frac{\epsilon}{\eta}\right)^2.
\end{align*}
This gives (iii).

Next we apply Harnack's inequality to bound $U(-s)$, Theorem 1.3.1 of \cite{Ransford} with, in their notation, $w=0, \rho = \delta, r = \frac{\delta^2}{s}$, $t = \pi$,

\begin{align*}
    U(-s)&=V\left(-\frac{\delta^2}{s}\right) \\
    &\leq \frac{s+\delta}{s-\delta}V(0) \\
    &= \frac{s+\delta}{s-\delta}\frac{1}{2\pi}\int_{-\pi}^\pi |f(\delta e^{i\theta})|^2d\theta
\end{align*}
where we applied the mean value property in the last line.

\begin{align*}
    \frac{1}{2\pi}\int_{-\pi}^\pi |f(\delta e^{i\theta})|^2d\theta &= \frac{1}{2\pi}\int_{-\pi}^\pi\sum_{j,k=0}^\infty m_jm_k \delta^{j+k}e^{i(j-k)\theta}d\theta \\
    &=\sum_{k=0}^\infty m_k^2 \delta^{2k} \\
    &\leq (\sup_{k \in \mathbb{Z}_{\geq 0}}\delta^{2k} (k+1)^{1/2})\sum_{k=0}^\infty (k+1)^{-1/2}m_k^2
\end{align*}
The second factor is bounded by $\epsilon^2$ since $\nu \in \mathcal{M}_\epsilon[0,\infty)$. For the first factor,
\begin{align*}
    (1-\eta)^{2k} (k+1)^{1/2}&= e^{2k \log(1-\eta)}(k+1)^{1/2}, \\
    &\leq e^{-2k\eta}(k+1)^{1/2}, \\
    &= e^{2\eta}e^{-2(k+1)\eta}(k+1)^{1/2}, \\
    &\leq e^{2\eta} (4\eta)^{-1/2}e^{-1/2},
\end{align*}
where in the last line we applied the inequality  \begin{align}\label{eq: elementary ineq}
    x^{1/2}e^{-\lambda x} \leq (2\lambda)^{-1/2}e^{-1/2},
\end{align} valid for $\lambda,x >  0$, with $x=k+1$, $\lambda = 2\eta$. (\ref{eq: elementary ineq}) holds since some calculus shows the right hand side to be the maximum of the left hand side. Now combining the bounds on the left and right factors gives (iv).
\end{proof}
The following simple computation occurs repeatedly in what follows so we state it as a lemma. Fix $\eta = \frac{2}{\log(t)}$.

\begin{lem} \label{lem: power fiddling}
    For $A \geq 1$ and $s > -1$, one has
    \begin{align*}
        (A \epsilon^2)^{\frac{\delta}{1+s}} \leq (e^2A \epsilon^2)^{\frac{1}{1+s}}.
    \end{align*}
\end{lem}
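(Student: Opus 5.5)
The plan is to take logarithms, which reduces the claim to a linear inequality in one real variable. Both sides are positive because $A \geq 1$ and $\epsilon^2 = t^{-1} > 0$. Since $1+s > 0$, the map $x \mapsto x^{\frac{1}{1+s}}$ is order preserving. Taking logarithms and multiplying through by $1+s>0$, the claim is therefore equivalent to
\begin{align*}
    \delta \log(A\epsilon^2) \leq 2 + \log(A\epsilon^2).
\end{align*}

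Write $L := \log(A\epsilon^2)$ and recall $\delta = 1-\eta$. The inequality becomes $(1-\eta)L \leq 2 + L$, that is, $-\eta L \leq 2$. Since $\eta = \frac{2}{\log(t)} > 0$ (we are implicitly taking $t > 1$, as is needed for $\eta \in (0,\frac{1}{2})$ anyway), this is equivalent to
\begin{align*}
    L \geq -\frac{2}{\eta} = -\log(t).
\end{align*}

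Now substitute $\epsilon = t^{-1/2}$, so $\epsilon^2 = t^{-1}$. This gives $L = \log(A) - \log(t)$, and since $A \geq 1$ we have $\log(A) \geq 0$, hence $L \geq -\log(t)$. This is exactly the required bound.

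All steps are equivalences, so reversing them yields the lemma. There is no real obstacle here. The only points to watch are the positivity of $1+s$ and $\eta$, which justify multiplying without flipping inequalities, and the observation that the specific choice $\eta = 2/\log(t)$ is precisely what makes the factor $e^2$ absorb the $\epsilon^{-2\eta}$ loss.
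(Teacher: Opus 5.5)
Your proof is correct and is essentially the paper's argument written in logarithmic form. The paper shows $(A\epsilon^2)^{\delta} = A\epsilon^2 (A\epsilon^2)^{-\eta} \leq A\epsilon^2\epsilon^{-2\eta} = A\epsilon^2 t^{\eta} = e^2 A\epsilon^2$, using $A \geq 1$, $\eta>0$ and $t^{\eta}=e^{2}$, and then implicitly raises both sides to the power $\frac{1}{1+s}>0$; this is exactly your reduction to $-\eta\log(A\epsilon^2) \leq 2$.
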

\begin{proof}
\begin{align*}
    (A\epsilon^2)^{\delta} = A\epsilon^2 (A\epsilon^2)^{-\eta} \leq A\epsilon^2  \epsilon^{-2\eta} = A\epsilon^2  t^{\eta} = A\epsilon^2 e^{\eta\log t} = e^2 A\epsilon^2 .
\end{align*}
\end{proof}

We partition the integration range $[0,r]$ and use Lemma \ref{lem: function class properties} to establish suitable bounds on $|f(-s)|$ on the different pieces. Pick $\eta = \frac{2}{\log(t)}, \delta = 1-\frac{2}{\log(t)}$ and recall that $\epsilon = t^{-1/2}$. Define $K:= (r+1)(r+3)\sqrt{\log(t)}\min \{\frac{1}{r-1},\log(t)\}$.
\begin{cor} \label{cor: f estimates}For $f \in \mathcal{F}$, $r > 1$ and $t > e^4$,
\begin{align} 
    |f(-s)| &\lesssim \epsilon \log(t) && 0 \leq s \leq \delta, \label{eq: fact near zero}\\
    |f(-s)| &\lesssim (\log(t)^2\epsilon^2)^{\frac{1}{1+s}} && \delta < s \leq \frac{\delta +r}{2}, \label{eq: circle sup bound}\\
    |f(-s)| &\lesssim \left(K \epsilon^2\right)^{\frac{1}{1+s}} && \tfrac{\delta + r}{2} \leq s \leq r, \label{eq:  harnack bound}
\end{align} all implicit constants universal over $f \in \mathcal{F}$, $r > 1$ and $t > e^4$.
\end{cor}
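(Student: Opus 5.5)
The three bounds come from the three parts of Lemma \ref{lem: function class properties}, one per range of $s$, with $\eta = 2/\log(t)$ and $\delta = 1-\eta$. The hypothesis $t>e^4$ gives $\eta<\tfrac12$, so that lemma applies. All constants are tracked so they do not depend on $r$ or $f$.

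For (\ref{eq: fact near zero}), with $0\le s\le\delta$, the point $z=-s$ lies in $D(0,1)$. By part (ii),
\[
|f(-s)|\le \epsilon(1-s^2)^{-1}\le \epsilon(1-\delta^2)^{-1}\le \epsilon/\eta = \tfrac12\epsilon\log(t).
\]
This uses $1-\delta^2\ge\eta$, which was already noted in the proof of that lemma.

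For (\ref{eq: circle sup bound}), take $\delta<s\le(\delta+r)/2$ and use part (iii). Exponentiating gives
\[
|f(-s)|\le \Big(\tfrac{1+s}{2\delta}\cdot\tfrac{\log(t)^2}{4}\,\epsilon^2\Big)^{\frac{\delta}{1+s}}.
\]
If the quantity in brackets is at most $1$, then Lemma \ref{lem: power fiddling} is not needed: since $\delta<1$, the bound $x^{\delta/(1+s)}\le 1$ together with a monotonicity argument does the job. Otherwise we apply Lemma \ref{lem: power fiddling} with $A=\frac{(1+s)\log(t)^2}{8\delta}\ge 1$, which gives $|f(-s)|\le (e^2A\epsilon^2)^{1/(1+s)}$. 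The extra factor $\big(\tfrac{1+s}{2\delta}\big)^{1/(1+s)}$ is uniformly bounded, because $\delta\ge\tfrac12$ and $x^{1/x}\le e^{1/e}$. The factor $e^{2/(1+s)}\le e^2$ is bounded too, and what remains is $(\log(t)^2\epsilon^2)^{1/(1+s)}$.

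For (\ref{eq: harnack bound}), take $s\ge(\delta+r)/2$ and use part (iv). The key step is to control the Harnack factor $\frac{s+\delta}{s-\delta}$. Here $s-\delta\ge (r-\delta)/2 = (r-1+\eta)/2$, so
\[
\frac{s+\delta}{s-\delta}\le \frac{2(r+1)}{r-1+\eta}\lesssim (r+1)\min\Big\{\frac{1}{r-1},\log(t)\Big\},
\]
since $r-1+\eta\ge \max\{r-1,\eta\}$ and $1/\eta=\log(t)/2$. The remaining factors are handled as follows. We have $1+s\le 1+r\le r+3$. The factor $e^{4\eta-1/2}/(2\delta)$ is bounded. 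Finally $1/(2\sqrt\eta)\asymp\sqrt{\log(t)}$. The product of all of these is $\lesssim K$. We then apply Lemma \ref{lem: power fiddling} with $A$ equal to a constant multiple of $K$, which is $\ge1$ by the estimate on $K$ in the proof of Theorem \ref{thm: main rate}; if the bracket is below $1$ the trivial case applies as before. This yields $(K\epsilon^2)^{1/(1+s)}$ up to constants, which can be pulled out as $C^{1/(1+s)}\le C$.

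The main obstacle is bookkeeping: making sure every constant that is absorbed stays bounded uniformly in $r>1$. In particular the Harnack factor must be bounded in the two regimes $r$ close to $1$ and $r$ large, which is exactly what the $\min$ in $K$ encodes. The cases where the bracketed quantity is below $1$, so that Lemma \ref{lem: power fiddling} cannot be applied directly, must also be treated consistently.
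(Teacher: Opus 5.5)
Your route is the paper's. You use part (ii) of Lemma~\ref{lem: function class properties} on $[0,\delta]$, and parts (iii) and (iv) together with Lemma~\ref{lem: power fiddling} on the other two ranges. The bookkeeping showing that the factor in (iv) is $\lesssim K$ is also the same.

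Two cosmetic differences are both fine. In the middle range you fold $\frac{1+s}{2\delta}$ into $A$ and bound it afterwards via $y^{1/y}\le e^{1/e}$, whereas the paper peels it off first via $x^{-x/2}\le e^{1/(2e)}$. In the last range you bound $A\lesssim K$ before applying Lemma~\ref{lem: power fiddling}, whereas the paper does so after.

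There is, however, a concrete error in the case split you introduce, and it sits exactly in the case that matters. You claim that when the bracketed quantity $x=A\epsilon^2$ is at most $1$, no lemma is needed because $x^{\delta/(1+s)}\le 1$ plus monotonicity suffices. This is false on both counts:
\begin{itemize}
\item For $x\in(0,1)$ and $\delta<1$ one has $x^{\delta/(1+s)}\ge x^{1/(1+s)}$, so monotonicity in the exponent goes the wrong way.
\item The bound $\le 1$ is useless, because the target $(\log(t)^2\epsilon^2)^{1/(1+s)}$ is small; for $s$ near $\delta$ it is of order $\log(t)t^{-1/2}$.
\end{itemize}
This case is also not exceptional. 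For $s$ close to $\delta$ one has $A\epsilon^2\lesssim \log(t)^2/t<1$, so it is the generic case. The trivial case is the other one: if $x\ge 1$ then $x^{\delta/(1+s)}\le x^{1/(1+s)}$ directly.

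So you have the two cases reversed. Your closing remark that Lemma~\ref{lem: power fiddling} ``cannot be applied directly'' when the bracket is below $1$ misreads its hypothesis. The lemma needs only $A\ge 1$, not $A\epsilon^2\ge 1$. It exists precisely to handle $A\epsilon^2<1$, paying the factor $(A\epsilon^2)^{-\eta}\le\epsilon^{-2\eta}=t^{\eta}=e^2$.

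The repair is to drop the split entirely. In the middle range,
\[
A=\frac{1+s}{2\delta}\cdot\frac{\log(t)^2}{4}>1\cdot 4=4 \quad\text{for all } s>\delta,\ t>e^4 .
\]
In the last range, $CK\ge K\ge 1$ once you take $C\ge 1$. Hence Lemma~\ref{lem: power fiddling} applies unconditionally, and your ``otherwise'' branch covers every $s$. The same fix is needed for the sentence ``if the bracket is below $1$ the trivial case applies as before'' in the third range. With this change your argument is the paper's proof.
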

\begin{proof}
For $s \in [0,\delta]$, note that since $\delta < 1$, by Lemma \ref{lem: function class properties}(ii),
    \begin{align*}
        |f(-s)| \leq \epsilon(1-s^2)^{-1} \leq \epsilon(1-\delta^2)^{-1}\leq \frac{\epsilon}{\eta} = \frac{1}{2}\epsilon \log(t),
    \end{align*} as $1-\delta^2 = (1-\delta)(1+\delta) = \eta(2-\eta) > \eta.$

For $s \in (\delta, \frac{\delta+r}{2}]$ we exponentiate both sides in Lemma \ref{lem: function class properties}(iii), obtaining
\begin{align*}
    |f(-s)| \leq \left(\frac{1+s}{2\delta}\right)^{\frac{\delta}{1+s}} \left(\frac{\epsilon}{\eta}\right)^{\frac{2\delta}{1+s}}.
\end{align*}
Defining $x := \frac{2\delta}{1+s} > 0$, the first factor is $x^{-x/2}$ but differentiating and maximizing gives that for all $x>0$ one has $x^{-x/2} \leq e^{\frac{1}{2e}}$. For the second factor we take the square inside and then apply Lemma \ref{lem: power fiddling} with $A = \frac{1}{\eta^2} > 1$ and then notice that $(\frac{e^2}{4})^{\frac{1}{1+s}} \leq \frac{e^2}{4}$.

For $s \in [\frac{r+\delta}{2},r]$, apply Lemma \ref{lem: function class properties}(iv) 
\begin{align*}
    |f(-s)| \leq (A\epsilon^2)^{\frac{\delta}{1+s}}, \qquad A:= \frac{s+\delta}{s-\delta}\frac{1+s}{2\delta}\frac{e^{4\eta-1/2}}{2\sqrt{\eta}}.
\end{align*}
Each factor in $A$ is larger than one. Thus Lemma \ref{lem: power fiddling} applies and gives
\begin{align*}
    |f(-s)| \leq (e^2A \epsilon^2)^{\frac{1}{1+s}}.
\end{align*}
The first factor in $A$ is decreasing in $s$, so we may bound it by $\frac{\frac{r+\delta}{2}+\delta}{\frac{r+\delta}{2}-\delta} = \frac{r+3\delta}{r-\delta}$. The numerator is bounded from above by $r+3$ and we bound the denominator from below by noticing that $r-\delta= (r-1)+\eta \geq \max{\{r-1,\eta\}}$. For the second factor notice that we may upper bound the numerator by $1+r$ since $s \leq r$. For the numerator in the third factor, note that since $\eta \leq \frac{1}{2}$, $e^{4\eta-1/2} \leq e^{3/2}$. This pre-factor as well as the $e^2$ contributes at most a (universal) constant factor since for $s \geq 0$, $c^{\frac{1}{1+s}} \leq \max\{c,1\}$. Assembling we therefore have

\begin{align*}
    |f(-s)| \lesssim \left(\epsilon^2\frac{(r+3)(r+1)}{\max\{r-1,2/\log(t)\}}\sqrt{\log(t)}\right)^{\frac{1}{1+s}}\lesssim (K\epsilon^2)^{\frac{1}{1+s}}.
\end{align*}

\end{proof}

The following bound will be combined with the estimates on $|f(-s)|$ to bound the integral.

\begin{lem} \label{lem: integration with reciprocal power} For $q \in (0,1), b > a \geq 0$,
    \begin{align*}
        \int_a^b q^{\frac{1}{1+s}}ds \leq q^{\frac{1}{1+b}} \min\left\{b-a,\frac{(1+b)^2}{\log(1/q)}\right\}
    \end{align*}
\end{lem}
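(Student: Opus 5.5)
Two bounds are needed: the trivial one $\int_a^b q^{\frac{1}{1+s}}ds \leq (b-a)q^{\frac{1}{1+b}}$ and a sharper one that uses the decay of the integrand. Both start from the same observation. Since $q\in(0,1)$, we have $\log q<0$, so $s\mapsto q^{\frac{1}{1+s}} = \exp\!\left(-\frac{\log(1/q)}{1+s}\right)$ is increasing in $s$. Bounding the integrand by its value at $s=b$ and multiplying by the length $b-a$ gives the first bound immediately. It then remains to show
$$\int_a^b q^{\frac{1}{1+s}}ds \leq q^{\frac{1}{1+b}}\frac{(1+b)^2}{\log(1/q)},$$
after which taking the minimum of the two bounds finishes the proof.

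For the second bound, set $L:=\log(1/q)>0$ and substitute $u = \frac{1}{1+s}$, so that $ds = -u^{-2}du$. The integral becomes $\int_{1/(1+b)}^{1/(1+a)} e^{-Lu}u^{-2}\,du$. On this range $u\geq u_b:=\frac{1}{1+b}$, hence $u^{-2}\leq (1+b)^2$. This yields
$$\int \leq (1+b)^2\int_{u_b}^{\infty}e^{-Lu}\,du = (1+b)^2\frac{e^{-Lu_b}}{L} = \frac{(1+b)^2}{\log(1/q)}\,q^{\frac{1}{1+b}}.$$

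An equivalent route avoids the substitution. Note that $\frac{d}{ds}q^{\frac{1}{1+s}} = \frac{L}{(1+s)^2}q^{\frac{1}{1+s}} \geq \frac{L}{(1+b)^2}q^{\frac{1}{1+s}}$ for $s\leq b$. Integrating this inequality over $[a,b]$ gives
$$\frac{L}{(1+b)^2}\int_a^b q^{\frac{1}{1+s}}ds \leq q^{\frac{1}{1+b}} - q^{\frac{1}{1+a}} \leq q^{\frac{1}{1+b}},$$
which is the same bound. I would present this derivative version because it is the cleaner of the two.

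There is no real obstacle here. The only point needing care is the direction of the monotonicity, namely that $q<1$ makes the integrand increasing, and the fact that $u^{-2}$ (equivalently $(1+s)^{-2}$) is controlled by its value at the endpoint $b$.
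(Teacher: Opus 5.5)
Your proof is correct, and both of your routes to the second branch work. The paper argues slightly differently. It uses convexity of $s\mapsto\frac{1}{1+s}$ to get the tangent-line bound $\frac{1}{1+s}\geq\frac{1}{1+b}+\frac{b-s}{(1+b)^2}$, and hence the pointwise majorant $q^{\frac{1}{1+s}}\leq q^{\frac{1}{1+b}}\,q^{\frac{b-s}{(1+b)^2}}$. Integrating the second factor over $u=b-s\in[0,\infty)$ gives $\frac{(1+b)^2}{\log(1/q)}$, and bounding it by $1$ gives the $b-a$ branch. Your differential-inequality version rests on the same estimate, $(1+s)^{-2}\geq(1+b)^{-2}$ for $s\leq b$, but applies it to the derivative of the integrand rather than to its exponent, so you integrate an exact derivative instead of an exponential majorant. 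This gives a marginally sharper bound, $\frac{(1+b)^2}{\log(1/q)}\bigl(q^{\frac{1}{1+b}}-q^{\frac{1}{1+a}}\bigr)$, and never extends the range of integration to infinity. The paper's approach instead yields a single pointwise bound on the integrand from which both branches of the minimum follow at once. For how the lemma is used in the paper, the difference does not matter.
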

\begin{proof}
    Using convexity of $\frac{1}{1+s}$ at $s=b$ we have $\frac{1}{1+s} \geq \frac{1}{1+b}+\frac{b-s}{(1+b)^2}$ so $q^{\frac{1}{1+s}} \leq q^{\frac{1}{1+b}}q^{\frac{b-s}{(1+b)^2}}$. The first factor is constant as a function of $s$. Integrating the second factor we have, substituting $u=b-s$,
\begin{align*}
    \int_a^b q^{\frac{b-s}{(1+b)^2}} ds = \int_0^{b-a}q^{\frac{u}{(1+b)^2}}du \leq \int_0^{\infty}q^{\frac{u}{(1+b)^2}}du = \frac{(1+b)^2}{\log(1/q)},
\end{align*} giving the second branch of the minimum. For the first simply notice that $q^{\frac{b-s}{(1+b)^2}} \leq 1$.
\end{proof}

\begin{lem} \label{lem: assembly}
Let $t \geq t_0$ and $r>1$ satisfy
\begin{align} \label{eq: r hypothesis}
    \frac{r-1}{r+1}\log t \geq 4\log\log t.
\end{align}
Then
\begin{align*}
    \sup_{f \in \mathcal{F}} \int_0^r |f(-s)| ds \lesssim \min\left\{r,\frac{(1+r)^2}{\log(t)}\right\}K^{\frac{1}{1+r}}t^{-\frac{1}{1+r}}.
\end{align*}
\end{lem}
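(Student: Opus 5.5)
The plan is to split $[0,r]$ into the three ranges of Corollary~\ref{cor: f estimates}, bound each piece separately, and show each is $\lesssim \min\{r,\frac{(1+r)^2}{\log t}\}K^{\frac{1}{1+r}}t^{-\frac{1}{1+r}}$. Throughout I will use two facts.

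\begin{itemize}
\item $K\geq 1$, as noted in the proof of Theorem~\ref{thm: main rate}.
\item $\min\{r,\frac{(1+r)^2}{\log t}\}\geq \min\{1,\frac{4}{\log t}\}\gtrsim \frac{1}{\log t}$.
\end{itemize}

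The key consequence of hypothesis (\ref{eq: r hypothesis}) is
\[
t^{-\frac{r-1}{r+1}}\leq (\log t)^{-4},
\]
and this will pay for all the polylogarithmic losses.

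\textbf{Range $[0,\delta]$.} By (\ref{eq: fact near zero}) the integral is $\lesssim \epsilon\log t = t^{-1/2}\log t$. Write
\[
t^{-1/2}=t^{-\frac{1}{1+r}}\,t^{-\frac{r-1}{2(r+1)}}\leq t^{-\frac{1}{1+r}}(\log t)^{-2}.
\]
So this piece is $\lesssim t^{-\frac{1}{1+r}}/\log t$, which is dominated by the target.

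\textbf{Range $(\delta,\frac{\delta+r}{2}]$.} Use (\ref{eq: circle sup bound}) and Lemma~\ref{lem: integration with reciprocal power} with $q=(\log t)^2/t<1$ (this holds since $t>e^4$) and $b=\frac{\delta+r}{2}$. Since $\log(1/q)=\log t-2\log\log t\gtrsim \log t$, and $b\le r$, $b-a\le r$, the length factor is $\lesssim \min\{r,\frac{(1+r)^2}{\log t}\}$.

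It remains to compare $q^{\frac{1}{1+b}}=t^{-\frac{1}{1+b}}(\log t)^{\frac{2}{1+b}}$ with $t^{-\frac{1}{1+r}}$. The exponent gap is $\frac{r-b}{(1+b)(1+r)}$. Since $r-b=\frac{r-\delta}{2}\geq\frac{r-1}{2}$, hypothesis (\ref{eq: r hypothesis}) gives
\[
t^{-\frac{r-b}{(1+b)(1+r)}}\leq (\log t)^{-\frac{2}{1+b}}.
\]
This exactly cancels the polylog factor, so $q^{\frac{1}{1+b}}\leq t^{-\frac{1}{1+r}}\leq K^{\frac{1}{1+r}}t^{-\frac{1}{1+r}}$.

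\textbf{Range $[\frac{\delta+r}{2},r]$.} Use (\ref{eq:  harnack bound}) and Lemma~\ref{lem: integration with reciprocal power} with $q=K/t$ and $b=r$. This yields exactly $(K/t)^{\frac{1}{1+r}}\min\{r,\frac{(1+r)^2}{\log(K/t)^{-1}}\}$, so I need $\log(t/K)\gtrsim\log t$.

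This is the step I expect to be the main obstacle, because $K$ grows polynomially in $r$ and the hypothesis does not bound $r$ from above. I would split into two cases.

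\begin{itemize}
\item If $K\leq\sqrt t$, then $\log(t/K)\geq\frac12\log t$ and we are done.
\item If $K>\sqrt t$, then since $K\leq (r+1)(r+3)(\log t)^{3/2}$, we get $r\gtrsim t^{1/4}(\log t)^{-3/4}$, and in particular $r\geq\log t$ for large $t$. In that case $\min\{r,\frac{(1+r)^2}{\log t}\}\geq r/2$. The trivial bound $|f|\leq 1$ from Lemma~\ref{lem: function class properties}(i) gives integral $\leq r$, while the target is $\gtrsim r(K/t)^{\frac{1}{1+r}}\geq r\,t^{-\frac{1}{2(1+r)}}\gtrsim r$, because $t^{1/(1+r)}$ is bounded when $r\gtrsim t^{1/4-o(1)}$.
\end{itemize}

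Summing the three ranges gives the claim for $t\geq t_0$.
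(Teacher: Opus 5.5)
Your proof is correct and follows the paper's argument: the same three-range split from Corollary~\ref{cor: f estimates}, Lemma~\ref{lem: integration with reciprocal power} on the last two ranges, hypothesis (\ref{eq: r hypothesis}) to absorb the polylogarithmic factors, and the trivial bound $|f|\le 1$ when $r$ is large. The only differences are cosmetic. On the middle range you compare $q^{1/(1+b)}$ with $t^{-1/(1+r)}$ directly, whereas the paper passes through the exponent $\frac{2}{3+r}$. On the last range you split on $K\le\sqrt t$ rather than on $r\le\log t$, and this amounts to the same thing.
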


\begin{proof}
    We bound the contribution to the integral on each piece in turn. By Corollary \ref{cor: f estimates},
\begin{align*}
    \int_0^\delta |f(-s)|ds \lesssim \delta \epsilon \log(t) \leq \epsilon \log(t) = t^{-1/2}\log(t) \leq \frac{1}{\log(t)}t^{-\frac{1}{1+r}},
\end{align*} where the last inequality is equivalent to (\ref{eq: r hypothesis}), which can be seen by taking logarithms and rearranging. Let $W := \min\{r,\frac{(1+r)^2}{\log(t)}\}$. Applying (\ref{eq: circle sup bound}) and Lemma \ref{lem: integration with reciprocal power},
\begin{align*}
    \int_{\delta}^{\frac{\delta+r}{2}} |f(-s)|ds \lesssim (\log(t)^2\epsilon^2)^{\frac{2}{2+\delta+r}}W \lesssim (\log(t)^2\epsilon^2)^{\frac{2}{3+r}}W \leq t^{-\frac{1}{1+r}}W,
\end{align*}
where the last inequality can be seen to be equivalent to (\ref{eq: r hypothesis}) as after dividing by $W \neq 0$, taking logarithms and substituting $\epsilon^2 = t^{-1}$, it becomes
\begin{align*}
    \frac{4}{3+r} \log\log(t)- \frac{2}{3+r}\log(t) \leq -\frac{1}{1+r}\log(t),
\end{align*}
which, after multiplying through by $3+r$ and combining like terms is (\ref{eq: r hypothesis}).

We turn our attention to the last piece.

If $r \geq  \log(t)$, then the result holds straightforwardly but lacks content. The right hand side becomes proportional to $r$ and $|f(-s)| \leq 1$ from Lemma \ref{lem: function class properties} suffices. We may therefore assume $1 < r \leq \log(t)$. From the definition of $K$ this gives $K \lesssim \log(t)^{3/2}$ so that Lemma \ref{lem: integration with reciprocal power} applies and yields, when combined with (\ref{eq:  harnack bound}),
\begin{align*}
    \int_{\frac{\delta+r}{2}}^r |f(-s)|ds \lesssim  (K\epsilon^2)^{\frac{1}{1+r}}\min\{r,\frac{(1+r)^2}{\log(t)}\} \lesssim K^{\frac{1}{1+r}}t^{-\frac{1}{1+r}}W.
\end{align*}
Adding up the contributions, noticing that the last dominates, now implies the result.
\end{proof}

\begin{prop}\label{prop: distilling} Let $t_{\text{main}} :=  \left(1-\frac{1}{\log(t)}\right)t$, $t_{\text{rest}} := \frac{t}{\log(t)}$. Let $C_0$, $L := 4C_0 \log^2(t_{\text{main}})$, $b':=C_0 \log(t_{\text{main}})$ and $f: \mathbb{Z}_{\geq 0} \to \mathbb{R}$ be as in the proof of Theorem 11 of \cite{polyanskiy_2019_dualizing}. Set\begin{align*}
    H_{\text{d}}(N_s) := \sum_{k=k_{\text{min}}}^{k_{\text{max}}} f(k)\binom{N_s}{k}\binom{\lceil t_{\text{main}}\rceil+\lceil t_{\text{rest}}\rceil-N_s}{\lceil t_{\text{main}}\rceil-k}\frac{\lceil t_{\text{main}}\rceil!\lceil t_{\text{rest}}\rceil!}{(\lceil t_{\text{main}}\rceil+\lceil t_{\text{rest}}\rceil)!},
\end{align*} where
\begin{align*}
    k_{\text{min}}&:=\max\{0,N_s-\lceil t_{\text{rest}}\rceil,N_s-\lceil b'\rceil+1\}, \\
    k_{\text{max}} &:= \min\{N_s,\lceil L\rceil-1,\lceil t_{\text{main}}\rceil\}.
\end{align*} Then,
\begin{align*}
    \mathcal{R}_B(\hat{S}_{t,T}^{(H_\text{d})},S_{t,T}) \lesssim B^2t^{-\frac{2}{1+r}}\log^{4}(t).
\end{align*}
\end{prop}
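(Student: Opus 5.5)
The plan is to realize $\hat{S}_{t,T}^{(H_{\text{d}})}$ as the symmetrization of the Polyanskiy--Wu estimator over orderings of the data, and then transfer their risk bound by Jensen's inequality. Recall the construction in the proof of Theorem 11 of \cite{polyanskiy_2019_dualizing}. The sample is split into a main part of (Poissonized) size $t_{\text{main}}$ and a rest part of size $t_{\text{rest}}$. The rest part is used for filtering: species seen at least $b'$ times there are treated separately. On the main part, a linear functional with coefficients $f(k)$, $k < L$, is applied to the main-sample counts.

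In the classical case $\mu \prec 1$, I would condition on the total count $N_s$ and on $n$. Under a uniformly random assignment of the $n$ observations to the two parts, the main-part count of species $s$ is hypergeometric. With the part sizes replaced by their ceilings, the probability that the main count equals $k$ is exactly
\begin{align*}
\binom{N_s}{k}\binom{\lceil t_{\text{main}}\rceil+\lceil t_{\text{rest}}\rceil-N_s}{\lceil t_{\text{main}}\rceil-k}\frac{\lceil t_{\text{main}}\rceil!\lceil t_{\text{rest}}\rceil!}{(\lceil t_{\text{main}}\rceil+\lceil t_{\text{rest}}\rceil)!}.
\end{align*}
The limits $k_{\text{min}}, k_{\text{max}}$ encode three constraints:
\begin{itemize}
\item the support of the hypergeometric distribution;
\item the truncation $k<L$;
\item the filter requiring the rest count $N_s-k$ to be below $b'$.
\end{itemize}
Hence $H_{\text{d}}(N_s)=\IE[\text{PW contribution of } s \mid N_s,\text{data multiset}]$. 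The first step is to verify this identity, i.e. that $\hat{S}^{(H_{\text{d}})}_{t,T}=\IE[\hat{S}^{(\text{PW})}_{t,T}\mid \boldsymbol{\phi}]$ under random splitting.

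The second step is to handle the contributions of filtered species. Species with large rest count are predicted to contribute zero new species in PW, so their contribution vanishes after averaging; this matches the constraint $k_{\text{min}}\ge N_s-\lceil b'\rceil+1$. Once this identity holds, conditional Jensen gives
\begin{align*}
\IE[|S_{t,T}-\hat{S}^{(H_{\text{d}})}_{t,T}|^2]\le \IE[|S_{t,T}-\hat{S}^{(\text{PW})}_{t,T}|^2].
\end{align*}
This uses that $S_{t,T}$ is independent of the split given $\mathcal{F}_t$. Theorem 11 of \cite{polyanskiy_2019_dualizing} then yields the rate $t^{-\frac{2}{1+r}}\log^4(t)$ for $\mu\prec1$.

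The third step passes to $B>1$. I would not redo the PW analysis. Instead, $\hat{S}^{(H_{\text{d}})}_{t,T}$ is linear, so Theorem \ref{thm: opt powerhouse} applies: the $\mu\prec1$ bound controls $G(H_{\text{d}})$ up to the factor $6$, and the upper bound of the same theorem then gives $\sup_{\mu\prec B}\IE[\cdot]\le B^2G(H_{\text{d}})\le 6B^2\sup_{\mu\prec1}\IE[\cdot]$. Dividing by $r^2t^2$ gives the claim.

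The main obstacle is the exactness of the first step. PW's split is made with Poissonized or Bernoulli thinning, not a fixed-size hypergeometric split with ceilinged sizes. The hypergeometric averaging therefore corresponds to a slightly different randomized estimator, and I must check that PW's risk analysis still applies to it. I would first try to run PW's argument with the fixed-size split of $n$ into $\lceil t_{\text{main}}\rceil$ and the remainder, and bound the event that $n$ is far from these sizes by Poisson concentration. On that event the contribution is polynomially small, because $|H_{\text{d}}|$ is at most $\max_k|f(k)|$, which PW bound by a power of $t$ below $t^{1/2}$. If that fails, I would redefine the split inside PW to match and argue that their bias and variance bounds depend only on the counts' marginal structure.
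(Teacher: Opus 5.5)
\textbf{Gap: the key identity in your first step is false in the model where $\mathcal{R}_B$ is measured, and your proposed repair does not fix it.}

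The identity $\hat S^{(H_{\text{d}})}_{t,T}=\IE[\hat S^{(\text{PW})}_{t,T}\mid\boldsymbol{\phi}]$ needs two things. The sample must have exactly $U:=\lceil t_{\text{main}}\rceil+\lceil t_{\text{rest}}\rceil$ observations, and it must be split into blocks of exactly $\lceil t_{\text{main}}\rceil$ and $\lceil t_{\text{rest}}\rceil$. Neither holds in the Poissonized model defining $\mathcal{R}_B$. There PW's main and rest counts are independent $\text{Po}(t_{\text{main}}M_s)$ and $\text{Po}(t_{\text{rest}}M_s)$. So $\IE[\hat S^{(\text{PW})}_{t,T}\mid (N_s)_s]$ is the linear estimator with \emph{binomial} weights $\binom{N_s}{k}p^k(1-p)^{N_s-k}$, where $p=t_{\text{main}}/t$. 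Call it $H_{\text{bin}}$; it is not $H_{\text{d}}$.

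Poisson concentration of $n$ cannot close this. We have $n\neq U$ with probability $1-O(t^{-1/2})$, so the mismatch sits on the typical event, not a rare one. On that event the hypergeometric weights with urn size $U$ are not PW's conditional split probabilities. This is true whether PW splits by Poisson thinning, or into blocks $\lceil t_{\text{main}}\rceil$ and $n-\lceil t_{\text{main}}\rceil$, which gives urn size $n$. Conditional Jensen needs an exact identity, so it tells you nothing about $\hat S^{(H_{\text{d}})}_{t,T}$ under the Poissonized law. Your fallback of redefining PW's split would mean redoing PW's Poisson-based bias and variance analysis for a different estimator, which you do not do.

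\textbf{How the paper proceeds.} It never claims an exact identity under Poissonization, and it does not compare MSEs at all.
\begin{enumerate}
\item It introduces an auxiliary model $\IE_{\text{Fix}}$ with exactly $U$ i.i.d.\ samples. There exchangeability gives $\IE_{\text{Fix}}[\hat S^{(\text{PW})}_{t,T}]=\IE_{\text{Fix}}[\hat S^{(H_{\text{d}})}_{t,T}]$.
\item It moves both sides back to the Poissonized model using per-species total-variation bounds (Poisson versus binomial, $O(M_s)$ per species). This costs only $O(\|f\|_\infty)$ in the bias.
\item It bounds the variance of the linear estimator directly via $\|H_{\text{d}}\|_\infty\le\|f\|_\infty$.
\end{enumerate}

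\textbf{How to rescue your route.} Apply Jensen exactly to $H_{\text{bin}}$, which is legitimate because it really is the conditional expectation of PW given the counts. Then compare $H_{\text{d}}$ with $H_{\text{bin}}$ pointwise:
\begin{itemize}
\item The two estimators agree unless $1\le N_s<L+b'=O(\log^2 t)$.
\item The hypergeometric weights in $H_{\text{d}}$ are within $O(N_s^2/t)$ of $\text{Bin}(N_s,p)$ in total variation.
\item Hence $|\hat S^{(H_{\text{d}})}_{t,T}-\hat S^{(H_{\text{bin}})}_{t,T}|\lesssim\|f\|_\infty\log^2(t)\,n/t$.
\item Its second moment is $O(\|f\|_\infty^2\log^4 t)$, which is negligible against the target rate given PW's bound on $\|f\|_\infty$.
\end{itemize}
Your passage to $B>1$ through Theorem~\ref{thm: opt powerhouse} is fine and is exactly what the paper does.
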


\begin{proof}
Let $\IE_{\text{Po}}$ denote expectation under the usual Poissonized sampling and $\IE_{\text{Fix}}$ expectation under binomial sampling with $\tilde{n} = \lceil t_{\text{main}}\rceil$, $\tilde{n}' = \lceil t_{\text{rest}}\rceil$, $n = \lceil t_{\text{main}}\rceil+\lceil t_{\text{rest}}\rceil $ samples. Let $\hat{S}^{(\text{PW})}_{t,T}$ denote the estimator of \cite{polyanskiy_2019_dualizing}. By the triangle inequality, 
\begin{align}
\nonumber|\IE_{\text{Po}}[\hat{S}_{t,T}^{(\text{PW})}]-\IE_{\text{Po}}[\hat{S}_{t,T}^{(H_\text{d})}]| &\leq \\
 &|\IE_{\text{Po}}[\hat{S}_{t,T}^{(\text{PW})}]-\IE_{\text{Fix}}[\hat{S}_{t,T}^{(\text{PW})}]| \label{first term distillation} \\
    &+ |\IE_{\text{Fix}}[\hat{S}_{t,T}^{(\text{PW})}]-\IE_{\text{Fix}}[\hat{S}_{t,T}^{(H_\text{d})}]| \label{second term distillation}\\
    &+ |\IE_{\text{Fix}}[\hat{S}_{t,T}^{(H_\text{d})}]-\IE_{\text{Po}}[\hat{S}_{t,T}^{(H_\text{d})}]| \label{third term distillation}
\end{align}
The estimator is chosen so that (\ref{second term distillation}) vanishes, which we now demonstrate. Using that the data is exchangeable, the expectation does not change if we permute the data. We use $\hat{S}_{t,T}^{(\sigma)}$ for $\hat{S}_{t,T}^{(\text{PW})}$ applied to the data after permuting the observations by $\sigma$. By exchangeability,

\begin{align*}
\IE_{\text{Fix}}[\hat{S}_{t,T}^{(\text{PW})}] &= \frac{1}{n!}\sum_{\sigma \in \mathfrak{S}_n}\IE[\hat{S}_{t,T}^{(\sigma)}] \\
&=\frac{1}{n!}\IE[\sum_{\sigma \in \mathfrak{S}_n}\hat{S}_{t,T}^{(\sigma)}].
\end{align*}
We now compute the sum. Namely, recalling the definition of $\hat{S}_{t,T}^{(\text{PW})}$, we see that we need to count how many permutations put $k$ observations of $s$ among the first $\tilde{n}$ samples, while respecting that there should be fewer than $L$ observations of $s$ among the first $\tilde{n}$ observations and there should be fewer than $b'$ observations of $s$ among the remaining observations. For feasible $k$ we construct such a permutation by choosing $k$ among the $N_s$ observations to go into the block of size $\tilde{n}$. Then we choose $\tilde{n}-k$ of the $n-N_s$ observations of species other than $s$ to fill the first block. Finally we need to account for the permutations of the elements within the blocks. For feasibility we need (i) the first choice to make sense, so $0 \leq k \leq N_s$, (ii) to respect the condition of fewer than $L$ observations of $s$ within the first $\tilde{n}$ we need $k < L$, (iii) we then need to be able to make the choice of the non-observations, so we need $0 \leq \tilde{n}-k \leq n-N_s$. (iv) Finally, we need that there are not too many leftover observations of $s$, that is, we need $N_s-k < b'$. Thus we obtain

\begin{align*}
    &\frac{1}{n!}\sum_{\sigma \in \mathfrak{S}_n}\hat{S}_{t,T}^{(\sigma)} \\ 
    &= \sum_{s \in \mathcal{S}}\sum_{k=k_{\text{min}}}^{k_{\text{max}}} f(k)\binom{N_s}{k}\binom{\lceil t_{\text{main}}\rceil+\lceil t_{\text{rest}}\rceil-N_s}{\lceil t_{\text{main}}\rceil-k}\frac{\lceil t_{\text{main}}\rceil!\lceil t_{\text{rest}}\rceil!}{(\lceil t_{\text{main}}\rceil+\lceil t_{\text{rest}}\rceil)!}
\end{align*}

\begin{align*}
    &\IE_{\text{Fix}}[\hat{S}_{t,T}^{(\text{PW})}] \\
    &= \IE_{\text{Fix}}\left[\sum_{s \in \mathcal{S}}\underbrace{\sum_{k=k_{\text{min}}}^{k_{\text{max}}} f(k)\binom{N_s}{k}\binom{\lceil t_{\text{main}}\rceil+\lceil t_{\text{rest}}\rceil-N_s}{\lceil t_{\text{main}}\rceil-k}\frac{\lceil t_{\text{main}}\rceil!\lceil t_{\text{rest}}\rceil!}{(\lceil t_{\text{main}}\rceil+\lceil t_{\text{rest}}\rceil)!}}_{=:H_{\text{d}}(N_s)}\right]\\
    &= \IE_{\text{Fix}}[\hat{S}_{t,T}^{(H_\text{d})}].
\end{align*} 

We turn our attention to (\ref{first term distillation}). It is handled in a way inspired by the Proof of Lemma 8 of \cite{orlitsky_2016_optimal}. Let $\tilde{N}_s$ denote the number of observations of $s$ among the first $\tilde{n}$ samples. We note that

\begin{align*}
    &|\IE_{\text{Po}}[f(\tilde{N}_s)1_{\tilde{N}_s < L}1_{N_s-\tilde{N}_s < b'}]-\IE_{\text{Fix}}[f(\tilde{N}_s)1_{\tilde{N}_s < L}1_{N_s-\tilde{N}_s < b'}]| \\
    &\leq 2||f||_\infty d_{\text{TV}}(\text{Po}(t_{\text{main}}M_s)\times\text{Po}(t_{\text{rest}}M_s),\text{Bi}(\lceil t_{\text{main}}\rceil,M_s)\times \text{Bi}(\lceil t_{\text{rest}}\rceil,M_s))
\end{align*}
By the triangle inequality for $d_{\text{TV}}$,

\begin{align*}
    d_{\text{TV}}(\text{Po}(M_s t_{\text{main}}),\text{Bi}(\lceil t_{\text{main}}\rceil,M_s)) &\leq d_{\text{TV}}(\text{Po}(M_s t_{\text{main}}),\text{Po}(M_s \lceil t_{\text{main}}\rceil)), \\
    &+ d_{\text{TV}}(\text{Po}(M_s \lceil t_\text{main} \rceil ),\text{Bi}(\lceil t_\text{main} \rceil,M_s)), \\
    &\leq |M_s t_{\text{main}}-M_s\lceil t_{\text{main}}\rceil|+M_s, \\
    &\leq 2M_s.
\end{align*}
where the first total variation bound is immediate from the natural coupling and the second comes from Theorem 1 of \cite{barbour_1984_on}. The above bound holds also with $t_{\text{rest}}$ replacing $t_{\text{main}}$. Combining these bounds with the fact that for probability measures $\mathbb{P}_1,\mathbb{P}_2,\mathbb{P}_1',\mathbb{P}_2'$ $d_{\text{TV}}(\mathbb{P}_1 \times \mathbb{P}_2,\mathbb{P}_1' \times \mathbb{P}_2') \leq d_{\text{TV}}(\mathbb{P}_1,\mathbb{P}_1')+d_{\text{TV}}(\mathbb{P}_2,\mathbb{P}_2')$ we have

\begin{align*}
    |\IE_{\text{Po}}[f(\tilde{N}_s)1_{\tilde{N}_s < L}1_{N_s-\tilde{N}_s < b'}]-\IE_{\text{Fix}}[f(\tilde{N}_s)1_{\tilde{N}_s < L}1_{N_s-\tilde{N}_s < b'}]| \leq 8||f||_\infty M_s.
\end{align*}
Summing over $s$ we get that (\ref{first term distillation}) is bounded by  $8||f||_\infty$. Term (\ref{third term distillation}) is handled similarly. Since $||f||_\infty$ is of lower order than (107) of \cite{polyanskiy_2019_dualizing} we get a bias bound of the correct order for $\hat{S}_{t,T}^{(H_\text{d})}$. For the variance, note that $||H_\text{d}||_\infty \leq ||f||_\infty$ so that the variance is of the right order for the same reasons as in Theorem 11 of \cite{polyanskiy_2019_dualizing}. To move to $B > 1$ go via Theorem \ref{thm: opt powerhouse}.
\end{proof}

\begin{lem}\label{lem: concentration of variance estimator}
    Let $\mu \prec 1$ and let $\hat{S}_{t,T}^{(H)}$ be a linear estimator with $\IE[\hat{S}_{t,T}] \geq 0$. Then,
    $$\mathbb{V}[\hat{V}_t] \leq (1+||H||_\infty)^2\IE[\hat{V}_t].$$
\end{lem}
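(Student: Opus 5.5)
Since $\mu \prec 1$, the Poissonized counts $N_s$ are independent across species. Recall that $\hat{V}_t = \hat{S}^{(H)}_{t,T} + \sum_s H(N_s)^2 = \sum_s W_s$ with $W_s := H(N_s) + H(N_s)^2$. The variance therefore splits as $\mathbb{V}[\hat V_t] = \sum_s \mathbb{V}[W_s]$, and the plan is to compare this sum with $\IE[\hat V_t]$ term by term.

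For each species we bound $\mathbb{V}[W_s] \le \IE[W_s^2]$. Write $h := H(N_s)$ and use $H(0)=0$ together with $|h| \le \|H\|_\infty$:
$$W_s^2 = h^2(1+h)^2 \le h^2(1+\|H\|_\infty)^2.$$
Summing gives
$$\mathbb{V}[\hat V_t] \le (1+\|H\|_\infty)^2 \sum_s \IE[H(N_s)^2].$$

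It remains to show $\sum_s \IE[H(N_s)^2] \le \IE[\hat V_t]$. This is where the hypothesis enters: $\IE[\hat V_t] = \IE[\hat S^{(H)}_{t,T}] + \sum_s \IE[H(N_s)^2]$, and $\IE[\hat S^{(H)}_{t,T}] \ge 0$ by assumption, so the inequality holds and the lemma follows.

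No step looks hard. The one point to watch is the sign issue: $W_s$ may be negative species-wise, so the argument must avoid any per-species comparison of $W_s$ with $\IE[W_s]$. Only the aggregate nonnegativity of the expected estimator is used, which is precisely the stated assumption.
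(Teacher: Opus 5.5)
Your proposal is correct and follows the paper's proof essentially step for step. You split $\mathbb{V}[\hat V_t]$ over independent species, bound each $\mathbb{V}[H(N_s)+H(N_s)^2]$ by its second moment, control $H(N_s)^2(1+H(N_s))^2$ by $(1+\|H\|_\infty)^2H(N_s)^2$, and finish with the hypothesis $\IE[\hat S^{(H)}_{t,T}]\ge 0$. The only cosmetic difference is that the paper expands the square as $h^2+2h^3+h^4$ and bounds the cubic and quartic terms separately, which amounts to the same estimate.
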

\begin{proof}
\begin{align*}
    \mathbb{V}[\hat{V}_t] &= \sum_{s \in \mathcal{S}}\mathbb{V}[H(N_s)+H(N_s)^2], \\
    &\leq \sum_{s \in \mathcal{S}}\IE[H(N_s)^2+2H(N_s)^3+H(N_s)^4], \\& \leq\sum_{s \in \mathcal{S}}\IE[H(N_s)^2+2||H||_\infty H(N_s)^2+||H||_{\infty}^2H(N_s)^2], \\
    &\leq (1+||H||_\infty)^2\sum_{s \in \mathcal{S}}\IE[H(N_s)^2], \\
    &\leq (1+||H||_\infty)^2\IE[\hat{V}_t],
\end{align*} where we used the non-negativity hypothesis in the last line.
\end{proof}
\begin{proof}[Proof of Corollary \ref{cor: GT Gaussianity}, Theorem \ref{thm: CLT Intermediate}]
    We begin by showing the convergence
    \begin{align*}
        \frac{S_{t,T}-\hat{S}_{t,T}^{(H)}-\IE[S_{t,T}-\hat{S}_{t,T}^{(H)}]}{\sqrt{\mathbb{V}[S_{t,T}-\hat{S}_{t,T}^{(H)}]}} \rightarrow\mathcal{N}(0,1).
    \end{align*}
Let $X_s = 1_{N_s=0}1_{N_s'>0}-H(N_s)$ be the error we make in species $s$.
By independence of the species we only need to check Lindeberg's condition. That is, we need to show that for any $\epsilon > 0$ we have

\begin{align*}
    \frac{1}{\mathbb{V}[S_{t,T}-\hat{S}_{t,T}]}\sum_{s \in \mathcal{S}}\IE[(X_s-b_s)^21_{|X_s-b_s|^2>\epsilon \mathbb{V}[S_{t,T}-\hat{S}_{t,T}]}] \rightarrow 0.
\end{align*}
Since $|X_s|\leq ||H||_\infty + 1$ and hence $|b_s| \leq ||H||_\infty + 1$ we have $|X_s-b_s|^2 \leq 4(||H||_\infty+1)^2$ but since $\frac{||H||_\infty^2+1}{\mathbb{V}[S_{t,T}-\hat{S}_{t,T}]} \rightarrow 0$ we therefore see that there exists a time, depending on $\epsilon$, such that each indicator is the indicator of the impossible event. Thus the sum vanishes and the condition is satisfied. 

To get the convergence with the variance proxy it suffices, by Slutsky's theorem, if we can also show that $$\frac{\hat{V}_t}{\mathbb{V}[S_{t,T}-\hat{S}_{t,T}^{(H)}]} \rightarrow 1,$$ in probability. Straightforward computations yield
\begin{align*}
    \mathbb{V}[S_{t,T}-\hat{S}_{t,T}^{(H)}] &= \IE[S_{t,T}]+\IE[\sum_{s \in \mathcal{S}}H(N_s)^2]-\sum_{s \in \mathcal{S}}b_s^2, \\
    &=\IE[\hat{V}_t]+\sum_{s \in \mathcal{S}}b_s-\sum_{s \in \mathcal{S}}b_s^2. 
\end{align*}
Under our assumptions we therefore have
\begin{align} \label{assympotic unbiasedness variance estimator}
    \frac{\IE[\hat{V}_t]}{\mathbb{V}[S_{t,T}-\hat{S}_{t,T}^{(H)}]} \rightarrow 1.
\end{align}
Combining (\ref{assympotic unbiasedness variance estimator}), Lemma \ref{lem: concentration of variance estimator} and Chebyshev's inequality,
    \begin{align*}
        P(|\frac{\hat{V}_t}{\IE[\hat{V}_t]}-1| \geq \epsilon) &\leq \frac{\mathbb{V}[\hat{V_t}]}{\epsilon^2 \IE[\hat{V}_t]^2}, \\
        &\leq \frac{(1+||H||_\infty)^2}{\epsilon^2 \IE[\hat{V}_t]}, \\
        &= \frac{(1+||H||_{\infty})^2}{\epsilon^2 \mathbb{V}[S_{t,T}-\hat{S}_{t,T}^{(H)}]}\frac{\mathbb{V}[S_{t,T}-\hat{S}_{t,T}^{(H)}]}{\IE[\hat{V}_t]}\\
        &\rightarrow 0.
    \end{align*}
For Corollary \ref{cor: GT Gaussianity} notice that $b_s \equiv 0$ and that when $r \leq 1$ we have $||H||_{\infty} \leq 1$. 
\end{proof}

\begin{proof}[Proof of Proposition \ref{prop: rate lower}]

Define the map \begin{align*}
\kappa_B: \mathcal{S} &\to 2^{\mathcal{S} \times \{1,\ldots,B\}} \\
s&\mapsto \{(s,1),(s,2),\ldots,(s,B)\} 
\end{align*}
From an estimator $\hat{S}_{t,T}$ defined when $\mu \prec B$ we create an estimator defined on $1 \prec \mu \prec 1$ as follows

$$\hat{S}_{t,T}^{{(\text{CL})}}(X_1,X_2,\ldots,X_N) := \frac{1}{B}\hat{S}_{t,T}(\kappa_B(X_1),\kappa_B(X_2),\ldots,\kappa_B(X_N)).$$

Now, clearly

\begin{align*}
    \sup_{\mu \prec B}\IE[|S_{t,T}-\hat{S}_{t,T}|^2] &\geq \sup_{\mu \prec B: \mu \text{ arises from some } \mu \prec 1 \text{ by speciation}}\IE[|S_{t,T}-\hat{S}_{t,T}|^2] \\
    &= B^{2}\sup_{\mu \prec 1} \IE[|S_{t,T}-\hat{S}_{t,T}^{(\text{CL})}|^2] \\
    &\gtrsim B^{2} r^2t^2 t^{-\frac{2}{1+r}}\log^{-4}(t)
\end{align*} by Theorem 11 of \cite{polyanskiy_2019_dualizing}.
\end{proof}

\subsection{Distant Future}

\begin{proof}[Proof of Lemma \ref{lem: new concentration}] We verify the hypothesis of Theorem 2.1 of \cite{bartroff_2018_bounded}. It is stated only for a finite index set, but this is no obstacle. We have the correspondence $Y = S_t^{(i)}$, occupancy counts $N_s$ (or rather, for notational compatibility $N_x$) and unit weights. Note that for $x \in \mathcal{S}$, $N_x$ is Poisson-distributed and hence log-concave and non-negative.
Let $E_t$ be the multiset of sets seen at time $t$. That is, if set $A$ has been observed $j$ times, then the multiplicity of $A$ in $E_t$ is $j$.
For each $x$, associate a probability distribution over sets that contain $x$ to $x$ by
\begin{align*}
    P(Z^{(x)}=A)=\frac{\mu(A)}{M_x}.
\end{align*}
For each $x$, generate $Z^{(x)}_1,Z^{(x)}_2,\ldots$ by sampling from the distribution associated to $x$. Also for each $A \in 2^\mathcal{S}$ generate a Poisson random variable $N_A$ with mean $t\mu(A)$, independent of everything else. Let $E^{(k)}_{x,t}$ be the multiset consisting of $Z^{(x)}_1,Z^{(x)}_2,\ldots,Z^{(x)}_k$ as well as, for each set $A$ which does not contain $x$, $N_A$ copies of $A$. With some thought one sees that, with $\mathcal{L}$ here denoting the law of a random variable,
\begin{align*}
    \mathcal{L}(E_t|N_x=k)=\mathcal{L}(E^{(k)}_{x,t}).
\end{align*}
This is because on both sides the number of observations of sets which do not contain $x$ are generated in the same way and independently of those which contain $x$. Turning to those which contain $x$ we note that conditioning on the sum of independent Poisson random variables makes the joint distribution of the summands multinomial and we choose the distribution associated to each $x$ such that they would be (the correct) multinomial.  

$E^{(k)}_{x,t}$ has associated counts, $$N_{x,y}^{(k)} = \sum_{j=1}^k 1_{y \in Z^{(x)}_j}+\sum_{A \in 2^\mathcal{S}:x \notin A, y \in A}N_A.$$
Moreover, since when we increment $k$ only one more set is added and it can influence at most $B-1$ non-$x$ species, 
\begin{align*}
    \sum_{y \in \mathcal{S}: y \neq x}1_{N^{(k+1)}_{x,y} \geq i}-\sum_{y \in \mathcal{S}: y \neq x}1_{N^{(k)}_{x,y} \geq i}\leq B-1.
\end{align*}
That is, the counts have property $(B-1,\geq)$ in the sense of definition 2.2 of \cite{bartroff_2018_bounded}. Note that the infimum of the support of $N_x$ is zero and we are interested in a threshold $i$. By Theorem 2.1 of \cite{bartroff_2018_bounded} there exists a coupling of $S_t^{(i)}$ to its size biased version so that the coupling difference is bounded by $(B-1)i+1$. Now equation (5) of \cite{bartroff_2018_bounded} and Corollary 1.1 of \cite{bartroff_2018_bounded} combine to give the result.
\end{proof}

\begin{prop} For $i \in \mathbb{Z}_{\geq 0}$
\begin{align*}
    \IE[S_t^{(i+1)}] = (-1)^{i}\frac{t^{i+1}}{i!}\mathcal{L}[\nu]^{(i)}(t).
\end{align*}
In particular, the following identities hold.

\begin{align*}
    \IE[S_t] &= t \mathcal{L}[\nu](t), \\
    \IE[S_t^{(2)}] &= -t^2 \mathcal{L}[\nu]'(t),\\
    \IE[\phi_1] &= t\mathcal{L}[\nu](t)+t^2 \mathcal{L}[\nu]'(t),\\
    \IE[\phi_2] &= -t^2 \mathcal{L}[\nu]'(t)-\frac{t^3}{2} \mathcal{L}[\nu]''(t).
\end{align*}
\end{prop}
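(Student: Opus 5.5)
The plan is to write each expectation as a sum over species of Poisson tail probabilities, and then express that sum as an integral against the counting function $\nu$. By Poisson thinning, $N_s \sim \mathrm{Po}(M_s t)$ for each species $s$. Hence
\begin{align*}
\IE[S_t^{(i+1)}] = \sum_{s\in\mathcal{S}} P(\mathrm{Po}(M_s t)\geq i+1).
\end{align*}
The first step is the standard Poisson--Gamma identity
\begin{align*}
P(\mathrm{Po}(\lambda)\geq i+1)=\int_0^{\lambda}\frac{u^{i}e^{-u}}{i!}\,du.
\end{align*}
Substituting $u=tx$, the summand for $s$ becomes $\int_0^{M_s} \frac{t^{i+1}x^i e^{-tx}}{i!}\,dx$.

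The second step is to sum over $s$ and exchange sum and integral, which Tonelli permits since everything is non-negative. Note that $\sum_s 1_{x<M_s}=\nu(x)$. This gives
\begin{align*}
\IE[S_t^{(i+1)}]=\frac{t^{i+1}}{i!}\int_0^\infty x^i e^{-tx}\nu(x)\,dx.
\end{align*}
Finally, differentiate the Laplace transform under the integral sign: $\mathcal{L}[\nu]^{(i)}(t)=\int_0^\infty (-x)^i e^{-tx}\nu(x)\,dx$. This yields the claimed formula with the factor $(-1)^i$.

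The only delicate point is justifying finiteness and differentiation under the integral. We have $\int_0^\infty \nu(x)\,dx=\sum_s M_s\leq B$ when $\mu\prec B$; without bounded arity, finiteness of $\IE[S_t]$ suffices. Also $\nu(x)=0$ for $x>1$ in the normalized case, since $M_s\le 1$. So $x^i e^{-tx}\nu(x)$ is integrable for every $i$ and $t>0$, and dominated convergence on a neighbourhood of $t$ justifies differentiating. If one prefers to avoid this check, one can instead differentiate $\sum_s e^{-M_s t}$-type expressions directly. Even then, $\nu$ is bounded by the number of species only locally, so the integrability argument above is the cleaner route.

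The particular identities then follow as direct consequences. Taking $i=0$ gives $\IE[S_t]=t\mathcal{L}[\nu](t)$, and $i=1$ gives $\IE[S_t^{(2)}]=-t^2\mathcal{L}[\nu]'(t)$. For the remaining two we use $\phi_1=S_t^{(1)}-S_t^{(2)}$ and $\phi_2=S_t^{(2)}-S_t^{(3)}$, with $\IE[S_t^{(3)}]=\frac{t^3}{2}\mathcal{L}[\nu]''(t)$ from $i=2$. I expect no real obstacle beyond the measure-theoretic exchange of sum, integral and derivative described above.
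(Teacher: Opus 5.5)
Your proposal is correct and follows the paper's proof essentially step for step: the paper writes $(-1)^i\mathcal{L}[\nu]^{(i)}(t)=\mathcal{L}[x^i\nu](t)=\sum_{s}\int_0^{M_s}x^ie^{-xt}\,dx$, invokes the same Poisson--Gamma identity to recognize $P(N_s\geq i+1)$, and then obtains the particular cases from $\phi_1=S_t^{(1)}-S_t^{(2)}$ and $\phi_2=S_t^{(2)}-S_t^{(3)}$. You run the same chain in the opposite direction and additionally supply the Tonelli and dominated-convergence justifications for exchanging sum, integral and derivative, which the paper leaves implicit.
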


\begin{proof}
\begin{align*}
    (-1)^{i}\frac{t^{i+1}}{i!}\mathcal{L}[\nu]^{(i)}(t) &= \frac{t^{i+1}}{i!}\mathcal{L}[x^i \nu](t) \\
    &= \frac{t^{i+1}}{i!} \sum_{s \in \mathcal{S}} \int_0^{M_s} x^i e^{-xt}dx \\
    &= \sum_{s \in \mathcal{S}} \int_0^{M_s} t\frac{(xt)^i}{i!}e^{-xt}dx \\
    &= \sum_{s\in \mathcal{S}}P(N_s \geq i+1) \\
    &= \IE[S_t^{(i+1)}]
\end{align*} where the evaluation of the integral is (2.6) of \cite{integral_evaluation}.
Now write $\phi_1 = S_t^{(1)}-S_t^{(2)}$ and $\phi_2 = S_t^{(2)}-S_t^{(3)}$.
\end{proof}

 In the following we let $c_\alpha := c\Gamma(1-\alpha)$ and use $*$ to denote an \enquote{ideal} version of a quantity. In particular, let

\begin{align*}
    \IE[S_t^*] &:= c\Gamma(1-\alpha)t^\alpha \\
    \IE[\phi_1^*] &:=\alpha c\Gamma(1-\alpha)t^\alpha \\
    \hat{S}_T^* &:=\IE[S_t^*](1+r)^{\frac{\IE[\phi_1^*]}{\IE[S_t^*]}} \\
    \IE[S_t^{(2)*}] &:= (1-\alpha)c\Gamma(1-\alpha)t^\alpha \\
    \IE[\phi_2^*] &:= \frac{\alpha(1-\alpha)}{2}c\Gamma(1-\alpha)t^\alpha
\end{align*}

 From the above we have the following Corollary.
 
\begin{cor}\label{cor: Laplace neo} If $|\nu(x)-cx^{-\alpha}| \leq f(x)$ then
    \begin{align*}
        |\IE[S_t]-\IE[S_t^*]| &\leq t \mathcal{L}[f](t). \\
        |\IE[S_t^{(2)}]-\IE[S_t^{(2)*}]| &\leq -t^2\mathcal{L}[f]'(t). \\
        |\IE[\phi_1]-\IE[\phi_1^*]| &\leq t \mathcal{L}[f](t)-t^2\mathcal{L}[f]'(t)\\
        |\IE[\phi_2]-\IE[\phi_2^*]| &\leq -t^2\mathcal{L}[f]'(t)+\tfrac{t^3}{2}\mathcal{L}[f]''(t).
    \end{align*}
\end{cor}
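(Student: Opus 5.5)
The plan is to compare $\nu$ with the ideal tail $\nu^*(x) := cx^{-\alpha}$ directly through the Laplace-transform representation in the preceding proposition. The ideal quantities are exactly what that representation gives for $\nu^*$, and the error is controlled by integrating $f$ against the same positive kernels. Throughout I assume, as in the hypotheses of Theorem \ref{thm: far future main}, that $\mathcal{L}[f](t)<\infty$ for all $t>0$.

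\textbf{Step 1.} Rewrite the proposition in undifferentiated form. Its proof already shows
\begin{align*}
\IE[S_t^{(i+1)}] = \frac{t^{i+1}}{i!}\int_0^\infty x^i e^{-xt}\nu(x)\,dx .
\end{align*}
Note that $\nu(x)=0$ for $x\geq \sup_s M_s$. The hypothesis $|\nu(x)-cx^{-\alpha}|\leq f(x)$ holds for all $x>0$, so no truncation issue arises.

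\textbf{Step 2.} Compute the ideal values. Replacing $\nu$ by $\nu^*$ and using $\int_0^\infty x^{i-\alpha}e^{-xt}\,dx=\Gamma(i+1-\alpha)t^{\alpha-i-1}$, which is finite since $\alpha<1$, gives the following.
\begin{align*}
i=0:&\quad c\Gamma(1-\alpha)t^\alpha=\IE[S_t^*],\\
i=1:&\quad (1-\alpha)c_\alpha t^\alpha=\IE[S_t^{(2)*}],\\
i=2:&\quad \tfrac{(2-\alpha)(1-\alpha)}{2}c_\alpha t^\alpha .
\end{align*}
With the identities $\phi_1=S_t^{(1)}-S_t^{(2)}$ and $\phi_2=S_t^{(2)}-S_t^{(3)}$, these differences reproduce $\IE[\phi_1^*]=\alpha c_\alpha t^\alpha$ and $\IE[\phi_2^*]=\tfrac{\alpha(1-\alpha)}{2}c_\alpha t^\alpha$. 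So each starred quantity is the corresponding functional of $\nu^*$.

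\textbf{Step 3.} Bound each error. Since the kernel $x^ie^{-xt}$ is nonnegative,
\begin{align*}
\left|\IE[S_t^{(i+1)}]-\IE[S_t^{(i+1)*}]\right|\leq \frac{t^{i+1}}{i!}\int_0^\infty x^ie^{-xt}f(x)\,dx=(-1)^i\frac{t^{i+1}}{i!}\mathcal{L}[f]^{(i)}(t).
\end{align*}
The last equality is differentiation under the integral sign. It is justified by dominated convergence: $\mathcal{L}[f]$ is finite for every $t>0$, and $x^ie^{-xt}\lesssim_{i,t,t'} e^{-xt'}$ for any $t'<t$. Taking $i=0,1$ gives the first two lines of the corollary. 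The lines for $\phi_1$ and $\phi_2$ follow from the triangle inequality applied to $S_t^{(1)}-S_t^{(2)}$ and $S_t^{(2)}-S_t^{(3)}$, using the $i=0,1,2$ bounds. The signs match: $-t^2\mathcal{L}[f]'(t)\geq 0$ and $\tfrac{t^3}{2}\mathcal{L}[f]''(t)\geq 0$.

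The only step needing any care is the interchange of differentiation and integration together with the finiteness of the ideal integrals. Both are routine given $\alpha\in(0,1)$ and finiteness of $\mathcal{L}[f]$, so I expect no genuine obstacle.
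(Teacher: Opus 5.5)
Your proposal is correct and follows the paper's route. The paper derives this corollary in one line from the preceding proposition $\IE[S_t^{(i+1)}]=(-1)^i\frac{t^{i+1}}{i!}\mathcal{L}[\nu]^{(i)}(t)$: it compares $\nu$ with $cx^{-\alpha}$ against the nonnegative kernel $x^ie^{-xt}$, and treats $\phi_1,\phi_2$ via $S_t^{(1)}-S_t^{(2)}$ and $S_t^{(2)}-S_t^{(3)}$. Your checks that the starred quantities are exactly the corresponding functionals of $cx^{-\alpha}$, and your justification of differentiating under the integral, fill in details the paper leaves implicit.
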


\begin{proof}[Proof of Theorem \ref{thm: far future main}]
\begin{align*}
    |\hat{S}_{t,T}-S_{t,T}| &= |\hat{S}_T-S_T| \\
    &\leq |\hat{S}_T-\hat{S}_T^*|+|\hat{S}_T^*-\IE[S_T^*]|+|\IE[S_T^*]-\IE[S_T]|+|\IE[S_T]-S_T|
\end{align*}
From the definitions of the idealized quantities the second term is identically zero. Consider the event $A_1 := |\IE[S_T]-S_T| < qz - T\mathcal{L}[f](T)$. By Corollary \ref{cor: Laplace neo} we have that on $A_1$, the sum of the last two terms is upper bounded by $qz$. 
Let $d=d(z)$ be given by,
\begin{align*}  
&\frac{pz\IE[S_t^*]^2(1+r)^{\frac{\IE[{S_t^{(2)*}}]}{\IE[S_t^*]}}}{(1+r)(1+2\log(1+r))\IE[S_t^*]^2+pz(\IE[S_t^*]+\IE[{S_t^{(2)*}}])(1+r)^{\frac{\IE[{S_t^{(2)*}}]}{\IE[S_t^*]}}\log(1+r)}, \\
&=\frac{pz c\Gamma(1-\alpha)t^\alpha(1+r)^{-\alpha}}{(1+2\log(1+r))c\Gamma(1-\alpha)t^\alpha+pz(2-\alpha)\log(1+r)(1+r)^{-\alpha}}.
\end{align*}
Let $A_2$ be the event $|S_t - \IE[S_t]| < d-t\mathcal{L}[f](t)$. Note that on $A_2$, we have $|S_t - \IE[S_t^*]| < d$. Similarly, let $A_3$ be the event that $|S_t^{(2)} - \IE[S_t^{(2)}]| < d+t^2\mathcal{L}[f]'(t)$ which implies $|S_t^{(2)} - \IE[{S_t^{(2)}}^*]| < d$.

\begin{align*}
    |\hat{S}_T-\hat{S}_T^*| &\leq |S_t(1+r)^{\phi_1/S_t} - \IE[S_t^*](1+r)^{\IE[\phi_1^*]/\IE[S_t^*]}| \\
    &= (1+r)|S_t(1+r)^{-S_t^{(2)}/S_t}-\IE[S_t^*](1+r)^{-\IE[S_t^{(2)*}]/\IE[S_t^*]}|.
\end{align*}
Computing the partial derivatives of $g(x,y) := x(1+r)^{-y/x}$ gives
\begin{align*}
    \partial_x g(x,y) = (1+r)^{-y/x}(1+\frac{y}{x} \log(1+r)) \\
    \partial_y g(x,y) = -\log(1+r)(1+r)^{-y/x}
\end{align*}
On the event $A_2 \cap A_3$ we therefore have
\begin{align*}
    |\hat{S}_T-\hat{S}_T^*| \leq (1+r)(1+r)^{-\frac{\IE[S_t^{(2)*}]-d}{\IE[S_t^*]+d}}(1+2\log(1+r))d,
\end{align*}
where we used the hypothesis that we are on $A_2 \cap A_3$ both to bound the derivatives and to bound the deviation in each variable.

Applying Lemma \ref{lem: trans eq} with $x = \IE[S_t^*],y = \IE[{S_t^{(2)*}}],c = r+1, k = (1+r)(1+2\log(r+1))$ and $z$ of the lemma being $pz$ in the current scope now shows this to be upper bounded by $pz$. Thus on $A_1 \cap A_2 \cap A_3$ we have $|S_T - \hat{S}_T| \leq pz+qz = z$ and consequently,

\begin{align*}
    P(|S_{t,T}-\hat{S}_{t,T}|>z) &\leq P(\overline{A_1 \cap A_2 \cap A_3}), \\
    &\leq P(\overline{A_1})+P(\overline{A_2})+P(\overline{A_3}), \\
    &\leq P(|S_T-\IE[S_T]| \geq qz - T\mathcal{L}[f](T)) \\
    &+ P(|S_t - \IE[S_t]| \geq d-t\mathcal{L}[f](t)) \\
    &+P(|S_t^{(2)} - \IE[S_t^{(2)}]| \geq d+t^2\mathcal{L}[f]'(t)).\\
\end{align*}
The theorem now follows from applying Lemma \ref{lem: new concentration}.
\end{proof}

\begin{lem} \label{lem: trans eq}
Suppose that $c > 1, x,y,z,k > 0$.
Then, with $$d:=\frac{x^2zc^{y/x}}{kx^2+(x+y)zc^{y/x}\log(c)},$$
we have the inequality,
\begin{align*}
    c^{-\frac{y-d}{x+d}}kd < z.
\end{align*}
\end{lem}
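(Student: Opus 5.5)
The plan is to rewrite the exponent so that it separates into the ``ideal'' exponent $-y/x$ plus a small positive correction, and then to use the defining equation of $d$ to eliminate $k$. The whole argument reduces to the elementary inequality $(1-u)e^{u}<1$ for $u>0$.

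First, since $x,y,z,k>0$ and $c>1$, the numerator and denominator in the definition of $d$ are both positive, so $d>0$. Next I would decompose the exponent:
\begin{align*}
    -\frac{y-d}{x+d} = -\frac{y}{x} + \left(\frac{y}{x}-\frac{y-d}{x+d}\right) = -\frac{y}{x} + \frac{d(x+y)}{x(x+d)}.
\end{align*}
Because $x+d>x$ and $\log(c)>0$, this gives the bound
\begin{align*}
    c^{-\frac{y-d}{x+d}} \leq c^{-y/x}e^{u}, \qquad u := \frac{d(x+y)\log(c)}{x^2}>0.
\end{align*}

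The key step is to clear the denominator in the definition of $d$. This gives
\begin{align*}
    d\left(kx^2+(x+y)zc^{y/x}\log(c)\right) = x^2zc^{y/x}.
\end{align*}
Dividing by $x^2c^{y/x}$ turns this into the identity
\begin{align*}
    kd\,c^{-y/x} = z - z\frac{d(x+y)\log(c)}{x^2} = z(1-u).
\end{align*}
Combining this identity with the bound on the exponent yields
\begin{align*}
    c^{-\frac{y-d}{x+d}}kd \leq z(1-u)e^{u}.
\end{align*}

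To finish, I would use the elementary fact that $\varphi(u):=(1-u)e^{u}$ satisfies $\varphi(0)=1$ and $\varphi'(u)=-ue^{u}<0$ for $u>0$, so $\varphi(u)<1$ whenever $u>0$. This holds in particular when $u\ge 1$, where $\varphi(u)\leq 0$. Hence the right-hand side is strictly less than $z$, which is the claim.

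The only non-obvious step is spotting that the definition of $d$ is exactly tuned so that $kd\,c^{-y/x}=z(1-u)$. Once that identity is in hand, the rest is routine, and I expect no real obstacle.
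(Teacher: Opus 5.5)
Your proof is correct and is essentially the paper's argument in a different parametrization. The paper sets $d_0=\frac{z}{k}c^{y/x}$ and $\lambda=\frac{d_0}{d}-1$, and it drops $d_0$ from the denominator of $\frac{\lambda x}{x(1+\lambda)+d_0}$; since $x(1+\lambda)+d_0=(1+\lambda)(x+d)$, this is exactly your bound $x+d>x$. It then concludes via $\frac{\lambda}{1+\lambda}\le\log(1+\lambda)$, which is your inequality $(1-u)e^{u}<1$ under the substitution $u=\frac{\lambda}{1+\lambda}$.
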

\begin{proof}

Rearranging we may equivalently show that

\begin{align*}
    d < \frac{z}{k}c^{\frac{y-d}{x+d}}.
\end{align*}
Define $d_0 = \frac{z}{k}c^{y/x}$. Then,

\begin{align*}
    \frac{z}{k}c^{\frac{y-d}{x+d}} &= d_0c^{\frac{y-d}{x+d}-\frac{y}{x}}\\
    &= d_0 \exp(-\frac{(x+y)\log(c)}{x}\frac{d}{x+d})
\end{align*}
Taking logarithms we thus need to show
\begin{align*}
    \frac{(x+y)\log(c)}{x}\frac{d}{d+x} < \log(\frac{d_0}{d})
\end{align*}
Defining $\lambda := \frac{(x+y)d_0\log(c)}{x^2} = \frac{d_0}{d}-1$ this is equivalent to
\begin{align*}
    \frac{\lambda x}{x(1+\lambda)+d_0} < \log(1+\lambda),
\end{align*}
but this is immediate as $\frac{\lambda}{1+\lambda}$ is intermediate between these two quantities. It is larger than the first since $d_0 > 0$ and smaller than the second since it is a standard property of the logarithm that $\frac{u}{1+u} \leq \log(1+u)$.
\end{proof}
 
\begin{proof}[Proof of Corollary \ref{cor: far future rate}]

Pick $p=1/2$ arbitrarily and choose $z = \sqrt{CB}r^\alpha t^{\alpha/2}\log(r)$ in Theorem \ref{thm: far future main}. Notice that $|\nu(x)-cx^{-\alpha}| \leq Kx^{-\alpha/2}$ for $x \in (0,1)$ implies  $|\nu(x)-cx^{-\alpha}| \leq \max\{{K,c}\}x^{-\alpha/2}$ for all $x > 0$. Here asymptotic notation will hide constants that depend on $\alpha,c,K,r_0$ and an index $i$ but which are independent of $C$ and $B$.

\begin{align*}
d(z) \asymp \frac{\sqrt{CB} t^{3\alpha/2}\log(r)}{t^{\alpha}\log(r)+\sqrt{CB}t^{\alpha/2}\log^2(r)} \asymp \sqrt{CB}t^{\alpha/2},
\end{align*}
where we used that $\log(r) = o(t^{\alpha/2})$.

Since $\mathcal{L}[x^{-\alpha/2}](t) = \Gamma(1-\frac{\alpha}{2})t^{\alpha/2-1}$, we have $|t^{i+1} \mathcal{L}[f]^{(i)}(t)| \lesssim  t^{\alpha/2}$ which implies that (\ref{eq: three hypothesis estimates}) is satisfied for large enough $C$, large enough $t$. By Corollary \ref{cor: Laplace neo}, $\IE[S_t] \asymp \IE[S_t^{(2)}] \asymp t^{\alpha},\IE[S_T] \asymp r^\alpha t^\alpha$. Now the result follows from applying these asymptotics to the arguments of the exponentials, recalling $\log(r) = o(t^{\alpha/2})$.
\end{proof}

\begin{proof}[Proof of Theorem \ref{thm: alpha-rate}]
    For any choice of $u_t,v_t,z_t$ we have
    \begin{align}
        &\mathrel{\phantom{=}} P(\hat{\alpha}-\alpha > z_t) \nonumber \\
        &= P(\hat{\alpha}-\alpha > z_t \wedge |S_t-\IE[S_t]| \geq u_t \wedge |\phi_1-\IE[\phi_1]| \geq v_t) \label{eq:alpha1} \\
        &+P(\hat{\alpha}-\alpha > z_t \wedge |S_t-\IE[S_t]| < u_t \wedge |\phi_1-\IE[\phi_1]| \geq v_t) \label{eq:alpha2} \\
        &+P(\hat{\alpha}-\alpha > z_t \wedge |S_t-\IE[S_t]| \geq u_t \wedge |\phi_1-\IE[\phi_1]| < v_t) \label{eq:alpha3} \\
        &+P(\hat{\alpha}-\alpha > z_t \wedge |S_t-\IE[S_t]| < u_t \wedge |\phi_1-\IE[\phi_1]| < v_t). \label{eq:alpha4}
    \end{align}
\eqref{eq:alpha1} and \eqref{eq:alpha3} are each upper bounded by $P(|S_t-\IE[S_t]| \geq u_t)$ which we can, in turn, upper bound by combining Lemma \ref{lem: new concentration} and Corollary \ref{cor: Laplace neo} as follows: Recall that we have assumed $|\nu(x)-cx^{-\alpha}|\leq Kx^{-\frac{\alpha}{2}}$. Hence, we may apply Corollary \ref{cor: Laplace neo} with $f(x)=\max\{K,c\}x^{-\frac{\alpha}{2}}$, where the second part of the max ensures that the bound holds also for $x \geq 1$. Making $K$ larger if needed we can assume $K>c$. Because of $\mathcal{L}[f](t)=K\Gamma(1-\frac{\alpha}{2})/t^{1-\frac{\alpha}{2}}$ in this case, we get
\begin{align*}
\IE[S_t]\leq c\Gamma(1-\alpha)t^{\alpha}+K\Gamma(1-\frac{\alpha}{2})t^{\frac{\alpha}{2}},
\end{align*}
when recalling that $\IE[S_t^*]=c\Gamma(1-\alpha)t^{\alpha}$. From Lemma \ref{lem: new concentration}, we hence obtain
\begin{align}
&\mathrel{\phantom{=}}P(|S_t-\IE[S_t]|\geq u_t) \nonumber \\
&\leq \exp(-\frac{u_t^2}{2B\IE[S_t]})+\exp(-\frac{u_t^2}{2B\IE[S_t]+\frac{2B}{3}u_t}) \nonumber \\
&\leq \exp\left(-\frac{u_t^2}{2B(c\Gamma(1-\alpha)t^{\alpha}+K\Gamma(1-\frac{\alpha}{2})t^{\frac{\alpha}{2}})}\right) \nonumber \\
&+\exp\left(-\frac{u_t^2}{2B(c\Gamma(1-\alpha)t^{\alpha}+K\Gamma(1-\frac{\alpha}{2})t^{\frac{\alpha}{2}})+\frac{2B}{3}u_t}\right). \label{eq:rate_St}
\end{align}
We conclude that there is a constant $C$ for which the above becomes smaller than $\epsilon/3$ when letting $u_t:=Ct^\frac{\alpha}{2}$.

By Corollary \ref{cor: Laplace neo} we have
\begin{align*}
    \IE[S_t^{(2)}] &\leq  (1-\alpha)c\Gamma(1-\alpha)t^\alpha+K\Gamma\left(1-\frac{\alpha}{2}\right)(1-\frac{\alpha}{2})t^\frac{\alpha}{2}.
\end{align*}
Noticing that $\phi_1 = S_t^{(1)}-S_t^{(2)}$ and applying Lemma \ref{lem: new concentration} gives,
\begin{align*}
& P(|\phi_1-\IE[\phi_1]| \geq v_t) \\
 &\leq \exp\left(-\frac{v_t^2}{8B\IE[S_t]}\right)+ \exp\left(-\frac{v_t^2}{8B\IE[S_t]+\frac{4}{3}Bv_t}\right) \\
    &+\exp\left(-\frac{v_t^2}{8(2B-1)\IE[S_t^{(2)}]}\right)+ \exp\left(-\frac{v_t^2}{8(2B-1)\IE[S_t^{(2)}]+\frac{4(2B-1)}{3}v_t}\right), \\
    &\leq \exp\left(-\frac{v_t^2}{8B(c\Gamma(1-\alpha)t^\alpha+K\Gamma(1-\frac{\alpha}{2})t^\frac{\alpha}{2})}\right) \\
    &+ \exp\left(-\frac{v_t^2}{8B(c\Gamma(1-\alpha)t^\alpha+K\Gamma(1-\frac{\alpha}{2})t^\frac{\alpha}{2})+\frac{4}{3}Bv_t}\right) \\
    &+\exp\left(-\frac{v_t^2}{8(2B-1)((1-\alpha)c\Gamma(1-\alpha)t^\alpha+K\Gamma(1-\frac{\alpha}{2})(1-\frac{\alpha}{2})t^\frac{\alpha}{2})}\right)\\
    &+ \exp\left(-\frac{v_t^2}{8(2B-1)((1-\alpha)c\Gamma(1-\alpha)t^\alpha+K\Gamma(1-\frac{\alpha}{2})(1-\frac{\alpha}{2})t^\frac{\alpha}{2})+\frac{4(2B-1)}{3}v_t}\right).
\end{align*}

The above can be made smaller than $\frac{\epsilon}{3}$ by choosing $v_t := Ct^{\frac{\alpha}{2}}$ after possibly increasing $C$.

We argue next that $\eqref{eq:alpha4}=0$ because the event inside the probability is impossible. Note that on the event inside of the probability in \eqref{eq:alpha4} it  holds that
\begin{align*}
\hat{\alpha}-\alpha&=\frac{\phi_1}{S_t}-\alpha\leq\frac{v_t+\IE[\phi_1]}{-u_t+\IE[S_t]}-\alpha \\
&\leq \frac{Ct^{\frac{\alpha}{2}}+c\Gamma(1-\alpha)\alpha t^{\alpha}+K\Gamma\left(1-\frac{\alpha}{2}\right)\left(2-\frac{\alpha}{2}\right)t^{\frac{\alpha}{2}}}{-Ct^\frac{\alpha}{2}+c\Gamma(1-\alpha)t^{\alpha}-K\Gamma(1-\frac{\alpha}{2})t^{\frac{\alpha}{2}}}-\alpha,
\end{align*}
where we substituted the definitions of $v_t$ and $u_t$ and applied Corollary \ref{cor: Laplace neo}.
 Bringing $\alpha$ into the fraction and dividing numerator and denominator by $t^{\frac{\alpha}{2}}$, we obtain
 \begin{align*}
&\mathrel{\phantom{=}}\hat{\alpha}-\alpha \\
&\leq\frac{(1+\alpha)Ct^{\frac{\alpha}{2}}+K\Gamma\left(1-\frac{\alpha}{2}\right)\left(2+\frac{\alpha}{2}\right)t^{\frac{\alpha}{2}}}{-Ct^\frac{\alpha}{2}+c\Gamma(1-\alpha)t^{\alpha}-K\Gamma\left(1-\frac{\alpha}{2}\right)t^{\frac{\alpha}{2}}} \\
&=\frac{(1+\alpha)C+K\Gamma\left(1-\frac{\alpha}{2}\right)\left(2+\frac{\alpha}{2}\right)}{-C+c\Gamma(1-\alpha)t^{\frac{\alpha}{2}}-K\Gamma\left(1-\frac{\alpha}{2}\right)}.
\end{align*}
One sees that the right hand side above is smaller than $z_t := \frac{D}{t^{\frac{\alpha}{2}}}$ for $t$ sufficiently large if we set $D=D(\alpha,\epsilon,c,K,B)$ sufficiently big. Therefore, $\eqref{eq:alpha4}=0$ and the proof of the upper tail is complete. The other tail is treated similarly.
\end{proof}

We give a proof of Theorem \ref{thm: CLT distant} which is really a reduction to some facts which we will then go on to prove. Let $h(z) := e^z-1-z$ and $U := \beta\phi_1+(1-\alpha\beta)S_t-\rho^{-\alpha}S_T$ and $\Delta := \hat{\alpha}-\alpha$.
\begin{proof}[Proof of Theorem \ref{thm: CLT distant}]
First we provide a convenient way of expressing the prediction error. We note that,
\begin{align}
\begin{split}\label{eq: error expression}
        \rho^\alpha U+\rho^\alpha S_t h(\beta \Delta) &= \rho^\alpha (\beta\phi_1+(1-\alpha\beta)S_t-\rho^{-\alpha}S_T)+\rho^\alpha S_t h(\beta \Delta), \\
        &= \rho^\alpha (\beta(S_t \Delta +\alpha S_t)+(1-\alpha\beta)S_t-\rho^{-\alpha}S_T)+\rho^\alpha S_t h(\beta \Delta), \\
        &= \rho^\alpha (\beta S_t \Delta +S_t -\rho^{-\alpha}S_T)+\rho^\alpha S_th(\beta \Delta), \\
        &= \rho^\alpha \beta S_t \Delta +\rho^\alpha S_t -S_T+\rho^\alpha S_t(\rho^\Delta-\beta \Delta -1), \\
        &= S_t \rho^{\alpha + \Delta} -S_T,\\
        &= \hat{S}_{t,T}-S_{t,T}.
\end{split}
\end{align}
We will show that
    \begin{enumerate}
        \item $\frac{\IE[U]}{\sqrt{\mathbb{V}[U]}} \rightarrow 0.$
        \item $\rho^{2\alpha} \mathbb{V}[U] = V_t(1+o(1))$.
        \item $\frac{U-\IE[U]}{\sqrt{\mathbb{V}[U]}} \rightarrow \mathcal{N}(0,1)$.
        \item $\rho^\alpha S_t h(\beta \Delta) = o_P(\sqrt{V_t})$.
        \item $\hat{V}_t/V_t \overset{P}{\rightarrow} 1$.
    \end{enumerate}
This implies the theorem because in light of (\ref{eq: error expression}) we have the decomposition
\begin{align*}
    \frac{\hat{S}_{t,T}-S_{t,T}}{\sqrt{V_t}} = \underbrace{\frac{U-\IE[U]}{\sqrt{\mathbb{V}[U]}}}_{\rightarrow \mathcal{N}(0,1)} \underbrace{\frac{\rho^\alpha \sqrt{\mathbb{V}[U]}}{\sqrt{V_t}}}_{ \rightarrow 1}+ \underbrace{\frac{\IE[U]}{\sqrt{\mathbb{V}[U]}}}_{\rightarrow 0} \underbrace{\frac{\rho^\alpha \sqrt{\mathbb{V}[U]}}{\sqrt{V_t}}}_{\rightarrow 1}+\underbrace{\frac{\rho^\alpha S_th(\beta \Delta)}{\sqrt{V_t}}}_{\overset{P}{\rightarrow} 0}
\end{align*}
where the convergences are immediate from the first four facts. Slutsky's theorem and the final fact then upgrade this to a CLT with an estimated variance.
\end{proof}

First a construction to transfer our asymptotic estimate on $\nu$ into something global.

\begin{lem}\label{lem: quant nu}
    If $\mu \prec 1$ and $\nu(x)=cx^{-\alpha}+o(x^{-\alpha/2})$ as $x \downarrow 0$, then for any $\epsilon > 0$ there exist $\delta > 0$, $C_\delta < \infty$ and a function $f_\epsilon(x)$ so that $|\nu(x)-cx^{-\alpha}| \leq f_\epsilon(x)$ for all $x > 0$ and
\begin{align*}
    \mathcal{L}[f_\epsilon(x)](t) &= \epsilon\Gamma\left(1-\frac{\alpha}{2}\right)t^{\frac{\alpha}{2}-1}+C_\delta e^{-\delta t}\frac{1}{t}, \\
    \mathcal{L}[f_\epsilon(x)]'(t) &= -\epsilon\Gamma\left(2-\frac{\alpha}{2}\right)t^{\frac{\alpha}{2}-2}-C_\delta e^{-\delta t}\left(\frac{1}{t^2}+\frac{\delta}{t}\right), \\
    \mathcal{L}[f_\epsilon(x)]''(t) &= \epsilon\Gamma\left(3-\frac{\alpha}{2}\right)t^{\frac{\alpha}{2}-3}+C_\delta e^{-\delta t}\left(\frac{2}{t^3}+\frac{2\delta}{t^2}+\frac{\delta^2}{t}\right).
\end{align*}
\end{lem}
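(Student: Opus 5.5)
The plan is to take $f_\epsilon$ to be an explicit two-piece majorant:
\begin{align*}
    f_\epsilon(x) := \epsilon x^{-\alpha/2} + C_\delta 1_{x>\delta},
\end{align*}
with $\delta$ and $C_\delta$ chosen as below. The displayed formulas are then direct Laplace transform computations. Note that the lemma asserts \emph{equalities} for the transforms of this specific $f_\epsilon$; the inequality is only the pointwise bound $|\nu(x)-cx^{-\alpha}|\le f_\epsilon(x)$.

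\textbf{Choice of $\delta$.} By the hypothesis $\nu(x)-cx^{-\alpha}=o(x^{-\alpha/2})$ as $x\downarrow 0$, for the given $\epsilon>0$ there is a $\delta\in(0,1)$ such that
\begin{align*}
    |\nu(x)-cx^{-\alpha}|\le \epsilon x^{-\alpha/2} \quad \text{for all } x\in(0,\delta].
\end{align*}
This handles the small-$x$ region, and the first summand of $f_\epsilon$ suffices there.

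\textbf{Choice of $C_\delta$.} For $x>\delta$ we bound the two terms crudely. The function $\nu$ is non-increasing, and since $\mu\prec 1$ we have $\sum_s M_s\le 1$, so $\nu(x)\le 1/x$ by Markov-type counting. Hence $0\le\nu(x)\le\nu(\delta)\le 1/\delta$ for $x>\delta$. Also $cx^{-\alpha}\le c\delta^{-\alpha}$ there. Setting
\begin{align*}
    C_\delta := \frac{1}{\delta} + c\delta^{-\alpha} < \infty
\end{align*}
gives $|\nu(x)-cx^{-\alpha}|\le C_\delta\le f_\epsilon(x)$ for all $x>\delta$. Together with the previous step, the pointwise bound holds for all $x>0$.

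\textbf{Computing the transforms.} I then compute the transforms directly:
\begin{align*}
    \mathcal{L}[\epsilon x^{-\alpha/2}](t) &= \epsilon\Gamma\left(1-\tfrac{\alpha}{2}\right)t^{\alpha/2-1}, \\
    \mathcal{L}[C_\delta 1_{x>\delta}](t) &= C_\delta\frac{e^{-\delta t}}{t}.
\end{align*}
Both are finite for every $t>0$, since $\alpha/2<1$ makes the first integral converge at $0$. Differentiating under the integral sign is justified by dominated convergence, because $x^k e^{-xt}f_\epsilon(x)$ is integrable for $t>0$. This gives the derivative formulas:
\begin{itemize}
    \item For the power term, $(-1)^k\,\mathcal{L}[x^k f]$ yields the factors $\Gamma(2-\tfrac{\alpha}{2})t^{\alpha/2-2}$ and $\Gamma(3-\tfrac{\alpha}{2})t^{\alpha/2-3}$, with the stated signs.
    \item For $e^{-\delta t}/t$, the product rule gives first derivative $-e^{-\delta t}(1/t^2+\delta/t)$ and second derivative $e^{-\delta t}(2/t^3+2\delta/t^2+\delta^2/t)$.
\end{itemize}
Both match the statement.

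\textbf{Main obstacle.} The only step that is not mechanical is securing a finite global constant on $(\delta,\infty)$. This is where $\mu\prec 1$ (or more generally bounded arity) enters, through the bound $\nu(\delta)\le B/\delta$. It is also why the error term takes the exponentially decaying form $C_\delta e^{-\delta t}/t$, which is negligible compared with $\epsilon t^{\alpha/2-1}$ as $t\to\infty$.
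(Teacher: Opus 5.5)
Your proposal is correct and matches the paper's proof essentially verbatim. You use the same majorant $f_\epsilon(x)=\epsilon x^{-\alpha/2}+C_\delta 1_{x>\delta}$, with $\delta$ taken from the $o(x^{-\alpha/2})$ hypothesis and $C_\delta=1/\delta+c\delta^{-\alpha}$ justified by $\nu(x)\le 1/x$. You then compute the Laplace transform directly and differentiate; your derivative formulas check out.
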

\begin{proof}
Pick $\epsilon > 0$. By hypothesis there exists some $\delta$ so that $|\nu(x)-cx^{-\alpha}| \leq \epsilon x^{-\alpha/2}$ for $x \in (0,\delta]$. Note that since the intensities of the species sum to (at most) one, $\nu(x) \leq 1/x$. Then for $x \in (\delta,\infty)$ we have $|\nu(x)-cx^{-\alpha}| \leq 1/\delta+c\delta^{-\alpha} =: C_\delta$. 
\begin{align*}
    |\nu(x)-cx^{-\alpha}| \leq f_{\epsilon}(x) := \epsilon x^{-\alpha/2}+1_{x > \delta}C_\delta
\end{align*}
By standard properties of the Laplace transform, \\ $\mathcal{L}[f_\epsilon(x)](t) = \epsilon\Gamma\left(1-\frac{\alpha}{2}\right)t^{\alpha/2-1}+\frac{C_\delta}{t}e^{-\delta t}$. The final lines of the lemma follow by differentiation.
\end{proof}

\begin{lem} \label{lem: asymptotic moment estimates} If $|\nu(x)-cx^{-\alpha}| = o(x^{-{\alpha/2}})$ as $x \downarrow 0$, then
    \begin{align*}
        \IE[S_t] = \IE[S_t^*]+o(t^{\alpha/2}) \\
        \IE[\phi_1] = \IE[\phi_1^*]+o(t^{\alpha/2}) \\
        \IE[\phi_2] = \IE[\phi_2^*]+o(t^{\alpha/2})
    \end{align*}
\end{lem}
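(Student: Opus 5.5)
We use Lemma \ref{lem: quant nu} together with the Laplace-transform bounds of Corollary \ref{cor: Laplace neo}. Fix an arbitrary $\epsilon>0$. Lemma \ref{lem: quant nu} gives $\delta>0$, $C_\delta<\infty$ and a function $f_\epsilon$ with $|\nu(x)-cx^{-\alpha}|\le f_\epsilon(x)$ for all $x>0$, together with explicit formulas for $\mathcal{L}[f_\epsilon]$ and its first two derivatives. Since $f_\epsilon\ge 0$ is a legitimate dominating function, Corollary \ref{cor: Laplace neo} applies directly and yields
\begin{align*}
|\IE[S_t]-\IE[S_t^*]| &\le t\mathcal{L}[f_\epsilon](t),\\
|\IE[\phi_1]-\IE[\phi_1^*]| &\le t\mathcal{L}[f_\epsilon](t)-t^2\mathcal{L}[f_\epsilon]'(t),\\
|\IE[\phi_2]-\IE[\phi_2^*]| &\le -t^2\mathcal{L}[f_\epsilon]'(t)+\tfrac{t^3}{2}\mathcal{L}[f_\epsilon]''(t).
\end{align*}

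Substituting the formulas from Lemma \ref{lem: quant nu}, each right-hand side has the same two-part structure. The first part is $\epsilon$ times a constant depending only on $\alpha$, times $t^{\alpha/2}$:
\begin{itemize}
\item $\Gamma(1-\frac{\alpha}{2})$ for $S_t$;
\item $\Gamma(1-\frac{\alpha}{2})+\Gamma(2-\frac{\alpha}{2})$ for $\phi_1$;
\item $\Gamma(2-\frac{\alpha}{2})+\tfrac12\Gamma(3-\frac{\alpha}{2})$ for $\phi_2$.
\end{itemize}
The second part is $C_\delta e^{-\delta t}$ times a polynomial in $t$ and $\delta$ of degree at most $2$ in $t$. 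An example is $t^3\cdot\frac{\delta^2}{t}=\delta^2t^2$.

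Hence, for each of the three quantities,
$$\limsup_{t\to\infty}\frac{|\IE[\cdot]-\IE[\cdot^*]|}{t^{\alpha/2}}\le \epsilon\, C_\alpha + \limsup_{t\to\infty} \frac{C_\delta\, \mathrm{poly}(t)\, e^{-\delta t}}{t^{\alpha/2}}.$$
The second limsup is $0$ because exponential decay beats any polynomial. This holds even though $\delta$ and $C_\delta$ depend on $\epsilon$, since they are fixed once $\epsilon$ is fixed. So the limsup is at most $\epsilon C_\alpha$. As $\epsilon$ was arbitrary, the limsup is zero, which is precisely the $o(t^{\alpha/2})$ claim.

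Three minor points need checking.
\begin{itemize}
\item The signs must be right: $\mathcal{L}[f_\epsilon]'\le 0$ and $\mathcal{L}[f_\epsilon]''\ge 0$, so the bounds are sums of nonnegative terms. This follows from the displayed formulas.
\item The hypothesis of the present lemma matches that of Lemma \ref{lem: quant nu}, where $\mu\prec 1$ is used to get $\nu(x)\le 1/x$. If $\mu\prec 1$ is not assumed here, I would instead bound $\nu(x)\le \sum_s M_s/x\le B/x$. This only changes $C_\delta$.
\item The order of quantifiers must be right: $\epsilon$ is chosen first, then $t\to\infty$.
\end{itemize}
None of these is a real obstacle. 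The argument is essentially bookkeeping on top of Lemma \ref{lem: quant nu} and Corollary \ref{cor: Laplace neo}.
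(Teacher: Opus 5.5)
Your proposal is correct and follows essentially the same route as the paper's proof. You bound each deviation via Corollary \ref{cor: Laplace neo} using the dominating function $f_\epsilon$ from Lemma \ref{lem: quant nu}, isolate a leading term $\epsilon C_\alpha t^{\alpha/2}$ plus an exponentially small remainder, and then let $\epsilon \downarrow 0$; your explicit constants and your remark on the $\mu \prec 1$ hypothesis only fill in details the paper leaves implicit.
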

\begin{proof}
Applying Corollary \ref{cor: Laplace neo} we see that the distance to the idealized values may be bounded in terms of the Laplace transform and its derivatives, if we find a global bound on $|\nu(x)-cx^{-\alpha}|$. This bound is supplied by Lemma \ref{lem: quant nu}. Multiplying the $k$th derivative by a monomial of degree $k+1$ we obtain for $k=0,1,2$ that there is a leading term $\epsilon C_\alpha t^{\frac{\alpha}{2}}$, $C_\alpha$ being a constant depending only on $\alpha$ and then an exponentially small second term. 

\begin{align*}
    \limsup_{t \rightarrow \infty}\frac{|\IE[S_t]-\IE[S_t^*]|}{t^{\alpha/2}} \leq \epsilon C_\alpha
\end{align*}
Since $\epsilon$ was arbitrary, the $S_t$ part of the lemma now follows. We treat $\phi_1$ and $\phi_2$ the same way. 
\end{proof}

\begin{cor}\label{cor: S_t convergence}If $\mu \prec 1$ and $\nu(x)=cx^{-\alpha}+o(x^{-\alpha/2})$
    \begin{align*}
    \frac{S_t}{c_\alpha t^\alpha} \overset{P}{\rightarrow} 1
    \end{align*}
\end{cor}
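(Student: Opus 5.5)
The plan is to combine the first-moment asymptotics of Lemma \ref{lem: asymptotic moment estimates} with the concentration inequality of Lemma \ref{lem: new concentration} (with $B=1$, $i=1$) and then use Chebyshev-type reasoning. First, Lemma \ref{lem: asymptotic moment estimates} gives $\IE[S_t] = c_\alpha t^\alpha + o(t^{\alpha/2})$, so $\IE[S_t]/(c_\alpha t^\alpha) \to 1$ because $t^{\alpha/2} = o(t^\alpha)$ and $c_\alpha > 0$ (we have $c>0$ and $\alpha \in (0,1)$, so $\Gamma(1-\alpha)$ is finite and positive). It therefore suffices to show $S_t/\IE[S_t] \to 1$ in probability.

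For this step, fix $\eta > 0$ and set $z = \eta \IE[S_t]$ in both tails of Lemma \ref{lem: new concentration}. With $B=1$ and $i=1$ the prefactor $(B-1)i+1$ equals $1$. This gives
\begin{align*}
P\left(|S_t - \IE[S_t]| \geq \eta \IE[S_t]\right) \leq \exp\left(-\frac{\eta^2 \IE[S_t]}{2}\right) + \exp\left(-\frac{\eta^2 \IE[S_t]}{2(1+\eta/3)}\right).
\end{align*}
Since $\IE[S_t] \asymp t^\alpha \to \infty$, both terms vanish. Alternatively, one can avoid the lemma altogether: for $\mu \prec 1$ the species counts are independent, so $\mathbb{V}[S_t] = \sum_s e^{-M_s t}(1-e^{-M_s t}) \leq \IE[S_t]$, and Chebyshev's inequality gives the same conclusion.

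Combining the two ratios via $\frac{S_t}{c_\alpha t^\alpha} = \frac{S_t}{\IE[S_t]}\cdot\frac{\IE[S_t]}{c_\alpha t^\alpha}$ finishes the proof. There is no real obstacle here. The only point needing care is confirming that Lemma \ref{lem: asymptotic moment estimates} applies under the hypothesis $\nu(x) = cx^{-\alpha}+o(x^{-\alpha/2})$, and this is exactly how that lemma is stated, via Lemma \ref{lem: quant nu}.
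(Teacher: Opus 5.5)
Your proposal is correct and is essentially the paper's proof. The paper uses exactly your ``alternative'': $\mathbb{V}[S_t]\leq \IE[S_t]$ by independence of species under $\mu \prec 1$, Chebyshev's inequality, and Lemma~\ref{lem: asymptotic moment estimates} to get $\IE[S_t]/(c_\alpha t^\alpha)\to 1$. Your primary route through Lemma~\ref{lem: new concentration} with $B=i=1$ is also valid; it just gives exponential rather than polynomial tail decay, which this statement does not need.
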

\begin{proof}
Note that by independence of the summands $\mathbb{V}[S_t] \leq \IE[S_t]$. By Chebyshev's inequality,

    \begin{align*}
        P(|\frac{S_t}{\IE[S_t]}-1|>\epsilon) \leq \frac{\mathbb{V}[S_t]}{\epsilon^2 \IE[S_t]^2} \leq \frac{1}{\epsilon^2 \IE[S_t]} \rightarrow 0
    \end{align*}
    By Lemma \ref{lem: asymptotic moment estimates}, $\frac{\IE[S_t]}{c_\alpha t^\alpha} \rightarrow 1$. Combining these facts gives the Corollary.
\end{proof}

The next three lemmas concern themselves with the mean and variance of $U$.

\begin{lem}\label{lem: expectation of U} With $\nu(x) = cx^{-\alpha}+o(x^{-\alpha/2}),$
    $$\IE[U] = o((\beta+1)t^{\alpha/2}).$$
\end{lem}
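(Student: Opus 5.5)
By linearity, $\IE[U] = \beta\IE[\phi_1]+(1-\alpha\beta)\IE[S_t]-\rho^{-\alpha}\IE[S_T]$. The plan is to compare each of the three expectations with its idealized version, check that the idealized combination vanishes exactly, and then bound the error terms. The error terms are supplied by Lemma \ref{lem: asymptotic moment estimates}.

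First, the idealized combination. We have $\IE[\phi_1^*]=\alpha c_\alpha t^\alpha$ and $\IE[S_t^*]=c_\alpha t^\alpha$. Applying the same formula at time $T=\rho t$ gives $\IE[S_T^*]=c_\alpha \rho^\alpha t^\alpha$. Hence
$$\beta\IE[\phi_1^*]+(1-\alpha\beta)\IE[S_t^*]-\rho^{-\alpha}\IE[S_T^*] = c_\alpha t^\alpha\bigl(\alpha\beta+1-\alpha\beta-1\bigr)=0.$$

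Second, the error terms. Write $\IE[U]$ as $\beta(\IE[\phi_1]-\IE[\phi_1^*])+(1-\alpha\beta)(\IE[S_t]-\IE[S_t^*])-\rho^{-\alpha}(\IE[S_T]-\IE[S_T^*])$. Lemma \ref{lem: asymptotic moment estimates} gives $\IE[\phi_1]-\IE[\phi_1^*]=o(t^{\alpha/2})$ and $\IE[S_t]-\IE[S_t^*]=o(t^{\alpha/2})$. Since $|1-\alpha\beta|\le 1+\beta$, the first two terms are $o((\beta+1)t^{\alpha/2})$.

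Third, the $S_T$ term, which is the only delicate point. Lemma \ref{lem: asymptotic moment estimates} applied at time $T$ only gives $o(T^{\alpha/2})$, and $r$ (hence $\rho$) may grow with $t$. So the $o(\cdot)$ must be taken uniformly, which I would do by rerunning the argument of Lemma \ref{lem: quant nu} directly. For each $\epsilon>0$, Corollary \ref{cor: Laplace neo} combined with Lemma \ref{lem: quant nu} gives
$$|\IE[S_T]-\IE[S_T^*]|\le T\mathcal{L}[f_\epsilon](T)=\epsilon\Gamma(1-\tfrac{\alpha}{2})T^{\alpha/2}+C_\delta e^{-\delta T}.$$
Multiplying by $\rho^{-\alpha}$ bounds the term by $\epsilon\Gamma(1-\frac{\alpha}{2})\rho^{-\alpha/2}t^{\alpha/2}+C_\delta e^{-\delta t}$. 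Because $\rho\ge1$, we have $\rho^{-\alpha/2}\le 1$, so the bound is at most $\epsilon\,\Gamma(1-\frac{\alpha}{2})\,t^{\alpha/2}$ plus an exponentially small term, uniformly in $r$. Since $\epsilon$ is arbitrary, the $S_T$ term is $o(t^{\alpha/2})$.

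To keep the first two terms uniform in $r$ as well, I would apply the same explicit $\epsilon$-bound, with constants independent of $r$, to $\phi_1$ and $S_t$. This gives $|\IE[U]|\le (1+2\beta)\,\epsilon C_\alpha t^{\alpha/2}+O(\beta e^{-\delta t})$. The exponential term is negligible because $\log r=o(t^{\alpha/2})$, so $\beta e^{-\delta t}\to 0$. Dividing by $(\beta+1)t^{\alpha/2}$ and letting $\epsilon\downarrow0$ yields $\IE[U]=o((\beta+1)t^{\alpha/2})$.

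The main obstacle is exactly this uniformity in $r$ for the $S_T$ term. It is resolved by the favourable factor $\rho^{-\alpha}T^{\alpha/2}=\rho^{-\alpha/2}t^{\alpha/2}\le t^{\alpha/2}$.
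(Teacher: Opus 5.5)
Your proposal is correct and follows the paper's proof. Expand $\IE[U]$ by linearity, replace each expectation by its idealized version plus an error from Lemma \ref{lem: asymptotic moment estimates}, and note that the idealized combination $c_\alpha t^\alpha(\alpha\beta+1-\alpha\beta-1)$ vanishes. Your extra check of uniformity in $r$ for the $S_T$ term is harmless: the paper writes that error as $o(\rho^{\alpha/2}t^{\alpha/2})$, which is automatically uniform because $T=\rho t\ge t\to\infty$, and the prefactor $\rho^{-\alpha}$ then reduces it to $o(t^{\alpha/2})$ exactly as in your argument.
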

\begin{proof}
Combining Corollary \ref{cor: Laplace neo}  and Lemma \ref{lem: asymptotic moment estimates} we have
\begin{align*}
    \IE[U] &= \beta \IE[\phi_1]+(1-\alpha\beta)\IE[S_t]-\rho^{-\alpha}\IE[S_T] \\
    &= \beta (\IE[\phi_1^*]+o(t^{\alpha/2}))+(1-\alpha\beta)(\IE[S_t^*]+o(t^{\alpha/2}))-\rho^{-\alpha}(\IE[S_T^*] \\
    &+o(\rho^{\alpha/2}t^{\alpha/2})) \\
    &=  \beta\IE[\phi_1^*]+(1-\alpha\beta)\IE[S_t^*]-\rho^{-\alpha}\IE[S_T^*]+o((\beta+1)  t^{\alpha/2}) \\
    &= c\Gamma(1-\alpha)t^\alpha(\alpha \beta +1-\alpha\beta-1)+o((\beta+1)t^{\alpha/2}) \\
    &= o((\beta+1)  t^{\alpha/2})
\end{align*}
\end{proof}

\begin{lem}\label{lem: variance of U}  With $\nu(x) = cx^{-\alpha}+o(x^{-\alpha/2})$,
    \begin{align*}
        \mathbb{V}[U] = c_\alpha \Xi(r,\alpha) t^\alpha + o((1+\beta^2)t^{\alpha/2})
    \end{align*}
\end{lem}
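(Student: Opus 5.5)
Since $\mu \prec 1$ and we are in the Poissonized model, the vectors $(N_s,N_s')$ are independent across species. Hence $U=\sum_{s}X_s$ with
$$X_s := \beta 1_{N_s=1}+(1-\alpha\beta)1_{N_s>0}-\rho^{-\alpha}1_{N_s+N_s'>0},$$
and $\mathbb{V}[U]=\sum_s \mathbb{V}[X_s]$. The plan is to expand $\mathbb{V}[X_s]$ as three variances plus three covariances of indicators, sum over $s$, and express every resulting species sum as an explicit combination of $\IE[S_\tau]$, $\IE[\phi_1(\tau)]$ and $\IE[\phi_2(\tau)]$ at the times $\tau\in\{t,2t,T,t+T,2T\}$. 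Here $\phi_i(\tau)$ and $S_\tau$ denote the counts at time $\tau$. These expectations are then evaluated by Lemma \ref{lem: asymptotic moment estimates}.

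\textbf{Step 1 (the six sums).} The identities I would use are the following.
\begin{itemize}
\item $\sum_s P(N_s=1)^2=\sum_s (M_st)^2e^{-2M_st}=\tfrac12\IE[\phi_2(2t)]$, so $\sum_s\mathbb{V}[1_{N_s=1}]=\IE[\phi_1(t)]-\tfrac12\IE[\phi_2(2t)]$.
\item $\sum_s\mathbb{V}[1_{N_s>0}]=\sum_s(e^{-M_st}-e^{-2M_st})=\IE[S_{2t}]-\IE[S_t]$, and likewise $\sum_s\mathbb{V}[1_{N_s+N_s'>0}]=\IE[S_{2T}]-\IE[S_T]$.
\item $\sum_s\mathbb{C}[1_{N_s=1},1_{N_s>0}]=\sum_s M_ste^{-M_st}e^{-M_st}=\tfrac12\IE[\phi_1(2t)]$.
\item $\sum_s\mathbb{C}[1_{N_s=1},1_{N_s+N_s'>0}]=\sum_sM_ste^{-M_st}e^{-M_sT}=\tfrac{t}{t+T}\IE[\phi_1(t+T)]$.
\item $\sum_s\mathbb{C}[1_{N_s>0},1_{N_s+N_s'>0}]=\sum_s(1-e^{-M_st})e^{-M_sT}=\IE[S_{t+T}]-\IE[S_T]$.
\end{itemize}
Each identity is a one-line Poisson computation.

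\textbf{Step 2 (ideal values).} Substitute the ideal values: $\IE[S_\tau^*]=c_\alpha\tau^\alpha$, $\IE[\phi_1^*(\tau)]=\alpha c_\alpha\tau^\alpha$ and $\IE[\phi_2^*(\tau)]=\tfrac{\alpha(1-\alpha)}2c_\alpha\tau^\alpha$, with $T=\rho t$ and $t+T=(\rho+1)t$. Collecting terms with their coefficients gives the following contributions.
\begin{itemize}
\item From $\beta^2\mathbb{V}[1_{N=1}]$: $c_\alpha t^\alpha\bigl(\alpha\beta^2-\beta^2\alpha(1-\alpha)2^{\alpha-2}\bigr)$.
\item From $(1-\alpha\beta)^2\mathbb{V}[1_{N>0}]$: $(1-\alpha\beta)^2(2^\alpha-1)$.
\item From $\rho^{-2\alpha}\mathbb{V}[1_{N+N'>0}]$: $\rho^{-2\alpha}\rho^\alpha(2^\alpha-1)=\rho^{-\alpha}(2^\alpha-1)$.
\item From $2\beta(1-\alpha\beta)\cdot\tfrac12\alpha 2^\alpha$: $\beta(1-\alpha\beta)\alpha2^\alpha$.
\item From $-2\beta\rho^{-\alpha}\cdot\tfrac{1}{1+\rho}\alpha(1+\rho)^\alpha$: $-2\beta\rho^{-\alpha}\alpha(1+\rho)^{\alpha-1}$.
\item From $-2(1-\alpha\beta)\rho^{-\alpha}\bigl((\rho+1)^\alpha-\rho^\alpha\bigr)$: that expression itself.
\end{itemize}
Summed, these are exactly $c_\alpha\Xi(r,\alpha)t^\alpha$. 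I would verify the match term by term against the definition of $\Xi$. This is routine algebra.

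\textbf{Step 3 (error terms).} This is where the care is needed. Lemma \ref{lem: asymptotic moment estimates} gives errors $o(\tau^{\alpha/2})$ for each of the expectations above. Equivalently, via Lemma \ref{lem: quant nu} and Corollary \ref{cor: Laplace neo}, the errors are at most $\epsilon C_\alpha\tau^{\alpha/2}$ plus exponentially small terms, for arbitrary $\epsilon$.
\begin{itemize}
\item At $\tau\in\{t,2t\}$ the coefficients are $O(1+\beta^2)$, giving $o((1+\beta^2)t^{\alpha/2})$.
\item At $\tau\in\{T,2T,t+T\}$ the error is $o((\rho t)^{\alpha/2})$. It is multiplied by $\rho^{-2\alpha}$ or $\rho^{-\alpha}(1+\beta)$. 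The factor $t/(t+T)\le 1$ only helps.
\item Since $\rho^{-\alpha}\rho^{\alpha/2}\le1$, these contributions are again $o((1+\beta^2)t^{\alpha/2})$.
\end{itemize}
The main obstacle is ensuring uniformity when $r=r(t)$ varies. I would handle it by working directly with the explicit bound $\epsilon C_\alpha\tau^{\alpha/2}+C_\delta e^{-\delta\tau}(\cdot)$ from Lemma \ref{lem: quant nu}. That bound is uniform in $\tau\ge t$, so the little-$o$ holds uniformly in $\rho$ once $\epsilon\downarrow0$ is taken after $t\to\infty$.
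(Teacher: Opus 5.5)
Your proposal is correct and follows the paper's proof essentially line for line. It uses the same species-wise split into three variances and three nested-event covariances, the same rewriting in terms of $\IE[S_\tau]$, $\IE[\phi_1(\tau)]$, $\IE[\phi_2(\tau)]$ at $\tau\in\{t,2t,\rho t,(1+\rho)t,2\rho t\}$, and the same substitution of the ideal values via Lemma \ref{lem: asymptotic moment estimates}. Your Step 3 makes explicit the error bookkeeping that the paper compresses into a single $o((1+\beta^2)t^{\alpha/2})$, including uniformity in $r(t)$, which is in fact automatic since each error depends only on $\tau \geq t$.
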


\begin{proof}
Begin by recalling that when $A \subset B$ are events then $\mathbb{C}[1_A,1_B]=P(A \cap B)-P(A)P(B) = P(A)(1-P(B))$. Notice that $\{N_s=1\} \subset \{N_s > 0\} \subset \{N_s+N_s' > 0\}$. By independence of the histories of distinct species we have,
\begin{align*}
    &\mathbb{V}[U] \\
    &=\sum_{s \in \mathcal{S}}\mathbb{V}[\beta 1_{N_s=1}+(1-\alpha \beta)1_{N_s>0}-\rho^{-\alpha}1_{N_s+N_s' > 0}] \\
    &= \sum_{s \in \mathcal{S}}\beta^2\mathbb{V}[1_{N_s = 1}]+(1-\alpha\beta)^2\mathbb{V}[1_{N_s > 0}]+\rho^{-2\alpha}\mathbb{V}[1_{N_s+N_s' >0}] \\
    &+ 2\beta(1-\alpha \beta)\mathbb{C}[1_{N_s=1},1_{N_s>0}]-2\beta \rho^{-\alpha}\mathbb{C}[1_{N_s=1},1_{N_s+N_s'>0}] \\
    &-2(1-\alpha \beta)\rho^{-\alpha}\mathbb{C}[1_{N_s>0},1_{N_s+N_s'>0}] \\
    &=\sum_{s \in \mathcal{S}}\beta^2 M_ste^{-M_st}(1-M_ste^{-M_st})+(1-\alpha\beta)^2e^{-M_st}(1-e^{-M_st})\\
    &+\rho^{-2\alpha}e^{-M_sT}(1-e^{-M_sT}) +2\beta(1-\alpha\beta)M_ste^{-M_st}e^{-M_st}\\
    &-2\beta\rho^{-\alpha}M_ste^{-M_st}e^{-M_sT}-2(1-\alpha \beta)\rho^{-\alpha}(1-e^{-M_st})e^{-M_sT} \\
    &= \beta^2\IE[\phi_1]-\frac{1}{2}\beta^2 \IE[\phi_2(2t)]+ (1-\alpha \beta)^2(\IE[S_{2t}]-\IE[S_t]) \\
    &+\rho^{-2\alpha}(\IE[S_{2\rho t}]-\IE[S_{\rho t}]) \\
    &+\beta(1-\alpha \beta)\IE[\phi_1(2t)]\\
    &-2\beta \frac{\rho^{-\alpha}}{1+\rho}\IE[\phi_1((1+\rho)t)]-2(1-\alpha \beta)\rho^{-\alpha}(\IE[S_{(\rho+1)t}]-\IE[S_{\rho t}]) \\
    &=c_\alpha t^\alpha(\alpha\beta^2-\beta^2\alpha(1-\alpha)2^{\alpha-2}+(1-\alpha\beta)^2(2^\alpha-1)+\rho^{-\alpha}(2^\alpha-1) \\
    &+\beta(1-\alpha\beta)\alpha 2^\alpha -2\beta \frac{\rho^{-\alpha}}{1+\rho}\alpha (1+\rho)^\alpha-2(1-\alpha\beta)\rho^{-\alpha}((\rho+1)^\alpha-\rho^\alpha)) \\
    &+o((1+\beta^2)t^{\alpha/2})
\end{align*}
\end{proof}

\begin{lem} \label{lem: lower bound on Xi} With $\mu$ as above,
    \begin{align*}
        \Xi(r,\alpha) \geq \alpha(1-\alpha)(1-2^{\alpha-2})\beta^2.
    \end{align*}
\end{lem}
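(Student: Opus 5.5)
The plan is to read $\Xi(r,\alpha)$ as a quadratic polynomial in $\beta$, but with $\rho=e^{\beta}$ left inside the coefficients. The first thing to check is that the $\beta^2$-coefficient is exactly the claimed constant. Gathering the $\beta^2$ contributions in the definition of $\Xi$:
\begin{align*}
\alpha-\alpha(1-\alpha)2^{\alpha-2}+\alpha^2(2^\alpha-1)-\alpha^2 2^\alpha=\alpha-\alpha^2-\alpha(1-\alpha)2^{\alpha-2}=\alpha(1-\alpha)(1-2^{\alpha-2}).
\end{align*}
Here the four terms come from $\beta^2$, from $(1-\alpha\beta)^2$ and from $\beta(1-\alpha\beta)\alpha2^\alpha$. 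So the statement reduces to showing that the remaining part
\begin{align*}
R(\beta,\alpha):=\Xi(r,\alpha)-\alpha(1-\alpha)(1-2^{\alpha-2})\beta^2
\end{align*}
is nonnegative for all $\beta>0$ and $\alpha\in(0,1)$. This remainder collects the $\rho$-dependent terms and the terms linear in $\beta$.

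To organize $R$, I would use the structure from the proof of Lemma \ref{lem: variance of U}. Split
\begin{align*}
U=\beta(\phi_1-\alpha S_t)+(S_t-\rho^{-\alpha}S_T)=:\beta A+D.
\end{align*}
Then $\Xi$ is the $c_\alpha t^\alpha$-normalized limit of $\beta^2\mathbb{V}[A]+2\beta\,\mathbb{C}[A,D]+\mathbb{V}[D]$, and $R$ is exactly the normalized limit of $2\beta\,\mathbb{C}[A,D]+\mathbb{V}[D]$. Computing species by species as in Lemma \ref{lem: variance of U} gives explicit closed forms:
\begin{align*}
\lim\frac{2\beta\,\mathbb{C}[A,D]}{c_\alpha t^\alpha}&=2\beta\Big(\alpha2^{\alpha-1}-\alpha(2^\alpha-1)-\rho^{-\alpha}\alpha(1+\rho)^{\alpha-1}+\alpha\rho^{-\alpha}\big((1+\rho)^\alpha-\rho^\alpha\big)\Big),\\
\lim\frac{\mathbb{V}[D]}{c_\alpha t^\alpha}&=(2^\alpha-1)(1+\rho^{-\alpha})-2\rho^{-\alpha}\big((1+\rho)^\alpha-\rho^\alpha\big).
\end{align*}
These should match the corresponding terms of $\Xi$ term for term, which gives a consistency check on the bookkeeping.

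The main obstacle is proving $R\ge0$, since this does not follow from positivity of a variance alone. The cross term can be negative, and a crude Cauchy--Schwarz bound would eat part of the $\beta^2$ coefficient. What I would try first is to show that the cross-term bracket is itself nonnegative, using the substitution $u=\rho^{-1}\in(0,1)$:
\begin{align*}
(1+\rho)^\alpha-\rho^\alpha=\rho^\alpha\big((1+u)^\alpha-1\big),\qquad \rho^{-\alpha}(1+\rho)^{\alpha-1}=u(1+u)^{\alpha-1}.
\end{align*}
The bracket then becomes $\alpha\big[(1+u)^\alpha-1-u(1+u)^{\alpha-1}-(2^{\alpha-1}-1)\big]$. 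The function $g(u)=(1+u)^{\alpha-1}(1+u-u)-1=(1+u)^{\alpha-1}-1$ is decreasing from $0$ to $2^{\alpha-1}-1$ on $[0,1]$, so the bracket equals $\alpha\big(g(u)-g(1)\big)\ge0$. For $\mathbb{V}[D]$, I would write it as $(2^\alpha-1)(1+u^\alpha)-2\big((1+u)^\alpha-1\big)$ and show this is $\ge0$ by concavity of $x\mapsto x^\alpha$. It vanishes at $u=1$, and the derivative in $u$ should have a definite sign.

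If the cross term fails to be nonnegative for some range, the fallback is to bound it below by $-\mathbb{V}[D]$ using the elementary inequalities $\frac{v}{1+v}\le\log(1+v)\le v$ in $u$, on the ranges $\rho$ near $1$ and $\rho$ large separately. If both limits come out nonnegative, the lemma follows.
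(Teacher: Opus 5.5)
Your overall strategy is the paper's: group $\Xi$ by powers of $\beta$, check that the $\beta^2$-coefficient is exactly $\alpha(1-\alpha)(1-2^{\alpha-2})$, and show that the $\beta$- and $\beta^0$-coefficients are nonnegative. Your argument for the linear coefficient is correct and is the paper's, written in $u=\rho^{-1}$. The bracket equals $\alpha\big((1+u)^{\alpha-1}-2^{\alpha-1}\big)$, and $(1+u)^{\alpha-1}=\big(\rho/(1+\rho)\big)^{1-\alpha}$ is the function the paper shows increases from $2^{\alpha-1}$ at $\rho=1$. Since the cross term is therefore nonnegative, you do not need the fallback paragraph or the detour through $U=\beta A+D$.

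The gap is in the constant term. Write $F(u):=(2^\alpha-1)(1+u^\alpha)-2\big((1+u)^\alpha-1\big)$. You propose to combine $F(1)=0$ with a definite sign of $F'$ on $(0,1)$, but
\begin{align*}
F'(u)=\alpha u^{\alpha-1}\Big((2^\alpha-1)-2\big(\tfrac{u}{1+u}\big)^{1-\alpha}\Big)
\end{align*}
tends to $+\infty$ as $u\downarrow 0$ and equals $-\alpha$ at $u=1$. So $F$ first rises from $F(0^+)=2^\alpha-1$ and then falls to $F(1)=0$. For instance, with $\alpha=\tfrac12$ one has $F(0^+)\approx0.41$ and $F(0.1)\approx0.45$. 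A monotonicity argument anchored at $u=1$ therefore cannot work.

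There are three easy repairs.
\begin{itemize}
\item The bracket in $F'$ is decreasing in $u$. Hence $F'$ changes sign exactly once, and $\min_{[0,1]}F$ is attained at an endpoint, where the values are $2^\alpha-1>0$ and $0$.
\item As the paper does (after multiplying by $\rho^\alpha$), use the concavity splitting
\begin{align*}
F(u)=\big[2^\alpha-(1+u)^\alpha\big]+\Big(\big[(2u)^\alpha-u^\alpha\big]-\big[(1+u)^\alpha-1\big]\Big).
\end{align*}
The first bracket is nonnegative because $u\le 1$. The difference is nonnegative by concavity of $x\mapsto x^\alpha$, because $[u,2u]$ and $[1,1+u]$ have equal length and the former lies to the left.
\item Your own probabilistic reading settles it directly: for fixed $\rho$, $F$ is the limit of $\mathbb{V}[S_t-\rho^{-\alpha}S_T]/(c_\alpha t^\alpha)\ge 0$.
\end{itemize}
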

\begin{proof}

Collecting terms by power of $\beta$ gives\begin{align*}
    \Xi(r,\alpha) =&  \alpha(1-\alpha)(1-2^{\alpha-2})\beta^2 \\
    +& 2\alpha(\rho^{1-\alpha}(1+\rho)^{\alpha-1}-2^{\alpha-1})\beta \\
    +& (2^{\alpha}-1)(1+\rho^{-\alpha})-2(\rho^{-\alpha}(1+\rho)^{\alpha}-1)
\end{align*} Since $\beta \geq 0$ it suffices to show that the coefficients of the linear and constant terms are non-negative. First we show \begin{align} \label{eq: non-neg 1}
        2\alpha(\rho^{1-\alpha}(1+\rho)^{\alpha-1}-2^{\alpha-1})\beta \geq 0. \end{align} Set $f(\rho) := \rho^{1-\alpha}(1+\rho)^{\alpha-1}-2^{\alpha-1}$ and notice that $f(1) =0$. Calculation yields $f'(\rho) = \frac{1-\alpha}{(1+\rho)^2}(\frac{\rho}{1+\rho})^{-\alpha} \geq 0$ since $\alpha \in (0,1)$ and $\rho \geq 1$ yielding (\ref{eq: non-neg 1}). Next we show that \begin{align}\label{eq: non-neg 2}
            \tilde{f}(\rho) :=(2^{\alpha}-1)(1+\rho^{-\alpha})-2(\rho^{-\alpha}(1+\rho)^{\alpha}-1) \geq 0.
        \end{align}
        Notice that with $g(x) = x^\alpha$ we have
        \begin{align*}
  \rho^{\alpha}\tilde{f}(\rho) &= \rho^\alpha [(2^\alpha-1)(1+\rho^{-\alpha})-2(\rho^{-\alpha}(1+\rho)^\alpha-1)],\\
  &= (2^\alpha-1)(\rho^\alpha+1)-2(1+\rho)^\alpha+2\rho^\alpha,\\
  &= 2^\alpha \rho^\alpha +2^\alpha-\rho^\alpha-1-2(1+\rho)^\alpha +2\rho^\alpha \\
  &=\underbrace{[g(2\rho)-g(1+\rho)]}_{ \geq 0 \text{ by monotonicity}}
  +\underbrace{[g(\rho)-g(1)]-[g(1+\rho)-g(2)]}_{\geq 0 \text{ by concavity}}
\end{align*} This implies (\ref{eq: non-neg 2}). Now since $\beta \geq 0$ owing to $\rho \geq 1$, the lemma follows from (\ref{eq: non-neg 1}) and (\ref{eq: non-neg 2}).
\end{proof}

Now comparing Lemma \ref{lem: expectation of U} with the lower bound on the variance coming from combining Lemma \ref{lem: variance of U} and Lemma \ref{lem: lower bound on Xi} gives fact one once we recall that $\liminf_{t \rightarrow \infty}r(t) > 0$. 

Let $V_t := \rho^{2\alpha} c_\alpha \Xi(r,\alpha)t^\alpha$. Combining Lemma \ref{lem: variance of U}, \ref{lem: lower bound on Xi} and the hypothesis that $\liminf_t r(t) > 0$, we obtain fact 2.

\begin{lem} \label{lem: CLT for U}
    \begin{align*}
        \frac{U-\IE[U]}{\sqrt{\mathbb{V}[U]}} \rightarrow \mathcal{N}(0,1)
    \end{align*}
\end{lem}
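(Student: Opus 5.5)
Since $\mu \prec 1$, every observation hits at most one species. Under Poissonization the pairs $(N_s,N_s')$ are therefore independent across $s\in\mathcal{S}$. This lets us write
\begin{align*}
U-\IE[U] = \sum_{s \in \mathcal{S}} (Y_s - \IE[Y_s]), \qquad Y_s := \beta 1_{N_s=1}+(1-\alpha\beta)1_{N_s>0}-\rho^{-\alpha}1_{N_s+N_s'>0},
\end{align*}
which is a sum of independent, mean-zero summands indexed by a countable set. The plan mirrors the proof of Theorem \ref{thm: CLT Intermediate}. We view the family indexed by $t$ (with $r=r(t)$ varying) as a triangular array and verify Lindeberg's condition for the Feller--Lindeberg CLT. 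Countability of $\mathcal{S}$ causes no difficulty: the sum converges in $L^2$ because $\sum_s \mathbb{V}[Y_s] = \mathbb{V}[U]<\infty$ by Lemma \ref{lem: variance of U}. Alternatively, one can truncate to finitely many species whose omitted variance is negligible and apply the finite version.

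The key observation is that every summand is uniformly bounded. Since $\alpha\in(0,1)$, $\rho\geq 1$ and all indicators lie in $\{0,1\}$, we get $|Y_s| \leq \beta + |1-\alpha\beta| + 1 \leq 2\beta+2$, and hence $|Y_s-\IE[Y_s]|\leq 4(\beta+1)$. Lindeberg's condition then holds trivially as soon as $(\beta+1)^2 = o(\mathbb{V}[U])$: for any $\epsilon>0$, every indicator $1_{|Y_s-\IE Y_s|^2>\epsilon \mathbb{V}[U]}$ eventually vanishes identically, exactly as in the proof of Theorem \ref{thm: CLT Intermediate}.

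The remaining step, and the main obstacle, is the lower bound $\mathbb{V}[U]\gg (1+\beta)^2$. Lemmas \ref{lem: variance of U} and \ref{lem: lower bound on Xi} give
\begin{align*}
\mathbb{V}[U] \geq c_\alpha \alpha(1-\alpha)(1-2^{\alpha-2})\beta^2 t^\alpha - o\big((1+\beta^2)t^{\alpha/2}\big).
\end{align*}
The hypothesis $\liminf_t r(t)>0$ gives $\beta=\log(1+r)$ bounded below by a positive constant, so $1+\beta^2 \lesssim \beta^2$. Consequently $\mathbb{V}[U]\gtrsim \beta^2 t^\alpha \gtrsim (1+\beta)^2 t^\alpha$ for large $t$. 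In particular $(\beta+1)^2/\mathbb{V}[U] \lesssim t^{-\alpha}\to 0$. Note that no growth condition on $r$ is needed here, because the constant-in-$\beta$ bound on the summands scales exactly like the leading variance term.

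Conclude by applying the Lindeberg--Feller theorem, which yields $(U-\IE[U])/\sqrt{\mathbb{V}[U]} \overset{d}{\to}\mathcal{N}(0,1)$. The only place requiring care is making sure the $o(\cdot)$ term from Lemma \ref{lem: variance of U} is uniform in $r$. I would check this by tracking that the error terms come only from Lemma \ref{lem: asymptotic moment estimates} evaluated at the times $t,2t,\rho t,2\rho t,(1+\rho)t$, each with coefficients bounded by $1+\beta^2$.
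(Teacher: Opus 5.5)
Your proposal is correct and is essentially the paper's proof. Both arguments use independence across species, the uniform bound $|Y_s|\le 2\beta+2$, and the lower bound $\mathbb{V}[U]\gtrsim \beta^2 t^\alpha$ (from Lemmas \ref{lem: variance of U} and \ref{lem: lower bound on Xi} together with $\liminf_t r(t)>0$), so that every Lindeberg indicator eventually vanishes. Your extra remarks also hold: the paper cites $\log(r)/t^{\alpha/2}\to 0$ here, but it is not needed for this lemma, since the squared summand bound and the leading variance term both scale like $\beta^2$, and the $o\big((1+\beta^2)t^{\alpha/2}\big)$ error is uniform in $r$ because every time at which Lemma \ref{lem: asymptotic moment estimates} is applied is at least $t$.
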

\begin{proof}
    Write $U = \sum_{s \in \mathcal{S}}\beta1_{N_s = 1}+(1-\alpha\beta)1_{N_s > 0}-\rho^{-\alpha}1_{N_s+N_s'>0}$ and note that the sum over $\mathcal{S}$ is a sum over independent random variables. Each term is bounded in magnitude by $\beta+|1-\alpha\beta|+\rho^{-\alpha} \leq 2\log(r+1)+2$. Recall that by hypothesis \begin{align*}
        \frac{\log(r)}{t^{\alpha/2}} \rightarrow 0.
    \end{align*} These facts combine with Lemma \ref{lem: lower bound on Xi} and the hypothesis that $r(t)$ is bounded from below to ensure that Lindeberg's condition is satisfied. Namely, eventually each indicator in the condition is the indicator of an impossible event, causing the condition to be satisfied trivially.
\end{proof}

\begin{lem}
    \begin{align*}
        \rho^\alpha S_t h(\beta \Delta) = o_P(\sqrt{V_t})
    \end{align*}
\end{lem}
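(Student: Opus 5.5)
We need to show that $\rho^\alpha S_t h(\beta\Delta)$ is negligible compared to $\sqrt{V_t}$. The plan is to use the quadratic behaviour of $h$ near zero, $|h(z)|\le \tfrac{z^2}{2}e^{|z|}$, together with a $t^{-\alpha/2}$ rate for $\Delta=\hat\alpha-\alpha$. This gives
\[
|\rho^\alpha S_t h(\beta\Delta)| \le \tfrac12 \rho^\alpha S_t\beta^2\Delta^2 e^{\beta|\Delta|}.
\]

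\textbf{Step 1: rate for $\Delta$.} Under $\mu\prec 1$ and $\nu(x)=cx^{-\alpha}+o(x^{-\alpha/2})$, Lemma \ref{lem: quant nu} (with $\epsilon=1$, say) gives a global majorant $f$. I then rerun the argument of Theorem \ref{thm: alpha-rate} with this $f$ in place of $Kx^{-\alpha/2}$. The Laplace transforms are still $O(t^{\alpha/2-1})$ plus exponentially small terms, so that argument goes through verbatim. The output is $\Delta=O_P(t^{-\alpha/2})$.

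Combining this with the hypothesis $\log(r)/t^{\alpha/2}\to0$ gives $\beta|\Delta|=O_P(\beta t^{-\alpha/2})=o_P(1)$. Hence $e^{\beta|\Delta|}=O_P(1)$, and indeed $\to1$ in probability.

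\textbf{Step 2: size of $S_t$.} Corollary \ref{cor: S_t convergence} gives $S_t=O_P(t^\alpha)$. Combining Steps 1 and 2,
\[
\rho^\alpha S_t h(\beta\Delta)=O_P\!\left(\rho^\alpha t^\alpha\beta^2 t^{-\alpha}\right)=O_P(\rho^\alpha\beta^2).
\]

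\textbf{Step 3: compare with $\sqrt{V_t}$.} Lemma \ref{lem: lower bound on Xi} gives $V_t=\rho^{2\alpha}c_\alpha\Xi t^\alpha\gtrsim \rho^{2\alpha}\beta^2 t^\alpha$. The constant is positive because $\beta$ is bounded below: this uses $\liminf r>0$, which gives $\beta\ge\beta_0>0$, and $\alpha\in(0,1)$. Therefore
\[
\frac{|\rho^\alpha S_t h(\beta\Delta)|}{\sqrt{V_t}}=O_P\!\left(\frac{\beta}{t^{\alpha/2}}\right)\to0
\]
by hypothesis, which is the claim.

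\textbf{Main obstacle.} The crux is Step 1, that is, confirming that Theorem \ref{thm: alpha-rate} transfers from its global hypothesis $|\nu(x)-cx^{-\alpha}|\le Kx^{-\alpha/2}$ to the asymptotic one used here. Lemma \ref{lem: quant nu} handles this: its extra term $C_\delta e^{-\delta t}/t$ is negligible in every estimate of that proof.

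A secondary point is that $h(z)\le \tfrac{z^2}{2}e^{|z|}$ holds for $z$ of either sign. Since we only need it on the event $\beta|\Delta|\le1$, whose probability tends to one, the bound $|h(z)|\le z^2$ suffices there, which is simpler.
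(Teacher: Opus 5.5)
Your proposal is correct and follows essentially the same route as the paper: bound $h(z)\le \tfrac12 z^2e^{|z|}$, use $S_t=O_P(t^\alpha)$ and $\Delta=O_P(t^{-\alpha/2})$ from Theorem \ref{thm: alpha-rate} to get $O_P(\rho^\alpha\beta^2)$, and compare with $\sqrt{V_t}\gtrsim\rho^\alpha\beta t^{\alpha/2}$ from Lemma \ref{lem: lower bound on Xi}. Your Step 1 detour is harmless but not needed: with $\mu\prec 1$ we have $\nu(x)\le 1/x$, so the asymptotic hypothesis already gives $|\nu(x)-cx^{-\alpha}|\le Kx^{-\alpha/2}$ on $(0,1)$ for some $K$, and Theorem \ref{thm: alpha-rate} applies directly; likewise, the constant in Lemma \ref{lem: lower bound on Xi} is positive for every $\alpha\in(0,1)$, so it does not depend on any lower bound on $\beta$.
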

\begin{proof}
    Since $0\leq h(z) \leq \frac{1}{2}z^2e^{|z|}$,
    \begin{align*}
        0 \leq \rho^\alpha S_t h(\beta \Delta) \leq \frac{1}{2}\rho^\alpha S_t \beta^2 \Delta^2 e^{\beta |\Delta|} = O_P(\rho^\alpha \beta^2)
    \end{align*} To finish notice that, $V_t = \rho^{2\alpha} c_\alpha\Xi(r,\alpha) t^{\alpha} \geq \rho^{2\alpha}c_\alpha C_\alpha \beta^2 t^\alpha$ for some constant $C_\alpha > 0$ depending only on $\alpha$, by Lemma \ref{lem: lower bound on Xi}, and recall that by hypothesis $\frac{\beta^2}{t^\alpha} \rightarrow 0$.
\end{proof}

\begin{lem} \label{lem: Xi sensitivity} There exists a constant $C$, independent of $r$ and $\alpha$, so that
    \begin{align*}
        |\Xi(r,\alpha)-\Xi(r,\hat{\alpha})| \leq C(1+\beta^2)|\alpha-\hat{\alpha}|
    \end{align*}
\end{lem}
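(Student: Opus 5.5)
We will prove the bound by the mean value theorem in $\alpha$, bounding $|\partial_\alpha \Xi(r,a)|$ uniformly in $a \in [0,1]$ and $\rho \geq 1$ by $C(1+\beta^2)$. This suffices because $\hat{\alpha} = \phi_1/S_t \in [0,1]$ always, and $\alpha \in (0,1)$, so the segment joining them lies in $[0,1]$. Throughout, $\rho = r+1$, $\beta = \log\rho$ and $\Xi$ are regarded as functions of $a$ with $r$ fixed, and we restrict to $\rho \geq 1$, as in Lemma \ref{lem: lower bound on Xi}.

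It is cleanest to work with the regrouped form from the proof of Lemma \ref{lem: lower bound on Xi}:
\begin{align*}
\Xi(r,a) = a(1-a)(1-2^{a-2})\beta^2 + 2a\big(\rho^{1-a}(1+\rho)^{a-1}-2^{a-1}\big)\beta + (2^{a}-1)(1+\rho^{-a})-2\big(\rho^{-a}(1+\rho)^{a}-1\big).
\end{align*}
We then differentiate in $a$ term by term.

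\emph{Quadratic term.} The coefficient $a(1-a)(1-2^{a-2})$ is a smooth function of $a$ on $[0,1]$ alone. Its derivative is therefore bounded by an absolute constant, which gives a contribution $O(\beta^2)$.

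\emph{Linear term.} Write $\rho^{1-a}(1+\rho)^{a-1} = (\rho/(1+\rho))^{1-a}$ and set $u := \rho/(1+\rho) \in [1/2,1)$. The $a$-derivative of $u^{1-a}$ is $-u^{1-a}\log u$, and $|\log u| \leq \log 2$, so it is bounded. The derivative of $2^{a-1}$ is also bounded. By the product rule, the derivative of $2a(u^{1-a}-2^{a-1})$ is $O(1)$, and the term contributes $O(\beta)$.

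\emph{Constant term.} Write $\rho^{-a}(1+\rho)^a = (1+1/\rho)^a$. Its derivative is $(1+1/\rho)^a\log(1+1/\rho)$, which is bounded by $2\log 2$. For the other piece, the derivative of $\rho^{-a}$ is $-\beta\rho^{-a}$. This is not bounded by a constant; it is $O(\beta)$. Consequently the derivative of $(2^a-1)(1+\rho^{-a})$ is $O(1+\beta)$. If one wants a bound free of $\beta$ here, note that $\beta\rho^{-a} = a^{-1}\cdot a\beta e^{-a\beta} \leq 1/(ea)$, but this blows up as $a\to 0$. So we do not attempt a $\beta$-free bound and simply keep the $O(\beta)$.

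\emph{Assembly.} Summing the three contributions gives $|\partial_a\Xi| \leq C_1\beta^2 + C_2\beta + C_3 \leq C(1+\beta^2)$, using $\beta \leq (1+\beta^2)/2$. The mean value theorem then yields the lemma with $C$ independent of $r$ and $\alpha$.

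The only genuine obstacle is checking that no derivative produces a factor growing faster than $\beta^2$ uniformly as $a\to 0$ or $\rho\to\infty$. The worst factors are $\log\rho\cdot\rho^{-a}$, which are at most $\beta$. The rewritings $(\rho/(1+\rho))^{1-a}$ and $(1+1/\rho)^a$ are what keep the other terms bounded.
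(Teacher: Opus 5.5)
Your proof is correct, but it is organized differently from the paper's. The paper never regroups $\Xi$. Instead it uses the Lipschitz-algebra norm $\|f\| = \sup_{[0,1]}|f| + \mathrm{Lip}_{[0,1]}(f)$ and bounds it for building blocks such as $\alpha$, $2^\alpha$, $1-\alpha\beta$, $(1+\rho^{-1})^\alpha$ and $\rho^{-\alpha}(2^\alpha-1)$. It then assembles the seven terms of the original definition of $\Xi$ via the triangle inequality and submultiplicativity $\|fg\|\le\|f\|\,\|g\|$.

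You instead take the $\beta$-graded form from the proof of Lemma~\ref{lem: lower bound on Xi}, whose regrouping identity is indeed correct. You bound $\partial_a\Xi$ directly and then apply the mean value theorem on $[0,1]$. Your route makes the $\beta$-dependence structural: the $\beta^2$ coefficient depends on $a$ alone, and the rewritings $(\rho/(1+\rho))^{1-a}$ and $(1+1/\rho)^a$ make the other coefficients visibly bounded. It also keeps the cancellations among the $\beta^2$ terms that the paper's term-by-term triangle inequality discards. The paper's route, in exchange, is more mechanical and needs no expansion at all.

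One small point concerns your remark that a $\beta$-free bound for the $\rho^{-a}$ derivative blows up as $a\to 0$. This overlooks the prefactor: since $2^a-1\le a$ on $[0,1]$, you get $(2^a-1)\beta\rho^{-a}\le a\beta e^{-a\beta}\le e^{-1}$. This is exactly how the paper bounds $\|\rho^{-\alpha}(2^\alpha-1)\|$. Keeping $O(\beta)$ there is harmless for the final bound.
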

\begin{proof}
    Let $||f|| := \sup_{\alpha \in [0,1]}|f(\alpha)|+\sup_{\alpha \neq \hat{\alpha}}\frac{|f(\alpha)-f(\hat{\alpha})|}{|\hat{\alpha}-\alpha|}$ be the Lipschitz algebra norm and recall that, $||1||=1$, $||cf||=|c|\,||f||$, $||f+g|| \leq ||f||+||g||$, $||fg|| \leq ||f||\,||g||$. We bound the norms of the expressions that make up $\Xi(r,\alpha)$. First note that for any $c > 1$ independent of $\alpha$ we have $\frac{d}{d\alpha}c^\alpha = \log(c)c^\alpha$ and recall that for differentiable functions the maximum of the derivative is the Lipschitz constant. Recall also that $|\log(x)|$ is increasing for $x \geq 1$.
\begin{align*}
    ||\alpha|| &\leq 1+1=2 \\
    ||\alpha(1-\alpha)|| &\leq \frac{1}{4}+1 \leq 2 \\
    ||2^\alpha|| &\leq 2+2\log(2) \leq 4 \\
    ||2^{\alpha-2}|| &= \frac{1}{4}||2^\alpha|| \leq 1 \\
    ||1-\alpha \beta || &\leq (1+\beta) +  \beta = 1+2\beta\\
    ||(1+\rho^{-1})^\alpha|| &\leq (1+\rho^{-1})+(1+\rho^{-1})\log(1+\rho^{-1}) \leq 2+2=4 \\
    ||\rho^{-\alpha}(2^\alpha-1)|| &\leq 1+  \sup_{\alpha \in [0,1]} |\frac{d}{d\alpha}\rho^{-\alpha}(2^\alpha-1)|\\
    &= 1+\sup_{\alpha \in [0,1]} \rho^{-\alpha}|2^\alpha \log(2)-(2^\alpha-1)\log{\rho}| \\
    &\leq 1+ 2+\sup_{\alpha \in [0,1]}\alpha \rho^{-\alpha}\log(\rho) \\
    &\leq 3+e^{-1} \leq 4
\end{align*}
where we used that $2^\alpha -1 \leq \alpha$ for $\alpha \in (0,1)$ and observed that $\alpha \rho^{-\alpha}\log(\rho) = xe^{-x}|_{x=\alpha \log(\rho)}$.

\begin{align*}
    &||\Xi(r,\alpha)|| \\
    &=||\alpha\beta^2-\beta^2\alpha(1-\alpha)2^{\alpha-2}+(1-\alpha\beta)^2(2^\alpha-1)+\rho^{-\alpha}(2^\alpha-1) \\
    +&\beta(1-\alpha\beta)\alpha 2^\alpha -2\beta \alpha \rho^{-\alpha}(1+\rho)^{\alpha-1}-2(1-\alpha\beta)\rho^{-\alpha}((\rho+1)^\alpha-\rho^\alpha)|| \\
    \leq & 2\beta^2+2\beta^2+(1+2\beta)^2(4+1)+4+\beta(1+2\beta)8+16\beta+10(1+2\beta) \\
    \leq& C(1+\beta^2).
\end{align*}
\end{proof}
Define $\hat{V}_t := \rho^{2\hat{\alpha}}  \Xi(r,\hat{\alpha}) S_t$.
\begin{lem}
    \begin{align*}
        \frac{\hat{V}_t}{V_t} \overset{P}{\rightarrow} 1.
    \end{align*}
\end{lem}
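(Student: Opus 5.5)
We need $\hat{V}_t/V_t \to 1$ in probability, where $V_t = \rho^{2\alpha} c_\alpha \Xi(r,\alpha) t^\alpha$ and $\hat{V}_t = \rho^{2\hat{\alpha}} \Xi(r,\hat{\alpha}) S_t$. The plan is to factor the ratio into three pieces,
\begin{align*}
\frac{\hat{V}_t}{V_t} = \underbrace{\frac{S_t}{c_\alpha t^\alpha}}_{(a)}\cdot \underbrace{\rho^{2(\hat{\alpha}-\alpha)}}_{(b)} \cdot \underbrace{\frac{\Xi(r,\hat{\alpha})}{\Xi(r,\alpha)}}_{(c)},
\end{align*}
and to show that each factor tends to $1$ in probability. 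The continuous mapping theorem (products of factors converging in probability to $1$) then finishes the argument.

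Factor (a) is exactly Corollary \ref{cor: S_t convergence}.

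For factor (b), we write $\rho^{2\Delta} = e^{2\beta\Delta}$. It suffices to show $\beta\Delta \to 0$ in probability. Theorem \ref{thm: alpha-rate} with $B=1$ gives $\Delta = O_P(t^{-\alpha/2})$. Its hypothesis is the global bound $|\nu(x)-cx^{-\alpha}|\le K x^{-\alpha/2}$ on $(0,1)$. This follows from the assumed $o(x^{-\alpha/2})$ behaviour near zero combined with the crude bound $\nu(x)\le 1/x$ away from zero, exactly as in Lemma \ref{lem: quant nu}. Then $\beta\Delta = O_P(\log(1+r)\,t^{-\alpha/2}) = o_P(1)$ by the standing hypothesis $\log(r)/t^{\alpha/2}\to 0$. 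Together with $\liminf r>0$, this hypothesis also makes $\beta$ comparable to $\log r$ when $r$ is large and bounded otherwise.

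Factor (c) is the main obstacle, since $\Xi$ involves $\beta$ and we need a ratio, not a difference. Lemma \ref{lem: Xi sensitivity} gives
\begin{align*}
|\Xi(r,\hat{\alpha})-\Xi(r,\alpha)| \le C(1+\beta^2)|\Delta|,
\end{align*}
and Lemma \ref{lem: lower bound on Xi} gives $\Xi(r,\alpha)\ge \alpha(1-\alpha)(1-2^{\alpha-2})\beta^2$. Since $\liminf r(t)>0$, $\beta$ is bounded away from zero, so $1+\beta^2 \lesssim \beta^2$ with a constant depending on that liminf. Hence
\begin{align*}
\left|\frac{\Xi(r,\hat{\alpha})}{\Xi(r,\alpha)}-1\right| \lesssim_\alpha |\Delta| = O_P(t^{-\alpha/2}) \to 0.
\end{align*}

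One technicality is that Lemma \ref{lem: Xi sensitivity} is stated with Lipschitz norms over $\alpha\in[0,1]$, so it needs $\hat{\alpha}\in[0,1]$. This holds automatically because $0\le\phi_1\le S_t$. On the event $S_t=0$, which has vanishing probability, we invoke the convention mentioned earlier for ill-defined ratios.

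Combining (a), (b) and (c) gives $\hat{V}_t/V_t \overset{P}{\rightarrow} 1$.
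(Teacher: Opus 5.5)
Your proposal is correct and follows the paper's proof exactly. It uses the same factorization into $\rho^{2\Delta}$, $S_t/(c_\alpha t^\alpha)$ and $\Xi(r,\hat{\alpha})/\Xi(r,\alpha)$, handled respectively by Theorem \ref{thm: alpha-rate}, Corollary \ref{cor: S_t convergence}, and Lemmas \ref{lem: Xi sensitivity} and \ref{lem: lower bound on Xi}. Your extra details are points the paper leaves implicit: $\rho^{2\Delta}\to 1$ needs $\beta t^{-\alpha/2}\to 0$ since $r$ may grow; the local $o(x^{-\alpha/2})$ hypothesis upgrades to the global bound required by Theorem \ref{thm: alpha-rate}; and $\liminf r>0$ keeps $(1+\beta^2)/\beta^2$ bounded.
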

\begin{proof}
    \begin{align*}
        \frac{\hat{V}_t}{V_t} = \rho^{2\Delta} \frac{S_t}{c_\alpha t^\alpha} \frac{\Xi(r,\hat{\alpha})}{\Xi(r,\alpha)}
    \end{align*}
    $\rho^{2\Delta} \overset{P}{\rightarrow} 1$ by Theorem \ref{thm: alpha-rate}.  $\frac{S_t}{c_\alpha t^\alpha} \rightarrow 1$ by Corollary \ref{cor: S_t convergence}. By Lemma \ref{lem: Xi sensitivity}, Lemma \ref{lem: lower bound on Xi} and Theorem \ref{thm: alpha-rate}
    \begin{align*}
        |\frac{\Xi(r,\hat{\alpha})}{\Xi(r,\alpha)}-1| \leq \frac{C(1+\beta^2)}{\Xi(r,\alpha)}|\Delta| \leq C'|\Delta| \rightarrow 0.
    \end{align*} Combining these three results proves the lemma.
\end{proof}

\section{Data and Experiments} \label{sec: Appendix 2: Data}
\subsection{Real World Data} \label{app: data}
In this appendix we describe the real world data used and how it was processed.

\subsubsection{Surnames}
We use the surnames frequency data from the 2010 census \cite{comenetz_2016_frequently}. 

\subsubsection{Butterflies}
We start with the large database of butterfly observation data in  \cite{yu_2025_spatial}. As the model does not account for time evolution of the distribution and geographically distinct sources we sought a subset of the data with many observations over a short time span in a small geographical area. Among those we picked one with a large number of observed species as we consider this the more interesting regime. In this way we settled on using data collected near coordinates $(-5.25,145.27)$, Papua New Guinea between 2008-05-30 and 2008-07-29 by L. Sam and F. Kimbeng and staff of the Binatang Research Center. We note again that the high discovery regime is favorable for $\hat{S}_{t,T}^{(H^*)}$, this has some cherry-picking effect and one should not expect it to perform as well when applied to data collected in less ecologically diverse places.
\subsubsection{Messages}
We use the CollegeMsg temporal network data in \cite{panzarasa_2009_patterns}. Since most people sent many messages in the 193 days over which the data was collected, we sub-sample the data by a factor $100$. Our hope was also that this would help with exchangeability as it limits call-response dynamics.
\subsubsection{Magic the Gathering}
We simulated packs using \cite{joshbirnholz_2025_github}. The associated README claims that \enquote{packs are meant to have cards that match what you'd really find in booster packs from that set}. Despite this we find it likely that the simulator does not reproduce all aspects of pack generation. For convenience and reproducibility our GitHub contains a text file with the generated packs.

\subsubsection{Genes}
We choose the same data set as the first example in \cite{daley_2014_modeling}, namely we test performance using sequence read archive accession number SRX151616, which uses an MDA haploid library for single sperm sequencing. To limit computational cost the authors of \cite{daley_2014_modeling} use a binning approach. Because this interferes with the generalized unseen species structure we instead limit computational cost by restricting ourselves to the first $10^5$ locations on the genome. That is, for each fragment, we keep it only if it hits this initial segment. Because the experiment, by design, covers most locations many times we keep only every 10th fragment to put us in a higher discovery regime. We use GRCh38 as the reference genome.

\subsection{Experiment Results Table Form} \label{app: data tables}
\begin{table}[ht]
  \centering
  \scriptsize
  \setlength{\tabcolsep}{3pt}
  \caption{Absolute percentage error (\%) for \textbf{Hamlet (True Order)} (single run, no std).}
  \label{tab:appendix:hamletTO1K1S1runjson}
  \resizebox{\textwidth}{!}{%
  \begin{tabular}{rcccccccc}
    \toprule
    \% seen & $H^{*}$ & SGT & PW & Trivial & Ratio-$\alpha$ & MLE-$\alpha$ & Pad\'{e}--GT [2,3] & Chebyshev \\
    \midrule
    5.1 & $22.9$ & $44.3$ & $99.5$ & $86.5$ & $5.0$ & $10.8$ & $52.4$ & $48.5$ \\
    10.2 & $2.6$ & $31.0$ & $194.0$ & $78.1$ & $4.2$ & $1.9$ & $24.3$ & $31.3$ \\
    15.3 & $13.5$ & $19.3$ & $91.1$ & $71.1$ & $3.9$ & $0.8$ & $4.8$ & $15.6$ \\
    20.4 & $4.8$ & $4.6$ & $4.2$ & $63.3$ & $4.7$ & $7.5$ & $13.1$ & $5.3$ \\
    25.5 & $0.8$ & $6.3$ & $4.6$ & $58.8$ & $0.9$ & $1.8$ & $2.6$ & $9.3$ \\
    29.6 & $0.2$ & $6.1$ & $31.8$ & $55.0$ & $3.0$ & $0.3$ & $1.9$ & $14.1$ \\
    34.7 & $7.8$ & $6.4$ & $3.4$ & $50.4$ & $3.7$ & $1.3$ & $3.8$ & $19.2$ \\
    39.8 & $1.2$ & $2.7$ & $4.0$ & $44.4$ & $0.8$ & $1.4$ & $1.4$ & $29.5$ \\
    44.9 & $2.2$ & $2.3$ & $16.9$ & $39.8$ & $1.0$ & $1.0$ & $2.2$ & $34.5$ \\
    49.9 & $1.0$ & $0.9$ & $11.8$ & $35.1$ & $0.2$ & $1.6$ & $0.9$ & $41.8$ \\
    \bottomrule
  \end{tabular}
  }
\end{table}

\begin{table}[ht]
  \centering
  \scriptsize
  \setlength{\tabcolsep}{3pt}
  \caption{Absolute percentage error (\%) for \textbf{Butterflies (True Order)} (single run, no std).}
  \label{tab:appendix:butterfliesTO1K1S1runjson}
  \resizebox{\textwidth}{!}{%
  \begin{tabular}{rcccccccc}
    \toprule
    \% seen & $H^{*}$ & SGT & PW & Trivial & Ratio-$\alpha$ & MLE-$\alpha$ & Pad\'{e}--GT [2,3] & Chebyshev \\
    \midrule
    3.3 & $57.5$ & $92.1$ & --- & $97.3$ & $17.8$ & $97.3$ & --- & $94.1$ \\
    10.0 & $4.5$ & $67.5$ & --- & $87.7$ & $23.3$ & $87.7$ & --- & $74.0$ \\
    13.3 & $1.9$ & $58.4$ & $5.4$ & $83.6$ & $23.3$ & $83.6$ & --- & $65.6$ \\
    20.0 & $14.8$ & $37.9$ & $16.5$ & $72.6$ & $26.4$ & $26.4$ & $9.6$ & $45.2$ \\
    23.3 & $6.7$ & $34.4$ & $24.2$ & $68.5$ & $11.7$ & $13.4$ & $22.8$ & $40.2$ \\
    30.0 & $20.1$ & $19.2$ & $21.2$ & $57.5$ & $16.6$ & $18.5$ & $31.5$ & $21.9$ \\
    33.3 & $8.8$ & $23.3$ & $24.0$ & $57.5$ & $3.0$ & $5.5$ & $34.7$ & $23.9$ \\
    40.0 & $4.7$ & $21.7$ & $8.6$ & $53.4$ & $3.6$ & $1.4$ & $15.2$ & $18.2$ \\
    43.3 & $1.9$ & $21.4$ & $7.7$ & $50.7$ & $7.7$ & $4.6$ & $19.7$ & $16.0$ \\
    46.7 & $0.0$ & $18.5$ & $0.1$ & $47.9$ & $10.5$ & $6.7$ & $22.4$ & $13.8$ \\
    \bottomrule
  \end{tabular}
  }
\end{table}

\begin{table}[ht]
  \centering
  \scriptsize
  \setlength{\tabcolsep}{3pt}
  \caption{Absolute percentage error (\%) for \textbf{Messages (True Order)} (single run, no std).}
  \label{tab:appendix:collegemsgTO1K100S1runjson}
  \resizebox{\textwidth}{!}{%
  \begin{tabular}{rcccccccc}
    \toprule
    \% seen & $H^{*}$ & SGT & PW & Trivial & Ratio-$\alpha$ & MLE-$\alpha$ & Pad\'{e}--GT [2,3] & Chebyshev \\
    \midrule
    5.2 & $38.8$ & $67.6$ & $33.9$ & $90.0$ & $7.3$ & $16.0$ & $86.3$ & $73.6$ \\
    10.2 & $77.9$ & $58.2$ & $60.1$ & $82.1$ & $14.5$ & $0.4$ & $71.0$ & $61.7$ \\
    15.4 & $67.2$ & $51.2$ & $74.1$ & $75.7$ & $22.4$ & $8.5$ & $63.6$ & $52.9$ \\
    20.4 & $42.8$ & $41.6$ & $58.3$ & $69.3$ & $19.2$ & $6.9$ & $48.1$ & $42.0$ \\
    25.4 & $31.8$ & $40.4$ & $36.5$ & $64.7$ & $24.2$ & $12.1$ & $38.1$ & $38.6$ \\
    29.5 & $27.8$ & $36.3$ & $31.1$ & $60.0$ & $23.0$ & $11.4$ & $35.7$ & $32.9$ \\
    34.6 & $27.0$ & $30.8$ & $18.4$ & $54.6$ & $20.2$ & $9.8$ & $27.8$ & $25.2$ \\
    39.6 & $22.8$ & $27.9$ & $36.7$ & $50.2$ & $19.9$ & $10.4$ & $25.9$ & $20.5$ \\
    44.7 & $21.1$ & $23.7$ & $37.5$ & $45.4$ & $17.4$ & $9.2$ & $29.2$ & $13.4$ \\
    49.7 & $16.8$ & $20.4$ & $30.7$ & $40.4$ & $14.9$ & $7.5$ & $22.2$ & $7.5$ \\
    \bottomrule
  \end{tabular}
  }
\end{table}

\begin{table}[ht]
  \centering
  \scriptsize
  \setlength{\tabcolsep}{3pt}
  \label{tab:appendix:hamletTO0K1S100runjson}
  \resizebox{\textwidth}{!}{%
  \begin{tabular}{rcccccccc}
    \toprule
    \% seen & $H^{*}$ & SGT & PW & Trivial & Ratio-$\alpha$ & MLE-$\alpha$ & Pad\'{e}--GT [2,3] & Chebyshev \\
    \midrule
    5.1 & $27.4 \pm 2.1$ & $40.2 \pm 0.6$ & $108.2 \pm 9.5$ & $85.4 \pm 0.0$ & $17.5 \pm 0.8$ & $24.8 \pm 0.7$ & $98.3 \pm 10.6$ & $44.5 \pm 0.4$ \\
    10.2 & $16.4 \pm 1.4$ & $20.7 \pm 0.5$ & $82.0 \pm 5.5$ & $76.5 \pm 0.0$ & $10.8 \pm 0.4$ & $16.0 \pm 0.4$ & $344.2 \pm 112.2$ & $22.0 \pm 0.3$ \\
    15.3 & $11.7 \pm 0.9$ & $10.8 \pm 0.5$ & $76.0 \pm 5.2$ & $69.1 \pm 0.0$ & $8.0 \pm 0.3$ & $12.3 \pm 0.3$ & $46.7 \pm 7.4$ & $6.9 \pm 0.4$ \\
    20.4 & $5.8 \pm 0.5$ & $5.8 \pm 0.3$ & $52.6 \pm 3.7$ & $62.6 \pm 0.1$ & $6.0 \pm 0.3$ & $9.8 \pm 0.2$ & $13.8 \pm 1.3$ & $5.0 \pm 0.3$ \\
    25.5 & $4.4 \pm 0.3$ & $3.9 \pm 0.3$ & $40.8 \pm 3.1$ & $56.8 \pm 0.1$ & $4.6 \pm 0.2$ & $8.0 \pm 0.2$ & $7.2 \pm 0.5$ & $12.9 \pm 0.4$ \\
    29.6 & $3.3 \pm 0.3$ & $2.7 \pm 0.2$ & $33.0 \pm 2.4$ & $52.5 \pm 0.1$ & $3.7 \pm 0.2$ & $6.9 \pm 0.2$ & $3.7 \pm 0.3$ & $19.0 \pm 0.4$ \\
    34.7 & $2.1 \pm 0.2$ & $1.9 \pm 0.1$ & $21.9 \pm 1.8$ & $47.4 \pm 0.1$ & $2.8 \pm 0.2$ & $5.6 \pm 0.2$ & $2.0 \pm 0.2$ & $25.6 \pm 0.3$ \\
    39.8 & $2.0 \pm 0.2$ & $1.5 \pm 0.1$ & $22.3 \pm 1.5$ & $42.7 \pm 0.1$ & $2.2 \pm 0.1$ & $4.7 \pm 0.1$ & $1.9 \pm 0.1$ & $31.3 \pm 0.3$ \\
    44.9 & $1.4 \pm 0.1$ & $1.3 \pm 0.1$ & $17.9 \pm 1.4$ & $38.2 \pm 0.1$ & $1.8 \pm 0.1$ & $4.0 \pm 0.1$ & $1.3 \pm 0.1$ & $37.0 \pm 0.3$ \\
    49.9 & $1.5 \pm 0.1$ & $1.1 \pm 0.1$ & $18.1 \pm 1.2$ & $33.9 \pm 0.1$ & $1.5 \pm 0.1$ & $3.4 \pm 0.1$ & $1.1 \pm 0.1$ & $42.0 \pm 0.3$ \\
    \bottomrule
  \end{tabular}
  }
    \caption{Mean absolute percentage error (\%) $\pm$ SEM over 100 permutations for \textbf{Hamlet (Permutation Average)}.}
\end{table}

\begin{table}[ht]
  \centering
  \scriptsize
  \setlength{\tabcolsep}{3pt}
  \label{tab:appendix:butterfliesTO0K1S100runjson}
  \resizebox{\textwidth}{!}{%
  \begin{tabular}{rcccccccc}
    \toprule
    \% seen & $H^{*}$ & SGT & PW & Trivial & Ratio-$\alpha$ & MLE-$\alpha$ & Pad\'{e}--GT [2,3] & Chebyshev \\
    \midrule
    3.3 & $49.9 \pm 2.6$ & $85.6 \pm 1.0$ & --- & $95.0 \pm 0.3$ & $76.9 \pm 8.2$ & --- & --- & $89.2 \pm 0.7$ \\
    10.0 & $32.5 \pm 2.3$ & $60.7 \pm 1.3$ & --- & $84.8 \pm 0.5$ & $50.7 \pm 3.8$ & $71.4 \pm 2.9$ & --- & $68.5 \pm 1.0$ \\
    13.3 & $28.9 \pm 2.1$ & $52.1 \pm 1.3$ & $60.4 \pm 4.8$ & $80.5 \pm 0.5$ & $42.0 \pm 3.0$ & $57.3 \pm 3.0$ & --- & $60.3 \pm 1.1$ \\
    20.0 & $23.9 \pm 1.7$ & $35.6 \pm 1.3$ & $38.9 \pm 2.9$ & $70.9 \pm 0.6$ & $31.2 \pm 2.3$ & $36.8 \pm 2.5$ & --- & $43.0 \pm 1.1$ \\
    23.3 & $21.8 \pm 1.6$ & $29.2 \pm 1.2$ & $36.2 \pm 2.6$ & $66.4 \pm 0.6$ & $27.6 \pm 1.9$ & $30.9 \pm 2.1$ & --- & $35.4 \pm 1.1$ \\
    30.0 & $17.7 \pm 1.3$ & $20.5 \pm 1.1$ & $27.9 \pm 2.2$ & $58.5 \pm 0.6$ & $21.3 \pm 1.5$ & $22.6 \pm 1.5$ & --- & $23.0 \pm 1.1$ \\
    33.3 & $16.9 \pm 1.1$ & $17.3 \pm 1.1$ & $27.8 \pm 2.1$ & $54.9 \pm 0.6$ & $18.3 \pm 1.2$ & $19.5 \pm 1.3$ & $20.4 \pm 1.5$ & $17.9 \pm 1.1$ \\
    40.0 & $13.9 \pm 1.0$ & $13.0 \pm 0.9$ & $25.6 \pm 1.8$ & $47.8 \pm 0.6$ & $14.4 \pm 1.0$ & $15.6 \pm 1.0$ & $13.3 \pm 1.0$ & $10.9 \pm 0.8$ \\
    43.3 & $13.7 \pm 1.0$ & $10.1 \pm 0.9$ & $23.2 \pm 1.6$ & $43.6 \pm 0.6$ & $14.0 \pm 0.9$ & $15.4 \pm 1.0$ & --- & $9.2 \pm 0.7$ \\
    46.7 & $13.0 \pm 1.0$ & $9.6 \pm 0.7$ & $22.9 \pm 1.5$ & $40.2 \pm 0.7$ & $12.7 \pm 0.9$ & $14.3 \pm 1.0$ & --- & $10.0 \pm 0.7$ \\
    \bottomrule
  \end{tabular}
  }
  \caption{Mean absolute percentage error (\%) $\pm$ SEM over 100 permutations for \textbf{Butterflies (Permutation Average)}.}
\end{table}

\begin{table}[ht]
  \centering
  \scriptsize
  \setlength{\tabcolsep}{3pt}
  \label{tab:appendix:collegemsgTO0K100S100runjson}
  \resizebox{\textwidth}{!}{%
  \begin{tabular}{rcccccccc}
    \toprule
    \% seen & $H^{*}$ & SGT & PW & Trivial & Ratio-$\alpha$ & MLE-$\alpha$ & Pad\'{e}--GT [2,3] & Chebyshev \\
    \midrule
    5.2 & $34.9 \pm 2.1$ & $54.2 \pm 0.4$ & $35.7 \pm 1.8$ & $88.4 \pm 0.0$ & $81.5 \pm 2.6$ & $82.6 \pm 2.3$ & --- & $64.7 \pm 0.3$ \\
    10.2 & $18.0 \pm 1.2$ & $31.6 \pm 0.5$ & $26.3 \pm 1.7$ & $78.9 \pm 0.1$ & $51.9 \pm 1.5$ & $56.8 \pm 1.4$ & $114.2 \pm 11.0$ & $42.7 \pm 0.4$ \\
    15.4 & $12.8 \pm 1.1$ & $18.0 \pm 0.6$ & $17.6 \pm 1.2$ & $70.2 \pm 0.1$ & $36.7 \pm 1.2$ & $43.0 \pm 1.1$ & $139.3 \pm 32.6$ & $26.3 \pm 0.5$ \\
    20.4 & $9.6 \pm 0.7$ & $11.1 \pm 0.6$ & $13.7 \pm 1.1$ & $62.8 \pm 0.1$ & $26.7 \pm 0.9$ & $34.0 \pm 0.8$ & $63.6 \pm 17.1$ & $15.0 \pm 0.5$ \\
    25.4 & $8.1 \pm 0.6$ & $7.4 \pm 0.5$ & $10.6 \pm 0.7$ & $56.1 \pm 0.1$ & $19.6 \pm 0.7$ & $27.2 \pm 0.6$ & $51.2 \pm 15.3$ & $6.9 \pm 0.4$ \\
    29.5 & $7.4 \pm 0.6$ & $5.8 \pm 0.3$ & $9.4 \pm 0.7$ & $51.1 \pm 0.1$ & $15.3 \pm 0.6$ & $23.0 \pm 0.5$ & $25.6 \pm 5.3$ & $3.7 \pm 0.3$ \\
    34.6 & $5.9 \pm 0.4$ & $4.5 \pm 0.3$ & $8.1 \pm 0.7$ & $45.5 \pm 0.1$ & $11.5 \pm 0.5$ & $19.1 \pm 0.4$ & $50.5 \pm 34.0$ & $5.6 \pm 0.4$ \\
    39.6 & $4.8 \pm 0.4$ & $4.0 \pm 0.3$ & $8.3 \pm 0.7$ & $40.6 \pm 0.1$ & $8.0 \pm 0.4$ & $15.3 \pm 0.4$ & $11.4 \pm 2.0$ & $9.1 \pm 0.5$ \\
    44.7 & $4.8 \pm 0.3$ & $3.6 \pm 0.3$ & $8.2 \pm 0.6$ & $35.8 \pm 0.1$ & $5.7 \pm 0.4$ & $12.6 \pm 0.3$ & $7.6 \pm 1.1$ & $13.1 \pm 0.5$ \\
    49.7 & $4.4 \pm 0.3$ & $3.3 \pm 0.2$ & $6.2 \pm 0.5$ & $31.3 \pm 0.2$ & $4.7 \pm 0.3$ & $10.8 \pm 0.3$ & $4.7 \pm 0.7$ & $17.3 \pm 0.5$ \\
    \bottomrule
  \end{tabular}
  }
  \caption{Mean absolute percentage error (\%) $\pm$ SEM over 100 permutations for \textbf{Messages (Permutation Average)}.}
\end{table}

\begin{table}[ht]
  \centering
  \scriptsize
  \setlength{\tabcolsep}{3pt}
  \label{tab:appendix:mtgN12TO0K1S100runjson}
  \resizebox{\textwidth}{!}{%
  \begin{tabular}{rcccccccc}
    \toprule
    \% seen & $H^{*}$ & SGT & PW & Trivial & Ratio-$\alpha$ & MLE-$\alpha$ & Pad\'{e}--GT [2,3] & Chebyshev \\
    \midrule
    8.3 & $13.7 \pm 0.0$ & $64.3$ & --- & $86.7 \pm 0.0$ & $59.3 \pm 0.0$ & $86.7 \pm 0.0$ & --- & $66.7$ \\
    16.7 & $7.6 \pm 0.4$ & $40.2 \pm 0.3$ & --- & $74.7 \pm 0.1$ & $39.8 \pm 1.2$ & $42.2 \pm 1.6$ & --- & $40.4 \pm 0.3$ \\
    25.0 & $14.1 \pm 0.6$ & $24.1 \pm 0.4$ & --- & $63.7 \pm 0.1$ & $28.1 \pm 1.1$ & $29.2 \pm 1.0$ & $21.5 \pm 1.8$ & $19.5 \pm 0.5$ \\
    33.3 & $15.8 \pm 0.6$ & $13.9 \pm 0.5$ & $67.8 \pm 1.8$ & $53.8 \pm 0.2$ & $19.5 \pm 0.9$ & $21.3 \pm 0.8$ & $13.6 \pm 1.0$ & $5.4 \pm 0.4$ \\
    41.7 & $15.0 \pm 0.6$ & $7.7 \pm 0.5$ & $52.3 \pm 1.5$ & $44.7 \pm 0.2$ & $13.5 \pm 0.7$ & $15.8 \pm 0.7$ & $9.7 \pm 0.7$ & $10.3 \pm 0.6$ \\
    \bottomrule
  \end{tabular}
  }
  \caption{Mean absolute percentage error (\%) $\pm$ SEM over 100 permutations for \textbf{MTG ($N=12$) (Permutation Average)}.}
\end{table}

\begin{table}[ht]
  \centering
  \scriptsize
  \setlength{\tabcolsep}{3pt}
  \label{tab:appendix:mtgN24TO0K1S100runjson}
  \resizebox{\textwidth}{!}{%
  \begin{tabular}{rcccccccc}
    \toprule
    \% seen & $H^{*}$ & SGT & PW & Trivial & Ratio-$\alpha$ & MLE-$\alpha$ & Pad\'{e}--GT [2,3] & Chebyshev \\
    \midrule
    4.2 & $11.6 \pm 0.0$ & $74.6 \pm 0.0$ & --- & $91.1 \pm 0.0$ & $114.3 \pm 0.0$ & $91.1 \pm 0.0$ & --- & $80.1 \pm 0.0$ \\
    8.3 & $38.0 \pm 0.9$ & $55.2 \pm 0.3$ & --- & $82.9 \pm 0.1$ & $84.9 \pm 2.3$ & $77.5 \pm 1.5$ & --- & $63.3 \pm 0.2$ \\
    16.7 & $35.5 \pm 1.4$ & $30.3 \pm 0.5$ & $89.5 \pm 2.7$ & $68.8 \pm 0.1$ & $47.3 \pm 1.8$ & $51.0 \pm 1.6$ & $138.1 \pm 48.6$ & $38.1 \pm 0.4$ \\
    20.8 & $30.2 \pm 1.4$ & $21.9 \pm 0.5$ & $67.1 \pm 2.2$ & $62.5 \pm 0.2$ & $37.5 \pm 1.4$ & $42.3 \pm 1.3$ & $93.6 \pm 21.5$ & $28.0 \pm 0.5$ \\
    25.0 & $23.4 \pm 1.2$ & $15.7 \pm 0.5$ & $49.4 \pm 2.4$ & $56.8 \pm 0.2$ & $29.8 \pm 1.1$ & $35.5 \pm 1.0$ & $59.3 \pm 5.1$ & $19.5 \pm 0.5$ \\
    29.2 & $17.7 \pm 1.0$ & $10.9 \pm 0.5$ & $35.6 \pm 2.0$ & $51.4 \pm 0.2$ & $24.1 \pm 1.0$ & $30.4 \pm 0.9$ & $40.8 \pm 5.0$ & $12.1 \pm 0.5$ \\
    33.3 & $14.3 \pm 0.9$ & $7.3 \pm 0.4$ & $26.0 \pm 1.8$ & $46.3 \pm 0.2$ & $20.4 \pm 0.9$ & $26.9 \pm 0.8$ & $31.6 \pm 4.4$ & $6.2 \pm 0.4$ \\
    41.7 & $10.0 \pm 0.6$ & $4.5 \pm 0.3$ & $19.3 \pm 1.4$ & $37.3 \pm 0.2$ & $13.4 \pm 0.7$ & $20.2 \pm 0.6$ & $13.4 \pm 1.3$ & $6.0 \pm 0.4$ \\
    45.8 & $8.9 \pm 0.6$ & $5.0 \pm 0.3$ & $19.4 \pm 1.3$ & $33.4 \pm 0.2$ & $10.7 \pm 0.6$ & $17.4 \pm 0.5$ & $8.9 \pm 0.8$ & $9.0 \pm 0.5$ \\
    \bottomrule
  \end{tabular}
  }
  \caption{Mean absolute percentage error (\%) $\pm$ SEM over 100 permutations for \textbf{MTG ($N=24$) (Permutation Average)}.}
\end{table}

\begin{table}[ht]
  \centering
  \scriptsize
  \setlength{\tabcolsep}{3pt}
  \label{tab:appendix:mtgN48TO0K1S100runjson}
  \resizebox{\textwidth}{!}{%
  \begin{tabular}{rcccccccc}
    \toprule
    \% seen & $H^{*}$ & SGT & PW & Trivial & Ratio-$\alpha$ & MLE-$\alpha$ & Pad\'{e}--GT [2,3] & Chebyshev \\
    \midrule
    4.2 & $103.1 \pm 1.9$ & $63.4 \pm 0.2$ & --- & $86.7 \pm 0.0$ & $178.5 \pm 4.1$ & $145.9 \pm 4.2$ & --- & $73.9 \pm 0.1$ \\
    10.4 & $54.5 \pm 2.8$ & $29.1 \pm 0.4$ & $115.6 \pm 3.1$ & $70.2 \pm 0.1$ & $104.5 \pm 2.5$ & $112.4 \pm 2.2$ & $1645.9 \pm 1476.8$ & $45.4 \pm 0.3$ \\
    14.6 & $33.0 \pm 2.0$ & $15.2 \pm 0.6$ & $78.3 \pm 3.1$ & $61.0 \pm 0.2$ & $78.2 \pm 2.1$ & $89.6 \pm 1.8$ & $278.1 \pm 108.8$ & $31.5 \pm 0.4$ \\
    20.8 & $18.9 \pm 1.4$ & $5.8 \pm 0.4$ & $44.2 \pm 2.6$ & $49.9 \pm 0.2$ & $49.9 \pm 1.5$ & $64.7 \pm 1.2$ & $1804.6 \pm 1694.5$ & $17.3 \pm 0.5$ \\
    25.0 & $13.6 \pm 1.0$ & $4.3 \pm 0.3$ & $27.0 \pm 1.9$ & $43.6 \pm 0.2$ & $38.6 \pm 1.0$ & $54.3 \pm 0.9$ & $93.3 \pm 33.3$ & $10.2 \pm 0.4$ \\
    29.2 & $12.4 \pm 1.0$ & $4.7 \pm 0.3$ & $22.0 \pm 1.5$ & $38.2 \pm 0.2$ & $29.4 \pm 0.9$ & $45.7 \pm 0.8$ & $58.0 \pm 24.4$ & $5.6 \pm 0.4$ \\
    35.4 & $11.7 \pm 1.0$ & $5.3 \pm 0.3$ & $18.8 \pm 1.4$ & $31.4 \pm 0.2$ & $19.9 \pm 0.7$ & $35.8 \pm 0.6$ & $95.6 \pm 52.6$ & $3.8 \pm 0.3$ \\
    39.6 & $13.0 \pm 1.0$ & $5.0 \pm 0.3$ & $17.2 \pm 1.3$ & $27.5 \pm 0.2$ & $15.7 \pm 0.6$ & $30.9 \pm 0.5$ & $22.5 \pm 4.1$ & $4.9 \pm 0.3$ \\
    45.8 & $12.2 \pm 0.9$ & $4.1 \pm 0.3$ & $17.9 \pm 1.2$ & $22.4 \pm 0.2$ & $10.7 \pm 0.4$ & $24.7 \pm 0.4$ & $21.4 \pm 6.8$ & $7.3 \pm 0.4$ \\
    47.9 & $10.8 \pm 0.9$ & $3.6 \pm 0.3$ & $16.4 \pm 1.1$ & $21.0 \pm 0.2$ & $9.1 \pm 0.4$ & $22.7 \pm 0.4$ & $19.9 \pm 6.2$ & $7.7 \pm 0.4$ \\
    \bottomrule
  \end{tabular}
  }
  \caption{Mean absolute percentage error (\%) $\pm$ SEM over 100 permutations for \textbf{MTG ($N=48$) (Permutation Average)}.}
\end{table}

\begin{table}[ht]
  \centering
  \scriptsize
  \setlength{\tabcolsep}{3pt}
  \label{tab:appendix:genesTO0K10S100runjson}
  \resizebox{\textwidth}{!}{%
  \begin{tabular}{rcccccccc}
    \toprule
    \% seen & $H^{*}$ & SGT & PW & Trivial & Ratio-$\alpha$ & MLE-$\alpha$ & Pad\'{e}--GT [2,3] & Chebyshev \\
    \midrule
    5.1 & $122.6 \pm 8.1$ & $27.0 \pm 1.5$ & $3328.5 \pm 238.4$ & $64.6 \pm 0.1$ & $33.0 \pm 0.7$ & $24.1 \pm 0.4$ & $71.8 \pm 3.4$ & $29.7 \pm 1.0$ \\
    10.2 & $41.4 \pm 3.2$ & $13.8 \pm 1.0$ & $1936.9 \pm 143.1$ & $57.8 \pm 0.1$ & $19.9 \pm 0.7$ & $23.7 \pm 0.4$ & $91.2 \pm 11.7$ & $12.0 \pm 0.8$ \\
    15.3 & $21.0 \pm 1.6$ & $10.5 \pm 0.8$ & $932.5 \pm 68.8$ & $52.5 \pm 0.2$ & $13.0 \pm 0.7$ & $21.7 \pm 0.4$ & $65.0 \pm 13.5$ & $9.1 \pm 0.7$ \\
    20.4 & $13.8 \pm 1.0$ & $7.8 \pm 0.6$ & $518.2 \pm 40.1$ & $47.7 \pm 0.2$ & $8.6 \pm 0.5$ & $19.0 \pm 0.4$ & $15.4 \pm 1.1$ & $12.1 \pm 0.8$ \\
    25.5 & $7.9 \pm 0.6$ & $6.2 \pm 0.5$ & $319.3 \pm 24.3$ & $43.3 \pm 0.2$ & $6.1 \pm 0.4$ & $16.6 \pm 0.4$ & $8.9 \pm 0.6$ & $16.3 \pm 0.9$ \\
    29.6 & $6.2 \pm 0.5$ & $5.0 \pm 0.4$ & $168.4 \pm 12.2$ & $40.0 \pm 0.2$ & $5.0 \pm 0.3$ & $14.8 \pm 0.3$ & $6.0 \pm 0.4$ & $20.2 \pm 0.9$ \\
    34.7 & $4.6 \pm 0.4$ & $4.3 \pm 0.3$ & $97.7 \pm 7.3$ & $36.2 \pm 0.2$ & $4.1 \pm 0.3$ & $12.9 \pm 0.3$ & $4.4 \pm 0.3$ & $24.1 \pm 0.9$ \\
    39.8 & $4.0 \pm 0.3$ & $3.9 \pm 0.3$ & $46.8 \pm 3.5$ & $32.6 \pm 0.2$ & $3.5 \pm 0.3$ & $11.0 \pm 0.3$ & $4.0 \pm 0.3$ & $29.0 \pm 0.9$ \\
    44.9 & $3.2 \pm 0.2$ & $3.2 \pm 0.2$ & $31.3 \pm 2.2$ & $29.1 \pm 0.2$ & $2.9 \pm 0.2$ & $9.4 \pm 0.3$ & $3.2 \pm 0.2$ & $32.3 \pm 0.8$ \\
    49.9 & $2.8 \pm 0.2$ & $2.8 \pm 0.2$ & $3.6 \pm 0.2$ & $25.7 \pm 0.2$ & $2.5 \pm 0.2$ & $7.9 \pm 0.3$ & $2.8 \pm 0.2$ & $36.1 \pm 0.8$ \\
    \bottomrule
  \end{tabular}
  }
  \caption{Mean absolute percentage error (\%) $\pm$ SEM over 100 permutations for \textbf{Genome Coverage}.}
\end{table}

\end{document}